\newtheorem{theorem}{Theorem}[section]
\newtheorem{lemma}[theorem]{Lemma}
\newtheorem{proposition}[theorem]{Proposition}
\newtheorem{corollary}[theorem]{Corollary}
\newtheorem{conjecture}[theorem]{Conjecture}
\theoremstyle{definition}
\newtheorem{definition}[theorem]{Definition}
\newtheorem{assumption}[theorem]{Assumption}
\newtheorem{example}[theorem]{Example}
\newtheorem{claim}[theorem]{Claim}
\newtheorem{warning}[theorem]{Warning}
\theoremstyle{remark}
\newtheorem{remark}[theorem]{Remark}
\numberwithin{equation}{section}
\newcommand{\ov}{\overline}
\newcommand{\Int}{\mathrm{Int}}
\newcommand{\der}{\mathrm{der}}
\newcommand{\mc}{\mathcal}
\newcommand{\Q}{\mathbb{Q}}
\newcommand{\ms}{\mathsf}
\newcommand{\mf}{\mathfrak}
\newcommand{\inv}{\mathrm{inv}}
\newcommand{\ad}{\mathrm{ad}}
\newcommand{\iso}{\mathrm{iso}}
\newcommand{\bas}{\mathrm{bas}}
\newcommand{\alg}{\mathrm{alg}}
\newcommand{\C}{\mathbb{C}}
\newcommand{\R}{\mathbb{R}}
\newcommand{\Gal}{\mathrm{Gal}}
\newcommand{\Z}{\mathbb{Z}}
\newcommand{\im}{\mathrm{im}}
\newcommand{\SL}{\mathrm{SL}}
\newcommand{\tr}{\mathrm{tr}}
\newcommand{\Trans}{\mathrm{Trans}}
\newcommand{\Ind}{\mathrm{Ind}}
\newcommand{\Irr}{\mathrm{Irr}}
\newcommand{\Jac}{\mathrm{Jac}}
\newcommand{\Gm}{{\mathbb{G}_{\mathrm{m}}}}
\newcommand{\Hom}{\mathrm{Hom}}
\newcommand{\ab}{\mathrm{ab}}
\newcommand{\rel}{\mathrm{rel}}
\newcommand{\Res}{\mathrm{Res}}
\newcommand{\reg}{\mathrm{reg}}
\newcommand{\GL}{\mathrm{GL}}
\newcommand{\op}{\mathrm{op}}
\newcommand{\co}[1]{\prescript{#1}{}}
\newcommand{\wh}{\widehat}
\newcommand{\res}{\mathrm{res}}
\newcommand{\LLC}{\mathrm{LLC}}
\newcommand{\Bun}{\mathrm{Bun}}
\newcommand{\pure}{\mathrm{pure}}
\newcommand{\Kal}{\mathrm{Kal}}
\newcommand{\D}{\mathbb{D}}
\newcommand{\Stab}{\mathrm{Stab}}
\newcommand{\PGL}{\mathrm{PGL}}
\newcommand{\Gbas}{G\mathchar`-\mathrm{bas}}
\newcommand{\Mbas}{M\mathchar`-\mathrm{bas}}
\newcommand{\acc}{\mathrm{acc}}
\newcommand{\msc}{\mathscr}
\newcommand{\emb}{\mathrm{emb}}
\newcommand{\Lie}{\mathrm{Lie}}
\newcommand{\Ad}{\mathrm{Ad}}
\newcommand{\st}{\mathrm{st}}
\newcommand{\Dist}{\mathrm{Dist}}
\newcommand{\SO}{\mathrm{SO}}
\newcommand{\Sp}{\mathrm{Sp}}
\newcommand{\xO}{\mathrm{O}}
\newcommand{\wM}{{}^{w}M}
\newcommand{\IndCoh}{\mathrm{IndCoh}}
\newcommand{\Par}{\mathrm{Par}}
\newcommand{\Rep}{\mathrm{Rep}}
\newcommand{\ul}{\underline}
\title{The B(G)-parametrization of the local Langlands correspondence}
\author{Alexander Bertoloni Meli}
\address{Department of Mathematics and Statistics, 665 Commonwealth Ave, Boston, MA 02215, USA.}
\email{abertolo@bu.edu}
\author{Masao Oi}
\address{Department of Mathematics, National Taiwan University, Astronomy Mathematics Building 5F, No.\ 1, Sec.\ 4, Roosevelt Rd., Taipei 10617, Taiwan.}
\email{masaooi@ntu.edu.tw}
\begin{document}
\begin{abstract}
    This article is on the parametrization of the local Langlands correspondence over local fields for non-quasi-split groups according to the philosophy of Vogan. We show that a parametrization indexed by the basic part of the Kottwitz set (which is an extension of the set of pure inner twists) implies a parametrization indexed by the full Kottwitz set. On the Galois side, we consider irreducible algebraic representations of the full centralizer group of the $L$-parameter (i.e., not a component group). When $F$ is a $p$-adic field, we discuss a generalization of the endoscopic character identity.
\end{abstract}

\maketitle

\tableofcontents

\section{Introduction}
For a quasi-split connected reductive group $G$ over a local field $F$, a \textit{local Langlands correspondence (LLC)} is a map
\begin{equation*}
    \LLC_G\colon \Pi(G)  \; \longrightarrow \; \Phi(G),
\end{equation*}
satisfying certain desiderata. Here, $\Pi(G)$ is the set of isomorphism classes of irreducible admissible $\C$-valued representations of $G(F)$ and $\Phi(G)$ is the set of $\widehat{G}$-conjugacy classes of $L$-parameters $\phi\colon W_F \times \SL_2 \to \co{L}G$ (throughout the text, we conflate $\wh{G}$ and $\co{L}{G}$ with their $\C$-points). The map $\LLC_G$ is not injective in general but it has finite fibers which are denoted $\Pi_{\phi}(G)$ and called $L$-\emph{packets}. The constituents of each $L$-packet are parametrized, after a choice of a Whittaker datum of $G$, by $\Irr(\pi_0(S_{\phi}/Z(\wh{G})^{\Gamma}))$, where $S_{\phi} := Z_{\wh{G}}(\im \phi)$ is a potentially disconnected reductive group and where $\Irr(\pi_0(S_{\phi}/Z(\wh{G})^{\Gamma}))$ denotes the set of irreducible representations of the finite group $\pi_0(S_{\phi}/Z(\wh{G})^{\Gamma})$ ($\Gamma$ denotes the absolute Galois group of $F$). The existence of an LLC was conjectured by Langlands and by now constructions are known in many cases. At this point, the literature is too rich to acknowledge every contribution, but we briefly mention some results here. Some further remarks are given in Remark \ref{rem: mainthmrems}. 
In the Archimedean case, an LLC for all groups is known by work of Langlands and Shelstad (see \cite{ShelstadLindistinguishability}). 
In the case of $p$-adic fields, LLC's have been constructed for $\GL_n$, by \cite{HT1} and \cite{HenniartLLC} and for $\Sp_{2n}$, $\SO_{2n+1}$, and $\xO_{2n}$ by \cite{Arthurbook}. Unitary groups and their inner twists were handled by \cite{KMSW}, \cite{Mok}.

In the case where $G$ is not quasi-split, Whittaker data are no longer defined, and in any case, the two sets are not always in bijection. Vogan realized (\cite{VoganLLC}) that instead of trying to parametrize the $L$-packets of $G$ on their own, one does better by simultaneously parametrizing $L$-packets of a collection of suitably rigidified inner twists of the unique quasi-split inner form $G^{\ast}$ of $G$. 
For various reasons, inner twists classified by $H^1(F, G_{\ad})$ are not suitable; for example, they can have outer-automorphisms that act non-trivially on representations of $G(F)$. Some natural suitable collections of inner twists are parametrized by $H^1(F,G)$,  $B(G)_{\bas}$, or $H^1(u \to W, Z(G) \to G)$ (see \cite{KalethaLLCnonQS} for the details).
In each case, one can conjecture an expected parametrization of $L$-packets. For instance, in the case of $B(G)_{\bas}$, Kottwitz conjectured a bijection (\cite[Conjecture F]{KalethaLLCnonQS}) 
\begin{equation*}
    \begin{tikzcd}
          \coprod\limits_{b \in B(G)_{\bas}} \Pi_{\phi}(G_b) \arrow[r, "\iota_{\mf{w}}"] & \Irr(S^{\natural}_{\phi}),
    \end{tikzcd}
\end{equation*}
where $S^{\natural}_{\phi} = S_{\phi} / (\wh{G}_{\der} \cap S_{\phi})^{\circ}$ ($\wh{G}_\der$ denotes the derived subgroup of $\wh{G}$).  The notation $B(G)_{\bas}$ refers to the \emph{basic elements} of the Kottwitz set $B(G)$, that is at the center of this work. This definition and combinatorial description of $B(G)$ are reviewed in \S\ref{ss: Kottwitzreview}. Most simply, $B(G)$ is given by the Frobenius-twisted conjugacy classes of $G(\breve{F})$ when $F$ is a $p$-adic field (here, $\breve{F}$ denotes the completion of the maximal unramified extension of $F$). For each $b \in B(G)$, there is a canonically associated group $G_b$, that is an inner twist of a standard Levi subgroup of $G$. The element $b$ is \emph{basic} precisely when $G_b$ is an inner twist of $G$ itself.

The main result of our paper is that a $B(L)_{\bas}$-parametrization of an LLC for each standard Levi subgroup $L \subset G$ implies a parametrization of a ``generalized LLC'' for $G$ involving the full Kottwitz set, $B(G)$, and each group $G_b$. Before stating our results precisely, we recall the work of \cite{fargues--scholze}, which is the main motivation for our paper.

Since its inception, $B(G)$ has been known to be central to the construction of Rapoport--Zink spaces and more generally, local Shimura varieties. Motivated by this, Fargues \cite{farguesgeometrization} outlined a program to geometrize the LLC for $p$-adic fields. Tremendous progress towards completing this program was made in \cite{fargues--scholze}. In particular, they conjecture that the LLC comes from an equivalence of categories. On the automorphic side, they define a v-stack $\Bun_G$ whose points are canonically in bijection with the Kottwitz set: $|\Bun_G| \cong B(G)$. They further define a derived category of sheaves $D(\Bun_G, \ov{\Q_{\ell}})$ containing, for each $b \in B(G)$, the derived category $\Rep(G_b(F))$ of smooth representations of $G_b(F)$. On the Galois side, they consider a variant of the stack of $L$-parameters $\Par_G$ first defined in \cite{DHKM}, and the ind-completion $\IndCoh(\Par_G)$ of its derived category of coherent sheaves.
\begin{conjecture}[{\cite[I.10.2]{fargues--scholze}}]{\label{conj: catconj}}

There exists a canonical equivalence of $\infty$-categories 
\begin{equation*}
    \IndCoh(\Par_G) \cong D(\Bun_G, \ov{\Q_{\ell}}).
\end{equation*}
\end{conjecture}

The category $D(\Bun_G, \ov{\Q_{\ell}})$ carries a perverse t-structure and the irreducible perverse sheaves are known to be in bijection with the set of pairs $(b, \pi)$, where $b \in B(G)$ and $\pi \in \Pi(G_b)$. 
This perverse $t$-structure must correspond to some $t$-structure on $\IndCoh(\Par_G)$, and Fargues and Scholze conjecture \cite[Remark I.10.3]{fargues--scholze} its irreducible objects are given by pairs $(\phi, \rho)$ for $\phi \in \Phi(G)$ and $\rho \in \Irr(S_{\phi})$. If this conjecture is true, then these sets of pairs must naturally be in bijection. Our motivation in this paper is therefore to show how such a bijection follows from the classical, $B(G)_{\bas}$ formulation of LLC. We remark that for $G$ with connected center, the $B(G)_{\bas}$-parametrization is known to be equivalent to the rigid parametrization of Kaletha by \cite{KallocalBGvsRig}. Kaletha further shows that knowing the $B(G)_{\bas}$-parametrization of LLC for all $G$ is equivalent to knowing the rigid parametrization for all $G$.

Our main result is then
\begin{theorem}[See \S\ref{s: basiccorrespondence}, \S\ref{s: theconstruction}]{\label{thm: intromainthm}}
Let $G$ be a quasi-split connected reductive group with a fixed Whittaker datum $\mf{w}$.
Suppose that there is an LLC for $G$ and its $B(G)_{\bas}$-inner twists as well as an LLC for each proper Levi subgroup $L \subset G$ and its $B(L)_{\bas}$-inner twists.
Then there is a natural LLC for the $B(G)$-twists of $G$ and a bijection
\begin{equation*}
    \begin{tikzcd}
          \coprod\limits_{b \in B(G)} \Pi_{\phi}(G_b) \arrow[r, "\iota_{\mf{w}}"] & \Irr(S_{\phi}),
    \end{tikzcd}
\end{equation*}
where $\Irr(S_{\phi})$ now denotes the set of irreducible algebraic representations of $S_{\phi}$.
(See \S \ref{s: basiccorrespondence} for the precise meaning of ``LLC".) 
\end{theorem}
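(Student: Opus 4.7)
The proof naturally splits into two pieces: a reduction on the automorphic side to the basic case over Levi subgroups, and a matching on the Galois side via Clifford theory plus highest-weight theory for the (possibly-disconnected) complex reductive group $S_\phi$.

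On the automorphic side, the plan is to exploit the Newton decomposition of $B(G)$. For each $b \in B(G)$, let $\nu_b$ be the Newton cocharacter and set $M = M_b := Z_{G^\ast}(\nu_b)$, the standard Levi centralizing $\nu_b$. By Kottwitz's theorem, $b \in B(M_b)_{\bas}$ and $G_b$ is identified with the $B(M)_{\bas}$-twist of $M$ attached to $b$. One then declares $\Pi_\phi(G_b) := \emptyset$ unless $\phi$ admits a factorization through $\co{L}M$, in which case
$$\Pi_\phi(G_b) \;=\; \bigsqcup_{\phi_M} \Pi_{\phi_M}(G_b),$$
where $\phi_M$ ranges over equivalence classes of such factorizations and each packet on the right is supplied by the assumed LLC for $M$ and its $B(M)_{\bas}$-twists. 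Regrouping according to the Levi $M = M_b$ gives
$$\coprod_{b \in B(G)} \Pi_\phi(G_b) \;=\; \bigsqcup_{[M]}\bigsqcup_{\phi_M} \coprod_{\substack{b' \in B(M)_{\bas}\\ \nu_{b'}\ G\text{-regular}}} \Pi_{\phi_M}(M_{b'}),$$
where $[M]$ runs over suitable conjugacy classes of standard Levis, and the hypothesized basic LLC identifies each inner coproduct with $\Irr(S^{\natural}_{\phi_M})$.

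On the Galois side, Clifford theory classifies $\Irr(S_\phi)$ by pairs $(\sigma,\tau)$, where $\sigma \in \Irr(S_\phi^{\circ})$ taken modulo $\pi_0(S_\phi)$-action and $\tau$ is an irrep of the component group of $\Stab_{S_\phi}(\sigma)$. Highest-weight theory further classifies $\sigma$ by a dominant weight $\lambda \in X^\ast(T_\phi)$ on a maximal torus $T_\phi \subset S_\phi^{\circ}$; this $T_\phi$ cuts out a Levi $\wh M = Z_{\wh G}(T_\phi)$ of $\wh G$ through which $\phi$ factors (say as $\phi_M$), with $S_{\phi_M} = S_\phi \cap \wh M$. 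The construction attaches to each $(\sigma,\tau)$ a tuple $(M,\phi_M,\sigma_M,\tau_M)$ describing an element of $\Irr(S^{\natural}_{\phi_M})$, and the heart of the proof is to verify that this assignment is a bijection, so that the Galois and automorphic decompositions match term by term.

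The principal obstacle is the bookkeeping required to make this matching rigorous: one must (i) show the assignment on the Galois side is well-defined and independent of the choice of $T_\phi$ inside its $S_\phi$-conjugacy class, carefully relating $S^{\natural}_{\phi_M}$ (which involves $\wh M_{\der}$) to Clifford stabilizers inside $S_\phi$; (ii) identify the regularity of $\lambda$ with respect to the roots of $\wh G/\wh M$ with the condition that the Newton point of $b$ be $G$-regular inside $M = M_b$, which is precisely what selects one representative of each $W_G(M)$-orbit under the map $B(M) \to B(G)$; and (iii) verify compatibility with parabolic induction and with the Whittaker normalization, so that the datum $\mf{w}$ of $G$ induces the correct normalization on each Levi. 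These are the points where \S3 and \S4 do the detailed work.
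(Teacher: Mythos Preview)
Your overall architecture is right and matches the paper's: decompose $B(G)$ by Newton strata, reduce to the $B(L)_{\bas}$-LLC for the centralizer $L = M_b$ of the Newton point, and on the Galois side classify $\Irr(S_\phi)$ by highest-weight-plus-Clifford data. But there is a genuine gap in how you match the two sides, stemming from a conflation of two different Levi subgroups.

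On the automorphic side your Levi is $M_b = Z_{G^\ast}(\nu_b)$, the Newton centralizer; this is the paper's $L$. On the Galois side you set $\wh{M} = Z_{\wh G}(T_\phi)$, the centralizer of the \emph{full} maximal torus of $S_\phi^\circ$; this is the \emph{minimal} Levi through which $\phi$ factors, and it does not depend on $\rho$ at all. These two Levis coincide only when the highest weight $\lambda$ is regular for the roots of $\wh G/\wh M$; in general the Newton centralizer is strictly larger. (Already for $G=\GL_2$ and $\phi = \mathbbm{1}\oplus\mathbbm{1}$, the minimal Levi is the torus $T$, but for one-dimensional $\rho$ the Newton point is central and $M_b = G$.) Consequently your Galois-side assignment $(\sigma,\tau) \mapsto (M,\phi_M,\sigma_M,\tau_M)$ lands only on the minimal Levi and cannot match the automorphic decomposition, which ranges over all standard Levis $[M]$.

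The fix, and the paper's key point, is that the correct Levi on the Galois side is determined by the highest weight $\lambda$, not by the torus $T_\phi$: one takes $L = L_\lambda$ to be the centralizer of (a dominant $W^{\rel}$-translate of) $\alpha_M(\lambda)$ inside $\mf{A}_T$. The crucial technical input, which your sketch does not isolate, is that the Clifford stabilizer $A^\lambda := \Stab_{\pi_0(S_\phi)}(\lambda)$ coincides with $\pi_0(S_{\phi,L})$ for \emph{this} $L$ (Proposition~\ref{prop: AlambdaL}). This is exactly what lets one transport the simple $\mc{A}^\lambda$-module $E$ to a simple $\mc{A}_L^\lambda$-module and hence to an element of $\Irr(S^{\natural}_{\phi,L})$; with the minimal Levi in place of $L_\lambda$ one has only an inclusion $\pi_0(S_{\phi,M}) \subset A^\lambda$, generally strict, and the transfer of $E$ fails to be bijective. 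Once the Levi is corrected, the remaining steps you list (well-definedness, factoring through $S^{\natural}_{\phi,L}$, and the dominance condition $b_L \in B(L)_{\bas}^+$) are indeed what the paper carries out in \S\ref{s: theconstruction}.
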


One advantage of Theorem \ref{thm: intromainthm} is that, as suggested by the conjectures of Fargues and Scholze, only the group $S_{\phi}$ appears as opposed to its variants. On the other hand, the use of the disconnected reductive group $S_{\phi}$ as opposed to variants of its component group is one of the main subtleties we must contend with. The algebraic representations of a disconnected reductive group form a highest weight category (\cite{AcharHardestyRicherepthrydisconnected}) and this structure is central to our construction.

Showing that a formulation of the LLC is ``canonical'' in any sense is known to be a subtle question. On the other hand, we claim that our construction is the natural extension of the $B(G)_{\bas}$-LLC in the following sense. Given a pair $b \in B(G)$ and $\pi \in \Pi(G_b)$, there is a unique standard Levi subgroup $L$ and $b_L \in B(L)_{\bas}$ such that $b$ equals the image of $b_L$ under the map $B(L) \rightarrow B(G)$ and $b_L$ is ``$G$-dominant''. These notions are defined precisely in \S\ref{ss: Kottwitzreview}. Then we can consider $G_b$ as an inner twist of $L$ via $b_L$, and by the $B(L)_{\bas}$-LLC, there exists a corresponding pair $(\phi_L, \rho_L)$. Now we get an $L$-parameter $\phi$ of $G$ by composing $\phi_L$ with the map $\co{L}{L} \hookrightarrow \co{L}{G}$. On the other hand, by the representation theory of disconnected reductive groups of \cite{AcharHardestyRicherepthrydisconnected}, $\rho_L \in \Irr(S_{\phi_L})$ is determined by certain highest weight data $(\lambda, E)$. One has that the identity component $S^{\circ}_{\phi_L}$ is a Levi subgroup of $S^{\circ}_{\phi}$ and that the same data $(\lambda, E)$ can be used to define an irreducible representation of $S_{\phi}$. Then our $B(G)$-LLC is defined to be the unique correspondence that takes $(b, \pi)$ to $(\phi, \rho)$. In fact, this is essentially the definition of the correspondence. The more involved part is showing that this actually produces a bijection.

We explain how Theorem \ref{thm: intromainthm} can be seen as part of an \emph{extended Vogan philosophy}. Each of the classes of inner twists we mentioned above ($H^1(F,G)$, $B(G)_{\bas}$, $H^1(u \to W, Z(G) \to G)$) are related to cohomology of certain Galois gerbes. For us, Galois gerbes will be extensions of $\Gamma$ of $F$ by the $\ov{F}$-points of a certain pro-multiplicative $F$-group which we call the \emph{band}. For $H^1(F,G)$, the relevant gerbe $\mc{E}^{\pure} = \Gamma$ is banded by the trivial group. For $B(G)_{\bas}$ one has the gerbe $\mc{E}^{\iso}$ banded by the pro-torus $\D_F$ with character group $\Q$. Finally, $H^1(u \to W, Z(G) \to G)$ is associated to the Kaletha gerbe $\mc{E}^{\Kal}$ banded by a certain multiplicative pro-algebraic group $u$ (to be precise, when $F$ is a local function field and in the $H^1(u \to W, Z(G) \to G)$-parametrization, one has to instead work with geometric gerbes as in \cite{Dillerylocal}). In each case, the parametrizing set is given as the cohomology $H^1_{\bas}(\mc{E}, G(\ov{F}))$, where we are taking equivalence classes of $1$-cocycles $z$ of $\mc{E}$ whose restriction to $D(\ov{F})$ (here $D$ is the band) comes from an algebraic map $\nu_z: D \to G$. Further, the ``$\bas$'' signifies that we are only considering $\nu_z$ with central image in $G$. 

The expectation evidenced by Theorem \ref{thm: intromainthm} is that one gets a cleaner parametrization by dropping this centrality condition on $\nu_z$. In the $H^1(F,G)$ case, $D=1$ so dropping this assumption does nothing. For $B(G)_{\bas}$ one gets $B(G)$. We remark that the study of non-central cocycles of $\mc{E}^{\Kal}$ and their relation to the LLC has been initiated in \cite{dilleryschweinnonbasicrigidpacketsdiscrete}.

An obvious question is if one can recover Theorem \ref{thm: intromainthm} from Conjecture \ref{conj: catconj}. Unfortunately, this seems quite subtle at present. The interested reader is advised to study \cite{beijingnotes} for a detailed picture of the relationships currently conjectured. A detailed example for $G=\PGL_2$ is informally worked out in \cite{BMPGL2notes}. For an example of the difficulties involved, given a sheaf in $\IndCoh(\Par_G)$ conjecturally corresponding to an irreducible perverse sheaf in $D(\Bun_G, \ov{\Q_{\ell}})$, it is not clear at present how to recover the data $(\phi, \rho)$. Moreover, there exist many different incarnations of the pair $(b, \pi)$ as a sheaf in $D(\Bun_G, \ov{\Q_{\ell}})$ and it is not clear which sheaf is ``correct''. More precisely, there is a canonical identification with sheaves on the $b$-stratum $\Bun^b_G \xhookrightarrow{i_b} \Bun_G$ and the category of smooth representations of $G_b(F)$. On the other hand, there are several pushforward functors such as $i_{b,\ast}, i_{b, !}, i_{b, \#}$ as well as the intermediate extension functor $i_{b, !\ast}$ and in general these functors are all different. One answer is to consider, for $\phi$ a discrete parameter (though constructions for more general $\phi$ seem possible, see \cite[\S3.1]{beijingnotes} for details) Hecke eigensheaves $\mc{F}_{\phi}$ on $\Bun_G$ as originally conjectured by Fargues \cite{farguesgeometrization}. In cases where they are understood, the $\mc{F}_{\phi}$ appear to admit decompositions in terms of \emph{tilting-extensions} of the $\pi \in \Pi_{\phi}(G_b)$ along $i_b$. On the Galois side, these sheaves appear to admit decompositions in terms $\rho \in \Irr(S_{\phi})$.

In \S\ref{s: eci} we study how the endoscopic character identities in the $B(G)_{\bas}$-LLC generalize in the non-basic case. One motivation for this is that these identities should be related to the stalks of the Hecke eigensheaves $\mc{F}_{\phi}$ (for instance see \cite[Appendix A]{hamanneisenstein} and the remarks at the end of \cite[\S3.1]{beijingnotes}).

More precisely, we define the transfer to $G_b$ of the stable distribution $S\Theta_{\phi_H}^H$ attached to a tempered $L$-parameter $\phi_H$ of an endoscopic group $H$ of $G$. 
The transfer map is essentially a composition of the Jacquet functor from $H$ to certain Levi subgroups $H_L$ of $H$ that are simultaneously endoscopic groups of $G_b$, and then the endoscopic transfer from $H_L$ to $G_b$.
\begin{equation*}
    \begin{tikzcd}
        \Dist^{\st}(H) \arrow[rd, "{\Trans^{G_b}_H}"] \arrow[swap, d, "{\bigoplus \Jac}"]& \\ 
        \bigoplus \Dist^{\st}(H_L) \arrow[swap, r, "{\sum\Trans^{G_b}_{H_L}}"]& \Dist(G_b)
    \end{tikzcd}
\end{equation*}
The goal is then to describe $\Trans^{G_b}_H S\Theta_{\phi_H}^H$ in terms of $\Pi_{\phi}(G_b)$ for $\phi := \eta \circ \phi_H$, where $\eta$ denotes the $L$-embedding ${}^LH\hookrightarrow{}^LG$.

When $H=G$, this is essentially a question of understanding the compatibility of the local Langlands correspondence with Jacquet modules and already in this case, the description is quite complicated and not known in general. In particular, $\Trans^{G_b}_H S\Theta_{\phi_H}^H$ can contain representations of $G_b$ that are associated to different $L$-parameters of $G$ (see \cite{AtobeJacquet}, though the phenomenon appears even for $\GL_4$; Example \ref{ex:second}).

In this paper, we give the following partial description of $\Trans^{G_b}_H S\Theta_{\phi_H}^H$. We first define the regular part $[\Trans^{G_b}_H S\Theta_{\phi_H}^H]_{\reg}$ of $\Trans^{G_b}_H S\Theta_{\phi_H}^H$. Standard desiderata of LLC imply that whenever $\phi_H$ has trivial $\SL_2$-part, we have $[\Trans^{G_b}_H S\Theta_{\phi_H}^H]_{\reg} = \Trans^{G_b}_H S\Theta_{\phi_H}^H$, though in general they are different. We prove the following.
\begin{theorem}[Theorem \ref{thm: ECI}]{\label{thm: intro-eci}}
    We have an equality of distributions on $G_b$.
    \begin{equation*}
        [\Trans^{G_b}_H S\Theta_{\phi_H}^H]_{\reg}
        =
        e(G_b)\sum\limits_{\pi \in \Pi_{\phi}(G_b)} \langle \pi, \eta(s)\rangle_{\reg}\Theta_{\pi},
    \end{equation*}
    where $e(G_b)$ denotes the Kottwitz sign of $G_b$ and $\langle \pi, \eta(s)\rangle_{\reg}$ is a certain number defined in \S \ref{ss: regular-pairing-def} and $\Theta_{\pi}$ is the trace distribution attached to $\pi$.
\end{theorem}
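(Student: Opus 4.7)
The plan is to exploit the factorization of $\Trans^{G_b}_H$ displayed in the commutative diagram of the introduction: writing $\Trans^{G_b}_H = \Trans^{G_b}_{H_L} \circ \coprod J^H_{P_b}$, I reduce the computation of $\Trans^{G_b}_H S\Theta_{\phi_H}^H$ to a sum indexed by Levi subgroups $H_L \subset H$ that are simultaneously endoscopic data for Levi subgroups $M_b \subset G_b$. Since $H$ is quasi-split, the standard desideratum that $\LLC_H$ be compatible with parabolic induction forces $J^H_{P_b} S\Theta^H_{\phi_H}$ to be a linear combination of stable characters $S\Theta^{H_L}_{\phi_{H_L,j}}$ indexed by $L$-parameters $\phi_{H_L,j}$ of $H_L$ whose composition with ${}^{L}H_{L}\hookrightarrow{}^{L}H$ lies in the $\wh{H}$-conjugacy class of $\phi_H$; the coefficients and Kottwitz signs are tracked via the standard normalization of Jacquet functors.

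For each such pair $(H_L, M_b)$ I then invoke the basic endoscopic character identity guaranteed by the assumed $B(M)_{\bas}$-parametrization for the Levi $M$ of $G$. This writes the transferred stable character as
\begin{equation*}
    \Trans^{M_b}_{H_L} S\Theta^{H_L}_{\phi_{H_L,j}} = e(M_b) \sum_{\sigma \in \Pi_{\phi_{M,j}}(M_b)} \langle \sigma, \eta_L(s_L)\rangle \,\Theta_{\sigma},
\end{equation*}
where $\phi_{M,j} = \eta_L \circ \phi_{H_L,j}$ and $\eta_L$ is the dual of the Levi embedding ${}^{L}H_{L}\hookrightarrow{}^{L}M$. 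Parabolic induction $i^{G_b}_{P_b}$, which is built into $\Trans^{G_b}_{H_L}$, then produces a virtual character on $G_b$ that is a sum of traces of constituents of $i^{G_b}_{P_b}\sigma$.

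The main subtlety, and the heart of the proof, is that the constituents of $i^{G_b}_{P_b}\sigma$ need not all have $L$-parameter equal to $\phi = \eta \circ \phi_H$: their parameters may differ from $\phi$ in the $\SL_2$-component or by non-obvious twists, which is exactly the phenomenon illustrated in Example \ref{ex:second} and in \cite{Ato20}. By construction, the regular part $[-]_{\reg}$ extracts precisely those contributions whose $L$-parameter equals $\phi$, i.e.\ the terms $\Theta_\pi$ with $\pi \in \Pi_\phi(G_b)$. To finish, I check that the definition of $\langle \pi, \eta(s)\rangle_{\reg}$ from \S\ref{ss: regular-pairing-def} is engineered so that collecting the Levi contributions by $\pi$ yields coefficient $e(G_b)\langle \pi, \eta(s)\rangle_{\reg}$ in front of each $\Theta_\pi$; the sign matches since $e(G_b)=e(M_b)$ by multiplicativity of the Kottwitz sign under Levi restriction.

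The main obstacle I anticipate is the bookkeeping in this last reassembly: several pairs $(H_L, \phi_{H_L,j})$ and several Levis $M_b$ can contribute to the same $\pi \in \Pi_\phi(G_b)$, and one must verify that the multiplicities from parabolic induction, the internal pairings from the basic ECI, and the signs from the Jacquet/induction normalizations combine into the clean quantity $\langle \pi, \eta(s)\rangle_{\reg}$. I expect this to reduce, via the Langlands classification together with a Zelevinsky-type filtration argument, to a purely dual-side statement about how algebraic representations of $S_\phi$ decompose along its Levi centralizers $S_{\phi_{M,j}}$, for which the highest-weight category structure of $\Irr(S_\phi)$ emphasized in the introduction should be exactly the right tool.
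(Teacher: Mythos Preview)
Your high-level strategy — factor $\Trans^{G_b}_H$ as ``Jacquet on $H$, then basic endoscopic transfer to $G_b$'' and invoke the basic endoscopic character identity for the Levi — matches the paper's. But two structural points are wrong, and they propagate into the rest of your outline.

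First, there is no parabolic induction on the $G_b$ side. For non-basic $b$, the group $G_b$ is \emph{itself} the inner form $L_{b_L}$ of the standard Levi $L=Z_G(\nu_b)$; the transfer $\Trans^{G_b}_{H_L}$ is just the basic endoscopic transfer for $L$, with no $i^{G_b}_{P_b}$ attached. Your paragraph about ``constituents of $i^{G_b}_{P_b}\sigma$'' is therefore misplaced: the phenomenon in Example~\ref{ex:second} occurs entirely in the Jacquet module $J^H_{P^{\op}_\nu}$ on the $H$-side, not in any induction on $G_b$. Second, the regular part is not defined as ``the subsum with $L$-parameter equal to $\phi$''. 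It is a combinatorial cutoff in the geometric lemma: one keeps only the terms indexed by $w\in W^\rel[H_M,H_L]$, i.e., exactly those for which $H_M\subset w^{-1}(H_L)$ so that the Jacquet module $J^{H_M}_{P_1}$ is the identity and the resulting term is already $S\Theta^{H_L}_{{}^w\phi_H}$. Proving that this agrees with ``the $\phi$-isotypic piece'' would require the very compatibility of LLC with Jacquet modules that the paper explicitly avoids assuming.

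With these corrections, the actual argument is cleaner than you anticipate and does not need Langlands classification or a Zelevinsky-type filtration. The paper shows (Lemma~\ref{lem: endoindexinglem}) a purely combinatorial bijection
\[
\coprod_{\mf{e}_L\in X^{\mf{e}}_L} W^{\rel}_{H_L}\backslash W^{\rel}(H_M,H_L)\;\xrightarrow{\ \sim\ }\;W^{\rel}_L\backslash W^{\rel}(M,L),
\]
so that after applying the basic ECI to each $S\Theta^{H_L}_{{}^w\phi_H}$ one obtains $[\Trans^{G_b}_H S\Theta^H_{\phi_H}]_{\reg}=\sum_{w\in W^{\rel}_L\backslash W^{\rel}(M,L)}\Theta^{L_{b_L},{}^w\eta(s)}_{{}^w\phi}$ (Lemma~\ref{lem: H-side-unstable-sums}). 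On the other side, $\langle\pi,\eta(s)\rangle_{\reg}$ is defined concretely as a sum over $W_{{}^w\phi,L}\backslash W_{{}^w\phi}$ of traces of ${}^{w'w}\eta(s)$ on the representation $\rho_L$ of $S_{{}^w\phi,L}$; unwinding the parametrization of $\Pi_\phi(G_b)$ by pairs $(w,E_{L,w})$ (Proposition~\ref{prop: members-of-b}) gives exactly the same sum over $W^{\rel}_L\backslash W^{\rel}(M,L)$ (Proposition~\ref{prop:Gb-side-unstable-sum}). The ``bookkeeping'' you worry about is thus a Weyl-group double-coset count, not a representation-theoretic filtration.
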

In general, $[\Trans^{G_b}_H S\Theta_{\phi_H}^H]_{\reg}$ is the transfer to $G_b$ of a certain part, $ [J^H_{P_{H_L}}S\Theta_{\phi_H}^H]_{\reg}$, of the Jacquet module $J^H_{P_{H_L}}S\Theta_{\phi_H}^H$ ranging over various Levi subgroups $H_L$ of $H$. In Appendix \ref{s: appendixA} we describe $ [J^H_{P_{H_L}}S\Theta_{\phi_H}^H]_{\reg}$ for general linear groups and show that $[\cdot]_{\reg}$ is precisely the projection to the tempered part. It would be quite interesting to extend Theorem \ref{thm: intro-eci} beyond the regular case.

Finally, we remark that the Archimedean version of Theorem \ref{thm: intromainthm} should optimistically be related to the emerging categorical Langlands conjectures for real groups due to Scholze \cite{Scholzerealnotes}.

\subsection*{Acknowledgements}
We would like to thank David Hansen for suggesting we think about this question and also giving us a lot of constructive comments. We also thank the anonymous referee for their detailed suggestions. In addition, we thank Anne-Marie Aubert and Tasho Kaletha for several helpful discussions and Sug Woo Shin for inspiring us to a more general form of our results. We thank Jonathan Leake for helping us with some Weyl group computations. A.B.M.\ was partially supported by NSF grant DMS-1840234.
M.O.\ was partially supported by JSPS KAKENHI Grant Number 20K14287, Hakubi Project at Kyoto University, and the Yushan Young Fellow Program, Ministry of Education, Taiwan.

\section{Preliminaries}{\label{s: preliminaries}}
Let $F$ be a local field with a fixed choice of algebraic closure $\ov{F}$. We write $\Gamma$ (resp.\ $W_F$) for the absolute Galois group (resp.\ the Weil group) of $F$.

Let $G$ be a quasi-split connected reductive group over $F$. 
We fix an $F$-rational splitting $(T,B,\{X_\alpha\})$ of $G$ (we follow Kottwitz's terminology here, some other authors use \emph{pinning}).
We let $\wh{G}$ denote the Langlands dual group of $G$ where we remark that we routinely conflate $\wh{G}$ with its $\C$-points. Let $\co{L}{G}$ denote the $L$-group $\wh{G} \rtimes W_F$. By fixing a splitting $(\wh{T},\wh{B},\{\wh{X}_\alpha\})$ of $\wh{G}$, we get an action of $\Gamma$ on $\wh{G}$. 
To be more precise, let $\Psi(G)$ (resp.\ $\Psi(\wh{G})$) be the based root datum of $G$ (resp.\ $\wh{G}$) determined by the Borel pair contained in the fixed splitting.
Then, by fixing an isomorphism of based root data $\Psi(G)^{\vee}\cong\Psi(\wh{G})$, where $\Psi(G)^{\vee}$ is the dual to $\Psi(G)$, we obtain a unique action of $\Gamma$ on $\wh{G}$ which preserves the fixed splitting and is compatible with the Galois action on $\Psi(G)$ through the isomorphism $\Psi(G)^\vee\cong\Psi(\wh{G})$. 

For any algebraic group $H$, we write $H^\circ$ for the identity component of $H$ and $Z(H)$ for the center of $H$.
When $H$ acts on a set $X$, for any subset $Y\subset X$, we put $Z_H(Y):=\{h\in H\mid \text{$h\cdot y=y$ for any $y\in Y$}\}$ and $N_H(Y):=\{h\in H\mid \text{$h\cdot y\in Y$ for any $y\in Y$}\}$.

We fix the following additional notation. Let $A_T \subset T$ be the maximal split subtorus and denote $\mf{A}_T = X_{\ast}(A_T)_{\R}$. Let $\ov{C}$ denote the closed Weyl chamber in $\mf{A}_T$ associated to $B$ and let $\ov{C}_{\Q}$ denote its intersection with $X_{\ast}(A_T)_{\Q}$. 
For each standard Levi subgroup $M \subset G$, let $A_M$ denote the maximal split torus in the center of $M$.
We denote $\mf{A}_M = X_{\ast}(A_M)_{\R} \subset \mf{A}_T$. 
Let ${}^{L}M$ be the standard Levi subgroup of ${}^{L}G$ which corresponds to $M$ 
(see \cite[\S 3]{Bor2} for the details of the correspondence between Levi subgroups of $G$ and those of ${}^{L}G$).
We put $\wh{M}:={}^{L}M\cap \wh{G}$ and $A_{\wh{M}}:=Z(\wh{M})^{\Gamma, \circ}$.

We write $W:=W_G:=W_{G}(T)$ and $W^\rel:=W^\rel_G:=W_{G}(A_{T})\cong W^\Gamma$.
On the dual side, similarly, we write $\wh{W}:= \wh{W}_{G}:=W_{\wh{G}}(\wh{T})$ and $\wh{W}^\rel:=\wh{W}^\rel_{G}:=W_{\wh{G}}(A_{\wh{T}})\cong\wh{W}^\Gamma$.
Since we have fixed $F$-splittings of $G$ and $\wh{G}$, we have a $\Gamma$-equivariant identification $W\cong \wh{W}$ which induces $W^\rel\cong\wh{W}^\rel$.
(We refer the reader to \cite[\S 0.4.3]{KMSW} for the details.)
In this paper, we often implicitly use these identifications of Weyl groups.

For a standard parabolic subgroup $Q$ of $G$ with standard Levi $L$, we let $J^G_Q(-)$ (resp.\ $I^G_Q(-)$) denote the associated normalized Jacquet functor (resp.\ normalized parabolic induction) (see \cite[\S 2.3]{Ber1}; our $J^G_Q$ (resp.\ $I^G_Q$) is denoted by $r_{L,G}$ (resp.\ $i_{G,L}$ in \textit{loc}.\ \textit{cit}.).

\subsection{Review of the Kottwitz set}{\label{ss: Kottwitzreview}}
In this section, we briefly review the theory of the Kottwitz set $B(G)$ for local fields following \cite{Kot9}. We follow this source instead of \cite{KottwitzisoII} since we handle a general local field $F$. In each case, the set $B(G)$ is the first cohomology of a certain Galois gerbe. 

Let $\D_F$ be the $F$-(pro-)torus defined as in \cite[\S10.4]{Kot9}.
Note that $\D_F$ is isomorphic to $\Gm$ when $F$ is Archimedean and also that $X^\ast(\D_F)\cong\Q$ when $F$ is non-Archimedean (see Remark \ref{rem: Kot97-14}).
We have an extension
\begin{equation*}
    1 \rightarrow \D_F(\ov{F}) \rightarrow \mc{E}^{\iso}_F \xrightarrow{\pi} \Gamma \rightarrow 1
\end{equation*}
such that
\begin{itemize}
\item 
when $F$ is non-Archimedean, $\mc{E}^{\iso}_F$ corresponds to $1 \in  \wh{\Z} = H^2(\Gamma, \varprojlim\mu_n(\ov{F})) \to H^2(\Gamma, \D_F(\ov{F}))$ (see \cite[\S3.1]{KallocalBGvsRig}),
\item
when $F=\R$, $\mc{E}^{\iso}_F$ corresponds to the nontrivial class of $H^2(\Gamma, \Gm(\C))$, and
\item 
when $F=\C$, $\mc{E}^{\iso}_{F} = \Gm(\C)$. 
\end{itemize}

We then define $B(G)$ in all cases to be the set $H^1_{\alg}(\mc{E}^{\iso}_F, G(\ov{F}))$ of equivalence classes of algebraic cocycles $Z^1_{\alg}(\mc{E}^{\iso}_F, G(\ov{F}))$ (see \cite[\S2, \S10]{Kot9}).

For $z \in Z^1_{\alg}(\mc{E}^{\iso}_F, G(\ov{F}))$, we define an algebraic group $G_b$ over $F$ by
\[
G_b(R)
:=
\{g\in G(R\otimes_{F}\overline{F}) \mid \Int(z_e)(\gamma(g))=g, \forall e \in \mc{E}^{\iso}_F \text{ such that } \pi(e) = \gamma \},
\]
for any $F$-algebra $R$. Then $G_b$ is an inner form of a standard Levi subgroup of $G$. This Levi subgroup is given by the centralizer of the image of $b := [z]$ under the Newton map, which is mentioned next.

The Kottwitz set $B(G)$ has two important invariants. In the non-Archimedean and complex cases, these invariants completely determine the set $B(G)$.

The first invariant is the Kottwitz map 
\begin{equation*}
    \kappa_G: B(G) \to X^{\ast}(Z(\widehat{G})^{\Gamma}) \cong \pi_1(G)_{\Gamma},
\end{equation*}
where $(-)^\Gamma$ (resp.\ $(-)_\Gamma$) denotes the $\Gamma$-invariants (resp.\ $\Gamma$-coinvariants) (\cite[\S11]{Kot9}). When $G$ is a torus, the Kottwitz map is bijective (see \cite[\S13.2]{Kot9}).

The second invariant is the Newton map \cite[\S10.7]{Kot9}
\begin{equation*}
    \nu_G: B(G) \to \mf{A}_T,
\end{equation*}
which takes image in $\ov{C}_{\Q}$. 

In the non-Archimedean case, this is constructed by noting that by definition of an algebraic cocycle, the restriction of $z \in Z^1_{\alg}(\mc{E}^{\iso}_F, G(\ov{F}))$ to $\D_F(\ov{F})$ is induced from a homomorphism $\nu_z: \D_F \to G$ defined over $\ov{F}$ with $\Gamma$-invariant $G(\ov{F})$-conjugacy class. Modifying $z$ by a coboundary has the effect of conjugating $z$ by an element of $G(\ov{F})$, so we get that $[z] \mapsto [\nu_z] \in (\Hom_{\ov{F}}(\D_F, G) / G(\ov{F}))^{\Gamma}$, which corresponds to a unique element of $\ov{C}_{\Q}$. In the Archimedean case, we have an analogous construction with $\Gm$ taking the role of $\D_F$.

We define $B(G)_{\bas} \subset B(G)$ to be the preimage of $\mf{A}_G$ under the Newton map $\nu_G$ and recall that in the non-Archimedean case, $\kappa_G$ induces a bijection $B(G)_{\bas} \cong X^{\ast}(Z(\widehat{G})^{\Gamma})$ \cite[Proposition 13.1.(1)]{Kot9}.

In the real case, $\kappa_G|_{B(G)_{\bas}}$ is no longer injective or surjective so $B(G)_{\bas}$ is parametrized differently. Recall a fundamental torus of $G$ is defined to be a maximal torus of minimal split rank. Suppose $S \subset G$ is a fundamental torus. Then $B(S)_{\Gbas}$ is defined to be the subset of $B(S)$ whose image under $B(S) \to B(G)$ lies in $B(G)_{\bas}$. The map $B(S)_{\Gbas} \to B(G)_{\bas}$ is surjective (see \cite[Lemma 13.2]{Kot9} and its proof) and induces a bijection $B(S)_{\Gbas} / W_G(S)^{\Gamma} \cong B(G)_{\bas}$, where $W_G(S)$ denotes the Weyl group of $S$ in $G$.

Recall that for each standard Levi subgroup $M$, there is a map $X_{\ast}(A_M) \to X^{\ast}(A_{\wh{M}})$ given by 
\begin{equation*}
    X_{\ast}(A_M) \hookrightarrow X_{\ast}(Z(M)^{\circ}) \cong X^{\ast}(\wh{M}_{\ab}) = X^{\ast}(\wh{M}) \xrightarrow{\res} X^{\ast}(A_{\wh{M}}),
\end{equation*}
which induces an isomorphism after taking the tensor product with $\R$.
We write $\alpha_M$ for the inverse of this isomorphism:
\begin{equation}{\label{eqn: BGiso}}
    \alpha_M: X^{\ast}(Z(\widehat{M})^{\Gamma})_{\R} \xrightarrow{\sim} \mf{A}_M \subset \mf{A}_T,
\end{equation}
where we note that the restriction map induces an isomorphism $X^{\ast}(Z(\widehat{M})^{\Gamma})_{\R} \xrightarrow{\sim} X^{\ast}(A_{\wh{M}})_{\R}$.
We remark that the restriction of the Newton map $\nu_G$ on $B(G)_\bas$ is given by the composition of $\kappa_G$ and $\alpha_G$ (see \cite[Proposition 11.5]{Kot9}, cf.\ \cite[\S4.4]{KottwitzisoII}):
\begin{equation}\label{eqn: Kottwitz-Newton}
\begin{tikzcd}
B(G)_\bas \ar[r, swap, "\kappa_G"] \ar[rrr, bend left=15, "\nu_G"] & X^\ast(Z(\wh{G})^\Gamma) \ar[r] & X^\ast(Z(\wh{G})^\Gamma)_\R \ar[r, swap, "\alpha_G"]& \mf{A}_G
\end{tikzcd}
\end{equation}
(See also Remark \ref{rem: Kot97-14}.)

For any standard parabolic subgroup $P$ with Levi decomposition $P=MN$ such that $M\supset T$ (i.e., $M$ is a standard Levi subgroup), we put
\begin{equation*}
\mf{A}_P^+
:=
\{\mu\in\mf{A}_M \mid \text{$\langle\alpha,\mu\rangle>0$ for any root of $T$ in $N$}\}.
\end{equation*}

Then we have the decomposition
\begin{align}\label{eqn: chamberdecomp}
\overline{C}
=
\coprod_{P}\mf{A}_P^+,
\end{align}
where the index is the set of standard parabolic subgroups of $G$.
We define the subset $B(G)_P$ of $B(G)$ to be the preimage of $\mf{A}_P^+$ under the Newton map.
This gives the decomposition
\begin{align}{\label{eqn: BGdecomp}}
B(G)
=
\coprod_{P}B(G)_P.
\end{align}
Note that $B(G)_G=B(G)_\bas$.
For a general standard parabolic $P=MN$, $B(G)_P$ has the following description.
By noting that the image of the Newton map $\nu_M|_{B(M)_{\bas}}$ lies in $\mf{A}_M$, we define the ``$G$-dominant'' subset $B(M)_\bas^+$ of $B(M)_{\bas}$ by
\[
B(M)_\bas^+
:=
\{b\in B(M)_\bas \mid \nu_M(b)\in\mf{A}_P^+\}.
\]
Then the canonical map $B(M)\rightarrow B(G)$ induces a bijection $B(M)_\bas^+\xrightarrow{1:1}B(G)_P$ (see \cite[\S5.1]{KottwitzisoII} for the non-Archimedean case). Indeed, given a $b \in B(G)_P$, we choose a cocycle representative $z$ whose restriction to $\Gm$ or $\D_F$ is equal to $\nu_G(b)$. Then $z$ will factor through the centralizer of $\nu_G(b)$, which is $M$. Hence, it suffices to prove the injectivity. But if $z_1, z_2 \in B(M)^+_{\bas}$ are conjugate by some $g \in G(\ov{F})$, then we can assume their restrictions to $\Gm$ or $\D_F$ are equal. Then $g$ centralizes this restriction so lies in $M$.

\begin{remark}\label{rem: Kot97-14}
Let us give some comments on the difference between the convention used in \cite{Kot9} and ours (which is closer to the one in \ \cite{KottwitzisoII}).
Recall that $\D_F$ is defined to be $\varprojlim_{K/F}\Gm$, where the projective limit is taken over the directed set of finite Galois extensions $K/F$ and the transition map for $L\supset K\supset F$ is given by $\Gm\rightarrow\Gm\colon z\mapsto z^{[L:K]}$.
Thus the character group $X^\ast(\D_F)$ is given by $\varinjlim_{K/F}\Z$, where the transition map for $L\supset K\supset F$ is given by $\Z\rightarrow\Z\colon x\mapsto [L:K]x$.
The point is that we have a natural injective map
\begin{align}\label{eq: pro-torus}
X^\ast(\D_F)
=\varinjlim_{K/F}\Z
\cong \varinjlim_{K/F}\frac{1}{[K:F]}\Z
\hookrightarrow\Q,
\end{align}
where the middle isomorphism is given by $\Z\rightarrow \frac{1}{[K:F]}\Z\colon x\mapsto \frac{x}{[K:F]}$ at each $K/F$ and the last map is the one induced from the inclusion $\frac{1}{[K:F]}\Z\hookrightarrow \Q$ (note that the transition maps of $\varinjlim_{K/F}\frac{1}{[K:F]}\Z$ are natural inclusions).

\begin{enumerate}
\item 
In \cite[\S11.5]{Kot9}, the target of the Kottwitz map is given by 
\[
A(F,G):=\varinjlim_{K/F} X^\ast(Z(\wh{G}))_{\Gal(K/F)}.
\]
Here, the limit is taken over the directed set of finite Galois extensions $K/F$ such that the action of $\Gamma$ on $X^\ast(Z(\wh{G}))$ factors through $\Gal(K/F)$ and the transition maps are the isomorphisms induced from the identity maps.
Thus we naturally have $A(F,G)\cong X^\ast(Z(\wh{G}))_\Gamma \,(\cong X^\ast(Z(\wh{G})^\Gamma))$.
\item
In \cite[\S1.4.1]{Kot9}, the target of the Newton map restricted to the basic part is given by $(X^\ast(\wh{G}_\ab)\otimes X^\ast(\D_F))^\Gamma$.
Let us write 
\[
\nu'_G|_{B(G)_\bas}\colon B(G)_\bas\rightarrow(X^\ast(\wh{G}_\ab)\otimes X^\ast(\D_F))^\Gamma
\]
for this map in order to emphasize the difference of the conventions.
Using the above identification \eqref{eq: pro-torus}, we have
\[
(X^\ast(\wh{G}_\ab)\otimes X^\ast(\D_F))^\Gamma
\hookrightarrow
X^\ast(\wh{G}_\ab)_\Q^\Gamma
\cong
X_\ast(Z(G)^{\circ})_\Q^\Gamma
\cong
X_\ast(A_G)_\Q.
\]
Then our Newton map $\nu_G|_{B(G)_\bas}$ is nothing but the composition of $\nu'_G|_{B(G)_\bas}$ with the inclusion $(X^\ast(\wh{G}_\ab)\otimes X^\ast(\D_F))^\Gamma\hookrightarrow X_\ast(A_G)_\Q$.
\item 
In \cite[Definition 11.3]{Kot9}, a map 
\[
N\colon A(F,G)\rightarrow (X^\ast(Z(\wh{G}))\otimes X^\ast(\D_F))^\Gamma
\]
is constructed by taking the inductive limit of the norm map 
\[
N_{K/F}\colon X^\ast(Z(\wh{G}))_{\Gal(K/F)}\rightarrow X^\ast(Z(\wh{G}))^{\Gal(K/F)}
\]
given by $\sum_{\sigma\in\Gal(K/F)}\sigma$ at each finite level.
Note that, if we compose the map $N$ with the above identification \eqref{eq: pro-torus} (and also $A(F,G)\cong X^\ast(Z(\wh{G}))_\Gamma$), the resulting map $X^\ast(Z(\wh{G}))_\Gamma\rightarrow X^\ast(Z(\wh{G}))^\Gamma_\Q$ is given by $\frac{1}{[K:F]}\sum_{\sigma\in\Gal(K/F)}\sigma$ at each finite level.
Thus, by furthermore composing it with the quotient map $X^\ast(Z(\wh{G}))^\Gamma_\Q\rightarrow X^\ast(Z(\wh{G}))_{\Q,\Gamma}$, we get the natural map $X^\ast(Z(\wh{G}))_{\Gamma}\rightarrow X^\ast(Z(\wh{G}))_{\Q,\Gamma}$.
\item 
In fact, \cite[Proposition 11.5]{Kot9} mentioned before asserts that $N\circ \kappa_G$ is equal to $i\circ\nu'_G|_{B(G)_\bas}$, where $i$ denotes the natural map $(X^\ast(\wh{G}_\ab)\otimes X^\ast(\D_F))^\Gamma\rightarrow (X^\ast(Z(\wh{G}))\otimes X^\ast(\D_F))^\Gamma$.
By putting all the above observations into together, we obtain the assertion as in \eqref{eqn: Kottwitz-Newton} (after furthermore changing the coefficients from $\Q$ to $\R$).
\end{enumerate}
The situation can be summarized as follows:
\[
\begin{tikzcd}
B(G)_\bas \arrow[r, "\kappa_G"] \arrow[d, "\nu'_G|_{B(G)_\bas}"] \ar[ddd, bend right=90, "\nu_G|_{B(G)_\bas}", swap]& X^{\ast}(Z(\wh{G}))_\Gamma \arrow[d, "N"] \ar[ddd, bend left=80, "(-)\otimes\Q"]\\
(X^\ast(\wh{G}_\ab)\otimes X^\ast(\D_F))^\Gamma  \arrow[r, "i"] \arrow[d, hook]& (X^\ast(Z(\wh{G}))\otimes X^\ast(\D_F))^\Gamma \arrow[d, hook]\\
X^\ast(\wh{G}_\ab)_\Q^\Gamma \arrow[r] \ar[d, "\cong"]&  X^{\ast}(Z(\wh{G}))_{\Q}^\Gamma \arrow[d, two heads]\\
X_\ast(A_G)_\Q \ar[r, "\alpha_{G}^{-1}"]&  X^{\ast}(Z(\wh{G}))_{\Q,\Gamma} 
\end{tikzcd}
\]
The top square commutes by \cite[Proposition 11.5]{Kot9}.
It can be easily seen that the middle and bottom squares also commute.
Thus we get the commutativity of the outer big square, as stated in \eqref{eqn: Kottwitz-Newton}.
\end{remark}

\subsection{Representation theory of disconnected reductive groups}{\label{ss: repthrydisconnected}}
We now briefly recall the theory of algebraic representations of disconnected reductive groups as in \cite{AcharHardestyRicherepthrydisconnected}. 
For us, a disconnected reductive group is an algebraic group $\ms{G}$ whose identity component $\ms{G}^{\circ}$ is reductive. 
For an $L$-parameter $\phi$ of $G$, the group $S_{\phi}$ is disconnected reductive (see Lemma \ref{lem: S-group-Levis}) and we need to understand the algebraic representations of these groups.
For this reason, we always assume in this section our groups are defined over $\C$ and only consider $\C$-valued representations.

Suppose $\ms{G}$ is disconnected reductive and fix a maximal torus $\ms{T}$ and Borel subgroup $\ms{B}$ of $\ms{G}^{\circ}$ such that $\ms{T} \subset \ms{B} \subset \ms{G}^{\circ}$. 
We put $W_{\ms{G}}(\ms{T}):=N_{\ms{G}}(\ms{T})/\ms{T}$ and $W_{\ms{G}}(\ms{T},\ms{B}):=N_{\ms{G}}(\ms{T},\ms{B})/\ms{T}$, where $N_{\ms{G}}(\ms{T},\ms{B}):=\{n\in \ms{G} \mid \co{n}{(\ms{T},\ms{B})}=(\ms{T},\ms{B})\}$. 

\begin{lemma}\label{lem: pi0embedweyl}
\begin{enumerate}
\item
We have a canonical bijection $\pi_0(\ms{G})\xrightarrow{\cong}W_{\ms{G}}(\ms{T},\ms{B})$.
\item
We have $W_\ms{G}(\ms{T})=W_{\ms{G}^\circ}(\ms{T})\rtimes W_\ms{G}(\ms{T},\ms{B})$.
\end{enumerate}
\end{lemma}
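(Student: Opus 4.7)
The plan is to prove (1) by analyzing the natural homomorphism $N_{\ms{G}}(\ms{T},\ms{B}) \hookrightarrow \ms{G} \twoheadrightarrow \pi_0(\ms{G})$, and then deduce (2) by recognizing $W_{\ms{G}}(\ms{T},\ms{B})$ as a section of the quotient $W_{\ms{G}}(\ms{T}) \twoheadrightarrow \pi_0(\ms{G})$ splitting the normal subgroup $W_{\ms{G}^\circ}(\ms{T})$.

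For (1), first I would note that $\ms{T} \subset \ms{G}^\circ$, so the above composite kills $\ms{T}$ and descends to a homomorphism $W_{\ms{G}}(\ms{T},\ms{B}) \to \pi_0(\ms{G})$. To prove surjectivity, take any $g \in \ms{G}$; then $\ms{G}^\circ$ is normal in $\ms{G}$, so conjugation by $g$ sends $(\ms{T},\ms{B})$ to another Borel pair of $\ms{G}^\circ$. Since all Borel pairs of a connected reductive group are conjugate, we can find $h \in \ms{G}^\circ$ with $hg \in N_{\ms{G}}(\ms{T},\ms{B})$, and $hg$ represents the same class as $g$ in $\pi_0(\ms{G})$. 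For injectivity, the kernel is $(N_{\ms{G}}(\ms{T},\ms{B}) \cap \ms{G}^\circ)/\ms{T} = N_{\ms{G}^\circ}(\ms{T},\ms{B})/\ms{T}$, and the standard fact that the stabilizer of a Borel pair in a connected reductive group is the torus itself gives $N_{\ms{G}^\circ}(\ms{T},\ms{B}) = \ms{T}$.

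For (2), I would first observe that $W_{\ms{G}^\circ}(\ms{T})$ is a normal subgroup of $W_{\ms{G}}(\ms{T})$ because $\ms{G}^\circ$ is normal in $\ms{G}$. The quotient $W_{\ms{G}}(\ms{T})/W_{\ms{G}^\circ}(\ms{T}) = N_{\ms{G}}(\ms{T})/N_{\ms{G}^\circ}(\ms{T})$ maps injectively into $\pi_0(\ms{G})$, and surjectivity follows by the same conjugacy argument as in (1), now only requiring that all maximal tori in $\ms{G}^\circ$ are conjugate. Thus $W_{\ms{G}}(\ms{T})/W_{\ms{G}^\circ}(\ms{T}) \cong \pi_0(\ms{G})$. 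On the other hand, the inclusion $W_{\ms{G}}(\ms{T},\ms{B}) \hookrightarrow W_{\ms{G}}(\ms{T})$ composed with this quotient map is exactly the bijection from (1). This exhibits $W_{\ms{G}}(\ms{T},\ms{B})$ as a set-theoretic and group-theoretic complement of $W_{\ms{G}^\circ}(\ms{T})$ in $W_{\ms{G}}(\ms{T})$, yielding the semidirect product decomposition.

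There is no serious obstacle; the only nontrivial input is the classical identity $N_{\ms{G}^\circ}(\ms{T},\ms{B}) = \ms{T}$ for connected reductive $\ms{G}^\circ$, which is used in the injectivity in (1) and implicitly sets up the complement in (2). Everything else is a routine diagram chase combined with the conjugacy of Borel pairs (respectively maximal tori) inside the connected reductive group $\ms{G}^\circ$.
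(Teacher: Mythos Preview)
Your proof is correct and follows essentially the same approach as the paper: conjugacy of Borel pairs in $\ms{G}^\circ$ for surjectivity in (1), the identity $N_{\ms{G}^\circ}(\ms{T},\ms{B})=\ms{T}$ for injectivity, and then in (2) the normality of $W_{\ms{G}^\circ}(\ms{T})$ together with $W_{\ms{G}}(\ms{T},\ms{B})$ as a complement. The only cosmetic differences are that the paper constructs the map in the direction $\pi_0(\ms{G})\to W_{\ms{G}}(\ms{T},\ms{B})$ as stated in the lemma, and verifies the semidirect product conditions directly rather than phrasing it as a splitting of $W_{\ms{G}}(\ms{T})\twoheadrightarrow \pi_0(\ms{G})$.
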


\begin{proof}
Let us first show (1).
For $\ov{g} \in \pi_0(\ms{G})$, we can choose a representative $g \in \ms{G}$. 
Then the conjugation map $\Int(g)$ takes $(\ms{T},\ms{B})$ to some pair $\co{g}(\ms{T}, \ms{B})$. 
All pairs are conjugate in $\ms{G}^{\circ}$ so we can find some $g^{\circ} \in \ms{G}^{\circ}$ such that $\Int(g^{\circ})$ takes $\co{g}(\ms{T},\ms{B})$ to $(\ms{T},\ms{B})$. 
Then we let $\ov{g}$ act on $\ms{T}$ by $\Int(g^{\circ}g)$ where we have $g^{\circ}g \in N_{\ms{G}}(\ms{T},\ms{B})$. 
Any two such $g^{\circ}$ differ by an element of $\ms{T}$ so this indeed gives a well-defined action. 
Suppose that $g_{1}, g_2 \in \ms{G}$ give the same element of $W_{\ms{G}}(\ms{T},\ms{B})$.
Then we have elements $g^{\circ}_{1},g^{\circ}_{2}\in \ms{G}^{\circ}$ and $t\in \ms{T}$ satisfying $g_{1}^{\circ}g_{1}=g_{2}^{\circ}g_{2}t$.
This means that $g_{1}$ and $g_{2}$ are equal in $\pi_{0}(\ms{G})$.
The surjectivity of the map is obvious.

We next show (2).
Since $\ms{G}^\circ$ is normal in $\ms{G}$, so is $W_{\ms{G}^\circ}(\ms{T})$ in $W_\ms{G}(\ms{T})$.
We have $W_{\ms{G}^\circ}(\ms{T})\cap W_{\ms{G}}(\ms{T},\ms{B})=W_{\ms{G}^\circ}(\ms{T},\ms{B})=\{1\}$.
Thus it is enough only to show that any element of $W_{\ms{G}}(\ms{T})$ can be written as a product of elements of $W_{\ms{G}^\circ}(\ms{T})$ and $W_{\ms{G}}(\ms{T},\ms{B})$. Choose $w \in W_{\ms{G}}(\ms{T})$. Then $\co{w}(\ms{T}, \ms{B}) = (\ms{T}, \co{w}{\ms{B}})$ and we can choose $w_0 \in W_{\ms{G}^{\circ}}(\ms{T})$ such that $\co{w_0}{\ms{B}} = \co{w}{\ms{B}}$. Then $w^{-1}_0w \in W_{\ms{G}}(\ms{T},\ms{B})$, which concludes the proof.
\end{proof}

For each dominant $\lambda \in X^{\ast}(\ms{T})^+$, we let $\mc{L}(\lambda)$ denote the irreducible algebraic representation of $\ms{G}^{\circ}$ with highest weight $\lambda$. 
We define $A^{\lambda} \subset \pi_0(\ms{G})$ to be the stabilizer of $\lambda$ under the action just described. 
We let $\ms{G}^{\lambda}$ be the pre-image in $\ms{G}$ of $A^{\lambda}$. 
For each $a\in A^\lambda$, we fix a representative $\iota(a)$ of $a$ in $\ms{G}^\lambda$ and a $\ms{G}^\circ$-equivariant isomorphism
\[
\theta_a \colon \mc{L}(\lambda)\xrightarrow{\cong}{}^{\iota(a)}\mc{L}(\lambda),
\]
such that $\iota(1)=1$ and $\theta_{1}=\mathrm{id}$.
Then the data $\{\theta_a\}_{a\in A^\lambda}$ defines a $2$-cocycle $\alpha(-,-)\colon A^\lambda\times A^\lambda\rightarrow\C^\times$ (see \cite[\S 2.4]{AcharHardestyRicherepthrydisconnected} for the details).
We define a twisted group algebra $\mc{A}^{\lambda}$ to be the $\C$-vector space $\C[A^{\lambda}]$ spanned by symbols $\{\rho_a\mid a\in A^\lambda\}$ with multiplication given by $\rho_a\cdot\rho_b=\alpha(a,b)\rho_{ab}$ for $a,b\in A^\lambda$.

For each simple $\mc{A}^{\lambda}$-module $E$, we have an irreducible representation $\mc{L}(\lambda,E)$ of $\ms{G}$ given by $\Ind^{\ms{G}}_{\ms{G}^{\lambda}} (E \otimes\mc{L}(\lambda))$. 
Here, the $\ms{G}^\lambda$-module structure on $E\otimes\mc{L}(\lambda)$ is given by
\[
(\iota(a)g)\cdot (u\otimes v)
=
(\rho_{a}u)\otimes(\theta_{a}^{-1}(gv))
\]
for $a\in A^\lambda$ and $g\in \ms{G}^\circ$.

An $a \in \pi_0(\ms{G})$ induces an isomorphism $\mc{L}(\lambda, E) \cong \mc{L}(\co{a}\lambda,\co{a}E)$, for a certain simple $\mc{A}^{\co{a}\lambda}$-module $\co{a}E$, and we have the following theorem.
\begin{theorem}[{\cite[Theorem 2.16]{AcharHardestyRicherepthrydisconnected}}]{\label{thm: acharmainthm}}
There is a bijection
\begin{equation*}
    \{(\lambda, E)\}/\pi_0(\ms{G}) \leftrightarrow \Irr(\ms{G}),
\end{equation*}
given by $(\lambda, E) \mapsto \mc{L}(\lambda, E)$, where $\{(\lambda, E)\}$ denotes the set of pairs of $\lambda \in X^{\ast}(\ms{T})^+$ and an isomorphism class of simple $\mc{A}^{\lambda}$-modules $E$ and $\Irr(\ms{G})$ denotes the set of isomorphism classes of irreducible algebraic representations of $\ms{G}$.
\end{theorem}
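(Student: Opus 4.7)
The approach is standard Clifford theory applied to the short exact sequence $1\to\ms{G}^\circ\to\ms{G}\to\pi_0(\ms{G})\to 1$, organized around the action of $\pi_0(\ms{G})$ on highest weights described via Lemma \ref{lem: pi0embedweyl}. The plan is to set up the bijection in both directions and then check that it descends to $\pi_0(\ms{G})$-orbits on one side and isomorphism classes on the other.

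First I would construct the map $(\lambda,E)\mapsto\mc{L}(\lambda,E)$ and show it lands in $\Irr(\ms{G})$. Since $E\otimes\mc{L}(\lambda)$ is an $\ms{G}^\lambda$-module (one must check here that the prescribed formula $(\iota(a)g)\cdot(u\otimes v)=(\rho_a u)\otimes(\theta_a^{-1}gv)$ really defines an action, which reduces exactly to the $2$-cocycle identity satisfied by $\alpha$), its restriction to $\ms{G}^\circ$ is $(\dim E)$ copies of the irreducible $\mc{L}(\lambda)$. Mackey's formula together with the fact that the $\pi_0(\ms{G})/A^\lambda$-translates of $\lambda$ are pairwise distinct dominant weights then identifies $\Ind^{\ms{G}}_{\ms{G}^\lambda}(E\otimes\mc{L}(\lambda))|_{\ms{G}^\circ}$ with $\bigoplus_{a\in\pi_0(\ms{G})/A^\lambda}(\dim E)\cdot\mc{L}(\co{a}\lambda)$; irreducibility of $\mc{L}(\lambda,E)$ as an $\ms{G}$-module is then equivalent to irreducibility of $E\otimes\mc{L}(\lambda)$ as an $\ms{G}^\lambda$-module, and that in turn reduces to simplicity of $E$ over $\mc{A}^\lambda$ by a Schur-lemma argument on the multiplicity space.

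Next I would construct the inverse. Given an irreducible algebraic $\ms{G}$-representation $V$, decompose $V|_{\ms{G}^\circ}=\bigoplus_\mu V_\mu$ into $\mc{L}(\mu)$-isotypic components. The $\pi_0(\ms{G})$-action from Lemma \ref{lem: pi0embedweyl} permutes these components along its action on $X^\ast(\ms{T})^+$, and irreducibility of $V$ forces this permutation to be transitive on the set of $\mu$ that actually appear; pick any $\lambda$ in this orbit. The $\lambda$-isotypic piece $V_\lambda$ is an $\ms{G}^\lambda$-stable subspace, it has the form $E\otimes\mc{L}(\lambda)$ as an $\ms{G}^\circ$-module, and unraveling the action of $\iota(a)$ through $\theta_a$ turns $E$ into an $\mc{A}^\lambda$-module whose simplicity follows from irreducibility of $V_\lambda$ as an $\ms{G}^\lambda$-module (which itself follows from Frobenius reciprocity applied to $V=\Ind^{\ms{G}}_{\ms{G}^\lambda}V_\lambda$). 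This recovers $(\lambda,E)$ up to the choice of $\lambda$ in its orbit.

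Finally I would check the orbit equivalence: a different choice $\lambda'=\co{a}\lambda$ produces the pair $(\co{a}\lambda,\co{a}E)$ where $\co{a}E$ is the twist of $E$ under the natural isomorphism $\mc{A}^\lambda\cong\mc{A}^{\co{a}\lambda}$ induced by conjugation by $\iota(a)$, and conversely two pairs $(\lambda,E)$, $(\lambda',E')$ give isomorphic induced representations only if their $\ms{G}^\circ$-restrictions share a highest weight, forcing $\lambda'\in\pi_0(\ms{G})\cdot\lambda$, after which the identification of $E'$ with $\co{a}E$ is immediate from the explicit description of $V_\lambda$ above. The main obstacle is the bookkeeping around the $2$-cocycle $\alpha$ and the fact that the isomorphism $\mc{A}^\lambda\cong\mc{A}^{\co{a}\lambda}$ is only well-defined up to a coboundary coming from the freedom in choosing the $\iota(a)$ and $\theta_a$; one must verify that the resulting module class $\co{a}E$ is nevertheless canonical, which is exactly what makes the orbit $\{(\lambda,E)\}/\pi_0(\ms{G})$ a well-defined invariant.
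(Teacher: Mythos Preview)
The paper does not supply its own proof of this theorem: it is stated with a citation to \cite[Theorem 2.16]{AHR20} and used as a black box. So there is no ``paper's proof'' to compare against here.

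Your outline is a correct sketch of the standard Clifford-theory argument for algebraic representations of a disconnected reductive group, and it is essentially the argument carried out in \cite{AHR20}. The three steps you describe --- verifying that the cocycle condition makes $E\otimes\mc{L}(\lambda)$ an honest $\ms{G}^{\lambda}$-module and that the induction is irreducible, extracting $(\lambda,E)$ from an arbitrary irreducible $V$ via the isotypic decomposition of $V|_{\ms{G}^{\circ}}$, and checking that the ambiguity in the choice of $\lambda$ in its orbit matches the $\pi_0(\ms{G})$-equivalence on pairs --- are exactly the ingredients needed, and your identification of the only delicate point (that the twisted module class $\co{a}{E}$ is well-defined despite the coboundary ambiguity in $\alpha$) is accurate. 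If you were to write this out in full you would want to be slightly careful at one spot: the claim that $V=\Ind^{\ms{G}}_{\ms{G}^{\lambda}}V_{\lambda}$ is not quite automatic from Frobenius reciprocity alone in the algebraic category, but follows once one knows the restriction $V|_{\ms{G}^{\circ}}$ is semisimple (complete reducibility in characteristic zero) and compares dimensions or multiplicities. With that caveat, your proposal is sound.
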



\begin{lemma}\label{lem: disconn-irrep-paramet}
The set $\{(\lambda, E)\}/\pi_0(\ms{G})$ can be identified with the set
\[
\coprod_{\lambda \in X^{\ast}(\ms{T})^+/W_\ms{G}(\ms{T},\ms{B})}
\{\text{$E$: simple $\mc{A}^{\lambda}$-module}\}/{\cong},
\]
where the index set is over a(ny) complete set of representatives of $X^{\ast}(\ms{T})^+/W_\ms{G}(\ms{T},\ms{B})$ and each summand is the set of isomorphism classes of simple $\mc{A}^\lambda$-modules.
\end{lemma}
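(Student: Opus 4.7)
The plan is to realize the quotient on the left as a fibered quotient over $X^\ast(\ms{T})^+/W_\ms{G}(\ms{T},\ms{B})$, and then argue that the residual stabilizer action on each fiber is trivial on isomorphism classes.

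First, I would observe that the $\pi_0(\ms{G})$-action on pairs covers the action on the first coordinate $\lambda\in X^\ast(\ms{T})^+$, which by Lemma \ref{lem: pi0embedweyl}(1) factors through the identification $\pi_0(\ms{G})\cong W_\ms{G}(\ms{T},\ms{B})$. A standard orbit-stabilizer decomposition for a fibered action then yields
\[
\{(\lambda,E)\}/\pi_0(\ms{G})
\;\cong\;
\coprod_{\lambda\in X^\ast(\ms{T})^+/W_\ms{G}(\ms{T},\ms{B})}
\{\text{simple $\mc{A}^\lambda$-modules}\}/A^\lambda,
\]
where a representative $\lambda$ is fixed in each $W_\ms{G}(\ms{T},\ms{B})$-orbit and $A^\lambda$, its stabilizer in $\pi_0(\ms{G})$, acts by $E\mapsto\co{a}E$.

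What remains is to show that for each $a\in A^\lambda$ and each simple $\mc{A}^\lambda$-module $E$, one has $\co{a}E\cong E$ as $\mc{A}^\lambda$-modules. Since by the defining property of $\co{a}E$ we have $\mc{L}(\lambda,\co{a}E)\cong\mc{L}(\lambda,E)$ as $\ms{G}$-modules, I would reduce this to the injectivity statement: for fixed $\lambda$, the assignment $E\mapsto\mc{L}(\lambda,E)$ on isomorphism classes of simple $\mc{A}^\lambda$-modules is injective. To prove this I would use Frobenius reciprocity and Mackey:
\[
\Hom_\ms{G}\bigl(\mc{L}(\lambda,E),\mc{L}(\lambda,E')\bigr)
=\Hom_{\ms{G}^\lambda}\bigl(E\otimes\mc{L}(\lambda),\mc{L}(\lambda,E')|_{\ms{G}^\lambda}\bigr),
\]
and observe that the Mackey decomposition of the restriction contains $\mc{L}(\lambda)$ as a $\ms{G}^\circ$-isotype only through the trivial double coset in $A^\lambda\backslash\pi_0(\ms{G})/A^\lambda$ (a representative $a'$ contributes $\mc{L}(\co{a'}\lambda)$, which matches $\mc{L}(\lambda)$ only when $a'\in A^\lambda$). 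Tracking this lone contribution and applying Schur's lemma for $\mc{L}(\lambda)$ identifies the above Hom space with $\Hom_{\mc{A}^\lambda}(E,E')$, so a nonzero $\ms{G}$-isomorphism forces $E\cong E'$ when both are simple.

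The hard part is the Mackey step: one must carefully verify that only the identity double coset contributes to the $\mc{L}(\lambda)$-isotypic component and that the resulting identification respects the $\mc{A}^\lambda$-module structure coming from the full $\ms{G}^\lambda$-action, not merely the underlying $\ms{G}^\circ$-module. Once that is in place, combining the two steps produces the desired bijection, in harmony with Theorem \ref{thm: acharmainthm}.
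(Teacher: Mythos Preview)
Your proposal is correct, but the paper takes a shorter route to the key point. Both arguments share the orbit--stabilizer decomposition over $X^\ast(\ms{T})^+/W_\ms{G}(\ms{T},\ms{B})$, reducing everything to the claim that $A^\lambda$ acts trivially on isomorphism classes of simple $\mc{A}^\lambda$-modules. The paper dispatches this in one line: for $a\in A^\lambda$ the twist defining ${}^aE$ is conjugation by $\rho_a$ inside the twisted group algebra $\mc{A}^\lambda$, an inner automorphism, so ${}^aE\cong E$ via $v\mapsto\rho_a v$. You instead route through $\ms{G}$-representations, invoking the defining property $\mc{L}(\lambda,{}^aE)\cong\mc{L}(\lambda,E)$ and then establishing injectivity of $E\mapsto\mc{L}(\lambda,E)$ for fixed $\lambda$ by Frobenius reciprocity and a Mackey decomposition. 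That is valid, but it essentially re-derives part of Theorem~\ref{thm: acharmainthm} and carries the bookkeeping burden you yourself flag (that the Mackey identification respect the full $\mc{A}^\lambda$-structure rather than just the $\ms{G}^\circ$-module). The inner-automorphism observation sidesteps all of this and keeps the lemma a purely combinatorial statement about the parametrizing set, independent of the representation theory of $\ms{G}$.
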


\begin{proof}
We first note that if two dominant characters $\lambda_1$ and $\lambda_2$ satisfy $\lambda_2=w\cdot\lambda_1$ for $w\in W_{\ms{G}^\circ}(\ms{T})$, then we must have $\lambda_1=\lambda_2$ (see, e.g., \cite[Lemma 10.3.B]{Hum3}).
Thus, by Lemma \ref{lem: pi0embedweyl}, we have $X^{\ast}(\ms{T})^+/\pi_0(\ms{G})=X^{\ast}(\ms{T})^+/W_\ms{G}(\ms{T},\ms{B})$.
By fixing a complete set of representatives of $X^{\ast}(\ms{T})^+/W_\ms{G}(\ms{T},\ms{B})$, we get a surjective map
\[
\coprod_{\lambda \in X^{\ast}(\ms{T})^+/W_\ms{G}(\ms{T},\ms{B})}
\{\text{$E$: simple $\mc{A}^{\lambda}$-module}\}/{\cong}
\twoheadrightarrow
\{(\lambda, E)\}/\pi_0(\ms{G})
\colon E\mapsto (\lambda,E).
\]
Let us consider the fibers of this map.
For any simple $\mc{A}^{\lambda_1}$-module $E_1$ and simple $\mc{A}^{\lambda_2}$-module $E_2$, $(\lambda_1,E_1)$ and $(\lambda_2,E_2)$ are equivalent under the $\pi_0(\ms{G})$-action if and only if $\lambda_2=\lambda_1$ (as $\lambda_1$ and $\lambda_2$ are representatives of $X^{\ast}(\ms{T})^+/\pi_0(\ms{G})=X^{\ast}(\ms{T})^+/W_\ms{G}(\ms{T},\ms{B})$) and $E_2\cong {}^{w}E_1$ for some $w\in \pi_0(\ms{G})$ stabilizing $\lambda_1$.
Since $\Stab_{\pi_0(\ms{G})}(\lambda_1)=A^{\lambda_1}$ by definition, we have ${}^{w}E_1\cong E_1$.
In other words, the above map is in fact bijective.
\end{proof}


\subsection{\texorpdfstring{$S$}{S}-groups of \texorpdfstring{$L$}{L}-parameters as disconnected reductive groups}{\label{ss: S-group}}

Let $\phi\colon L_F \to {}^LG$ be an $L$-parameter of $G$, where $L_F = W_F \times \SL_2$ in the non-Archimedean case and $W_F$ in the Archimedean case.
Let ${}^{L}M$ be a smallest Levi subgroup of ${}^{L}G$ such that $\phi$ factors through the $L$-embedding ${}^{L}M\hookrightarrow{}^{L}G$.
Up to replacing $\phi$ with a conjugate, we can and do assume that ${}^{L}M$ is a standard Levi subgroup.
Let $M$ be the $F$-rational standard Levi subgroup of $G$ which corresponds to ${}^{L}M$. 
Then, $\phi$ is discrete as an $L$-parameter of $M$.
For each $F$-rational standard Levi subgroup $L$ containing $M$, we define $S_{\phi,L}:=Z_{\wh{L}}(\im\phi)$.

\begin{lemma}{\label{lem: S-group-Levis}}
\begin{enumerate}
\item
The group $S_\phi^\circ$ is a connected reductive group.
\item
We have $S_{\phi,M}^{\circ}=A_{\widehat{M}}$ and this is a maximal torus of $S_\phi^\circ$.
\item
For any $F$-rational standard Levi subgroup $L$ containing $M$, the group $S_{\phi,L}^\circ$ is a Levi subgroup of $S_\phi^\circ$ and satisfies $S_{\phi,L}^\circ=S_{\phi,L}\cap S_\phi^\circ$.
\end{enumerate}
\[
\begin{tikzcd}
S_{\phi,M}\ar[r,hook]&S_{\phi,L}\ar[r,hook]&S_\phi\\
S_{\phi,M}^\circ=A_{\wh{M}}\ar[r,hook]\ar[u,hook]&S_{\phi,L}^\circ\ar[u,hook]\ar[r,hook]&S_\phi^\circ\ar[u,hook]
\end{tikzcd}
\]

\end{lemma}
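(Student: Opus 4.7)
The plan is to prove the three assertions in order, relying on three standard inputs: (a) the semisimplicity of $L$-parameters, (b) the discreteness of $\phi$ viewed as an $L$-parameter of $M$, and (c) the Levi-correspondence identification $\wh{M} = Z_{\wh{G}}(A_{\wh{M}})$. For (1), I would note that by definition an $L$-parameter has semisimple image in $\wh{G}$, and the identity component of the centralizer in a connected reductive group of a set of semisimple elements---taken with respect to the twisted action that incorporates the outer Galois part---is connected reductive. This is a standard fact about $S_\phi$ and applies equally to each $S_{\phi,L}$.

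For (2), I observe that $A_{\wh{M}} = Z(\wh{M})^{\Gamma,\circ}$ is connected, central in $\wh{M}$, and $\Gamma$-fixed, so it centralizes every element of ${}^LM \supset \im\phi$; this yields $A_{\wh{M}} \subset S_{\phi,M}^\circ$. The reverse containment follows because $\phi$ being discrete as an $L$-parameter of $M$ is equivalent to $S_{\phi,M}/Z(\wh{M})^\Gamma$ being finite, forcing $S_{\phi,M}^\circ \subset Z(\wh{M})^{\Gamma,\circ} = A_{\wh{M}}$. For the maximality claim, let $T' \subset S_\phi^\circ$ be a torus containing $A_{\wh{M}}$. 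Since $T'$ centralizes $A_{\wh{M}}$, the Levi correspondence gives $T' \subset Z_{\wh{G}}(A_{\wh{M}}) = \wh{M}$; combined with $T' \subset S_\phi$ and the connectedness of $T'$, this forces $T' \subset S_\phi \cap \wh{M} \cap S_\phi^\circ = S_{\phi,M}^\circ = A_{\wh{M}}$, hence $T' = A_{\wh{M}}$.

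For (3), note that $M \subset L$ implies $A_{\wh{L}} \subset A_{\wh{M}} \subset S_\phi^\circ$, so $A_{\wh{L}}$ is a torus in the connected reductive group $S_\phi^\circ$. Its centralizer $Z_{S_\phi^\circ}(A_{\wh{L}})$ is automatically a Levi subgroup of $S_\phi^\circ$, in particular connected. Computing, an element of $S_\phi^\circ$ centralizes $A_{\wh{L}}$ iff it lies in $Z_{\wh{G}}(A_{\wh{L}}) = \wh{L}$, whence $Z_{S_\phi^\circ}(A_{\wh{L}}) = S_\phi^\circ \cap \wh{L} = S_\phi^\circ \cap S_{\phi,L}$, using $S_{\phi,L} = S_\phi \cap \wh{L}$. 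Since this group is connected and contained in $S_{\phi,L}$, it lies in $S_{\phi,L}^\circ$; conversely $S_{\phi,L}^\circ$ is connected and contained in both $S_{\phi,L}$ and $S_\phi$, hence in $S_\phi^\circ \cap S_{\phi,L}$. This yields $S_{\phi,L}^\circ = S_{\phi,L} \cap S_\phi^\circ$ and realizes it as a Levi of $S_\phi^\circ$. The main technical delicacy is the careful verification of (1)---ensuring the centralizer is reductive despite the twisted Galois action---together with the translation of discreteness of $\phi$ for $M$ into the inclusion $S_{\phi,M}^\circ \subset A_{\wh{M}}$; everything else is formal manipulation via the Levi correspondence.
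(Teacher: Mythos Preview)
Your proof is correct and follows essentially the same approach as the paper: both reduce (3) to the identification $\wh{L} = Z_{\wh{G}}(A_{\wh{L}})$, showing that $S_{\phi,L}\cap S_\phi^\circ = Z_{S_\phi^\circ}(A_{\wh{L}})$ is a connected Levi subgroup of $S_\phi^\circ$, and both derive (2) from discreteness of $\phi$ as an $M$-parameter together with this Levi description. One cosmetic remark: in your maximality argument you invoke the equality $S_\phi\cap\wh{M}\cap S_\phi^\circ = S_{\phi,M}^\circ$, which is the $L=M$ case of (3) not yet proven at that point---there is no genuine circularity (your proof of (3) uses only $A_{\wh{M}}\subset S_\phi^\circ$, not maximality), and you can sidestep the forward reference by simply noting that the connected torus $T'$ lies in $S_{\phi,M}$ and hence in $S_{\phi,M}^\circ$.
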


\begin{proof}
See \cite[10.1.1, Lemma]{Kot5} (and also a comment in \cite[\S 12, p.648]{Kot5}) for the assertion (1).

The equality $S_{\phi,M}^{\circ}=A_{\widehat{M}}$ follows from the fact that $\phi$ is discrete as an $L$-parameter of $M$ (see \cite[10.3.1, Lemma]{Kot5}).
We note that $\wh{L}=Z_{\wh{G}}(A_{\wh{L}})$ (see \cite[\S0.4.1]{KMSW}).
We have
\[
S_{\phi,L}^\circ
=
(\wh{L}\cap S_\phi^\circ)^\circ
=
\bigl(Z_{\wh{G}}(A_{\wh{L}})\cap S_\phi^\circ\bigr)^\circ
=
Z_{S_\phi^\circ}(A_{\wh{L}})^\circ.
\]
As $S_\phi^\circ$ is a connected reductive group, the centralizer $Z_{S_\phi^\circ}(A_{\wh{L}})$ of a torus $A_{\wh{L}}$ is a Levi subgroup of $S_\phi^\circ$ (in particular, connected).
This also shows that $A_{\wh{M}}$ is a maximal torus of $S_\phi^\circ$.

Let us finally verify the equality $S_{\phi,L}^\circ=S_{\phi,L}\cap S_\phi^\circ$.
The inclusion $S_{\phi,L}^\circ \subset S_{\phi,L}\cap S_\phi^\circ$ is obvious, so it suffices to check the converse inclusion $S_{\phi,L}^\circ \supset S_{\phi,L}\cap S_\phi^\circ$.
For this, it is enough to show that $S_{\phi,L}\cap S_\phi^\circ$ is connected.
We have
\[
S_{\phi,L}\cap S_\phi^\circ
=
(\wh{L}\cap S_\phi) \cap S_\phi^\circ
=
Z_{\wh{G}}(A_{\wh{L}}) \cap S_\phi^\circ
=
Z_{S_\phi^\circ}(A_{\wh{L}}).
\]
Thus $S_{\phi,L}\cap S_\phi^\circ$ is connected as shown above.
\end{proof}

Note that Lemma \ref{lem: S-group-Levis} implies that for the fixed $L$-parameter $\phi$, the Levi subgroup $M$ is determined canonically up to conjugation.
Indeed, suppose that ${}^{L}M'$ is another smallest Levi subgroup of ${}^{L}G$ such that $\phi$ factors through ${}^{L}M'$.
Let us assume that ${}^{g}({}^{L}M)$ and ${}^{g'}({}^{L}M')$ are standard.
Then, by the above lemma, $A_{{}^{g}\wh{M}}$ and $A_{{}^{g'}\wh{M'}}$ are maximal tori of $S_{{}^{g}\phi}^\circ$ and $S_{{}^{g'}\phi}^\circ$, respectively.
Noting that ${}^{gg^{\prime-1}}S_{{}^{g'}\phi}^\circ=S_{{}^{g}\phi}^\circ$, both $A_{{}^{g}\wh{M}}$ and ${}^{gg^{\prime-1}}A_{{}^{g'}\wh{M'}}=A_{{}^{g}\wh{M'}}$ are maximal tori of $S_{{}^{g}\phi}^\circ$, hence conjugate by $S_{{}^{g}\phi}^\circ$.
This implies that $A_{\wh{M}}$ and $A_{\wh{M'}}$ are conjugate by $S_{\phi}^\circ$.
By using ${}^L{M}=Z_{{}^L{G}}(A_{\wh{M}})$ and ${}^L{M'}=Z_{{}^L{G}}(A_{\wh{M'}})$ (\cite[\S4.0.1]{KMSW}), we also see that ${}^{L}M$ and ${}^{L}M'$ are conjugate by $S_{\phi}^\circ$.
Thus, $M$ and $M'$ are conjugate in $G$.

\subsection{Weyl group constructions}{\label{ss: weylgroupconstructions}}

Let $\phi$ and $M$ be as in the previous section.
Suppose that $\lambda\in X^{\ast}(A_{\wh{M}})$ is given.
Then we have $\alpha_M(\lambda) \in \mf{A}_M\subset\mf{A}_T$ where $\alpha_M$ is the map of \eqref{eqn: BGiso}.
We take an element $w\in W^\rel$ such that $w\cdot\alpha_M(\lambda)\in\overline{C}\subset\mf{A}_T$. 
Let us write $M':={}^{w}M$ and $\lambda':={}^{w}\lambda$.
Thus we have $w\cdot\alpha_M(\lambda)=\alpha_{M'}(\lambda')$.
According to the decomposition \eqref{eqn: chamberdecomp}, there exists a unique standard parabolic subgroup $Q_\lambda$ of $G$ satisfying $\alpha_{M'}(\lambda')\in\mf{A}_{Q_\lambda}^+$.
We let $L_\lambda$ be the $F$-rational standard Levi subgroup of $G$ associated to $Q_\lambda$ (hence we have $T\subset M'\subset L_\lambda\subset G$).
Equivalently, $L_\lambda$ is the centralizer of an element of $X_\ast(A_T)$ given by some suitable scaling of $\alpha_{M'}(\lambda')$.
We simply write $Q$ and $L$ for $Q_\lambda$ and $L_\lambda$ in the following, respectively.
We note that the map $\alpha_T^{-1}\circ\alpha_{M'}\colon X^\ast(A_{\wh{M'}})_\R \hookrightarrow X^\ast(A_{\wh{T}})_\R$ gives a section to the restriction map $X^\ast(A_{\wh{T}})_\R \twoheadrightarrow X^\ast(A_{\wh{M'}})_\R$.
\begin{equation*}
    \begin{tikzcd}
    \mf{A}_{M'} \arrow[d, hook]  & X^{\ast}(A_{\wh{M'}})_{\R} \arrow[l, "\alpha_{M'}"]\\
    \mf{A}_{T} \arrow[r, "\alpha_{T}^{-1}"] &  X^{\ast}(A_{\wh{T}})_{\R} \arrow[u, "\res", swap]
    \end{tikzcd}
\end{equation*}
By furthermore noting that the isomorphism $\alpha_T$ is equivariant with respect to the action of $W^\rel\cong\wh{W}^\rel$, we get the following.

\begin{lemma}{\label{lem: lambdacentweyl}}
We have $\Stab_{W^\rel}(\alpha_{M'}(\lambda'))=W^\rel_L$ and $\Stab_{\wh{W}^\rel}(\alpha_T^{-1}\circ\alpha_{M'}(\lambda'))=\wh{W}^\rel_L$.
\end{lemma}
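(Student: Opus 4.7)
The plan is to reduce the second equality to the first, and then apply the standard fact on stabilizers of points in a closed Weyl chamber. The reduction is immediate from what has already been recorded in the excerpt: the isomorphism $\alpha_T\colon \mf{A}_T\xrightarrow{\sim} X^\ast(A_{\wh{T}})_\R$ is equivariant for the canonical identification $W^\rel\cong\wh{W}^\rel$, and under this identification $W^\rel_L$ corresponds to $\wh{W}^\rel_L$; hence the stabilizer of $\alpha_T^{-1}\circ\alpha_{M'}(\lambda')$ in $\wh{W}^\rel$ is carried bijectively to the stabilizer of $\alpha_{M'}(\lambda')$ in $W^\rel$.

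For the first equality, the inclusion $W^\rel_L\subset\Stab_{W^\rel}(\alpha_{M'}(\lambda'))$ is built into the setup. By construction $\alpha_{M'}(\lambda')\in\mf{A}_Q^+\subset\mf{A}_L=X_\ast(A_L)_\R$, and since $A_L$ is central in $L$ the relative Weyl group $W^\rel_L=W_L(A_T)$ (whose elements are represented by $N_L(A_T)$) acts trivially on $\mf{A}_L$.

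For the reverse inclusion, I would invoke the standard Coxeter-theoretic fact (Bourbaki, \emph{Groupes et alg\`ebres de Lie}, Ch.~VI, \S1.7) applied to the (possibly non-reduced) relative root system $\Phi(G,A_T)$ and the Coxeter group $W^\rel=W_G(A_T)$: the stabilizer in $W^\rel$ of any point $\mu\in\overline{C}$ is generated by the simple reflections $s_\alpha$ with $\langle\alpha,\mu\rangle=0$. Applied with $\mu=\alpha_{M'}(\lambda')\in\mf{A}_Q^+$, the defining inequalities $\langle\alpha,\mu\rangle>0$ for roots of $T$ in $N_Q$, combined with the chamber decomposition \eqref{eqn: chamberdecomp} and the fact that $\mu\in\mf{A}_L$ forces $\langle\alpha,\mu\rangle=0$ for every root of $T$ in $L$, identify the simple relative roots annihilating $\mu$ with precisely the simple relative roots occurring in $L$; the corresponding reflections generate $W^\rel_L$.

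The main subtlety lies less in the argument itself than in the bookkeeping: making sure we are consistently using the \emph{relative} root system and the \emph{relative} Weyl group throughout, and that the standard stabilizer result transfers verbatim to this root system (which may be non-reduced in the presence of ramification). Granting this, both equalities follow.
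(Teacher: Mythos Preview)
Your argument is correct and is exactly what the paper has in mind. The paper does not give a separate proof: immediately before the lemma it notes that $L_\lambda$ is the centralizer of (a scaling of) $\alpha_{M'}(\lambda')$ and that $\alpha_T$ is $W^\rel\cong\wh{W}^\rel$-equivariant, and then states the lemma as a consequence; your Bourbaki stabilizer argument and your reduction via $\alpha_T$ are precisely the details underlying these two remarks.
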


In the following, by choosing a representative $\dot{w}\in N_{\wh{G}}(A_{\wh{T}})$ of $w\in W^\rel\cong\wh{W}^\rel$ and replacing $\phi$ with ${}^{\dot{w}}\phi$, let us write $M$ and $\lambda$ for $M'$ and $\lambda'$, respectively.

We fix a Borel subgroup $B_{\phi} \subset S^{\circ}_{\phi}$ containing $A_{\widehat{M}}$.
We put
\begin{itemize}
    \item $W_\phi:=W_{S_\phi}(A_{\wh{M}}):=N_{S_{\phi}}(A_{\wh{M}})/A_{\wh{M}}$,
    \item $W_\phi^\circ:=W_{S_\phi^\circ}(A_{\wh{M}}):=N_{S_{\phi}^\circ}(A_{\wh{M}})/A_{\wh{M}}$,
    \item $R_\phi:=W_{S_\phi}(A_{\wh{M}},B_\phi):=N_{S_{\phi}}(A_{\wh{M}},B_\phi)/A_{\wh{M}}$.
\end{itemize}
Then, by Lemma \ref{lem: pi0embedweyl}, we have an identification $\pi_0(S_{\phi})\cong R_\phi$ and the semi-direct product decomposition $W_\phi=W_\phi^\circ\rtimes R_\phi$.
Note that we have a natural map
\begin{align}\label{map:Wphi}
W_\phi
=N_{S_{\phi}}(A_{\wh{M}})/A_{\wh{M}}
\rightarrow N_{\wh{G}}(A_{\wh{M}})/\wh{M}
=W_{\wh{G}}(A_{\wh{M}}).
\end{align}

\begin{lemma}{\label{lem: weylinj}}
We have a natural injective map $W_{\widehat{G}}(A_{\widehat{M}}) \hookrightarrow \wh{W}^\rel$.
Moreover, via this injection, the restriction map $X^\ast(A_{\wh{T}})_\R \twoheadrightarrow X^\ast(A_{\wh{M}})_\R$ is equivariant with respect the actions of $W_{\widehat{G}}(A_{\widehat{M}})$ on $X^\ast(A_{\wh{M}})_\R$ and $\wh{W}^\rel$ on $X^\ast(A_{\wh{T}})_\R$.
\end{lemma}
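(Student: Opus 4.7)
I would construct a canonical $\Gamma$-invariant lift of each element of $W_{\wh{G}}(A_{\wh{M}})$ to $\wh{W}^\rel\cong\wh{W}^\Gamma$, using that $\wh{M}$ is a Levi of $\wh{G}$ and that $\Gamma$ acts trivially on $A_{\wh{M}}$. Given $w\in N_{\wh{G}}(A_{\wh{M}})$, the identity $\wh{M}=Z_{\wh{G}}(A_{\wh{M}})$ forces $w$ to normalize $\wh{M}$, so $w\wh{T}w^{-1}$ is a maximal torus of $\wh{M}$, and by conjugacy in $\wh{M}$ there exists $m\in\wh{M}$ with $mw\in N_{\wh{G}}(\wh{T})$. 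I would pin $m$ down modulo $\wh{T}$ by further requiring $mw$ to stabilize the Borel pair $(\wh{T},\wh{B}\cap\wh{M})$ of $\wh{M}$, which is possible since $W_{\wh{M}}(\wh{T})$ acts simply transitively on the Borels of $\wh{M}$ containing $\wh{T}$. A short check shows the resulting pinned lift $\tilde w\in N_{\wh{G}}(\wh{T})$ depends only on the class $[w]\in W_{\wh{G}}(A_{\wh{M}})=N_{\wh{G}}(A_{\wh{M}})/\wh{M}$ and that the set of pinned lifts is closed under multiplication, so $[w]\mapsto[\tilde w]$ defines a group homomorphism $W_{\wh{G}}(A_{\wh{M}})\to\wh{W}$; injectivity is immediate, as $[\tilde w]=1$ forces $\tilde w\in\wh{T}\subset\wh{M}$ and hence $[w]=1$.

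To see the image lies in $\wh{W}^\Gamma\cong\wh{W}^\rel$, I would first observe that $\Gamma$ acts trivially on $W_{\wh{G}}(A_{\wh{M}})$: since $W_{\wh{G}}(A_{\wh{M}})$ acts faithfully on $A_{\wh{M}}$ (because $Z_{\wh{G}}(A_{\wh{M}})=\wh{M}$) and $\Gamma$ acts trivially on $A_{\wh{M}}$, any Galois conjugate of $w$ acts identically to $w$ on $A_{\wh{M}}$ and hence equals $w$. Then for $\gamma\in\Gamma$, the element $\gamma(\tilde w)$ is again a pinned lift of $\gamma([w])=[w]$ (since the fixed splitting, and hence $\wh{B}\cap\wh{M}$, is $\Gamma$-stable), so $\gamma(\tilde w)\equiv\tilde w\pmod{\wh{T}}$ by uniqueness, placing $[\tilde w]$ in $\wh{W}^\Gamma$.

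For the equivariance assertion, the restriction $X^\ast(A_{\wh{T}})_\R\twoheadrightarrow X^\ast(A_{\wh{M}})_\R$ is dual to the inclusion $A_{\wh{M}}\hookrightarrow A_{\wh{T}}$. The pinned lift $\tilde w$ lies in $N_{\wh{G}}(A_{\wh{M}})$ (since $\tilde w=mw$ with $w$ normalizing and $m$ centralizing $A_{\wh{M}}$) and acts on $A_{\wh{M}}$ identically to $w$, so under $A_{\wh{M}}\hookrightarrow A_{\wh{T}}$ the two actions commute with restriction; dualizing yields the stated equivariance on character lattices. The main obstacle I expect is making the pinning step fully rigorous; this reduces to the standard identification of $N_{\wh{G}}(\wh{T},\wh{B}\cap\wh{M})/\wh{T}$ inside $\wh{W}$ with the subgroup of $\wh{W}$ stabilizing the set of simple roots of $\wh{M}$.
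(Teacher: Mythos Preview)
Your proposal is correct and follows essentially the same approach as the paper: both construct the lift by choosing $m\in\wh{M}$ so that $m^{-1}n$ (your $mw$) stabilizes the Borel pair $(\wh{T},\wh{B}\cap\wh{M})$ of $\wh{M}$, verify injectivity by the same centralizer argument, and deduce equivariance of restriction directly from the construction. Your treatment of $\Gamma$-invariance via faithfulness of the $W_{\wh{G}}(A_{\wh{M}})$-action on $A_{\wh{M}}$ is a slight repackaging of the paper's argument (which instead checks directly that $\gamma(m^{-1}n)$ and $m^{-1}n$ conjugate $\wh{B}$ identically using that $\gamma(m^{-1}n)\in m^{-1}n\wh{M}$), but the underlying reason is the same.
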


\begin{proof}
The construction of the injective map can be found in \cite[\S 0.4.3 and \S 0.4.7]{KMSW}.
For the sake of completeness, we explain it.
We note that $W_{\wh{G}}(A_{\wh{T}}) = W_{\wh{G}}(\wh{T})^{\Gamma}$ (see \cite[\S 0.4.3]{KMSW}) and that the same fact holds replacing $\wh{T}$ with an $F$-rational standard Levi subgroup of $\wh{G}$. We will first prove that we have an injective map $W_{\widehat{G}}(A_{\widehat{M}}) \hookrightarrow W_{\widehat{G}}(\widehat{T})$, and then show that this map is $\Gamma$-equivariant, which will finish the proof of the first assertion.

Set $\widehat{B}_{\widehat{M}}:=\widehat{B}\cap \widehat{M}$.
Then $(\widehat{T},\wh{B}_{\widehat{M}})$ is a Borel pair of $\widehat{M}$. 
Let $n\in N_{\widehat{G}}(A_{\wh{M}})$, hence we have ${}^{n}A_{\wh{M}}=A_{\wh{M}}$.
As $A_{\wh{M}}\subset {}^{n}\wh{T}$, we get $\wh{M}\supset {}^{n}\wh{T}$ by taking centralizers in $\wh{G}$.
Since ${}^{n}\wh{B}$ is a Borel subgroup of $\wh{G}$ containing ${}^{n}\wh{T}$, it follows that ${}^{n}\wh{B}\cap \wh{M} ={}^{n}\wh{B}_{\wh{M}}$ is a Borel subgroup of $\wh{M}$ containing ${}^{n}\wh{T}$.
Thus ${}^{n}(\wh{T},\wh{B}_{\wh{M}})$ is also a Borel pair of $\wh{M}$.
Hence there exists an element $m$ of $\wh{M}$ (unique up to right $\wh{T}$-multiplication) such that ${}^{m}(\wh{T},\wh{B}_{\wh{M}})={}^{n}(\wh{T},\wh{B}_{\wh{M}})$, which implies that $m^{-1}n\in N_{\wh{G}}(\wh{T})$.
In other words, we have obtained a well-defined map $N_{\wh{G}}(A_{\wh{M}})/\wh{M}\rightarrow N_{\wh{G}}(\wh{T})/\wh{T}$ (given by $n\mapsto m^{-1}n$).

Let us suppose that two elements $n_{1},n_{2}\in N_{\wh{G}}(A_{\wh{M}})$ map to the same element of $N_{\wh{G}}(\wh{T})/\wh{T}$.
By the definition of the map, this means that there exist $m_{1},m_{2}\in \wh{M}$ such that $m_{1}^{-1}n_{1}=m_{2}^{-1}n_{2}t$ with some $t\in \wh{T}$, or equivalently, $m_{1}^{-1}n_{1}=t'm_{2}^{-1}n_{2}$ with some $t'\in \wh{T}$.
In particular, we have $n_{2}n_{1}^{-1}=m_{2}t^{\prime-1}m_{1}^{-1}\in \wh{M}$.
Thus $n_{1}$ and $n_{2}$ are equal in $N_{\wh{G}}(A_{\wh{M}})/\wh{M}$.

We now prove $\Gamma$-equivariance. Fix $\gamma \in \Gamma$ and consider $\gamma(m^{-1}n) \in N_{\wh{G}}(\wh{T})$. It suffices to show that $\co{\gamma(m^{-1}n)}{\wh{B}} = \co{m^{-1}n}{\wh{B}}$. In fact, since $m^{-1}n$ preserves $\wh{B}_{\wh{M}}$, we need only show that $\co{\gamma(m^{-1}n)}{U_{\wh{P}}} = \co{m^{-1}n}{U_{\wh{P}}}$, where $U_{\wh{P}}$ denotes the unipotent radical of the standard parabolic $\wh{P}$ with Levi component $\wh{M}$.
But since $m^{-1}n$ gives a $\gamma$-invariant element of $W_{\wh{G}}(\wh{M})$, we have $\gamma(m^{-1}n)=m^{-1}nm'$ for some $m' \in \wh{M}$. Then the result follows from the fact that $\wh{M}$ normalizes $U_{\wh{P}}$.

By this construction, the second assertion for the restriction map is obvious.
\end{proof}
We also need the following.
\begin{lemma}{\label{lem: weylinj2}}
The map $\alpha_T^{-1}\circ\alpha_M\colon X^\ast(A_{\wh{M}})_\R \hookrightarrow X^\ast(A_{\wh{T}})_\R$ is equivariant with respect the action of $W_{\widehat{G}}(A_{\widehat{M}}) \hookrightarrow \wh{W}^\rel$.
\end{lemma}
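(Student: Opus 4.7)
The plan is to deduce the equivariance of the section $\alpha_T^{-1}\circ\alpha_M$ from the equivariance of its retraction $r\colon X^\ast(A_{\wh{T}})_\R \twoheadrightarrow X^\ast(A_{\wh{M}})_\R$, established in Lemma \ref{lem: weylinj}. The key observation is that $\alpha_T^{-1}\circ\alpha_M$ is not an arbitrary section of $r$: its image is the canonical $\wh{W}_M^\rel$-invariant subspace of $X^\ast(A_{\wh{T}})_\R$, where $\wh{W}_M^\rel := W_{\wh{M}}(\wh{T})^\Gamma$.

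First I would identify the image of $\alpha_T^{-1}\circ\alpha_M$ with $X^\ast(A_{\wh{T}})_\R^{\wh{W}_M^\rel}$. Under $\alpha_T$, which is equivariant with respect to the identification $W^\rel\cong\wh{W}^\rel$, this subspace corresponds to $\mf{A}_T^{W_M^\rel}$; using the standard identification $\mf{A}_M = \mf{A}_T^{W_M^\rel}$ for a standard $F$-Levi $M$, it then matches the image of $\alpha_M$, namely $\mf{A}_M\subset\mf{A}_T$.

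Next I would verify that $X^\ast(A_{\wh{T}})_\R^{\wh{W}_M^\rel}$ is stable under the action of $W_{\wh{G}}(A_{\wh{M}})$ on $X^\ast(A_{\wh{T}})_\R$ defined via the injection of Lemma \ref{lem: weylinj}. The chosen lifts $m^{-1}n$ used to construct that injection preserve the Borel pair $(\wh{T},\wh{B}_{\wh{M}})$ of $\wh{M}$ and hence normalize $\wh{M}$; consequently, they normalize $\wh{W}_M^\rel$, so conjugation sends the $\wh{W}_M^\rel$-invariants to themselves.

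Finally, since $r\circ(\alpha_T^{-1}\circ\alpha_M)=\mathrm{id}$, as already observed in the paragraph preceding Lemma \ref{lem: lambdacentweyl}, the restriction of $r$ to $X^\ast(A_{\wh{T}})_\R^{\wh{W}_M^\rel}$ is surjective onto $X^\ast(A_{\wh{M}})_\R$; a dimension count (both spaces have dimension $\dim\mf{A}_M$) shows it is an isomorphism with inverse $\alpha_T^{-1}\circ\alpha_M$. The $W_{\wh{G}}(A_{\wh{M}})$-equivariance of $r$ from Lemma \ref{lem: weylinj} then transfers to equivariance of its inverse. I do not expect a serious obstacle: the main delicacy is the standard fact $\mf{A}_M=\mf{A}_T^{W_M^\rel}$, which requires some care for non-split groups; the remaining steps are essentially formal consequences of Lemma \ref{lem: weylinj}.
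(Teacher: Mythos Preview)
Your argument is correct and takes a somewhat different route from the paper. The paper works symmetrically through the group side: it constructs the analogous injection $W_G(A_M)\hookrightarrow W^\rel$, observes that the inclusion $\mf{A}_M\hookrightarrow\mf{A}_T$ is equivariant for this injection, and then concludes by checking separately that $\alpha_T$ and $\alpha_M$ are equivariant and that the group-side and dual-side injections match under the identifications $W^\rel\cong\wh{W}^\rel$ and $W_G(A_M)\cong W_{\wh{G}}(A_{\wh{M}})$. Your approach instead stays on the dual side, characterising the image of the section as the $\wh{W}_M^\rel$-fixed subspace; once you verify (as you do, using that the lifts $m^{-1}n$ normalise $\wh{M}$ and give $\Gamma$-fixed Weyl elements) that the image of $W_{\wh{G}}(A_{\wh{M}})$ in $\wh{W}^\rel$ normalises $\wh{W}_M^\rel$, equivariance of the section follows formally from the equivariance of its retraction (Lemma~\ref{lem: weylinj}). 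Your route is slightly more economical in that it avoids introducing and comparing the group-side injection, while the paper's route makes the group/dual-group parallel explicit, which is natural since $\alpha_T^{-1}\circ\alpha_M$ is defined via the group side.
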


\begin{proof}
Similarly to the previous lemma, it can be also checked that we have a natural inclusion $W_G(A_M)\hookrightarrow W^\rel$ and that the inclusion map $\mf{A}_M\hookrightarrow \mf{A}_T$ is equivariant with respect to the action of $W_G(A_M)\hookrightarrow W^\rel$.
Then the statement follows by checking that $\alpha_T$ (resp.\ $\alpha_M$) is equivariant with respect to the actions of $W^\rel\cong\wh{W}^\rel$ (resp.\ $W_G(A_M)\cong W_{\wh{G}}(A_{\wh{M}})$) and that the inclusions $W_G(A_M)\hookrightarrow W^\rel$ and $W_{\widehat{G}}(A_{\widehat{M}}) \hookrightarrow \wh{W}^\rel$ are consistent under the identifications $W^\rel\cong\wh{W}^\rel$ and $W_G(A_M)\cong W_{\wh{G}}(A_{\wh{M}})$.
\end{proof}

Following the notation of \S \ref{ss: repthrydisconnected}, we denote the stabilizer of $\lambda$ in $\pi_0(S_{\phi})$ by $A^{\lambda}$.
Here, recall that $\pi_0(S_\phi)$ acts on $X^\ast(A_{\wh{M'}})$ through the identification $\pi_0(S_\phi)\cong R_\phi$.
We denote the stabilizer of $\lambda$ in $\pi_0(S_{\phi, L})$ by $A^{\lambda}_L$.
We define the groups $W_{\phi,L}$, $W_{\phi,L}^\circ$, and $R_{\phi,L}$ in the same way as $W_{\phi}$, $W_{\phi}^\circ$, and $R_{\phi}$, respectively.
Note that $\pi_0(S_{\phi,L})$ can be regarded as a subgroup of $\pi_0(S_{\phi})$ by Lemma \ref{lem: S-group-Levis} (3).

\begin{proposition}{\label{prop: AlambdaL}}
We have $\pi_0(S_{\phi,L})= A^\lambda_L$ and the natural map $A^{\lambda}_L \hookrightarrow A^\lambda$ is surjective, hence bijective.
\[
\begin{tikzcd}
\pi_0(S_{\phi,L})\ar[r,hook]&\pi_0(S_\phi)\\
A_L^\lambda\ar[u,equal]\ar[r,hook,"="]&A^\lambda\ar[u,hook]
\end{tikzcd}
\]
\end{proposition}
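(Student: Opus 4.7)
The plan is to exploit the natural injection $\pi_0(S_{\phi,L}) \hookrightarrow \pi_0(S_\phi)$ from Lemma \ref{lem: S-group-Levis}(3), together with the identifications $\pi_0 \cong R$ of Lemma \ref{lem: pi0embedweyl}. As a preparatory move I would choose the Borel $B_{\phi,L} := B_\phi \cap S_{\phi,L}^\circ$ of the Levi $S_{\phi,L}^\circ \subset S_\phi^\circ$, which contains $A_{\wh M}$. With this choice, by Lemma \ref{lem: pi0embedweyl}(1) every class in $\pi_0(S_{\phi,L})$ has a representative in $N_{S_{\phi,L}}(A_{\wh M}, B_{\phi,L})$, and similarly for $\pi_0(S_\phi)$ using $(A_{\wh M}, B_\phi)$; moreover such a representative $n$ for a class in $\pi_0(S_{\phi,L})$ automatically normalizes $(A_{\wh M}, B_\phi)$ as well (since $n \in S_{\phi,L} \subset S_\phi$ normalizes $S_\phi^\circ$ and hence $B_\phi$ after the usual adjustment inside $S_\phi^\circ$), so the two identifications are compatible with the inclusion.

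For the equality $\pi_0(S_{\phi,L}) = A^\lambda_L$, I would take $r \in \pi_0(S_{\phi,L})$ with representative $n \in N_{S_{\phi,L}}(A_{\wh M}, B_{\phi,L}) \subset N_{\wh L}(A_{\wh M})$. Since $\wh L = Z_{\wh G}(A_{\wh L})$ and $n \in \wh L$, the construction in the proof of Lemma \ref{lem: weylinj} sends $n\wh M \in W_{\wh G}(A_{\wh M})$ to $(m^{-1}n)\wh T$ with $m \in \wh M \subset \wh L$, so that $m^{-1}n \in N_{\wh L}(\wh T)$ and the image lies in $W_{\wh L}(\wh T)^\Gamma = \wh W^\rel_L$. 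By Lemma \ref{lem: lambdacentweyl} this image stabilizes $\alpha_T^{-1}\circ\alpha_M(\lambda)$, and by the equivariance and injectivity of $\alpha_T^{-1}\circ\alpha_M$ (Lemma \ref{lem: weylinj2}), $n$ stabilizes $\lambda$. Hence $\pi_0(S_{\phi,L}) \subset A^\lambda_L$, and the reverse inclusion is the definition.

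For surjectivity of $A^\lambda_L \to A^\lambda$, take $a \in A^\lambda$ with representative $n \in N_{S_\phi}(A_{\wh M}, B_\phi)$. The action of $n \wh M \in W_{\wh G}(A_{\wh M})$ on $X^\ast(A_{\wh M})$ fixes $\lambda$, so by Lemma \ref{lem: weylinj2} its image in $\wh W^\rel$ fixes $\alpha_T^{-1}\circ\alpha_M(\lambda)$, and by Lemma \ref{lem: lambdacentweyl} lies in $\wh W^\rel_L$. Unpacking the construction of Lemma \ref{lem: weylinj} once more, this image has the form $(m^{-1}n)\wh T$ with $m \in \wh M$; lying in $\wh W^\rel_L = W_{\wh L}(\wh T)^\Gamma$ forces $m^{-1}n \in N_{\wh L}(\wh T) \subset \wh L$, whence $n = m \cdot (m^{-1}n) \in \wh L$ since $m \in \wh M \subset \wh L$. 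Thus $n \in S_\phi \cap \wh L = S_{\phi,L}$. Since $n$ normalizes $S_{\phi,L}^\circ = S_\phi^\circ \cap S_{\phi,L}$ (being contained in both factors) and normalizes $B_\phi$, it also normalizes $B_{\phi,L} = B_\phi \cap S_{\phi,L}^\circ$. Hence $n$ represents a class in $\pi_0(S_{\phi,L})$ mapping to $a$, which by the first step lies in $A^\lambda_L$. The injectivity of $A^\lambda_L \to A^\lambda$ is inherited from that of $\pi_0(S_{\phi,L}) \hookrightarrow \pi_0(S_\phi)$.

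The main obstacle is the bookkeeping in the surjectivity step: one must unpack the explicit formula in the proof of Lemma \ref{lem: weylinj} in order to pass from the information that the image of $a$ in $\wh W^\rel$ lies in $\wh W^\rel_L$ to the concrete conclusion that the chosen representative $n$ itself lies in $\wh L$. Everything else reduces to choosing $B_{\phi,L}$ inside $B_\phi$ and verifying that the natural inclusion is compatible with the identifications of Lemma \ref{lem: pi0embedweyl}.
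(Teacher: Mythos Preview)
Your proof is correct and follows essentially the same route as the paper's: both pass through Lemmas~\ref{lem: lambdacentweyl} and~\ref{lem: weylinj2} to equate ``stabilizes $\lambda$'' with ``image in $\wh{W}^\rel$ lies in $\wh{W}^\rel_L$'', and both rely on the compatibility of the injections of Lemma~\ref{lem: weylinj} for $G$ and $L$. The paper packages this as a single ``if and only if'' via the commutative square relating $R_{\phi,L}\to W_{\wh{L}}(A_{\wh{M}})\hookrightarrow\wh{W}^\rel_L$ with the corresponding maps for $G$, whereas you split the two directions and spell out the passage from $\tilde{w}\in\wh{W}^\rel_L$ back to $n\in\wh{L}$ at the level of representatives; the content is the same.
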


\begin{proof}
Our task is to show that, for any $g\in \pi_0(S_\phi)$, $g$ stabilizes $\lambda$ if and only if $g\in\pi_0(S_{\phi,L})$.
By letting $w\in R_\phi$ be the image of $g\in \pi_0(S_\phi)$ under the identification $\pi_0(S_\phi)\cong R_\phi$, it suffices to check that $w$ stabilizes $\lambda$ if and only if $w\in R_{\phi,L}$.
We note that, by construction, the maps of Lemma \ref{lem: weylinj} for $G$ and $L$ are compatible.
\[
\begin{tikzcd}
\pi_0(S_\phi)\ar[r, "\sim"]&R_\phi\ar[r]&W_{\wh{G}}(A_{\wh{M}})\ar[r,hook]&\wh{W}^\rel\\
\pi_0(S_{\phi,L})\ar[r, "\sim"]\ar[u,hook]&R_{\phi,L}\ar[r]\ar[u,hook]&W_{\wh{L}}(A_{\wh{M}})\ar[u,hook]\ar[r,hook]&\wh{W}_L^\rel\ar[u,hook]
\end{tikzcd}
\]
Since the map \eqref{map:Wphi} is injective on $R_\phi$, it is enough to check that the image of $w$ in $W_{\wh{G}}(A_{\wh{M}})$ under the map \eqref{map:Wphi} (say $\overline{w}$) stabilizes $\lambda$ if and only if $\overline{w}$ lies in $W_{\wh{L}}(A_{\wh{M}})$.
If we let $\tilde{w}$ be the image of $\overline{w}\in W_{\wh{G}}(A_{\wh{M}})$ in $\wh{W}^\rel$, then $\overline{w}$ stabilizes $\lambda$ if and only if $\tilde{w}$ stabilizes $\alpha_T^{-1}\circ\alpha_{M}(\lambda)$ by Lemma \ref{lem: weylinj2}.
By Lemma \ref{lem: lambdacentweyl}, this is equivalent to $\tilde{w}\in \wh{W}_L^\rel$.
This completes the proof.
\end{proof}

\section{Review of the \texorpdfstring{$B(G)_{\bas}$}{B(G)bas} form of the conjectural correspondence.}\label{s: basiccorrespondence}
In this section we review the conjectural local Langlands correspondence parametrized in terms of $B(G)_{\bas}$ following \cite[\S2.5]{KalethaLLCnonQS}.
Recall that we fixed an $F$-splitting $(T,B,\{X_\alpha\})$ of $G$. Fix also a nontrivial additive character $\psi: F \to \C^{\times}$.
This defines a Whittaker datum for $G$ which we denote by $\mf{w}$.
For an $L$-parameter $\phi$ of $G$, we let $S_{\phi} = Z_{\widehat{G}}(\im \phi)$ and define $S^{\natural}_{\phi}$ to equal $S_{\phi}/(\widehat{G}_{\der} \cap S_{\phi})^{\circ}$.

The local Langlands correspondence with $B(G)_\bas$-parametrization is as follows:
\begin{conjecture}\label{conj: basic-LLC}
For each $b\in B(G)_\bas$, there exists a finite-to-one map
\[
\LLC_{G_b}\colon\Pi(G_b) \rightarrow \Phi(G),
\]
or, equivalently, a partition
\[
\Pi(G_b)
=
\coprod_{\phi\in\Phi(G)}\Pi_\phi(G_b),
\]
where $\Pi_\phi(G_b)$ denotes the finite set $\LLC_{G_b}^{-1}(\phi)$ (“$L$-packet").
Furthermore, for each $\phi\in\Phi(G)$, the union of $\Pi_\phi(G_b)$ over $b\in B(G)_\bas$ is equipped with a bijective map $\iota_{\mf{w}}$, depending only on the choice of a Whittaker datum $\mf{w}$, to $\Irr(S_\phi^\natural)$ such that the following diagram commutes:
\begin{equation}{\label{eqn: basic correspondence}}
    \begin{tikzcd}
    \coprod\limits_{b \in B(G)_{\bas}} \Pi_{\phi}(G_b) \arrow[d] \arrow[r, "\iota_{\mf{w}}"] & \Irr(S^{\natural}_{\phi}) \arrow[d] \\
    B(G)_{\bas} \arrow[r, "\kappa_G"]& X^{\ast}(Z(\widehat{G})^{\Gamma}),
    \end{tikzcd}
\end{equation}
where the left vertical map is the obvious projection and the right vertical map takes central character.
\end{conjecture}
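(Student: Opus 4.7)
The final statement is Conjecture~\ref{conj: basic-LLC}, not a theorem, so it is not something I would attempt to prove directly — indeed it is a formulation of Kaletha's Conjecture~F and encodes (together with its unstated desiderata) the entire existence of the local Langlands correspondence with a prescribed internal parametrization. The authors are not going to prove it in the paper; they will \emph{assume} it in order to deduce the $B(G)$-version in Theorem~\ref{thm: intromainthm}. So what I can realistically sketch is how one would \emph{verify} the conjecture in the cases where it is known, and what the general attack looks like in outline.

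The plan, case by case, would proceed as follows. For tori, the bijection is local class field theory, and both sides reduce to $X^{\ast}(Z(\wh{G})^{\Gamma})$, so the diagram (\ref{eqn: basic correspondence}) commutes tautologically. For quasi-split classical groups, one would start from the Arthur/Mok endoscopic classification, which provides $\LLC_{G}$ on the quasi-split form together with multiplicity-one $L$-packets indexed by characters of the component group $\pi_{0}(S_{\phi}/Z(\wh{G})^{\Gamma})$; this then has to be upgraded along two axes. First, one extends to the non-quasi-split $B(G)_{\bas}$-inner forms, which, when $Z(G)$ is connected, can be done via Kaletha's comparison (\cite{Kal18}) with the rigid inner form package in \cite{Kal16-LLC}, where the $L$-packets are constructed using extended pure inner twists. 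Second, to get $\Irr(S_{\phi}^{\natural})$ rather than $\Irr(\pi_{0}(S_{\phi}/Z(\wh{G})^{\Gamma}))$, one uses the fact that the central character of a representation of $S_{\phi}^{\natural}$ is constrained by $Z(\wh{G})^{\Gamma}$, which then matches the Kottwitz invariant $\kappa_{G}(b)$ via the right vertical map in (\ref{eqn: basic correspondence}). The Whittaker normalization fixes the bijection $\iota_{\mf{w}}$ uniquely and is characterized by the condition that the $\mf{w}$-generic constituent of $\Pi_{\phi}(G^{\ast})$ corresponds to the trivial representation of $S_{\phi}^{\natural}$.

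An alternative, more uniform approach would be to take $\LLC_{G_{b}}$ to be the correspondence produced by Fargues--Scholze (\cite{FS21-LLC}), where a single semisimple $L$-parameter is attached to every irreducible smooth representation of $G_{b}(F)$ via the spectral action on $D(\Bun_{G})$. Here the stratification of $\Bun_{G}$ by $B(G)$ realizes all basic inner forms simultaneously, and the stabilizer $S_{\phi}^{\natural}$ arises naturally as the automorphism group of the corresponding skyscraper sheaf on the stack of $L$-parameters. Verifying (\ref{eqn: basic correspondence}) in this framework amounts to checking compatibility of the Fargues--Scholze parameter with central characters (which is a theorem of theirs), and then matching the fibers with $\Irr(S_{\phi}^{\natural})$ in a Whittaker-normalized way.

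The hard part is, of course, not any one of these steps in isolation but the existence and uniqueness of the whole package with all its internal compatibilities — independence of auxiliary choices, compatibility with parabolic induction and Jacquet modules, endoscopic character identities, behavior under $z$-extensions, and Whittaker normalization. None of these are consequences of bare definitions; each has to be either built in by construction (as in the classical groups case) or proved about a geometrically given candidate (as one expects eventually for Fargues--Scholze). Since the present paper takes Conjecture~\ref{conj: basic-LLC} as an input and uses it to construct the $B(G)$-parametrization, I would not try to prove it here; I would simply fix a class of groups for which it is known and check that my subsequent constructions are independent of which model of the $B(G)_{\bas}$-LLC I plug in.
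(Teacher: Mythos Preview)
You are right that the statement is a conjecture and that the paper takes it as an assumption for the non-Archimedean case rather than proving it. That said, the paper does more than merely assume it: the whole of \S3.1 is devoted to establishing the conjecture unconditionally when $F$ is Archimedean, and indeed to proving an enhanced version (Theorem~\ref{thm: realBGbasthm}) in which the right vertical arrow of diagram~\eqref{eqn: basic correspondence} is refined so that one can recover $b$ itself, not just $\kappa_G(b)$, from $\iota_{\mf{w}}(\pi)$. The argument there is quite different from the routes you outline: for $F=\C$ it is a direct computation using that $S_\phi$ is a connected Levi; for $F=\R$ it goes through Shelstad's parametrization of tempered $L$-packets by admissible embeddings of a torus $S$, constructs an explicit surjection $p\colon \wh{T}^{\Gamma_{\phi_1}} \to S_\phi^\natural$ and identifies its kernel (Lemma~\ref{lem: mainreallem}), and then extends to non-tempered parameters via the Langlands classification. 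Your sketch via Arthur--Mok or Fargues--Scholze is a reasonable survey of what one might do for $p$-adic $F$, but it does not match what the paper actually carries out; the paper's concrete contribution on this conjecture is entirely on the Archimedean side, and that construction is needed later because the non-Archimedean Kottwitz map is bijective on $B(G)_{\bas}$ while the real one is not.
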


In the following, we refer to Conjecture \ref{conj: basic-LLC} as “$B(G)_\bas$-LLC".

\begin{remark}\label{rem: tempered}
We note that in \cite{KalethaLLCnonQS}, Conjecture \ref{conj: basic-LLC} was stated for tempered $L$-parameters and that the proof of \cite[Theorem 2.5]{BMHN} shows that if Conjecture \ref{conj: basic-LLC} holds for all tempered $L$-parameters of each Levi subgroup of $G$, then it holds for all $L$-parameters of $G$.
\end{remark}

\subsection{The enhanced Archimedean basic correspondence}{\label{ss: enhancedbasicarchllc}}

In the Archimedean case, the Kottwitz map $\kappa_G$ is not injective.
Thus, when $\pi$ belongs to $\Pi_\phi(G_b)$ for $b \in B(G)_{\bas}$, Conjecture \ref{conj: basic-LLC} does not allow us to recover $b$ from the $Z(\wh{G})^\Gamma$-central character of $\iota_{\mf{w}}(\pi)$. We explain how to remedy this. Recall that when $F$ is an Archimedean local field, we have $W_F = \mc{E}^{\iso}_F$.

We first consider the simplest case when $F=\C$. Then the Newton map gives a bijection $\nu_G: B(G) \xrightarrow{\sim} X_*(T)^{+}$. An $L$-parameter is determined by two elements $\mu, \nu \in X_*(\wh{T})_{\C}$ such that $\mu - \nu \in X_*(\wh{T})$ via the formula $\phi(z) = z^{\mu} \ov{z}^{\nu}$. This implies the centralizer group $S_{\phi}$ is a Levi subgroup of $\wh{G}$ (\cite[Corollary 5.5]{VoganLLC}) and hence is connected. In particular, $S^{\natural}_{\phi} \cong \wh{G}_{\ab}$. The classical Langlands correspondence for $\C$ (see \cite[Theorem 5.3]{VoganLLC}) gives a bijection between $\Pi(G)$ and $\Phi(G)$. Since $B(G)_{\bas}$ is identified via $\nu_G$ with $X_*(A_G)$, which is canonically isomorphic to $X^*(\wh{G}_{\ab}) = \Irr(S^{\natural}_{\phi})$, we have the following commutative diagram, where every map is a bijection and the top horizontal arrow is defined to be the unique one such that the diagram commutes:
\begin{equation}{\label{eqn: complexbasicLLC}}
    \begin{tikzcd}
        \coprod\limits_{b \in B(G)_{\bas}} \Pi_{\phi}(G_b) \arrow[r] \arrow[d] & \Irr(S^{\natural}_{\phi}) \arrow[d] \\
        B(G)_{\bas} \arrow[r] & X^*(\wh{G}_{\ab}).
    \end{tikzcd}
\end{equation}

Now let $F=\R$. Let $\phi\colon W_F \to \co{L}{G}$ be an $L$-parameter. Let $M$ be a minimal Levi subgroup through which $\phi$ factors. By possibly replacing $\phi$ with a conjugate, we can assume $M$ is a standard Levi subgroup.  Then $\phi(W_F)$ normalizes a maximal torus of $\wh{M}$ (see \cite[pg.\ 126]{LanglandsClassification}), which we can assume is $\wh{T}$, again possibly replacing $\phi$ by a conjugate. We have an element $\mu \in X_*(\wh{T})_{\C}$ with $\mu - \phi(j)(\mu) \in X_*(\wh{T})$ such that $\phi(z) = z^{\mu} \ov{z}^{\phi(j)(\mu)} \rtimes z$ for $z \in \C^{\times}$, where $j \in W_{\R}$ projects to the nontrivial element of $\Gamma$ and satisfies $j^2 = -1$. The group $A_{\wh{M}}$ is a maximal torus of $S^{\circ}_{\phi}$ (see \S \ref{ss: S-group}) and we fix also a Borel subgroup $B_{\phi}$ of $S^{\circ}_{\phi}$ containing $A_{\wh{M}}$.

We explain first the discrete case where $G=M$  (our exposition parallels that of \cite[\S5.6]{Kal2}).
Then we have  $Z_{\wh{G}}(\phi(\C^{\times})) = \wh{T}$ (see \cite[Lemma 3.3]{LanglandsClassification}) and note that $\phi$ induces an action of $\Gamma$ on $\wh{T}$, which will in general be distinct to the given action of $\Gamma$. 
This data specifies an $\R$-rational torus $S$ whose dual is identified with $\wh{T}$ with the $\Gamma$-action coming from $\phi$. Our fixed Borel pair induces $(T,B)$ and gives us an embedding $S \to T \subset G$ defined over $\C$ whose $G(\C)$-conjugacy class is $\Gamma$-stable. Since $G$ is quasi-split, there exists an embedding $i: S \to G$ defined over $\R$ in this conjugacy class.

Now fix an inner twist $\varphi: G \to G'$. Since $i(S)$ is a fundamental torus, $\varphi \circ i$ has a $G'(\C)$-conjugate defined over $\R$ and we call such an embedding \emph{admissible}. Shelstad proves that the $L$-packet $\Pi_{\phi}(G')$ is in bijection with the set of $G'(\R)$-conjugacy classes of admissible embeddings $S \to G'$. 

Using Shelstad's bijection, we now show how to construct $\iota_{\mf{w}}$ in \ref{conj: basic-LLC}. Fix $b \in B(G)_{\bas}$ and choose a cocycle $z$ representing $b$ and let $(G_b, \varphi, z)$ be an extended pure inner twist. In particular, this means $\varphi: G \to G_b$ and $\varphi^{-1} \circ \gamma(\varphi) = \Int(z_e)$ for each $e \in \mc{E}^{\iso}_F$ projecting to $\gamma \in \Gamma$. There exists a unique $\mf{w}$-generic element $\pi_{\mf{w}}$ of the packet $\Pi_{\phi}(G)$ which corresponds to an embedding $i_{\mf{w}}: S \to G$.  Then choose any $\pi \in \Pi_{\phi}(G_b)$ and take its corresponding embedding $i_{\pi}: S \to G_b$. Then take $g \in G(\C)$ such that $i_{\pi} = \varphi \circ \Int(g) \circ i_{\mf{w}}$ and let $\inv[z](\pi_{\mf{w}}, \pi) \in B(S)_{\Gbas}$ be the cohomology class corresponding to the cocycle $e \mapsto i^{-1}_{\mf{w}}(g^{-1}z_ee(g))$. Then observe $B(S) = X^*(\wh{S}^{\Gamma_{\phi}})=X^*(\wh{T}^{\Gamma_{\phi}})$ (where the $\phi$-subscript reminds us that the invariants are with respect to the $\Gamma$-action induced by $\phi$). Since $
\phi$ is discrete, we have $S_{\phi} = S^{\natural}_{\phi}= \wh{T}^{\Gamma_{\phi}}$. So this gives an element of $\Irr(S^{\natural}_{\phi})$ that recovers $b \in B(G)_{\bas}$ via the map $B(S)_{\Gbas} \to B(G)_{\bas}$. Conversely, given an element of $B(S)$ whose image under $i_{\mf{w}}$ equals $b \in B(G)_{\bas}$, we get an admissible embedding $S \to G_b$ (proof analogous to \cite[Lemma 3.5]{BM3}). Finally, we claim that $B(S)_{\Gbas}$ is in bijection with $\Irr(S^{\natural}_{\phi})$ which follows from the fact that $B(S)_{\Gbas}= B(S)$ (since $i_{\mf{w}}(S) \subset G$ is an elliptic torus because $\phi$ is discrete).

We now explain how to handle the tempered case as in  \cite{ShelstadLindistinguishability}, following the notation of \cite[\S5.6]{Kal2}. 
\begin{remark}
    One could construct $S$ by taking $Z_{\wh{M}}(\phi(\C^{\times}))$ in analogy with the discrete case. However, this construction will in general give the ``wrong'' $\Gamma$-action on $\wh{S}$. A simple example of this is the parameter $\phi\colon W_{\R} \to \co{L}\SL_2$ where the composition of $\phi$ with the projection $\co{L}{\SL_2} \to \wh{\SL_2}$ has kernel equal to $\C^{\times}$ and $\phi(j) = \begin{pmatrix} 1 & 0 \\ 0 & -1 \end{pmatrix} \rtimes j$ where $j \in W_{\R}$ projects to the nontrivial element of $\Gamma$ and satisfies $j^2 = -1$. Then the ``naive'' construction of $S$ yields $\Gm$, but the construction we are about to describe produces $U(1)$. 
\end{remark}
 In the tempered case, Shelstad (\cite[\S5.3-\S5.4]{ShelstadLindistinguishability}) defines a Levi subgroup $M_1 \supset M$, an element $s \in \wh{G}$ and a parameter $\phi_1 = \Int(s) \circ \phi$ that is therefore equivalent in $\co{L}{G}$ to $\phi$ and such that $\phi_1$ is a limit of discrete series parameter for $M_1$. We have $\phi_1(W_{\R})$ normalizes $\wh{T}$ and $\phi_1(\C^{\times}) \subset \wh{T}$. Hence $\phi_1$ induces an action of $\Gamma$ on $\wh{T}$ which gives a torus $S$ which is elliptic in $M_1$.
 
 Using this, Shelstad proves that for each group $G'$ that is an inner form of $G$, there is an $L$-packet $\Pi_{\phi}(G')$ that is in bijection with the admissible embeddings $i: S \to G'$ such that $i(\Delta_{\phi})$ consists entirely of non-compact imaginary roots, where 
 \begin{equation*}
     \Delta_{\phi} = \{ \alpha \in X^*(S) \cong X_*(\wh{T}) \mid \alpha^{\vee} \in R(\wh{T}, \wh{G}), \langle  \mu,  \alpha^{\vee} \rangle = 0, \sum\limits_{r \in R_{\phi}} r\cdot \alpha^{\vee} = 0\},
 \end{equation*}
 ($R(\wh{T}, \wh{G})$ denotes the set of roots of $\wh{T}$ in $\wh{G}$).
We recall that the group $R_{\phi}$ acts on $X^\ast(\wh{T})$ through the map $R_\phi\rightarrow W_{\wh{G}}(A_{\wh{M}})\hookrightarrow \wh{W}^\rel$ (see \S \ref{ss: weylgroupconstructions}).

We give a few details on this construction. Fix an inner twist $\varphi: G \to G'$ as before and assume that $M$ transfers to some standard Levi $M'$ of $G'$ (if it does not, the $L$-packet will be trivial), and potentially change $\varphi$ by conjugation so that it restricts to an inner twist $\varphi: M \to M'$. Then the $\Gamma$-cocycle given by $\sigma \mapsto \varphi^{-1} \circ \sigma(\varphi)$ takes values in $M_{\ad}(\C)$ and hence it follows that if we define $M'_1 = \varphi(M_1)$, then $\varphi: M_1 \to M'_1$ is also an inner twist. Then for each admissible embedding $i: S \to M'_1$, we obtain a distribution on $M'_1$ by taking a limit at $\mu$ of the character formula for an essentially discrete series representation. Next, we take the parabolic induction to $G'$ and this is either $0$ or an irreducible character. The $L$-packet $\Pi_{\phi}(G')$ corresponds to the set of these characters which are in bijection with certain $M'_1(\R)$-conjugacy classes of admissible embeddings $i: S \to M'_1$. We claim the set of all $M'_1(\R)$-conjugacy classes of admissible embeddings is the same as the set of all $G'(\R)$-conjugacy classes of admissible embeddings. Indeed the former (resp.\ latter) set is in bijection with $\ker(H^1(\R, S) \to H^1(\R,M'_1))$ (resp.\ $\ker(H^1(\R, S) \to H^1(\R, G'))$) and it is a standard fact that $H^1(\R, M'_1) \hookrightarrow H^1(\R, G')$ (see, \cite[\S1.3.5]{Kestutistorsors} for instance). Thus, we have a bijection between $\Pi_{\phi}(G')$ and $G'(\R)$-conjugacy classes of admissible embeddings $i: S \to G'$ such that $i(\Delta_{\phi})$ consists of non-compact roots. 

We need to characterize the set of embeddings $i$ satisfying this non-compactness condition. There is a unique $\mf{w}$-generic constituent $I^G_{P_1}(\pi_{M_1, \mf{w}})$ of $\Pi_{\phi}(G)$. We let $i_{\mf{w}}: S \to G$ denote the corresponding embedding. Now let $b \in B(G)_{\bas}$ and choose an extended pure inner twist $(G_b, \varphi, z)$, where $z$ is an algebraic cocycle representing $b$. Now, $i_{\mf{w}}: S \to G$ is known to satisfy that $i_{\mf{w}}(\Delta_{\phi})$ consists of non-compact roots.
The condition we need on some embedding $i_{\pi}: S \to G_b$ is that the image of $\inv[z](\pi_{\mf{w}}, \pi) \in B(S)$ in $H^1(\R, S_{\ad}) \cong \pi_0(\wh{S_{\ad}}^{\Gamma_{\phi_1}})^{\vee}$ pairs to an even integer with each $\alpha^{\vee}$ such that $\alpha \in \Delta_{\phi}$. Indeed,  note that in the notation of \textit{loc.\ cit.}, a root $\alpha$ is non-compact relative to the embedding $S \to G$ if and only if $f_{(G,S)}(\alpha)=1$. Then by \cite[Proposition 4.3.(1)]{KalEpi} and using $\iota_{\mf{w}}$ as our base-point, we need only determine when $\kappa_{\alpha}(\eta_{t, \alpha})=1$. By \cite[Proposition 4.3.(2)]{KalEpi}, this is equivalent to our claimed expression (recalling that $\Gamma = \Gamma_{\pm \alpha}$ since the roots in question are symmetric). 

Note that $B(S)_{\Gbas}$ is those elements of $B(S)$ whose image under the Newton map $\nu'_S$ in the sense of \cite{Kot9} belongs to $(X^\ast(\wh{G}_\ab)\otimes X^\ast(\D_F))^\Gamma$ (see \cite[Definition 10.2]{Kot9} and also the discussion in Remark \ref{rem: Kot97-14}).
Thus, by the diagram \eqref{eqn: Kottwitz-Newton} and Remark \ref{rem: Kot97-14}, we have a diagram
\begin{equation}{\label{eqn: realKSVS}}
    \begin{tikzcd}
        B(S)_{\Gbas} \arrow[r, hook] \ar[d, "\nu'_S"]& B(S) \arrow[r, "\kappa_S"] \arrow[d, swap, "\nu'_S"] & X^*(\wh{S}^{\Gamma_{\phi_1}}) \arrow[dl, "N"] \\
        (X^\ast(\wh{G}_\ab)\otimes X^\ast(\D_F))^\Gamma \arrow[r, hook]& (X^\ast(\wh{S})\otimes X^\ast(\D_F))^\Gamma & 
    \end{tikzcd}
\end{equation}
In particular, the set $B(S)_{\Gbas}$ corresponds to the subgroup of $X^\ast(\wh{S}^{\Gamma_{\phi_1}})$ which is the pre-image under $N$ of $(X^\ast(\wh{G}_\ab)\otimes X^\ast(\D_F))^\Gamma$.
By the anti-equivalence of categories between multiplicative groups and finitely generated abelian groups, we get a subgroup $\wh{S}_{\Gbas} \subset \wh{S}^{\Gamma_{\phi_1}}$ such that the elements of $B(S)_{\Gbas}$ correspond via $\kappa_S$ to the subset $X^*(\wh{S}^{\Gamma_{\phi_1}}/\wh{S}_{\Gbas})$ of elements of $X^*(\wh{S}^{\Gamma_{\phi_1}})$ that vanish on $\wh{S}_{\Gbas}$.

Now for each $\alpha \in \Delta_{\phi}$, we get an element $\alpha^{\vee}(-1) \in \wh{S}$. The nontrivial element $\sigma \in \Gamma_{\R}$ is known to satisfy $\phi_1(\sigma)(\alpha) = - \alpha$ and so we have $\alpha^{\vee}(-1) \in \wh{S}^{\Gamma_{\phi_1}}$. Let $\Omega(\Delta_{\phi})$ be the group generated by the reflections $w_{\alpha}$ for $\alpha \in \Delta_{\phi}$. Then we define a map $\Omega(\Delta_{\phi}) \times \wh{S}_{\Gbas} \to \wh{S}^{\Gamma_{\phi_1}}$ where the map on the first factor is given by $w_{\alpha} \mapsto \alpha^{\vee}(-1)$ and the map on the second factor is the natural inclusion. Then it is clear that an embedding $i_{\pi}:S \to G_b$ satisfies that $i_{\pi}(\Delta_{\phi})$ are non-compact if and only if $\inv[z](\pi_{\mf{w}}, \pi) \in B(S) \cong X^*(\wh{S}^{\Gamma_{\phi_1}})$ vanishes on $\im(\Omega(\Delta_{\phi}) \times \wh{S}_{\Gbas})$.

\begin{lemma}{\label{lem: mainreallem}}
    We have an exact sequence
    \begin{equation*}
        \Omega(\Delta_{\phi}) \times \wh{S}_{\Gbas} \xrightarrow{r} \wh{T}^{\Gamma_{\phi_1}} \xrightarrow{p} S^{\natural}_{\phi} \to 1.
    \end{equation*}
\end{lemma}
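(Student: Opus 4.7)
The plan is to produce $p$, check it is a well-defined surjection, analyze its kernel, and finally identify the kernel with $\im(r)$.

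\textbf{Constructing $p$.} Since $\phi_1(\C^\times)\subset\wh{T}$ and elements of $\wh{T}^{\Gamma_{\phi_1}}$ are by definition fixed under the conjugation action of $\phi_1(j)$, we have $\wh{T}^{\Gamma_{\phi_1}}\subset Z_{\wh{G}}(\phi_1(W_\R))=S_{\phi_1}$. The relation $\phi_1=\Int(s)\circ\phi$ yields an isomorphism $\Int(s^{-1})\colon S_{\phi_1}\xrightarrow{\sim}S_{\phi}$ which descends to $S^{\natural}_{\phi_1}\cong S^{\natural}_{\phi}$. Define $p$ as the composite $\wh{T}^{\Gamma_{\phi_1}}\hookrightarrow S_{\phi_1}\twoheadrightarrow S^{\natural}_{\phi_1}\cong S^{\natural}_{\phi}$.

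\textbf{Surjectivity.} By \S\ref{ss: S-group}, the torus $A_{\wh{M}}$ is a maximal torus of $S^{\circ}_{\phi_1}$, and it lies in $\wh{T}^{\Gamma_{\phi_1}}$ because $\phi_1$ factors through $\co{L}{M_1}\supset\co{L}{M}$ (so the $\phi_1$-twisted Galois action on $\wh{T}$ preserves $A_{\wh{M}}$ and agrees with the standard action there). Hence $S^{\circ}_{\phi_1}=A_{\wh{M}}\cdot[S^{\circ}_{\phi_1},S^{\circ}_{\phi_1}]$ with the derived part contained in $(\wh{G}_{\der}\cap S_{\phi_1})^{\circ}$, so the image of $\wh{T}^{\Gamma_{\phi_1}}$ already surjects onto $S^{\circ}_{\phi_1}/(\wh{G}_{\der}\cap S_{\phi_1})^{\circ}$. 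To reach the component group $\pi_0(S_{\phi_1})\cong R_{\phi_1}$, use that each component is represented by an element of $N_{\wh{M_1}}(\wh{T})$, and the limit-of-discrete-series structure of $\phi_1$ for $M_1$ (as in the discrete case treatment with $M_1$ in place of $G$) allows one to find a representative fixed by the $\phi_1$-twisted Galois action, after multiplying by a suitable element of $A_{\wh{M}}$.

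\textbf{Image of $r$ lies in the kernel.} For $\alpha\in\Delta_{\phi}$, the defining conditions ($\alpha^\vee\in R(\wh{T},\wh{G})$, $\langle\mu,\alpha^\vee\rangle=0$, and $R_\phi$-symmetry) force $\alpha^\vee$ to take values in $S^{\circ}_{\phi_1}$ and in $\wh{G}_{\der}$, so $\alpha^\vee(-1)\in (S^{\circ}_{\phi_1})_{\der}\subset(\wh{G}_{\der}\cap S_{\phi_1})^{\circ}$, which is trivial in $S^{\natural}_{\phi_1}$. For $\wh{S}_{\Gbas}$, the defining property $X^\ast(\wh{S}^{\Gamma_{\phi_1}}/\wh{S}_{\Gbas})\cong B(S)_{\Gbas}$ and the commutativity of \eqref{eqn: realKSVS} show that elements of $\wh{S}_{\Gbas}$ correspond dually to the kernel of $B(S)\to B(G)_{\bas}\xrightarrow{\kappa_G}X^\ast(Z(\wh{G})^{\Gamma})$; dualizing and chasing through the identification $S/(\wh{G}_{\der}\cap S_{\phi_1})^{\circ}$, such elements lie in the image of $\wh{G}_{\der}\cap S_{\phi_1}$ in $\wh{T}^{\Gamma_{\phi_1}}$ up to connected components, hence map to $1$ in $S^{\natural}_{\phi}$.

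\textbf{Kernel equals $\im(r)$ and conclusion.} Pontryagin-dualize the putative exact sequence: surjectivity and triviality on $\im(r)$ already give an injection $X^\ast(S^{\natural}_{\phi})\hookrightarrow X^\ast(\wh{T}^{\Gamma_{\phi_1}})$ whose image is contained in the characters vanishing on the image of $\Omega(\Delta_\phi)\times\wh{S}_{\Gbas}$. To upgrade the inclusion to an equality, count (or, equivalently, compare both subgroups to the parametrization of $\Pi_{\phi}(G_b)$ over $b\in B(G)_{\bas}$ via $\iota_{\mf{w}}$ on one side and via admissible embeddings $i_\pi\colon S\to G_b$ with $i_\pi(\Delta_\phi)$ non-compact on the other): both subgroups parametrize the same set of $G_b(\R)$-conjugacy classes of admissible embeddings (by Shelstad and the preceding discussion), so they coincide. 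I expect the main obstacle to be the surjectivity onto the component group together with the precise matching of $\wh{S}_{\Gbas}$ with the obstruction class in $S^{\natural}_\phi$, both of which require careful use of the limit-of-discrete-series structure of $\phi_1$ and the commutative diagram \eqref{eqn: realKSVS}.
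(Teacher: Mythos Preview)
Your construction of $p$ and the surjectivity onto the identity-component part are close to the paper's, but the rest has two real problems.

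\textbf{The claim that $\alpha^\vee$ takes values in $S^{\circ}_{\phi_1}$ is false.} Since $\phi_1(j)$ acts on $\wh{T}$ by sending $\alpha^\vee$ to $-\alpha^\vee$, conjugation by $\phi_1(j)$ sends $\alpha^\vee(t)$ to $\alpha^\vee(t^{-1})$; thus $\alpha^\vee(t)\in S_{\phi_1}$ only when $t=\pm1$. So your argument that $\alpha^\vee(-1)\in(S^{\circ}_{\phi_1})_{\der}$ does not go through as written, and it is not at all clear that $\alpha^\vee(-1)$ lands in $(\wh{G}_{\der}\cap S_{\phi_1})^{\circ}$. The paper handles this step dually: it shows any $\chi\in X^*(S^{\natural}_{\phi_1})$ restricts to a character of $\pi_0(\wh{T}^{\Gamma_{\phi_1}}_{\der})$ and then uses the structure of real tori and the norm map to conclude vanishing on both $\wh{S}_{\Gbas}$ and the image of $\Omega(\Delta_\phi)$.

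\textbf{The exactness-in-the-middle step is circular.} You propose to match $\ker(p)$ with $\im(r)$ by comparing both to the parametrization of $\coprod_b\Pi_\phi(G_b)$ via $\iota_{\mf{w}}$. But in the paper $\iota_{\mf{w}}$ over $\R$ is \emph{constructed} as a consequence of this lemma (see the paragraph immediately following it and Theorem~\ref{thm: realBGbasthm}); you cannot invoke it here. The paper instead argues directly: if $\chi\in X^*(\wh{T}^{\Gamma_{\phi_1}})$ vanishes on $\wh{S}_{\Gbas}$, then $N(\chi)$ vanishes on $\wh{T}_{\der}$, and the injectivity of the norm map on $X^*(\wh{T}^{\Gamma_{\phi_1},\circ}_{\der})$ (again from the classification of real tori) forces $\chi$ to vanish on $\wh{T}^{\Gamma_{\phi_1},\circ}_{\der}=(\wh{G}_{\der}\cap S_{\phi_1})^{\circ}\cap\wh{T}^{\Gamma_{\phi_1}}$, hence to factor through $p$.

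Finally, your surjectivity onto $\pi_0(S_{\phi_1})$ is only a sketch; the paper quotes Shelstad \cite[Theorem~5.4.4]{She82} for this, which is the actual content.
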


\begin{proof}
We first construct the map $p: \wh{T}^{\Gamma_{\phi_1}} \to S^{\natural}_{\phi}$ and prove it is surjective. Recall \cite[Proposition 5.4.3]{ShelstadLindistinguishability}, that $\phi_1(\C^{\times}) \subset \wh{T}$ and that $\phi_1(W_{\R})$ normalizes $\wh{T}$. Hence, $S_{\phi_1} \cap \wh{T} = \wh{T}^{\Gamma_{\phi_1}}$. Shelstad proves (\cite[Theorem 5.4.4]{ShelstadLindistinguishability}) that we have a surjection $\wh{T}^{\Gamma_{\phi_1}} \twoheadrightarrow \pi_0({S_{\phi_1}})$. We also claim the natural map $\wh{T}^{\Gamma_{\phi_1}} \cap S^{\circ}_{\phi_1} \to S^{\circ}_{\phi_1} / (\wh{G}_{\der} \cap S_{\phi_1})^{\circ}$ is surjective. Indeed, it suffices to show that $Z(\wh{G})^{\Gamma} \cap S^{\circ}_{\phi_1}  \twoheadrightarrow S^{\circ}_{\phi_1} / (\wh{G}_{\der} \cap S_{\phi_1})^{\circ}$ and this follows from the fact (\cite[Lemma 0.4.13]{KMSW}) that $Z(\wh{G})^{\Gamma}$ surjects onto $S^{\circ}_{\phi_1}Z(\wh{G})^{\Gamma} / (\wh{G}_{\der} \cap S_{\phi_1})^{\circ}$. Now that the claim is proven, we can combine the two surjections to get a surjection $\wh{T}^{\Gamma_{\phi_1}} \to S^{\natural}_{\phi_1}$. Finally, we post-compose with $\Int(s^{-1})$ to get the desired map $p$.

Since $\wh{S}^{\Gamma_{\phi_1}} = \wh{T}^{\Gamma_{\phi_1}}$, the map $r$ is as constructed immediately before the statement of the lemma. It remains to prove exactness in the middle. We first show that $p \circ r$ is trivial. To do so, we let $\chi \in X^*(S^{\natural}_{\phi})$ and show the pullback to $\wh{T}^{\Gamma_{\phi_1}}$ vanishes on $\im(r)$. By conjugating by $s$, we get $\chi' \in X^*(S^{\natural}_{\phi_1})$. Then $\chi'$ by definition vanishes on $(\wh{G}_{\der} \cap S_{\phi_1})^{\circ}$ and hence $(\wh{G}_{\der} \cap \wh{T}^{\Gamma_{\phi_1}})^{\circ}$. Let $\wh{T}_{\der}$ denote the torus given by $(\wh{T} \cap \wh{G}_{\der})^{\circ}$. Then we have that $\chi'$ vanishes on $\wh{T}^{\Gamma_{\phi_1 , \circ}}_{\der}$.  We now have the following commutative diagram
\begin{equation}{\label{eqn: normderdiagram}}
    \begin{tikzcd}
        X^*(\wh{T}^{\Gamma_{\phi_1}}) \arrow[r, "\res"] \arrow[d, "N"]& X^*(\wh{T}^{\Gamma_{\phi_1}}_{\der})  \arrow[r, "\res"] \arrow[d, "N"] &  X^*(\wh{T}^{\Gamma_{\phi_1}, \circ}_{\der}) \\
        (X^*(\wh{T})\otimes X^\ast(\D_F))^{\Gamma_{\phi_1}} \arrow[r, "\res"] & (X^*(\wh{T}_\der)\otimes X^\ast(\D_F))^{\Gamma_{\phi_1}}.
    \end{tikzcd}
\end{equation}
We claim that the image of $\chi'$ in $(X^*(\wh{T}_\der)\otimes X^\ast(\D_F))^{\Gamma_{\phi_1}}$ is trivial. 
Indeed the restriction of $\chi'$ to $\wh{T}^{\Gamma_{\phi_1}}_{\der}$ is a character of $\pi_0(\wh{T}^{\Gamma_{\phi_1}}_{\der})$, and by the classification of tori over $\R$, the elements in the component group all have order $2$ and hence are killed by the norm map. 
Finally, we observe that $\wh{T}/\wh{T}_{\der}\cong \wh{G}/\wh{G}_\der=\wh{G}_\ab$. 
Hence it follows that $N(\chi')$ lies in $(X^\ast(\wh{G}_\ab)\otimes X^\ast(\D_F))^\Gamma$, which implies that $\chi'$ vanishes on $\wh{S}_\Gbas$.
Now we show that $\chi'$ vanishes on the image of $\Omega(\Delta_{\phi})$. But the image of this map also lies in $\wh{T}^{\Gamma_{\phi_1}}_{\der}$, so we are done.

Finally, we need to show that if $\chi \in X^*(\wh{T}^{\Gamma_{\phi_1}})$ vanishes on $\im(r)$, then it factors through $p$. Now, we have a surjection $\wh{T}^{\Gamma_{\phi_1}} \to S^{\natural}_{\phi_1}$ so it suffices to show that $\chi$ vanishes on $(\wh{G}_{\der} \cap S_{\phi_1})^{\circ} \cap \wh{T}^{\Gamma_{\phi_1}} = (\wh{G}_{\der} \cap \wh{T}^{\Gamma_{\phi_1}})^{\circ} = \wh{T}^{\Gamma_{\phi_1}, \circ}_{\der}$. We are assuming $\chi$ vanishes on $\wh{S}_{\Gbas}$ and so by the previous paragraph, $N(\chi)$ vanishes on $\wh{T}_{\der}$. Now, since all tori over $\R$ are a product of $\Gm, U(1), \Res_{\C/\R}\Gm$, we have that the center vertical norm map is injective when restricted to $\wh{T}^{\Gamma_{\phi_1},\circ}_{\der}$. Hence $\chi$ must vanish on $\wh{T}^{\Gamma_{\phi_1},\circ}_{\der}$ as desired.

\end{proof}

We define $W_G(S)^{\Gamma}$ by fixing an embedding $i: S \to G$ defined over $\R$ and defining $W_G(S)^{\Gamma} := W_G(i(S))^{\Gamma}$. We note that this definition is independent of $i$ since any two such embeddings are conjugate by some $g \in G(\C)$ which can be taken to be in $N_G(i(S))$ and whose $\Gamma$-invariance in $W_G(S)$, comes from both embeddings being defined over $\R$.
As a consequence of Lemma \ref{lem: mainreallem}, we have constructed for all tempered parameters $\phi$ a commutative diagram
  \begin{equation}
    \begin{tikzcd}
    \coprod\limits_{b \in B(G)_{\bas}} \Pi_{\phi}(G_b) \arrow[d] \arrow[r, "\iota_{\mf{w}}"] & \Irr(S^{\natural}_{\phi}) \arrow[d] \\
    B(G)_{\bas} & B(S)_{\Gbas} / W_G(S)^{\Gamma} \arrow[l],
    \end{tikzcd}
\end{equation}  
where $\iota_{\mf{w}}$ is bijective. 
More precisely, for any $\rho\in \Irr(S_\phi^\natural)$, the pull back of $\rho$ along the map $p$ is trivial on $\im(r)$ by Lemma \ref{lem: mainreallem}.
In particular, it gives rise to an element of $X^*(\wh{S}^{\Gamma_{\phi_1}}/\wh{S}_{\Gbas})$.
By noting that we have a bijection $\kappa_S\colon B(S)_{\Gbas}\rightarrow X^*(\wh{S}^{\Gamma_{\phi_1}}/\wh{S}_{\Gbas})$, we get an element $b$ of $B(S)_{\Gbas}$.
This association $\rho\mapsto b$ is the right vertical map.
Moreover, the $G_b(\R)$-rational conjugacy class of admissible embeddings $i\colon S\rightarrow G_b$ corresponding to the element $b\in B(S)_{\Gbas}$ satisfying the condition that $i(\Delta_\phi)$ are non-compact by the triviality of $p^*\rho$ on $r(\Omega(\Delta_\phi))$.
Hence $i$ corresponds to an element $\pi$ of $\Pi_\phi(G_b)$.
This association $\rho\mapsto\pi$ is the top horizontal map.

We now extend this construction to the non-tempered case. This is done via the Langlands classification and Langlands classification for $L$-parameters as in \cite[Appendix A]{SilbergerZink}. Fix $G'$ a connected reductive group over $\R$, a minimal $\R$-parabolic $P_0 \subset G'$ with Levi subgroup $M_0$ and maximal $\R$-split torus $A_0$. Let $\mf{a}^*_{M_0} = X^*(M_0)^{\Gamma}_{\R}$. On the one hand we have a bijection
\begin{theorem}[Langlands Classification]{\label{thm: LanglandsClassification}}
    \begin{equation*}
        \{(P, \sigma, \nu)\} \leftrightarrow \Pi(G'),
    \end{equation*}
    where $(P, \sigma, \nu)$ is a triple where $P \supset P_0$ is a standard parabolic subgroup with standard Levi $M$ and unipotent radical $N$, where $\sigma \in \Pi(M)$ is tempered, and $\nu \in \mf{a}^*_M \xhookrightarrow{\res} \mf{a}^*_{M_0}$ pairs positively with any root of $A_0$ in $N$.
\end{theorem}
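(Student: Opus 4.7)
The plan is to establish the bijection through the classical \emph{Langlands quotient construction} in one direction and an analysis of \emph{leading exponents} of Jacquet modules in the other, following the standard proof for real reductive groups (cf.\ Borel--Wallach, Chapter IV, or Knapp--Zuckerman; the statement used here is also the one recalled in \cite[Appendix A]{SZ18}).

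For the forward direction, given a triple $(P,\sigma,\nu)$, I would extend $\nu \in \mf{a}_M^*$ to a character $e^\nu$ of $M$ and form the normalized parabolic induction $I^{G'}_P(\sigma \otimes e^\nu)$. The positivity hypothesis on $\nu$ ensures that the standard intertwining operator from $I^{G'}_P(\sigma \otimes e^\nu)$ to $I^{G'}_{\ov{P}}(\sigma \otimes e^\nu)$ converges and has an irreducible image; dually, $I^{G'}_P(\sigma \otimes e^\nu)$ has a unique irreducible quotient $J(P,\sigma,\nu)$, the \emph{Langlands quotient}. Injectivity of the association $(P,\sigma,\nu) \mapsto J(P,\sigma,\nu)$ is then verified by reading off $(P,\nu)$ from the leading exponents of $J(P,\sigma,\nu)$ along $P_0$ (i.e., those characters of $A_0$ occurring in $J^{G'}_{P_0}(J(P,\sigma,\nu))$ whose real part is maximal in the standard ordering), and recovering $\sigma$ from the $\nu$-isotypic piece of $J^{G'}_{P}(J(P,\sigma,\nu)) \otimes e^{-\nu}$.

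For the inverse direction, given $\pi \in \Pi(G')$, I would analyze the exponents of the Jacquet module $J^{G'}_{P_0}(\pi)$. A leading exponent singles out a set of simple roots along which $\pi$ fails to be tempered, hence a standard parabolic $P \supset P_0$ with Levi decomposition $P=MN$; the real part of the leading exponent defines $\nu \in \mf{a}_M^* \hookrightarrow \mf{a}_{M_0}^*$, strictly positive on the roots of $A_0$ in $N$. Casselman's criterion, characterizing tempered representations by the real parts of their exponents, then guarantees that the corresponding constituent $\sigma$ of $J^{G'}_{P}(\pi) \otimes e^{-\nu}$ is a tempered representation of $M$. Frobenius reciprocity exhibits $\pi$ as a quotient of $I^{G'}_P(\sigma \otimes e^\nu)$, which by uniqueness of the Langlands quotient forces $\pi \cong J(P,\sigma,\nu)$, so the two constructions are mutually inverse.

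The main technical obstacle is the convergence, irreducibility, and uniqueness analysis of the standard intertwining operator, which underlies both the well-definedness of the Langlands quotient and the injectivity of the forward map; the other delicate point is verifying temperedness of the recovered Levi component via Casselman's exponent criterion. Both are classical inputs in the real-group setting, and in a treatment of this scope I would cite them rather than reproduce the proofs.
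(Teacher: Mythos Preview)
Your outline is a faithful sketch of the classical proof of the Langlands classification (Langlands quotient construction in one direction, leading-exponent analysis and Casselman's temperedness criterion in the other), and there is no mathematical gap in what you describe. However, there is nothing to compare against: the paper does not prove this theorem at all. It is stated as a known input, used as a black box to extend the construction of $\iota_{\mf{w}}$ from tempered to general parameters, with an implicit reference to \cite[Appendix A]{SZ18} for the companion statement on the Galois side. So your proposal is correct but strictly more than what the paper supplies; in a write-up matching the paper's level of detail you would simply cite the result.
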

On the $L$-parameter side, we have
\begin{theorem}[{\cite[A.2]{SilbergerZink}}]{\label{thm: LanglandsClassificationLparam}}
      \begin{equation*}
        \{(P, \co{t}{\phi}, \nu)\} \leftrightarrow \Phi(G'),
    \end{equation*}
    where $(P, \co{t}{\phi}, \nu)$ is a triple where $P \supset P_0$ is a standard parabolic subgroup with standard Levi $M$ and unipotent radical $N$, where $\co{t}{\phi}$ is a tempered $L$-parameter of $M$ up to equivalence, and $\nu \in \mf{a}^*_M \xhookrightarrow{\res} \mf{a}^*_{M_0}$ pairs positively with any root of $A_0$ in $N$.
\end{theorem}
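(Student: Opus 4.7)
The plan is to mirror the representation-theoretic Langlands classification (Theorem~\ref{thm: LanglandsClassification}) on the Galois side, establishing the bijection in both directions. In the forward direction, given a triple $(P,\co{t}{\phi},\nu)$ with $P=MN$ a standard parabolic, $\co{t}{\phi}$ a tempered $L$-parameter of $M$, and $\nu\in\mf{a}_M^*$ satisfying the positivity condition, one interprets $\nu$ via local Langlands for tori as an unramified character $W_\R\to Z(\wh M)$ and sets $\phi:=\co{t}{\phi}\cdot\nu\colon W_\R\to\co{L}{M}\hookrightarrow\co{L}{G'}$; this is manifestly an $L$-parameter of $G'$ well-defined up to $\wh{G'}$-conjugacy. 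The substance of the theorem therefore lies in the inverse construction together with uniqueness.

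For the inverse, I would start from $\phi\in\Phi(G')$ and canonically extract its ``unbounded part.'' Conjugating, I arrange that $\phi(\C^\times)\subset\wh T$ and that $\phi(W_\R)$ normalizes $\wh T$, and write $\phi(z)=z^\mu\bar z^{\phi(j)\mu}$ as in the discussion preceding Lemma~\ref{lem: mainreallem}. The symmetrized cocharacter $\xi_\phi:=\mu+\phi(j)\mu$ lies in $X_\ast(\wh T)^{\Gamma}_\R$ and corresponds via restriction to a well-defined element of $\mf{a}_T^*$. After applying an element of $W^\rel\cong\wh W^\rel$ I may assume $\xi_\phi$ is dominant; there is then a unique standard parabolic $P\supset P_0$ with Levi $M$ such that $\xi_\phi$ descends to $\mf{a}_M^*\hookrightarrow\mf{a}_{M_0}^*$ and pairs strictly positively with every root of $A_0$ in the unipotent radical of $P$.

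Since $\xi_\phi$ is fixed by $\phi(j)$ by symmetrization and its centralizer in $\wh{G'}$ is exactly $\wh M$, the image $\phi(W_\R)$ normalizes $\wh M$, so $\phi$ factors through $\co{L}{M}$. Setting $\nu$ equal to the descent of $\xi_\phi$, define $\co{t}{\phi}:=\phi\cdot\nu^{-1}\colon W_\R\to\co{L}{M}$. The restriction of $\co{t}{\phi}$ to $\C^\times$ has vanishing symmetrized cocharacter, so $\co{t}{\phi}(\C^\times)$ is bounded; a standard argument then upgrades this to boundedness of the image of $W_\R$ modulo $Z(\wh M)^\Gamma$, proving temperedness of $\co{t}{\phi}$ as an $L$-parameter of $M$.

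The step I expect to be the main obstacle is uniqueness: if two triples $(P_i,\co{t}{\phi_i},\nu_i)$ produce $\wh{G'}$-conjugate parameters of $G'$, I need to deduce $P_1=P_2$, $\nu_1=\nu_2$, and $\co{t}{\phi_1}\sim\co{t}{\phi_2}$ up to $\wh M$-conjugacy. The key input is the canonicity of the real character $\xi_\phi$ up to $W_{\wh{G'}}(\wh T)^\Gamma$, combined with the fact that the positivity condition singles out a unique standard chamber representative; once $P$ and $\nu$ are pinned down, the residual ambiguity is exactly the equivalence relation used in defining tempered $L$-parameters of $M$. The delicate point is verifying that the symmetrization $\mu+\phi(j)\mu$ really is intrinsic to the $\wh{G'}$-conjugacy class of $\phi$ and that no portion of the unboundedness of $\phi$ hides in the ``imaginary'' direction of $\phi|_{\C^\times}$.
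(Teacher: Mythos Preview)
The paper does not give its own proof of this theorem; it is quoted as \cite[A.2]{SZ18} and immediately used as input for the construction of $\iota_{\mf{w}}$ in the non-tempered case. There is therefore no in-paper argument to compare your proposal against.

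That said, your outline is essentially the standard one (and is close to what Silberger--Zink actually do), but there is a concrete gap in the inverse construction. You set $\xi_\phi:=\mu+\phi(j)\mu$ and assert $\xi_\phi\in X_\ast(\wh{T})^{\Gamma}_{\R}$. This is not automatic: the only integrality constraint in the paper's setup is $\mu-\phi(j)\mu\in X_\ast(\wh{T})$, while $\mu$ itself lies in $X_\ast(\wh{T})_{\C}$, so $\mu+\phi(j)\mu$ is a priori complex. The invariant that genuinely lives in $\mf{a}_T^\ast$ and governs unboundedness is the real part $\mathrm{Re}(\mu+\phi(j)\mu)$, since for $z=re^{i\theta}$ one has $\lvert z^{\mu}\bar z^{\phi(j)\mu}\rvert=r^{\mathrm{Re}(\mu+\phi(j)\mu)}$. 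With your definition, twisting by a \emph{real} $\nu$ cannot force $\mu+\phi(j)\mu$ to vanish, and ``vanishing symmetrized cocharacter'' is in any case the wrong criterion for boundedness of $\co{t}{\phi}(\C^\times)$. You flag exactly this worry in your last sentence, but it is not a side issue: it is the fix that makes the construction work. Once $\xi_\phi$ is replaced by its real part, the remaining steps---factoring through $\co{L}{M}$ via the centralizer of $\xi_\phi$, temperedness of $\co{t}{\phi}=\phi\cdot\nu^{-1}$, and uniqueness from the dominance/positivity condition on $\nu$---go through as you describe.
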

With these theorems, we define $\iota_{\mf{w}}$ as follows. Choose $b \in B(G)_{\bas}$ and choose an extended pure inner twist $(G_b, \varphi, z)$ such that $[z]=b$. Let $\phi \in \Phi(G_b)$ and suppose $\phi$ corresponds to $(P_b,\co{t}{\phi}, \nu)$ by Theorem \ref{thm: LanglandsClassificationLparam}. We have that $P_b=M_bN_b \subset G_b$ where $M_b \subset G_b$ is a standard Levi subgroup corresponding to a standard Levi $M \subset G$. Then by \cite[Lemma 2.4]{BMHN} (this Lemma is proven for $F=\Q_p$ in \textit{loc.\ cit.}\ but the proof works also for $F=\R$), there is a unique equivalence class of extended pure inner twists $(M_b, \varphi_M, z_M)$ with class $b_M \in B(M)$ whose class in $B(G)$ is $b$. We define $\Pi_{\phi}(G_b)$ to consist of all elements of $\Pi(G_b)$ with corresponding triple $(P, \sigma, \nu)$ such that $\sigma \in \Pi_{\co{t}{\phi}}(M_b)$.

Following \cite[\S7]{SilbergerZink} we have $S_{\phi} = S_{\phi, M} = S_{\co{t}{\phi}, M}$ so we define $\iota_{\mf{w}}$ on $G$ by declaring that for $\pi \in \Pi(G_b)$ corresponding to $(P_b, \sigma, \nu)$, we have $\iota_{\mf{w}}(\pi) := \iota_{\mf{w}_M}(\sigma)$ where $\mf{w}_M$ is the Whittaker datum of $M$ given by restricting $\mf{w}$ and we are temporarily thinking of both sides of this equality as representations of $S_{\phi} = S_{\co{t}{\phi}, M}$. Then the proof of \cite[Theorem 2.5]{BMHN} shows that $\iota_{\mf{w}}(\pi)$ factors to give a representation of $S^{\natural}_{\phi}$.

If we pullback $\iota_{\mf{w}}(\pi)$ to $\wh{S}^{\Gamma_{\phi_1}}$ via $p$, then we get an element $b_S \in B(S)_{\Mbas}$ whose image in $B(M)$ is $b_M$. Hence, the image in $B(G)$ is $b$ and therefore $b_S \in B(S)_{\Gbas}$ and gives a class in $B(S)_{\Gbas}/W_G(S)^{\Gamma}$ which recovers $b$. 
To prove $\iota_{\mf{w}}$ is a bijection, we construct an inverse. Note that given a representation $\rho \in \Irr(S_{\phi}^\natural)$ whose pullback to $S_{\phi_1}$ yields $b_S \in B(S)_{\Gbas}$ mapping to $b \in B(G)$, such a representation factors to give a representation of $S^{\natural}_{\co{t}{\phi}, M}$ and by the uniqueness result (\cite[Lemma 2.4]{BMHN}) we must have that $b_S$ maps to $b_M \in B(M)_{\bas}$.

In particular, we have proven the following theorem.

\begin{theorem}{\label{thm: realBGbasthm}}
We have the following commutative diagram
  \begin{equation}{\label{eqn: archimedean basic correspondence}}
    \begin{tikzcd}
    \coprod\limits_{b \in B(G)_{\bas}} \Pi_{\phi}(G_b) \arrow[d] \arrow[r, "\iota_{\mf{w}}"] & \Irr(S^{\natural}_{\phi}) \arrow[d] \\
    B(G)_{\bas} & B(S)_{\Gbas} / W_G(S)^{\Gamma} \arrow[l],
    \end{tikzcd}
\end{equation}  
where $\iota_{\mf{w}}$ is bijective.
\end{theorem}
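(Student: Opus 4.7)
The strategy is to handle the tempered case first by combining the exact sequence of Lemma \ref{lem: mainreallem} with Shelstad's bijection, and then to bootstrap to the non-tempered case via Langlands classification. Everything needed has essentially been assembled in the preceding discussion, so the proof is largely a matter of organizing the pieces.

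For the tempered case, given $\rho \in \Irr(S_\phi^\natural)$ I would first pull back along $p$ to obtain a character $p^\ast\rho \in X^\ast(\wh{T}^{\Gamma_{\phi_1}}) = X^\ast(\wh{S}^{\Gamma_{\phi_1}})$. By Lemma \ref{lem: mainreallem}, $p^\ast\rho$ vanishes on the image of $r$, hence in particular on $\wh{S}_{\Gbas}$ and on $\Omega(\Delta_\phi)$. The first vanishing means that $p^\ast\rho$ lies in $X^\ast(\wh{S}^{\Gamma_{\phi_1}}/\wh{S}_{\Gbas})$, so via $\kappa_S$ it corresponds to an element $b_S \in B(S)_{\Gbas}$; the image of $b_S$ in $B(G)_{\bas}$ under the map $B(S)_{\Gbas}/W_G(S)^\Gamma \to B(G)_{\bas}$ yields the required $b$, establishing commutativity of the diagram. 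The second vanishing is exactly the non-compactness condition that the associated admissible embedding $i\colon S \to G_b$ satisfies $i(\Delta_\phi) \subset \{\text{non-compact imaginary roots}\}$, so Shelstad's bijection produces $\pi \in \Pi_\phi(G_b)$. The inverse map sends $\pi$ to the character of $S_\phi^\natural$ whose pullback to $\wh{S}^{\Gamma_{\phi_1}}$ equals $\kappa_S(\inv[z](\pi_{\mf{w}},\pi))$; that this character factors through $p$ is precisely the combination of the non-compactness condition (handling $\Omega(\Delta_\phi)$) and the containment $b_S \in B(S)_{\Gbas}$ (handling $\wh{S}_{\Gbas}$), via the exactness in Lemma \ref{lem: mainreallem}.

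For the non-tempered case, apply the Langlands classifications (Theorems \ref{thm: LanglandsClassification} and \ref{thm: LanglandsClassificationLparam}) to write $\phi$ as a triple $(P, \co{t}\phi, \nu)$ with $P = MN$ a standard parabolic and $\co{t}\phi$ a tempered $L$-parameter of $M$. For $b \in B(G)_{\bas}$, invoke \cite[Lemma 2.4]{BMHN22} to lift the extended pure inner twist $(G_b,\varphi,z)$ uniquely to an extended pure inner twist $(M_b,\varphi_M,z_M)$ with class $b_M \in B(M)_{\bas}$. Using the identification $S_\phi = S_{\co{t}\phi,M}$ (see \cite[\S7]{SZ18}), define $\iota_{\mf{w}}(\pi) := \iota_{\mf{w}_M}(\sigma)$ whenever $\pi$ has Langlands data $(P_b,\sigma,\nu)$, and define $\Pi_\phi(G_b)$ as the union over $\sigma \in \Pi_{\co{t}\phi}(M_b)$. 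Commutativity of the outer diagram and bijectivity of $\iota_{\mf{w}}$ then reduce to the tempered case applied to $M_{b_M}$, together with the fact that $B(S)_{\Mbas} \to B(M)_{\bas}$ composes correctly with $B(M)_{\bas} \to B(G)_{\bas}$.

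The main obstacle I anticipate is verifying that $\iota_{\mf{w}}(\pi)$ as constructed on $S_{\co{t}\phi,M}^\natural$ genuinely factors through $S_\phi^\natural$, which is an a priori different quotient; this should be handled by transporting the argument in the proof of \cite[Theorem 2.5]{BMHN22} from the $p$-adic setting to the Archimedean one. A secondary technical point is ensuring that the $W_G(S)^\Gamma$-action on $B(S)_{\Gbas}$ is the one compatible with $\wh{G}$-equivalence of parameters, so that the right-hand vertical projection is well-defined on $\Irr(S_\phi^\natural)$ and matches the standard conjugation action.
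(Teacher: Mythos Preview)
Your proposal is correct and follows essentially the same approach as the paper: the tempered case is handled exactly as you describe (pullback along $p$, invoke Lemma~\ref{lem: mainreallem} to get both the element of $B(S)_{\Gbas}$ and the non-compactness condition, then apply Shelstad's bijection), and the non-tempered case is reduced to the tempered one for $M$ via the Langlands classifications together with \cite[Lemma 2.4, Theorem 2.5]{BMHN22}. Your anticipated obstacle---that $\iota_{\mf{w}}(\pi)$ as defined on $S^{\natural}_{\co{t}\phi,M}$ must be shown to factor through $S^{\natural}_{\phi}$---is precisely the point the paper addresses by invoking the proof of \cite[Theorem 2.5]{BMHN22}, and your secondary point about the $W_G(S)^{\Gamma}$-ambiguity is likewise noted in the paper's discussion immediately following the theorem.
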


The bottom map is explained in \S\ref{ss: Kottwitzreview}. The right vertical map comes from pullback along the map $p$ of Lemma \ref{lem: mainreallem} and uses the constructions in that lemma to show that we indeed get an element of $B(S)_{\Gbas}$. This element of $B(S)$ depends on the choice of $i_{\pi}$ in its $G(\R)$-conjugacy class. This ambiguity corresponds to modifying our element of $B(S)$ by an element of $N_{G(\R)}(i_{\pi}(S))$ and this ambiguity is removed when we take a quotient by $W_G(S)^{\Gamma}$. 

\subsection{Statement of main theorem}

We now return to considering a general local field $F$. Our aim in this paper is, by assuming the  $B(G)_\bas$-LLC (Conjecture \ref{conj: basic-LLC}) and its refinement in the Archimedean case, to establish its ``$B(G)$-version" in a reasonable way:
\begin{theorem}\label{thm: main}
We assume Conjecture \ref{conj: basic-LLC} for $G$ and all standard Levi subgroups of $G$.
For each $b\in B(G)$, there exists a finite-to-one map
\[
\LLC_{G_b}\colon\Pi(G_b) \rightarrow \Phi(G),
\]
given by the composition
\begin{equation}
    \Pi(G_b) \rightarrow \Phi(L) \rightarrow \Phi(G),
\end{equation}
where $L \subset G$ is the standard Levi subgroup that is the quasi-split inner form of $G_b$, the first map is from Conjecture \ref{conj: basic-LLC} for $L$, and the second map comes from $\co{L}{L} \hookrightarrow \co{L}{G}$.
Furthermore, for each $\phi \in \Phi(G)$, the union of $\Pi_\phi(G_b):=\LLC_{G_b}^{-1}(\phi)$ over $b\in B(G)$ is equipped with a bijection $\iota_\mf{w}$ to $\Irr(S_\phi)$ such that the following diagram commutes:
\begin{equation}
    \begin{tikzcd}
    \coprod\limits_{b \in B(G)} \Pi_{\phi}(G_b) \arrow[d] \arrow[r, "\iota_{\mf{w}}"] & \Irr(S_{\phi}) \arrow[d] \\
    B(G) \arrow[r, "\kappa_G"]& X^{\ast}(Z(\widehat{G})^{\Gamma}),
    \end{tikzcd}
\end{equation}
where the left vertical map is the obvious projection and the right vertical map takes central character. In particular, we note that since $\iota_{\mf{w}}$ is bijective, one can recover $b \in B(G)$ from $\iota_{\mf{w}}(\pi) \in \Irr(S_{\phi})$ for $\pi \in \Pi_{\phi}(G_b)$.
\end{theorem}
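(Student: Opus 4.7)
The plan is to construct both sides of the desired bijection using the highest weight classification of $\Irr(S_\phi)$ together with the Newton decomposition of $B(G)$, matching them Levi-by-Levi via parabolic induction. By Theorem \ref{thm: acharmainthm} and Lemma \ref{lem: disconn-irrep-paramet}, the irreducible algebraic representations of the disconnected reductive group $S_\phi$ correspond to pairs $(\lambda, E)$, where $\lambda$ represents an orbit in $X^*(A_{\wh{M}})^+/W_{S_\phi}(A_{\wh{M}},B_\phi)$ and $E$ is an isomorphism class of simple $\mc{A}^\lambda$-module; here $M$ is the minimal standard Levi through which $\phi$ factors. On the $B(G)$ side, the decomposition \eqref{eqn: BGdecomp} together with the bijections $B(L)_\bas^+ \xrightarrow{1:1} B(G)_Q$ for each standard parabolic $Q$ with Levi $L$ lets us reduce to one Levi at a time.

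Given a pair $(\lambda, E)$, the Weyl-group construction of \S\ref{ss: weylgroupconstructions} produces (after replacing $\phi$ with a $\wh{G}$-conjugate if necessary) a standard Levi $L = L_\lambda \supset M$ and standard parabolic $Q = Q_\lambda$ with Levi $L$, such that $\alpha_M(\lambda) \in \mf{A}_Q^+$. Restricting $\lambda$ to $A_{\wh{L}}$ and applying the Kottwitz isomorphism yields a class $b_L \in B(L)_\bas^+$, whose image in $B(G)$ under $B(L)_\bas^+ \xrightarrow{1:1} B(G)_Q$ is the distinguished $b = b(\lambda) \in B(G)$. Proposition \ref{prop: AlambdaL} identifies $A^\lambda$ with $A^\lambda_L$ and hence the twisted group algebras $\mc{A}^\lambda$ for $S_\phi$ and $S_{\phi,L}$, so the same $E$ also produces an irreducible representation of $S_{\phi,L}$ via Theorem \ref{thm: acharmainthm} applied to $S_{\phi,L}$. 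Tracking its $Z(\wh{L})^\Gamma$-central character via \eqref{eqn: Kottwitz-Newton}, this representation descends to $S_{\phi,L}^\natural$ (or, in the Archimedean case, is read through the enhanced correspondence of Theorem \ref{thm: realBGbasthm}), so the assumed $B(L)_\bas$-LLC for $L$ delivers a representation $\sigma_{\lambda,E}$ of $L_b$ in the $L$-packet for the parameter obtained from $\phi$ through ${}^LM \hookrightarrow {}^LL$.

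To transport $\sigma_{\lambda, E}$ to $G_b$, I parabolically induce along $Q_b \subset G_b$ with a positive real twist $e^\nu$ extracted from the Langlands classification of $\phi$ as a $G$-parameter (Theorem \ref{thm: LanglandsClassificationLparam} in the Archimedean case, and its standard $p$-adic analogue), then take the Langlands quotient using Theorem \ref{thm: LanglandsClassification} and its $p$-adic counterpart; the resulting irreducible $\pi_{\lambda, E}$ is declared to lie in $\Pi_\phi(G_b)$, and I set $\iota_\mf{w}(\pi_{\lambda, E}) = \mc{L}(\lambda, E)$. An inverse is given by running the Langlands classification on the representation side to extract a tempered representation of a Levi $L_b$ of $G_b$, then applying the $B(L)_\bas$-LLC and Proposition \ref{prop: AlambdaL} to recover a pair $(\lambda, E)$. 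Central character compatibility follows by chasing \eqref{eqn: Kottwitz-Newton} together with the central character square of the assumed $B(L)_\bas$-LLC, and finite-to-oneness of $\LLC_{G_b}$ holds because for each fixed $b$ only finitely many pairs $(\lambda, E)$ contribute to $\Pi(G_b)$.

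I expect the main obstacle to be establishing the well-definedness of this construction, independently of the Weyl-group choices made in \S\ref{ss: weylgroupconstructions}: the element $w \in W^\rel$ sending $\alpha_M(\lambda)$ into the closed dominant chamber, and the representative of $\lambda$ in its $W_{S_\phi}(A_{\wh{M}}, B_\phi)$-orbit. Both $L_\lambda$ and $b(\lambda)$ depend on these choices, but the resulting representation $\pi_{\lambda, E}$ must not. This is to be controlled by the compatibility statements of Lemmas \ref{lem: weylinj} and \ref{lem: weylinj2}, combined with the $\wh{G}$-conjugation invariance of $L$-packets under the assumed $B(L)_\bas$-LLC. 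The Archimedean case requires an additional layer of bookkeeping, since the enhanced basic correspondence \eqref{eqn: archimedean basic correspondence} of Theorem \ref{thm: realBGbasthm} must be threaded through, with $B(S)_{\Gbas}/W_G(S)^\Gamma$ playing the role of the Kottwitz invariant.
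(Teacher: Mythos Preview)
Your construction diverges from the paper's at a crucial point and rests on a misconception about the group $G_b$. After obtaining $\sigma_{\lambda,E} \in \Pi(L_{b_L})$ via the $B(L)_\bas$-LLC, you propose to parabolically induce along some $Q_b \subset G_b$ and take a Langlands quotient. But there is no such induction to perform: for $b \in B(G)_Q$ arising from $b_L \in B(L)^+_\bas$, the group $G_b$ is \emph{not} an inner form of $G$ containing $L_{b_L}$ as a proper Levi; rather $G_b$ \emph{equals} $L_{b_L}$. This is Lemma \ref{lem: Gbequals} in the paper: since $\nu_G(b) = \nu_L(b_L)$ lies in $\mf{A}_Q^+$, its centralizer in $G$ is exactly $L$, and $G_b$ is by definition an inner form of that centralizer. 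Hence $\sigma_{\lambda,E}$ is already a representation of $G_b(F)$, and one simply sets $\pi_b := \sigma_{\lambda,E}$. (A related imprecision: you obtain $b_L$ by restricting $\lambda$ to $A_{\wh{L}}$ and inverting the Kottwitz map, but $\kappa_L$ takes values in $X^*(Z(\wh{L})^\Gamma)$, not $X^*(A_{\wh{L}})$; the paper instead first builds $\rho_L \in \Irr(S^\natural_{\phi',L})$, applies the $B(L)_\bas$-LLC to get $b_L$, and only afterward verifies $b_L \in B(L)^+_\bas$ in Lemma \ref{lem: dominance}.)

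Your invocation of the Langlands classification conflates two unrelated decompositions. The Langlands classification (Theorems \ref{thm: LanglandsClassification}--\ref{thm: LanglandsClassificationLparam}) reduces general parameters to tempered ones via a positive twist; the Newton decomposition \eqref{eqn: BGdecomp} reduces $B(G)$ to the basic sets $B(L)^+_\bas$ of Levi subgroups. Only the latter is used in the construction of $\iota_\mf{w}$; the former enters solely to reduce the \emph{hypothesis} Conjecture \ref{conj: basic-LLC} to the tempered case (Remark \ref{rem: tempered}). Your proposed inverse has the same defect: given $\pi \in \Pi(G_b)$, one does not run the Langlands classification on $G_b$; one identifies $G_b = L_{b_L}$ via \eqref{eqn: BGdecomp}, applies the $B(L)_\bas$-LLC to $\pi$ directly to obtain $(\phi, \rho_L)$, and then reassembles $\rho = \mc{L}(\lambda,E)$ from $\rho_L$ using Proposition \ref{prop: AlambdaL}. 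This is exactly the content of the paper's Proposition \ref{prop: surjectivity}, with injectivity and the commutativity of the square handled separately in Propositions \ref{prop: injectivity} and the one following it.
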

Some remarks are in order.
\begin{remark}{\label{rem: mainthmrems}}
\begin{enumerate}
    \item     The set $\Pi_{\phi}(G_b)$ is trivial if $\phi$ does not factor through the canonical embedding $\co{L}{G_b} \rightarrow \co{L}{G}$. In particular, when $\phi$ is discrete, nothing new happens: $\Pi_{\phi}(G_b)$ is trivial for all non-basic $b$, and so we reduce to Conjecture \ref{conj: basic-LLC}.
    \item    From Corollary \ref{cor: surjectivity}, we get that the $\Pi_{\phi}(G_b)$ are unions of $L$-packets for $G_b$ considered as an inner twist of its quasi-split inner form.
    \item  In many cases, Theorem \ref{thm: main} is unconditional because Conjecture \ref{conj: basic-LLC} is known for all standard Levi subgroups. For instance, when $F$ is non-archimedean, this is true for $\GL_n$ by \cite{HT1}, \cite{HenniartLLC}, \cite{DeligneKazhdanVigneras}, \cite{LaumonRapoportStuhler}. For $p$-adic  $\SL_n$ this essentially follows from \cite{HiragaSaito}; here, the meaning of ``essentially" is that a Levi of $\SL_n$ is an intermediate group between a product of general linear groups and a product of special linear groups, hence we need to consider such groups inductively as well.  For $p$-adic unitary groups, this follows from \cite{Mok}, \cite{KMSW}, and \cite{AGIKMS}. The case of $p$-adic $\SO_{2n+1}$ is known by \cite{Arthurbook} and \cite{Ishimoto}. The archimedean case is known for all groups as discussed in \ref{ss: enhancedbasicarchllc}.
    \item We can also check that the maps $\LLC_{G_b}$ and $\iota_\mf{w}$ of Theorem \ref{thm: main} satisfy an expected property on duality. See the end of Section \ref{ss: properties}.
\end{enumerate}
\end{remark}

\begin{example}
    The simplest non-trivial example is for $G=\GL_2$ where $\phi=\phi_1 \oplus \phi_2$ is a sum of two characters of $W_F$ that do not differ by the norm character $|\cdot |$. Let $\chi_1, \chi_2$ be the corresponding characters of $F^{\times}$ by local class field theory. We fix the standard splitting of $G$ using the diagonal torus $T$, upper triangular Borel $B$, and standard choice of a simple root vector.
    Then $S_{\phi}$ can be identified with the diagonal torus of $\GL_2$ and we have $X^*(S_{\phi}) = \Z^2$. The Kottwitz set $B(\GL_2)= B(G)_G \coprod B(G)_B$ and we have $B(G)_G=\Z$ and $B(G)_B = \Z^2_>= \{ (x,y) \in \Z \mid x>y \}$. Then for $b \in B(G)_G$, we have $\Pi_{\phi}(G_b)$ is empty if $b$ is odd (so $G_b$ is non-split) and contains the irreducible representation $I^G_B(\chi_1 \boxtimes \chi_2)$ when $b$ is even. For $b=(x,y) \in B(G)_B$, we have $G_b$ is isomorphic to the diagonal torus of $G$ and $\Pi_{\phi}(G_b) = \{ \chi_1 \boxtimes \chi_2 , \chi_2 \boxtimes \chi_1 \}$. These representations correspond to two different elements of $X^*(S_{\phi})=\Z^2$. The first has weights $(x,y)$ and the second has weights $(y,x)$.
\end{example}
\begin{example}
    The next interesting example to consider is the parameter of $G=\SL_2$ corresponding to a degree two extension $E/F$ and such that the map $W_F \rightarrow \wh{G} = \PGL_2$ factors through $W_F/W_E$ and takes the non-trivial element to $\begin{pmatrix} 1 & 0 \\ 0 & -1 \end{pmatrix}$. Then $S_{\phi} = T \coprod nT$ where $T$ is the diagonal torus and $n = \begin{pmatrix} 0 & 1 \\ 1 & 0 \end{pmatrix} $. The abelianization of $S_{\phi}$ is the component group which is $\Z/2\Z$ and hence there are two irreducible characters which correspond to the two representations of $\SL_2(F)$ in the packet of the unique basic element of $B(G)$. The other irreducibles of $S_{\phi}$ are $2$-dimensional and each one restricted to $T$ is a sum of two non-trivial characters of weight $n$ and $-n$ for some integer $n > 0$. We call these $\pi_n$. The elements of $B(G)_B$ are in bijection with positive integers and the element $b \in B(G)_B$ corresponding to positive integer $n$ satisfies $\Pi_{\phi}(G_b) = \{ \pi_n \}$.
\end{example}

\section{The construction}{\label{s: theconstruction}}
In this section, $F$ is an arbitrary local field. Recall that we fixed an $F$-splitting $(T,B,\{X_\alpha\})$ of $G$, which gives rise to a Whittaker datum $\mf{w}$ of $G$.
For each standard Levi subgroup $L \subset G$, the Whittaker datum $\mf{w}$ restricts to give a Whittaker datum $\mf{w}_L$ of $L$.

\subsection{The easy map}{\label{ss: theeasymap}}

Fix a pair $(b, \pi_b)$ of $b\in B(G)$ and $\pi_b\in\Pi(G_b)$.
By \eqref{eqn: BGdecomp}, there exists a unique standard Levi subgroup $L$ of $G$ and $b_L\in B(L)_\bas^+$ such that $b_L$ is identified with $b\in B(G)$.
We may regard $\pi_b$ as an element of $\Pi(L_{b_L})$ via the identification $G_b\cong L_{b_L}$ as discussed later; see Lemma \ref{lem: Gbequals}.
Then, by the $B(L)_\bas$-LLC, we can associate to $\pi_b$ the pair $(\phi,\rho_L)$ of an $L$-parameter $\phi$ of $L$ and an irreducible representation $\rho_L$ of $S_{\phi,L}^\natural$ (i.e., $\rho_L=\iota_{\mf{w}_L}(\pi_b)$).

Let ${}^{L}M$ be a smallest Levi subgroup of ${}^{L}G$ such that $\phi$ factors through the $L$-embedding ${}^{L}M\hookrightarrow{}^{L}L$.
We regard $\phi$ also as an $L$-parameter of $G$ by composing it with the embedding ${}^{L}L\hookrightarrow{}^{L}G$.
Then, by Lemma \ref{lem: S-group-Levis}, $S_{\phi}^\circ$ is a connected reductive group and $S_{\phi,L}^\circ$ is its Levi subgroup with a maximal torus $A_{\wh{M}}$.
Hence, by representation theory of disconnected reductive groups (\S \ref{ss: repthrydisconnected}), $\rho_L$ is given by $\mc{L}_L(\lambda,E)$, where $\lambda\in X^\ast(A_{\wh{M}})^+$ is a dominant character and $E$ is a simple $\mc{A}_L^\lambda$-module with the notation as in \S \ref{ss: repthrydisconnected}.

Since $A_L^\lambda=A^\lambda$ by Proposition \ref{prop: AlambdaL}, $E$ can be regarded as a simple $\mc{A}^\lambda$-module.
Thus, we get an irreducible representation $\rho:=\mc{L}(\lambda,E)$ of $S_\phi$.
We put $\iota_{\mf{w}}(\pi_b):=\rho$ and this completes the construction of our map.

\subsection{The map in the other direction}{\label{ss: themap}}

We now construct a map in the other direction.
Let $[\phi]\in\Phi(G)$, i.e., $[\phi]$ is a $\wh{G}$-conjugacy class of $L$-parameters of $G$. 
(In this section, we use the symbol $[\phi]$ in order to emphasize that it is a $\wh{G}$-conjugacy class.)
We fix a representative $\phi$ of $[\phi]$. 
The group $S_{\phi}$ is a possibly disconnected reductive group. 
Our aim is to associate to $\rho\in\Irr(S_{\phi})$ a pair $(b, \pi_b)$ for $b \in B(G)$ and $\pi_b \in \Pi(G_b)$. 

Let ${}^{L}{M}$ be a minimal Levi subgroup through which $\phi$ factors and as in \S \ref{ss: weylgroupconstructions}, we replace $\phi$ with a conjugate such that we can assume ${}^{L}M$ is a standard Levi.
Let $M$ be the standard Levi subgroup of $G$ corresponding to ${}^{L}M$.
We fix a Borel subgroup $B_\phi$ of $S_\phi^\circ$ containing the maximal torus $A_{\wh{M}}$.

Let $\rho\in\mathrm{Irr}(S_\phi)$. 
By the classification of irreducible representations of disconnected reductive groups (Theorem \ref{thm: acharmainthm}), there exists a weight $\lambda \in X^{\ast}(A_{\wh{M}})^+$ (dominant relative to $B_{\phi}$) and a simple $\mc{A}^{\lambda}$-module $E$ such that $\rho \cong \mc{L}(\lambda,E)$ with the notations as in \S\ref{ss: repthrydisconnected}.
We associate $w\in W^\rel$, $Q=Q_\lambda$, and $L=L_\lambda$ to $\lambda$ according to the construction given in \S \ref{ss: weylgroupconstructions}.
Let us write $M':=\wM$.
Choose a representative $\dot{w} \in N_{\wh{G}}(A_{\wh{T}})$ of $w\in W^\rel\cong\wh{W}^\rel$ and consider the conjugate $L$-parameter $\phi':=\Int(\dot{w})\circ\phi$ of $\phi$. 
By construction, $\phi'$ factors through $\co{L}M'$ and hence $\co{L}L$. 
Conjugation by $\dot{w}$ induces an isomorphism $\Int(\dot{w}): S_{\phi} \cong S_{\phi'}$ and hence we get a corresponding representation $\rho' \in \Irr(S_{\phi'})$ and weight $\lambda':=\co{\dot{w}}\lambda \in X^{\ast}(A_{\wh{M'}})^+$ (dominant relative to $\co{\dot{w}}B_{\phi}$). 
We have $\rho'\cong \mc{L}(\lambda',E')$, where $E'$ is the simple $\mc{A}^{\lambda'}$-module corresponding to $E$ under the identification $S_{\phi} \cong S_{\phi'}$.
 
We let $\mc{L}_L(\lambda')$ be the irreducible  representation of $S^{\circ}_{\phi',L}$ with highest weight $\lambda'$.
Proposition \ref{prop: AlambdaL} says that the natural map from $\pi_0(S_{\phi',L})=A^{\lambda'}_L$ to $A^{\lambda'}$ is a bijection.
Thus we may regard $E'$ as a simple $\mc{A}_L^{\lambda'}$-module, for which we write $E'_L$.
Again by the classification of irreducible representations of disconnected reductive groups, applied to $S_{\phi',L}$, we get an irreducible representation $\mc{L}_L(\lambda',E'_L)$ of $S_{\phi',L}$.
We denote this representation by $\rho_L$.

\begin{lemma}{\label{lem: naturalfactoring}}
The representation $\rho_L \in \Irr(S_{\phi',L})$ factors through $S^{\natural}_{\phi',L}$, to give a representation which by abuse of notation we also denote $\rho_L$. 
\end{lemma}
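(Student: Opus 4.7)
The plan is to show that the normal subgroup $N := (\wh{L}_\der \cap S_{\phi',L})^\circ$, which is the kernel of $S_{\phi',L} \twoheadrightarrow S^\natural_{\phi',L}$, acts trivially on $\rho_L$. First I would verify that $N$ is connected reductive and normal in $S_{\phi',L}$: normality follows from $\wh{L}_\der$ being characteristic in $\wh{L}$ (so $S_{\phi',L}\subset\wh{L}$ normalizes it) together with $S_{\phi',L}$ normalizing itself, while reductiveness follows because the identity component of a normal subgroup of a reductive group is reductive (its unipotent radical is characteristic, hence normal in the ambient, and therefore trivial).

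Since $\rho_L = \Ind^{S_{\phi',L}}_{S^{\lambda'}_{\phi',L}}(E'_L \otimes \mc{L}_L(\lambda'))$, a standard lemma for induction says that a normal subgroup of the ambient acts trivially on the induced representation if and only if it acts trivially on the inducing representation. Moreover, since $N \subset S^\circ_{\phi',L} \subset S^{\lambda'}_{\phi',L}$, the formula for the action of $S^{\lambda'}_{\phi',L}$ on $E'_L \otimes \mc{L}_L(\lambda')$ given in \S\ref{ss: repthrydisconnected} shows that elements of the identity component act trivially on the $E'_L$-factor. Hence the problem reduces to showing that $N$ acts trivially on the irreducible $S^\circ_{\phi',L}$-representation $\mc{L}_L(\lambda')$.

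The main obstacle is to convert the hypothesis $\alpha_{M'}(\lambda')\in\mf{A}_L$ (which holds by the construction of $L=L_\lambda$ in \S\ref{ss: weylgroupconstructions}) into a concrete statement about characters of $A_{\wh{M'}}$. Unwinding the defining composition $\alpha_{M'}^{-1}\colon X_\ast(A_{M'}) \hookrightarrow X_\ast(Z(M')^\circ) \cong X^\ast(\wh{M'}_\ab) \to X^\ast(A_{\wh{M'}})$, the subspace $X_\ast(A_L)_\R\subset X_\ast(A_{M'})_\R$ factors through the inclusion $X^\ast(\wh{L}_\ab)\hookrightarrow X^\ast(\wh{M'}_\ab)$ dual to the abelianization map $\wh{M'}_\ab\twoheadrightarrow\wh{L}_\ab$ induced by $\wh{M'}\subset\wh{L}$. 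Tracking this through, $\mu\in X^\ast(A_{\wh{M'}})_\R$ satisfies $\alpha_{M'}(\mu)\in\mf{A}_L$ if and only if $\mu$ arises by restriction of a character of $\wh{L}$ trivial on $\wh{L}_\der$, or equivalently, $\mu$ vanishes on $T^\ast := (A_{\wh{M'}}\cap\wh{L}_\der)^\circ$. In particular $\lambda'|_{T^\ast}=0$.

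To conclude, I would use that $(S^\circ_{\phi',L})_\der$, being semisimple and contained in the reductive $\wh{L}$, lies in $\wh{L}_\der$; therefore the identity component of $A_{\wh{M'}}\cap(S^\circ_{\phi',L})_\der$ (a maximal torus of $(S^\circ_{\phi',L})_\der$) is contained in $T^\ast$. Triviality of $\lambda'$ on this torus forces $\mc{L}_L(\lambda')$ to be one-dimensional, i.e.\ a character of $S^\circ_{\phi',L}$ extending $\lambda'$. Since $T^\ast=(A_{\wh{M'}}\cap N)^\circ$ is a maximal torus of the reductive group $N$, and any character of $N$ is determined by its restriction to a maximal torus (the derived part having no nontrivial characters), the vanishing $\lambda'|_{T^\ast}=0$ implies that this character is trivial on $N$, completing the argument.
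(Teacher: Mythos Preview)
Your argument is correct and follows the same overall strategy as the paper: reduce to showing that $N=(\wh{L}_\der\cap S_{\phi',L})^\circ$ acts trivially on the highest-weight representation $\mc{L}_L(\lambda')$ of $S^\circ_{\phi',L}$, establish that $\mc{L}_L(\lambda')$ is one-dimensional, and then check that this character vanishes on $N$.

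The execution differs, however. The paper proceeds by clearing denominators: it chooses $m\in\Z_{>0}$ so that $\alpha_{M'}(m\lambda')\in X_\ast(A_L)$, whence $m\lambda'$ extends to an honest character of $\wh{L}$; this makes $\mc{L}_L(m\lambda')$ visibly one-dimensional and trivial on $N$, and a Weyl-dimension comparison together with torsion-freeness of $X^\ast(N)$ then descends both conclusions to $\mc{L}_L(\lambda')$. You instead work directly at the level of subtori: from $\alpha_{M'}(\lambda')\in\mf{A}_L$ you extract that $\lambda'$ vanishes on $T^\ast=(A_{\wh{M'}}\cap\wh{L}_\der)^\circ$, observe that $T^\ast$ contains the maximal torus of $(S^\circ_{\phi',L})_\der$ (forcing one-dimensionality) and equals the maximal torus of $N$ (forcing triviality on $N$). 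Your route avoids the Weyl dimension formula and the auxiliary integer $m$, at the cost of tracking several subtori and the identity $T^\ast=(A_{\wh{M'}}\cap N)^\circ$; the paper's route is more hands-on but requires the dimension comparison $\dim\mc{L}_L(\lambda')\le\dim\mc{L}_L(m\lambda')$. Both are short once the key observation $\alpha_{M'}(\lambda')\in\mf{A}_L$ is in hand.
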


\begin{proof}
We first study the representation $\mc{L}_L(\lambda') \in \Irr(S^{\circ}_{\phi',L})$. 
Note that $\alpha_{M'}(\lambda')$ belongs to $\mf{A}_L$ by construction and the following diagram commutes.
\begin{equation*}
     \begin{tikzcd}
      X^{\ast}(A_{\wh{M'}})_{\R} \arrow[rr, bend right=-15, "\alpha_{M'}"]  &X^{\ast}(\wh{M'}_{\ab})_{\R} \arrow[l, "\res"] & \mf{A}_{M'}\arrow[l]   \\
    &X^{\ast}(\wh{L}_{\ab})_{\R} \arrow[u, hook] & \mf{A}_L \arrow[l] \arrow[u, hook]
      \end{tikzcd}
\end{equation*}
Let $m\in\Z_{>0}$ be a positive integer such that $\alpha_{M'}(m\lambda')$ belongs to $X_\ast(A_{L})$. Hence, by the above diagram, there is a character of $\wh{L}$ whose restriction to $A_{\wh{M'}}$ is $m\lambda'$.
Then the irreducible representation $\mc{L}_L(m\lambda') \in \Irr(S^{\circ}_{\phi',L})$ with highest weight $m\lambda'$ is actually just this character acting through $S^{\circ}_{\phi',L} \subset \wh{L}$.
This implies that the irreducible representation $\mc{L}_L(\lambda') \in \Irr(S^{\circ}_{\phi',L})$ with highest weight $\lambda'$ is also a character of $S^{\circ}_{\phi',L}$.
(This can be checked by, e.g., comparing the dimensions of $\mc{L}_L(m\lambda')$ and $\mc{L}_L(\lambda')$; through the Weyl dimension formula, we can easily see that $\dim\mc{L}_L(\lambda')\leq \dim\mc{L}_L(m\lambda')$.)

Since $\mc{L}_L(m\lambda')$ is the restriction of a character of $\wh{L}$, the representation $\mc{L}_L(m\lambda')$ is clearly trivial on $(\wh{L}_{\der} \cap S^{\circ}_{\phi',L})^{\circ}$.
In other words, the $m$-th power of the character $\mc{L}_L(\lambda')|_{(\wh{L}_{\der} \cap S^{\circ}_{\phi',L})^{\circ}}$ is trivial.
As the finitely generated abelian group $X^\ast((\wh{L}_{\der} \cap S^{\circ}_{\phi',L})^{\circ})$ is torsion-free, this implies that $\mc{L}_L(\lambda')|_{(\wh{L}_{\der} \cap S^{\circ}_{\phi',L})^{\circ}}$ is trivial.
Therefore $\rho_L$ is trivial on $(\wh{L}_{\der} \cap S^{\circ}_{\phi',L})^{\circ}=(\wh{L}_{\der} \cap S_{\phi',L})^{\circ}$.
This concludes the proof of the lemma.
\end{proof}

Now, by the $B(L)_\bas$-LLC (Conjecture \ref{conj: basic-LLC} for $F$ non-Archimedean, Diagram \eqref{eqn: complexbasicLLC} for $\C$, Theorem \ref{thm: realBGbasthm} for $\R$), we get $b_L \in B(L)_{\bas}$ and $\pi_{b_L} \in \Pi(L_{b_L})$ corresponding to $\rho_L\in\Irr(S_{\phi',L}^\natural)$ (i.e., $\iota_{\mf{w}_L}(\pi_{b_L})=\rho_L$).
Denote by $b$ the image of $b_L$ in $B(G)$. 

\begin{lemma}{\label{lem: dominance}}
We have $b_{L}\in B(L)^+_\bas$.
\end{lemma}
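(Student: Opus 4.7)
The plan is to compute $\nu_L(b_L)$ explicitly and recognise it as the element $\alpha_{M'}(\lambda') \in \mf{A}_L$, which by the very construction of $Q = Q_\lambda$ in \S\ref{ss: weylgroupconstructions} already lies in $\mf{A}_Q^+$. Diagram \eqref{eqn: Kottwitz-Newton} applied to $L$ gives $\nu_L|_{B(L)_\bas} = \alpha_L\circ\kappa_L$, so the task reduces to establishing the identity $\alpha_L(\kappa_L(b_L)) = \alpha_{M'}(\lambda')$.

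By the commutativity of \eqref{eqn: basic correspondence} in the non-archimedean case, and of the archimedean refinements \eqref{eqn: complexbasicLLC} and \eqref{eqn: archimedean basic correspondence} in the archimedean cases, the element $\kappa_L(b_L) \in X^\ast(Z(\wh{L})^\Gamma)$ is exactly the $Z(\wh{L})^\Gamma$-central character of $\rho_L$. Recall that $\rho_L = \mc{L}_L(\lambda',E'_L) = \Ind_{S_{\phi',L}^{\lambda'}}^{S_{\phi',L}}(E'_L\otimes\mc{L}_L(\lambda'))$, where $\mc{L}_L(\lambda')$ has highest weight $\lambda'$ on the maximal torus $A_{\wh{M'}}$ of $S^\circ_{\phi',L}$. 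Since $A_{\wh{L}} = Z(\wh{L})^{\Gamma,\circ}$ is contained in $A_{\wh{M'}}$ and is central in the whole of $S_{\phi',L}$, every irreducible summand of $\rho_L|_{S^\circ_{\phi',L}}$ carries the same $A_{\wh{L}}$-weight $\lambda'|_{A_{\wh{L}}}$. Under the canonical isomorphism $X^\ast(Z(\wh{L})^\Gamma)_\R \cong X^\ast(A_{\wh{L}})_\R$ the $Z(\wh{L})^\Gamma$-central character of $\rho_L$ is therefore identified with $\lambda'|_{A_{\wh{L}}}$.

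Finally, the commutative diagram in the proof of Lemma \ref{lem: naturalfactoring} shows that whenever $\alpha_{M'}(\lambda')$ already lies in $\mf{A}_L\subset\mf{A}_{M'}$---which is our situation, by the choice of $L = L_\lambda$---one has $\alpha_L(\lambda'|_{A_{\wh{L}}}) = \alpha_{M'}(\lambda')$. Chaining the three identifications yields
\begin{equation*}
\nu_L(b_L) = \alpha_L(\kappa_L(b_L)) = \alpha_L(\lambda'|_{A_{\wh{L}}}) = \alpha_{M'}(\lambda') \in \mf{A}_Q^+,
\end{equation*}
which proves $b_L \in B(L)_\bas^+$. The only genuinely delicate point is verifying that in the archimedean settings $\kappa_L(b_L)$ is still recovered from the $Z(\wh{L})^\Gamma$-central character of $\rho_L$; this is precisely what the enhanced archimedean basic correspondence of \S\ref{ss: basiccorrespondence} is designed to guarantee, so once that dictionary is unpacked, the remainder of the argument is a purely formal manipulation of the Kottwitz--Newton comparison.
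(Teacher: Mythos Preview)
Your proof is correct and follows essentially the same route as the paper's: both compute $\nu_L(b_L)=\alpha_L\circ\kappa_L(b_L)$ via \eqref{eqn: Kottwitz-Newton}, identify $\kappa_L(b_L)$ with the $Z(\wh{L})^\Gamma$-central character of $\rho_L$ using \eqref{eqn: basic correspondence}, recognise this as $\lambda'|_{A_{\wh{L}}}$, and then use the section property of $\mf{A}_L\hookrightarrow\mf{A}_{M'}$ under $\alpha_L,\alpha_{M'}$ to conclude $\nu_L(b_L)=\alpha_{M'}(\lambda')\in\mf{A}_Q^+$. The only cosmetic difference is that the paper phrases the goal as $\nu_G(b)\in\mf{A}_Q^+$ and invokes functoriality $\nu_G(b)=\nu_L(b_L)$, whereas you work directly with $\nu_L(b_L)$; your reference to the diagram in Lemma~\ref{lem: naturalfactoring} for the section identity is slightly imprecise (the paper draws the relevant square separately), but the underlying fact is the same.
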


\begin{proof}
Recall that the natural map $B(L)\rightarrow B(G)$ induces a bijection $B(L)_\bas^+\xrightarrow{1:1}B(G)_Q$ and that the subset $B(G)_Q$ of $B(G)$ is defined to be the preimage of $\mf{A}_Q^+$ under the Newton map (\S \ref{ss: Kottwitzreview}):
\[
\begin{tikzcd}
B(L) \ar[r] & B(G) \ar[r, "\nu_G"] & \overline{C}\\
B(L)_\bas^+ \ar[r, "1:1"] \ar[u, hook] & B(G)_Q \ar[r] \ar[u,hook] & \mf{A}_Q^+ \ar[u, hook]
\end{tikzcd}
\]
Thus our task is to check that $\nu_G(b)$ belongs to $\mf{A}_Q^+$.
Since the Newton map is functorial, i.e., we have $\nu_G(b)=\nu_L(b_L)$, it suffices to show that $\nu_L(b_L)$ belongs to $\mf{A}_Q^+$.

Recall that we have $\nu_L(b_L)=\alpha_L\circ\kappa_L(b_L)$ since $b_L$ is basic (\eqref{eqn: Kottwitz-Newton} for $L$):
\begin{equation*}
\begin{tikzcd}
B(L)_\bas \ar[r, "\kappa_L"] \ar[rrr, bend left=15, "\nu_L"] & X^\ast(Z(\wh{L})^\Gamma) \ar[r] & X^\ast(Z(\wh{L})^\Gamma)_\R \ar[r, "\alpha_L"]& \mf{A}_L=X_\ast(A_L)_\R
\end{tikzcd} 
\end{equation*}
By the commutative diagram \eqref{eqn: basic correspondence} (applied to $L$), $Z(\wh{L})^{\Gamma}$ acts on $\rho_{L}$ via $\kappa_{L}(b_{L})\in X^{\ast}(Z(\wh{L})^{\Gamma})$.
Since $\wh{T}\subset\wh{M'}\subset\wh{L}$, we have $\wh{T}\supset A_{\wh{M'}}\supset A_{\wh{L}}$.
By construction, $A_{\wh{M'}}$ acts on $\rho_{L}$ via $\lambda'$.
Hence the element $\kappa_{L}(b_{L})\in X^{\ast}(Z(\wh{L})^{\Gamma})_{\R}$ is nothing but the image of $\lambda'$ under the map
\[
X^{\ast}(A_{\wh{M'}})_{\R}
\xrightarrow{\mathrm{res}} X^{\ast}(A_{\wh{L}})_{\R}
=X^{\ast}(Z(\wh{L})^{\Gamma})_{\R}.
\]

Now recall that the standard parabolic subgroup $Q$ with standard Levi $L$ is chosen so that $w\cdot \alpha_{M}(\lambda)=\alpha_{M'}(\lambda')$ belongs to $\mf{A}_Q^+$.
We note that the natural inclusion map $\mf{A}_L\hookrightarrow\mf{A}_{M'}$ gives a section of the restriction map $X^\ast(A_{\wh{M'}})_\R\twoheadrightarrow X^\ast(A_{\wh{L}})_\R$ under the identifications via $\alpha_{L}$ and $\alpha_{M'}$.
\[
\begin{tikzcd}
X^\ast(A_{\wh{M'}})_\R \ar[r, "\alpha_{M'}"] \ar[d, "\mathrm{res}"] & \mf{A}_{M'}\\
X^\ast(A_{\wh{L}})_\R \ar[r, "\alpha_{L}"] & \mf{A}_{L} \ar[u, hook]
\end{tikzcd}
\]
Hence the image of $\lambda'\in X^\ast(A_{\wh{M'}})_\R$ in $X^\ast(A_{\wh{L}})_\R$, which equals $\kappa_L(b_L)$ by the argument in the previous paragraph, is equal to $\alpha_{L}^{-1}\circ\alpha_{M'}(\lambda')$.
Thus we get $\alpha_{L}\circ\kappa_L(b_L)=\alpha_{M'}(\lambda')$.
This implies that $\nu_L(b_L)\,(=\alpha_L\circ\kappa_L(b_L)=\alpha_{M'}(\lambda'))$ lies in $\mf{A}_Q^+$.
\end{proof}

\begin{lemma}{\label{lem: Gbequals}}
We have $L_{b_L}=G_b$.
\end{lemma}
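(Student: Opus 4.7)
The plan is to use the explicit cocycle description of $G_b$ and $L_{b_L}$ from \S\ref{ss: Kottwitzreview} and show that both coincide because they are constructed by twisting the \emph{same} Levi $L$ by the \emph{same} cocycle.

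Concretely, first pick an algebraic cocycle $z_L \in Z^1_{\alg}(\mc{E}^{\iso}_F, L(\ov{F}))$ representing $b_L$. Composing with $L \hookrightarrow G$ yields a cocycle $z_G \in Z^1_{\alg}(\mc{E}^{\iso}_F, G(\ov{F}))$ representing $b$; by the formula for $G_b$ recalled in \S\ref{ss: Kottwitzreview}, $G_b$ is the $F$-group whose $R$-points are $\{g \in G(R \otimes_F \ov{F}) \mid \Int(z_{G,e})(\gamma(g))=g\}$, and similarly for $L_{b_L}$ with $L$ in place of $G$ and $z_L$ in place of $z_G$. So for the equality of group schemes it suffices to show that the (a priori $G(\ov{F})$-valued) twisting cocycle $z_G$ in fact takes values in $L(\ov{F})$ and that the ambient Levi for the $G$-twist is $L$.

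Since $b_L \in B(L)_{\bas}$, the associated Newton cocharacter $\nu_{z_L}\colon \D_F \to L$ is central in $L$ (resp.\ $\Gm \to L$ in the Archimedean case). Its image in $G$ is $\nu_{z_G}\colon \D_F \to G$, and by functoriality of the Newton map, its $G(\ov{F})$-conjugacy class corresponds to the element $\nu_G(b) = \nu_L(b_L) \in \mf{A}_L \subset \mf{A}_T$. The standard Levi of $G$ attached to $b$ (which $G_b$ is an inner form of, by the description in \S\ref{ss: Kottwitzreview}) is precisely the centralizer $Z_G(\nu_G(b))$. By Lemma \ref{lem: dominance}, $\nu_L(b_L) \in \mf{A}_Q^+$, so by the chamber decomposition \eqref{eqn: chamberdecomp} this centralizer is exactly the standard Levi $L$.

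In particular, the cocycle $z_G$ factors through $Z_G(\nu_{z_G}) = L$, which means that as a cocycle into $G(\ov{F})$ it literally equals the image of $z_L$; and by definition of an algebraic cocycle (see \cite[\S10]{Kot14}), the condition $\Int(z_{G,e})(\gamma(g))=g$ for $g \in G$ with $\gamma = \pi(e)$ cuts out the same functor of $F$-algebras as the condition $\Int(z_{L,e})(\gamma(g))=g$ for $g \in L$, since in both cases one needs $g$ to centralize $\nu_{z_G}$ (hence lie in $L(R \otimes_F \ov{F})$) before the twisting makes sense. This yields the desired identification $L_{b_L} = G_b$ of $F$-group schemes. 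The only real content is Lemma \ref{lem: dominance}, which forces the twisting to take place inside $L$; once that is in hand, the remaining argument is a direct unwinding of the definitions, so I do not expect a serious obstacle here.
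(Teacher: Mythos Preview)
Your argument is correct and follows essentially the same approach as the paper: both use Lemma \ref{lem: dominance} to identify $Z_G(\nu_G(b))$ with $L$, and then observe that $G_b$ and $L_{b_L}$ are built from the same cocycle twisting the same Levi $L$. The paper phrases the last step as ``$L_{b_L}\hookrightarrow G_b$ plus both are inner forms of $L$'', whereas you unwind the functor of points directly, but the content is the same.
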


\begin{proof}
By the definition of the groups $G_b$ and $L_{b_L}$, we have that $L_{b_L}$ is naturally embedded in $G_b$.
The group $G_b$ is an inner form of a Levi subgroup of $G$ given by the centralizer of $\nu_G(b)$ in $G$ (\S\ref{ss: Kottwitzreview}).
Similarly, the group $L_{b_L}$ is an inner form of a Levi subgroup of $L$ given by the centralizer of $\nu_L(b_L)$ in $L$.
Thus, since we have $\nu_L(b_L)=\nu_G(b)$, it is enough to show that the centralizer of $\nu_G(b)$ in $G$ is equal to $L$.
Noting that $\nu_G(b)$ belongs to $\mf{A}_Q^+$ by Lemma \ref{lem: dominance}, this can be easily checked by looking at the definition of $\mf{A}_Q^+$.
\end{proof}

By this lemma, we may regard $\pi_{b_L}$ as a representation of $G_b(F)$.
We define $\pi_b \in \Pi(G_b)$ to be this representation. 
Hence we have finally constructed $(b, \pi_b)$ as desired. This concludes the construction. 

It is moreover easy to see that applying this map to the $\rho$ produced by \S\ref{ss: theeasymap} returns the original $(b, \pi_b)$ up to equivalence. Hence the map $([\phi], \rho) \mapsto (b, \pi_b)$ is surjective onto $\coprod_b \Pi(G_b)$.

\subsection{Independence of choices}

Recall that, for fixed $[\phi]\in\Phi(G)$ and $\rho\in\Irr(S_\phi)$, several choices were made in the construction of $(b,\pi_b)$ as follows. 
\begin{enumerate}
\item 
We fixed a representative $\phi$ of $[\phi]$.
\item
We chose a smallest Levi subgroup ${}^{L}M$ such that $\phi$ factors through ${}^{L}M\hookrightarrow{}^{L}G$.
Furthermore, we replaced $\phi$ with its conjugate ${}^{x}\phi$ so that ${}^x({}^{L}M)$ is a standard Levi subgroup.
(We put ${}^{L}M$ to be ${}^x({}^{L}M)$.)
\item
We took a weight $\lambda\in X^\ast(A_{\wh{M}})^+$ and a simple $\mc{A}^\lambda$-module $E$ such that $\rho\cong\mc{L}(\lambda,E)$.
\item
We took $w\in W^\rel$ such that $w\cdot\alpha_M(\lambda)$ belongs to $\overline{C}$ and defined the standard parabolic $Q$ with standard Levi $L$ to be the unique one satisfying $w\cdot\alpha_M(\lambda)\in \mf{A}_Q^+$.
\item
Then, by taking a representative $\dot{w}$ of the element $w\in W^\rel\cong\wh{W}^\rel$, we applied the $B(L)_\bas$-LLC to $({}^{\dot{w}}\phi,\rho_L)$, where $\rho_L:=\mc{L}_L({}^{\dot{w}}\lambda,{}^{\dot{w}}E_L)$.
\end{enumerate}

We now explain that our construction is independent of these.

We first discuss (5).
Any two choices $\dot{w}, \dot{w}' \in N_{\wh{G}}(A_{\wh{T}})$ differ by an element of $\wh{T}$. This means that $(\co{\dot{w}}\phi,\mc{L}_L({}^{\dot{w}}\lambda,{}^{\dot{w}}E_L))$ and $(\co{\dot{w}'}\phi,\mc{L}_L({}^{\dot{w}'}\lambda,{}^{\dot{w}'}E_L))$ differ by conjugation by an element of $\wh{T}\subset\wh{L}$.
Hence, the resulting $(b,\pi_b)$ does not change since the basic correspondence is assumed to be well-defined.

We next discuss (4)
If $w'\in W^\rel$ is another element such that $w'\cdot\alpha_M(\lambda)\in\overline{C}$, then we must have $w\cdot\alpha_M(\lambda)=w'\cdot\alpha_M(\lambda)$ (see, e.g., \cite[Lemma 10.3.B]{Hum3}).
In particular, the standard Levi subgroup $L$ does not change.
Furthermore, $w'w^{-1}$ stabilizes $w\cdot\alpha_M(\lambda)$ and hence lies in $W_L^\rel$ by Lemma \ref{lem: lambdacentweyl}.
This will modify $({}^{\dot{w}}\phi,\rho_L)$ up to $\wh{L}$-conjugacy, which does not affect $(b,\pi_b)$.

Let us discuss (3).
We take another weight $\lambda'\in X^\ast(A_{\wh{M}})^+$ and simple $\mc{A}^{\lambda'}$-module $E'$ such that $\rho\cong\mc{L}(\lambda',E')$.
By Lemma \ref{lem: disconn-irrep-paramet}, we may assume that $\lambda'$ is $R_\phi$-conjugate to $\lambda$ (say $\lambda'=w\cdot\lambda$) and $E$ and $E'$ are identified under the isomorphism $\mc{A}^\lambda\cong\mc{A}^{w\cdot\lambda}$.
Recall that the action of $w\in R_\phi$ factors through $R_\phi\rightarrow W_{\wh{G}}(A_{\wh{M}})$ (see \eqref{map:Wphi}) and that $W_{\wh{G}}(A_{\wh{M}})$ is identified with a subgroup of $\wh{W}^\rel$ (Lemma \ref{lem: weylinj}).
Thus, by Lemma \ref{lem: weylinj2}, $w$ does not affect the definition of $L$ and $\rho_L$.

Let us discuss (2).
Let ${}^{L}M$ and ${}^{L}M'$ be two smallest Levi subgroups of $G$ such that $\phi$ factors through ${}^{L}M$ and ${}^{L}M'$, respectively.
As explained in \S\ref{ss: S-group}, ${}^{L}M$ and ${}^{L}M'$ are conjugate by an element of $S^\circ_\phi$, say ${}^{s}({}^{L}M)={}^{L}M'$.
Thus using $M'$ instead of $M$ amounts to using ${}^s\rho$ instead of $\rho$.
Since ${}^s\rho\cong\rho$, this does not change the rest of the construction of $(b,\pi_b)$.

We finally discuss (1).
Let us choose ${}^{g}\phi$ conjugate to $\phi$ via $g\in\wh{G}$.
Then ${}^g({}^{L}M)$ is a smallest Levi subgroup such that ${}^g\phi$ factors through ${}^g({}^{L}M)\hookrightarrow{}^{L}G$.
Thus, both $\phi$ and ${}^g\phi$ are conjugate to ${}^x\phi$, whose image is contained in a standard Levi subgroup ${}^x({}^LM)$.

\subsection{Properties of the correspondence}\label{ss: properties}
We now verify that the construction in \S\ref{ss: themap} is well behaved.

\begin{proposition}{\label{prop: injectivity}}
The map $([\phi],\rho)\mapsto(b,\pi_b)$ constructed in \S \ref{ss: themap} is injective. To be more precise, suppose $\phi_1, \phi_2$ are $L$-parameters of $G$ and $\rho_i \in \Irr(S_{\phi_i})$ and that our map takes $\rho_i$ to $(\pi_i , b_i)$ with $b_1 = b_2$ and $\pi_1 \cong \pi_2$. Then $\phi_1 \sim \phi_2$ and $\rho_1 \sim \rho_2$.
\end{proposition}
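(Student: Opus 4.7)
The plan is to invert the construction of \S\ref{ss: themap}, recovering $([\phi],\rho)$ from $(b,\pi_b)$ step by step. First, $b$ alone determines the standard Levi $L$: by Lemma \ref{lem: dominance}, $b$ lies in the unique stratum $B(G)_Q$ for $Q$ the standard parabolic with Levi $L$, so the decomposition \eqref{eqn: BGdecomp} pins down $Q$ and hence $L$, while the bijection $B(L)_\bas^+\cong B(G)_Q$ pins down $b_L$. Thus $L_1=L_2=L$ and $b_{L,1}=b_{L,2}=b_L$. Since $L_{b_L}=G_b$ by Lemma \ref{lem: Gbequals}, the representation $\pi$ lies in $\Pi(L_{b_L})$, and the assumed bijectivity of the $B(L)_\bas$-LLC recovers a unique $\wh{L}$-conjugacy class of $L$-parameters $[\phi']$ of $L$ together with a unique $\rho_L\in\Irr(S_{\phi',L}^\natural)$. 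Because $\phi_i$ is $\wh{G}$-conjugate to $\phi'_i$ via $\dot{w}_i$, and $\phi'_1,\phi'_2$ are $\wh{L}$-conjugate (hence $\wh{G}$-conjugate), I conclude $[\phi_1]=[\phi_2]$ in $\Phi(G)$. Using the independence-from-choices discussion of the preceding subsection, I adjust the representative of $[\phi_2]$ and the auxiliary data consistently so that $\phi'_1=\phi'_2=:\phi'$ and $\rho_{L,1}=\rho_{L,2}=:\rho_L$ under matching identifications of the $S^\natural$-groups.

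It remains to recover $\rho\in\Irr(S_\phi)$ from $\rho_L$. Viewing $\rho_L$ as an element of $\Irr(S_{\phi',L})$ via Lemma \ref{lem: naturalfactoring}, Theorem \ref{thm: acharmainthm} together with Lemma \ref{lem: disconn-irrep-paramet} yields a pair $(\lambda',E'_L)$, unique up to the $\pi_0(S_{\phi',L})$-action, with $\rho_L\cong\mc{L}_L(\lambda',E'_L)$. Here $\lambda'\in X^\ast(A_{\wh{M'}})^+$ and $M'$ is a minimal Levi through which $\phi'$ factors---unique up to $S_{\phi'}^\circ$-conjugacy by \S\ref{ss: S-group}, which does not affect what follows. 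Proposition \ref{prop: AlambdaL} identifies $\pi_0(S_{\phi',L})$ with $A^{\lambda'}\subset\pi_0(S_{\phi'})$, and the twisted group algebra $\mc{A}^{\lambda'}_L$ is canonically identified with $\mc{A}^{\lambda'}$ via a compatible choice of the data $\{\iota(a),\theta_a\}$, so $E'_L$ gives rise to a simple $\mc{A}^{\lambda'}$-module $E'$. Setting $\rho':=\mc{L}(\lambda',E')\in\Irr(S_{\phi'})$ and conjugating back by $\dot{w}^{-1}$ produces $\rho=\mc{L}(\lambda,E)\in\Irr(S_\phi)$ that depends only on $(b,\pi)$ up to equivalence. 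Hence $\rho_1\cong\rho_2$ under the identification $[\phi_1]=[\phi_2]$.

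The main obstacle is not conceptual but bookkeeping: one must verify that the many identifications---of $S^\natural$-groups under $\wh{L}$-conjugation, of minimal Levis up to $S_{\phi'}^\circ$-conjugacy, of twisted group algebras via compatible choices of $\{\iota(a),\theta_a\}$---are consistent, so that the reversed construction is genuinely well-defined. This amounts to reapplying, in reverse, the independence-of-choices arguments already carried out in the previous subsection.
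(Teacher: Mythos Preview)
Your proposal is correct and follows essentially the same route as the paper's proof: both first use Lemma~\ref{lem: dominance} and the decomposition~\eqref{eqn: BGdecomp} to pin down $L$ and $b_L$ from $b$, then invoke bijectivity of the $B(L)_\bas$-LLC to conclude $\phi_1\sim\phi_2$ and match the $\rho_{L,i}$, and finally use Theorem~\ref{thm: acharmainthm} together with Proposition~\ref{prop: AlambdaL} to pass from equality of the $(\lambda',E'_L)$-data to $\rho_1\sim\rho_2$. The only stylistic difference is that the paper tracks an explicit element $l\in\wh{L}$ with $\co{\dot{w}_2}{\phi_2}=\co{l\dot{w}_1}{\phi_1}$ and verifies $\co{l\dot{w}_1}{\rho_1}\cong\co{\dot{w}_2}{\rho_2}$ directly, whereas you phrase the same computation as constructing an inverse to $\rho\mapsto\rho_L$; the underlying steps are identical.
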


Here, the meaning of ``$\rho_1 \sim \rho_2$" in the statement is as follows.
Since we have $\phi_1 \sim \phi_2$, we can take $g\in \widehat{G}$ such that ${}^g\phi_2=\phi_1$, which implies that ${}^g S_{\phi_2}=S_{\phi_1}$.
Then we have ${}^g\rho_2 \cong \rho_1$.
Note that this condition is independent of the choice of $g$ as any other choice $g'$ can differ from $g$ only by an element of $S_{\phi_2}$.

\begin{proof}
For $i=1,2$, let $L_i$ be the Levi subgroup associated to $(\phi_i,\rho_i)$ as in \S \ref{ss: themap}.
Similarly, we let $\rho_{i,L_i}\in \Irr(S^{\natural}_{{}^{\dot{w}_i}\phi_i,L_i})$ denote the representation associated to $(\phi_i,\rho_i)$ as in \S \ref{ss: themap}.
Recall that $b_i$ and $\pi_i\in\Pi(G_{b_i})$ are obtained by applying the $B(L_i)_\bas$-LLC to $\rho_{i,L_i}\in \Irr(S^{\natural}_{{}^{\dot{w}_i}\phi_i,L_i})$.

Note that $L_i$ is characterized as the unique standard Levi subgroup of $G$ such that $b_i\in B(G)$ is contained in $B(L_i)^+_\bas$ by Lemma \ref{lem: dominance} and the decomposition \eqref{eqn: BGdecomp}.
Thus the assumption that $b_1=b_2$ implies that $L_1=L_2$.
Let us simply write $L$ for $L_1=L_2$ in the following.

Since the $B(L)_\bas$-LLC is bijective, the assumption $\pi_1\cong\pi_2$ implies that ${}^{\dot{w}_1}\phi_1$ and ${}^{\dot{w}_2}\phi_2$ are equivalent as $L$-parameters of $L$.
Hence $\phi_1$ and $\phi_2$ are equivalent as $L$-parameters of $G$.
In the following, we fix an element $l\in\widehat{L}$ satisfying ${}^{\dot{w}_2}\phi_2={}^{l\dot{w}_1}\phi_1$ (hence we get ${}^{l}S^{\natural}_{{}^{\dot{w}_1}\phi_1,L}=S^{\natural}_{{}^{\dot{w}_2}\phi_2,L}$ and ${}^{l}S^{\natural}_{{}^{\dot{w}_1}\phi_1}=S^{\natural}_{{}^{\dot{w}_2}\phi_2}$).

Let us show that the representations ${}^{l\dot{w}_{1}}\rho_{1}$ and ${}^{\dot{w}_{2}}\rho_{2}$ of $S^{\natural}_{{}^{\dot{w}_2}\phi_2}$ are isomorphic.
For this, for each $i=1,2$, we take an element $\lambda_{i}\in X^{\ast}(A_{\wh{M}})^{+}$ and a simple $\mc{A}^{\lambda_{i}}$-module $E_{i}$ such that $\rho_{i}\cong \mc{L}(\lambda_{i},E_{i})$.
Then, by construction, $\rho_{i,L}$ is the unique irreducible representation of $S_{{}^{\dot{w}_{i}}\phi_{i},L}$ associated with the pair $({}^{\dot{w}_{i}}\lambda_{i},{}^{\dot{w}_{i}}E_{i,L})$, where ${}^{\dot{w}_{i}}E_{i,L}$ is ${}^{\dot{w}_{i}}E_{i}$ regarded as a simple $\mc{A}_{L}^{{}^{\dot{w}_{i}}\lambda}$-module via the bijection $A_{L}^{{}^{\dot{w}_{i}}\lambda}\cong A^{{}^{\dot{w}_{i}}\lambda}$.
As the assumption $\pi_1\cong\pi_2$ also implies that the representations ${}^{l}\rho_{1,L}$ and $\rho_{2,L}$ of $S^{\natural}_{{}^{\dot{w}_2}\phi_2,L}$ are isomorphic, we have ${}^{l\dot{w}_{1}}\lambda_{1}={}^{\dot{w}_{2}}\lambda_{2}$ and ${}^{l\dot{w}_{1}}E_{1,L}\cong{}^{\dot{w}_{2}}E_{2,L}$.
Thus we see that ${}^{l\dot{w}_{1}}E_{1}\cong{}^{\dot{w}_{2}}E_{2}$ and conclude that ${}^{l\dot{w}_{1}}\rho_{1}\cong{}^{\dot{w}_{2}}\rho_{2}$.
\end{proof}

We denote by $\Pi_{\phi}(G_b)$ the set of all $\pi \in \Pi(G_b)$ attached to some $\rho \in \Irr(S_{\phi})$. As a result of Proposition \ref{prop: injectivity}, we can define a bijective map $\iota_{\mf{w}}$.
\begin{equation}{\label{eqn: iotaw}}
    \coprod\limits_{b \in B(G)} \Pi_{\phi}(G_b) \xrightarrow{\iota_{\mf{w}}} \Irr(S_{\phi}). 
\end{equation}
\begin{proposition}
The map $\iota_{\mf{w}}$ fits into a commutative diagram.
\begin{equation*}
     \begin{tikzcd}
    \coprod\limits_{b \in B(G)} \Pi_{\phi}(G_b) \arrow[d] \arrow[r, "\iota_{\mf{w}}"] & \Irr(S_{\phi}) \arrow[d] \\
    B(G) \arrow[r, "\kappa_G"]& X^{\ast}(Z(\widehat{G})^{\Gamma}).
    \end{tikzcd}
\end{equation*}
\end{proposition}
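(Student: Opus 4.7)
My plan is to reduce the statement to the analogous commutativity for the $B(L)_{\bas}$-LLC applied to the pair $({}^{\dot{w}}\phi, \rho_L)$, together with the functoriality of the Kottwitz map under the inclusion $B(L)_{\bas} \hookrightarrow B(G)$ and a compatibility between the central characters of $\rho$ and $\rho_L$.

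Concretely, I would proceed as follows. First, by the commutativity of the diagram \eqref{eqn: basic correspondence} applied to the group $L$, the Kottwitz invariant $\kappa_L(b_L) \in X^{\ast}(Z(\widehat{L})^\Gamma)$ equals the character by which $Z(\widehat{L})^\Gamma$ acts on $\rho_L$. Second, from the description of the Kottwitz map on basic elements via characters of $Z(\widehat{G})^\Gamma$ and the inclusion $Z(\widehat{G})^\Gamma \hookrightarrow Z(\widehat{L})^\Gamma$, the composition $B(L)_{\bas} \to B(G) \xrightarrow{\kappa_G} X^{\ast}(Z(\widehat{G})^\Gamma)$ equals the restriction of $\kappa_L$ along $Z(\widehat{G})^\Gamma \hookrightarrow Z(\widehat{L})^\Gamma$. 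Combining the two gives that $\kappa_G(b)$ is the character by which $Z(\widehat{G})^\Gamma$ acts on $\rho_L$.

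The remaining and main point is then to compare the actions of $Z(\widehat{G})^\Gamma$ on $\rho$ and on $\rho_L$. Because $Z(\widehat{G})^\Gamma$ is pointwise fixed by conjugation by $\dot{w} \in \widehat{G}$, it suffices to compare $\rho' = \mathrm{Int}(\dot{w}) \rho \cong \mc{L}(\lambda', E')$ with $\rho_L = \mc{L}_L(\lambda', E'_L)$. Both representations are parabolic/inflated from a module of the form $E' \otimes \mc{L}(\lambda')$, respectively $E'_L \otimes \mc{L}_L(\lambda')$, so as $Z(\widehat{G})^\Gamma$ is central in each of $S_{\phi'}$ and $S_{\phi',L}$, its character on $\rho'$ and $\rho_L$ can be read off from its action on these smaller modules. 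The irreducible $S_{\phi'}^\circ$-module $\mc{L}(\lambda')$ restricts to $\mc{L}_L(\lambda')$ on the Levi $S_{\phi',L}^\circ$ as the $\lambda'$-highest-weight piece, so the $A_{\widehat{M'}}$-weight $\lambda'$ matches; meanwhile, Proposition \ref{prop: AlambdaL} identifies $A^{\lambda'}_L$ with $A^{\lambda'}$, and choosing the lifts $\iota(a) \in S_{\phi',L}^{\lambda'}$ and the isomorphisms $\theta_{L,a}$ as restrictions of $\theta_a$ to $\mc{L}_L(\lambda') \subset \mc{L}(\lambda')$ identifies the twisted group algebras $\mc{A}^{\lambda'}$ and $\mc{A}^{\lambda'}_L$ and hence $E'$ and $E'_L$.

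The main obstacle I anticipate is the last bookkeeping step: verifying that, after identifying $A_L^{\lambda'} = A^{\lambda'}$, the two $2$-cocycles defining $\mc{A}^{\lambda'}$ and $\mc{A}_L^{\lambda'}$ agree, so that $E'$ and $E'_L$ really are the same module (and in particular have the same $Z(\widehat{G})^\Gamma$-action). Granting this, the equality of central characters on $\rho'$ and $\rho_L$ follows by computing on any highest-weight vector, and hence $\kappa_G(b)$ equals the $Z(\widehat{G})^\Gamma$-character of $\rho$, which is the commutativity claimed in the proposition.
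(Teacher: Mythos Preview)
Your overall strategy is the same as the paper's: reduce to the $B(L)_{\bas}$-LLC commutativity \eqref{eqn: basic correspondence} for $(\co{\dot{w}}\phi,\rho_L)$, use functoriality of the Kottwitz map to pass from $\kappa_L(b_L)$ to $\kappa_G(b)$, and then show that $Z(\widehat{G})^\Gamma$ acts on $\rho$ and $\rho_L$ by the same character. The first two steps are exactly as in the paper.

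Where you diverge is in the last step. You propose to identify the full twisted group algebras $\mc{A}^{\lambda'}$ and $\mc{A}_L^{\lambda'}$ by choosing lifts $\iota(a)\in S_{\phi',L}^{\lambda'}$ and setting $\theta_{L,a}=\theta_a|_{\mc{L}_L(\lambda')}$; you rightly flag the cocycle comparison as the obstacle. This can be made to work, but it is more than what is needed, and the restriction step (that $\theta_a$ preserves the highest-weight line $\mc{L}_L(\lambda')\subset\mc{L}(\lambda')$) requires $\iota(a)$ to normalize $B_{\phi'}$, which is an extra constraint you would have to arrange simultaneously with $\iota(a)\in S_{\phi',L}$.

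The paper avoids this bookkeeping entirely. It never compares the cocycles on all of $A^{\lambda'}$; it only analyzes the action of $Z(\widehat{G})^\Gamma$. Since $Z(\widehat{G})^\Gamma$ is central in $\widehat{G}$, one may choose, for each $a$ in the image $Z^\lambda\subset A^\lambda$ of $Z(\widehat{G})^\Gamma$, the lift $\iota(a)\in Z(\widehat{G})^\Gamma$ itself and $\theta_a=\mathrm{id}$. Writing $z\in Z(\widehat{G})^\Gamma$ as $\iota(a)g$ with $g=\iota(a)^{-1}z\in S_\phi^\circ\cap Z(\widehat{G})^\Gamma\subset A_{\widehat{M}}$, one gets $z\cdot(u\otimes v)=(\rho_a u)\otimes(\lambda(g)v)$, where $g$ acts as the scalar $\lambda(g)$ by Schur since it is central in $S_\phi^\circ$. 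The identical choices and formula apply verbatim on the $\rho_L$ side (the same $\iota(a)$ lies in $Z(\widehat{G})^\Gamma\subset S_{\co{\dot w}\phi,L}$, the same $g$ lies in $A_{\co{\dot w}\widehat{M}}$, and $\co{\dot w}\lambda(g)=\lambda(g)$ because $g$ is central), so the two central characters agree without ever touching the cocycle on the rest of $A^\lambda$. I would recommend replacing your global cocycle comparison with this targeted argument.
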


\begin{proof}
Suppose that $\pi_b\in\Pi_\phi(G_b)$ is mapped to $\rho\in\Irr(S_\phi)$ under the map $\iota_\mf{w}$.
Let $\omega_\rho\in X^\ast(Z(\wh{G})^\Gamma)$ be the image of $\rho$ under the map $\Irr(S_\phi)\rightarrow X^\ast(Z(\wh{G})^\Gamma)$, i.e., $Z(\wh{G})^\Gamma$ acts on $\rho$ via $\omega_\rho$.
Our task is to show that $\omega_\rho=\kappa_G(b)$.
In the following, we follow the notation of \S \ref{ss: themap}.

By our construction, $b\in B(G)$ is the image of $b_L\in B(L)_\bas^+$ in $B(G)$ and $\pi_b=\pi_{b_L}$ (under the identification $G_b\cong L_{b_L}$), where $\pi_{b_L}$ corresponds to $\rho_L\in\Irr(S^\natural_{{}^{\dot{w}}\phi,L})$ under the $B(L)_\bas$-LLC.
Let $\omega_{\rho_L}\in X^\ast(Z(\wh{L})^\Gamma)$ be the image of $\rho_L$ under the map $\Irr(S^\natural_{{}^{\dot{w}}\phi,L})\rightarrow X^\ast(Z(\wh{L})^\Gamma)$, i.e., $Z(\wh{L})^\Gamma$ acts on $\rho_L$ via $\omega_{\rho_L}$.
Then the commutativity in the basic case \eqref{eqn: basic correspondence} implies that $\omega_{\rho_L}$ is given by $\kappa_L(b_L)$.
By the functoriality of the Kottwitz homomorphism (see \cite[\S 4.9]{KottwitzisoII}), $\kappa_L(b_L)\in X^\ast(Z(\wh{L})^\Gamma)$ is mapped to $\kappa_G(b)\in X^\ast(Z(\wh{G})^\Gamma)$ under the natural map $X^\ast(Z(\wh{L})^\Gamma)\rightarrow X^\ast(Z(\wh{G})^\Gamma)$.
In other words, $\omega_{\rho_L}|_{Z(\widehat{G})^\Gamma}$ is given by $\kappa_G(b)$.
Hence it suffices to show that $\omega_\rho=\omega_{\rho_L}|_{Z(\widehat{G})^\Gamma}$, i.e., $Z(\wh{G})^\Gamma$ acts on both $\rho$ and $\rho_L$ via the same character.

Recall that $\rho\cong\mc{L}(\lambda,E)$.
Since the conjugate action of $Z(\wh{G})^\Gamma$ on $S_\phi$ is trivial, $Z(\wh{G})^\Gamma$ is contained the preimage $S_\phi^\lambda$ of $A^\lambda$ under the map $S_\phi\twoheadrightarrow\pi_0(S_\phi)$.
As $\mc{L}(\lambda,E)$ is defined to be the induction of $E\otimes\mc{L}(\lambda)$ from $S_\phi^\lambda$ to $S_\phi$, we see that $Z(\wh{G})^\Gamma$ acts on $\mc{L}(\lambda,E)$ and $E\otimes\mc{L}(\lambda)$ via the same character $\omega_\rho$.

Recall that, in \S \ref{ss: repthrydisconnected}, we choose a representative $\iota(a)$ of $a\in A^\lambda$ in $S_\phi^\lambda$ and an $S_\phi^\circ$-equivariant isomorphism $\theta_a \colon \mc{L}(\lambda)\xrightarrow{\cong}{}^{\iota(a)}\mc{L}(\lambda)$ such that $\iota(1)=1$ and $\theta_{1}=\mathrm{id}$.
Let $Z^\lambda$ be the image of $Z(\wh{G})^\Gamma\subset S_\phi$ in $A^\lambda$.
For any $a\in Z^\lambda$, we may and do choose $\iota(a)$ to be an element of $Z(\wh{G})^\Gamma$ and $\theta_{a}$ to be the identity map.
Then, for any element $z\in Z(\wh{G})^\Gamma$, its action on $u\otimes v\in E\otimes\mc{L}(\lambda)$ is given by 
\[
z\cdot (u\otimes v)=(\rho_a u)\otimes(gv),
\]
where $a$ denotes the image of $z$ in $Z^\lambda\subset A^\lambda$, $\rho_a$ is the associated element of $\mc{A}^\lambda$ (see \S \ref{ss: repthrydisconnected}), and $g:=\iota(a)^{-1}z\in S^{\circ}_{\phi}\cap Z(\wh{G})^\Gamma$.
Since $S^{\circ}_{\phi}\cap Z(\wh{G})^\Gamma$ is a central subgroup of the connected reductive group $S^{\circ}_{\phi}$, $S^{\circ}_{\phi}\cap Z(\wh{G})^\Gamma$ is contained in the maximal torus $A_{\wh{M}}$ of $S^\circ_\phi$.
In particular, we have $gv=\lambda(g)v$, hence we get $z\cdot (u\otimes v)=(\rho_a u)\otimes (\lambda(g)v)$.
By the same argument, we can also check that the action of $Z(\wh{G})^\Gamma$ on $\rho_L\cong\mc{L}_L({}^{\dot{w}}\lambda)\otimes{}^{\dot{w}}E_L$ and ${}^{\dot{w}}E_L$ is given by the same formula. 
\end{proof}

The surjectivity we remarked on at the end of \S\ref{ss: themap} gives us the desired finite-to-one map
\[
\LLC_{G_b}\colon\Pi(G_b) \rightarrow \Phi(G).
\]
\begin{corollary}{\label{cor: surjectivity}}
We have an equality of sets
\begin{equation*}
    \coprod\limits_{\phi} \Pi_{\phi}(G_b) = \Pi(G_b).
\end{equation*}
\end{corollary}

Let us also discuss the compatibility of our construction with duality.
Let $\wh{\mc{C}}$ be a Chevalley involution of $\wh{G}$ with respect to our fixed splitting $(\wh{T},\wh{B},\{\wh{X}_\alpha\})$ of $\wh{G}$, which extends to an involution ${}^L \mc{C}=\wh{\mc{C}}\rtimes\mathrm{id}$ of ${}^L G=\wh{G}\rtimes W_F$.
What we are interested in is the composite ${}^L\mc{C}\circ\phi$ of the involution ${}^L\mc{C}$ and an $L$-parameter $\phi\in\Phi(G)$.
Here note that $S_{{}^L\mc{C}\circ\phi}=\wh{\mc{C}}(S_\phi)$, hence we also have an isomorphism $\wh{\mc{C}}\colon S_\phi^\natural\cong S_{{}^L\mc{C}\circ\phi}^\natural$. 
The following is expected to be satisfied by the $B(G)_\bas$-LLC (see \cite[Section 2]{AV16} and also \cite{Kal13-gen} for more details):
\begin{conjecture}\label{conj:basic-duality}
Let $b\in B(G)_\bas$.
Suppose that an irreducible tempered representation $\pi_b\in\Pi(G_b)$ corresponds to $(\phi,\rho)$, where $\phi\in\Phi(G)$ and $\rho=\iota_{\mf{w}}(\pi_b)\in\Irr(S_\phi^\natural)$.
Then the $L$-parameter associated to the contragredient $\pi_b^\vee$ of $\pi_b$ is given by ${}^L\mc{C}\circ\phi$ and we have $\iota_{\mf{w}^{-1}}(\pi_b^\vee)=\rho^\vee\circ\wh{\mc{C}}^{-1}$.
Here, $\mf{w}^{-1}$ denotes the Whittaker datum whose Borel is the same as that of $\mf{w}$ but generic character is inverted.
\end{conjecture}

\begin{proposition}
Suppose that Conjecture \ref{conj:basic-duality} is true for $G$ and all standard Levi subgroups of $G$.
Let $b\in B(G)$ and $\pi_b\in\Pi(G_b)$ be an irreducible tempered representation.
If $\pi_b$ corresponds to $(\phi,\rho)$, where $\phi\in\Phi(G)$ and $\rho=\iota_{\mf{w}}(\pi_b)\in\Irr(S_\phi)$ under the map $\iota_{\mf{w}}$ constructed in \S\ref{ss: themap}, then the $L$-parameter associated to $\pi_b^\vee$ is given by ${}^L\mc{C}\circ\phi$ and we have $\iota_{\mf{w}^{-1}}(\pi_b)=\rho^\vee\circ\wh{\mc{C}}^{-1})$.
\end{proposition}

\begin{proof}
With the notation as in \S\ref{ss: theeasymap}, let $\phi$ be the $L$-parameter of $L$ and $\rho_L$ be the irreducible representation of $S_{\phi,L}^\natural$ associated to $\pi_b\cong\pi_{b_L}$ under the $B(L)_\bas$-LLC.
Since $\wh{\mc{C}}$ maps any root $\alpha$ of $\wh{T}$ to $-\alpha$ (see \cite[Section 2]{AV16}), ${}^L\mc{C}$ preserves any standard Levi subgroup of ${}^L G$ (in particular, ${}^L M$ and ${}^L L$) and induces the Chevalley involution with respect to the restriction of the splitting $(\wh{T},\wh{B},\{\wh{X}_\alpha\})$.
Thus, by Conjecture \ref{conj:basic-duality}, $\pi_b^\vee\cong\pi_{b_L}^\vee$ corresponds to $({}^L\mc{C}\circ\phi,\rho_L^\vee\circ\wh{\mc{C}}^{-1})$.
Hence the only task is to check that the representation of $S_\phi$ determined by $\rho_L^\vee\circ\wh{\mc{C}}^{-1}$ as in the manner of \S\ref{ss: theeasymap} is equal to $\rho^\vee\circ\wh{\mc{C}}^{-1}$.
But this directly follows from the construction (just note that $\wh{\mc{C}}$ also induces $S_{\phi,L}\cong S_{{}^L\mc{C}\circ\phi,L}$, $S_{\phi,M}\cong S_{{}^L\mc{C}\circ\phi,M}$, and so on).
\end{proof}

\section{Endoscopic character identity}{\label{s: eci}}

In this section, we restrict to the case where $F$ is a $p$-adic field. It seems to us that analogous results must hold for all local fields.

\subsection{Setup}{\label{ss: setup}}

Recall that \textit{a refined endoscopic datum $\mf{e}$ of $G$} is a tuple $(H, \mc{H}, s, \eta)$ consisting of \begin{itemize}
\item
$H$ is a quasi-split connected reductive group over $F$,
\item
$\mc{H}$ is a split extension of $W_F$ by $\wh{H}$ such that the induced action of $W_F$ on $\wh{H}$ coincides with the one coming from the $F$-rational structure of $\wh{H}$, 
\item
$s$ is an element of $Z(\wh{H})^\Gamma$, and
\item
$\eta\colon \mc{H}\rightarrow {}^{L}G$ is an $L$-homomorphism which restricts to an isomorphism $\hat{H}\rightarrow Z_{\wh{G}}(\eta(s))^\circ$
\end{itemize}
Recall also that an isomorphism of refined endoscopic data from $(H, \mc{H}, s, \eta)$ to $(H', \mc{H}', s', \eta')$ is an element $g \in \wh{G}$ such that
\begin{enumerate}
    \item we have $(\Int(g) \circ \eta)(\mc{H}) = \eta'(\mc{H}')$, and 
    \item $\Int(g)(\eta(s)) = \eta'(s')$.
\end{enumerate}
(see \cite[Definition 2.11]{BM2}, \cite[Definition 2.3.4]{BMShinIG2} and also \cite[\S 1.3 and \S 4.1]{KalethaLLCnonQS}). We let $\ms{E}^{\iso}(G)$ be the set of refined endoscopic data for $G$ and let $\msc{E}^{\iso}(G)$ denote the set of isomorphism classes.

We fix a refined endoscopic datum $\mf{e}=(H, \mc{H}, s, \eta)$ in the following.
For simplicity, we assume throughout that $\mc{H}={}^L{H}$.
We fix an $F$-splitting $(T_H, B_H, \{X_{H, \alpha}\})$ of $H$ and a $\Gamma$-stable splitting $(\wh{T}_{H}, \wh{B}_{H}, \{X_{\wh{H}, \alpha}\}$ of $\wh{H}$ in addition to the splittings of $G$ and $\wh{G}$ we fixed in \S\ref{s: preliminaries}. We assume that $\eta(\wh{T}_{H}) = \wh{T}$ and $\eta(\wh{B}_{H}) \subset \wh{B}$.

Temporarily fix $b \in B(G)_{\bas}$ and choose a cocycle $z \in Z^1_{\alg}(\mc{E}^{\iso}_F, G(\ov{F}))$ and $\varphi: G \to G_b$ such that $(G_b, \varphi, z)$ is an extended pure inner twist of $G$. Recall that we can define the notion of \textit{matching orbital integrals} between test functions $f_b\in C_c^\infty(G_b(F))$ and $f_H\in C_c^\infty(H(F))$.
For any test function $f_b\in C_c^\infty(G_b(F))$, there always exists a test function $f_H\in C_c^\infty(H(F))$ (\textit{transfer}) which has matching orbital integrals with $f_b$ (see \cite[Theorem 4]{KalethaLLCnonQS}).
Accordingly, for any stable distribution $D$ on $H(F)$, we may consider its \textit{transfer} $\Trans_H^{G_b} D$ to $G_b(F)$ by, for any test function $f_b\in C_c^\infty(G_b(F))$, 
\[
\Trans_{H}^{G_b} D(f_b)
:= D(f_H),
\]
where $f_H\in C_c^\infty(H(F))$ is a transfer of $f_b$ to $H(F)$. Note that the notion of transfer of functions (and distributions) requires fixing a transfer factor $\Delta[\mf{w}, z]$ depending on our fixed Whittaker datum $\mf{w}$ and cocycle $z$. We use the $\Delta^{\lambda}_D$-normalization as in \cite[\S5.5]{KS2}.

Let $\phi$ be a tempered $L$-parameter of $G$.
We assume that $\phi$ factors through $\eta$; let $\phi_H$ be an $L$-parameter of $H$ such that $\phi=\eta\circ\phi_H$.

In the following, we assume the existence of the basic case of the local Langlands correspondence (Conjecture \ref{conj: basic-LLC}).
Hence, by Theorem \ref{thm: main}, we have a bijective map
\[
\iota_\mf{w} \colon \coprod_{b\in B(G)} \Pi_\phi(G_b)\xrightarrow{1:1}\Irr(S_\phi)
\]
which extends the bijection of the $B(G)_{\bas}$-LLC
\[
\iota_{\mf{w}} \colon \coprod_{b\in B(G)_\bas} \Pi_\phi(G_b)\xrightarrow{1:1}\Irr(S_\phi^\natural).
\]
In the following, for any $\pi\in \Pi_\phi(G_b)$, we let $\langle\pi,-\rangle$ denote the irreducible character of $S_\phi$ corresponding to $\pi$ under $\iota_\mf{w}$, i.e., 
\[
\langle\pi,s\rangle
:=
\tr(s \mid \iota_\mf{w}(\pi))
\]
for $s\in S_\phi$.
For any $b\in B(G)$ and $s\in S_\phi$, we put
\begin{equation*}{\label{eq: basiceci}}
  \Theta_\phi^{G_b,s}
:=
e(G_b)\sum_{\pi\in\Pi_{\phi}(G_{b})}\langle\pi,s\rangle\Theta_\pi,  
\end{equation*}
where $e(G_b)$ denotes the Kottwitz sign of $G_b$.
When $s=1$, we write $S\Theta^{G_b}_{\phi}$ for $\Theta_\phi^{G_b,1}$. 

It is expected that the basic case of the local Langlands correspondence satisfies the \textit{stability} and the \textit{endoscopic character identity} (cf.\ \cite[Conjecture F]{KalethaLLCnonQS}).
We assume these properties in the following:

\begin{assumption}[stability and endoscopic character identity]\label{assumption:ECI-basic}
Let $b\in B(G)_\bas$.
\begin{enumerate}
\item
The distribution $S\Theta^H_{\phi_H}$ on $H(F)$ is stable.
\item
We have the following equality as distributions on $G_b(F)$:
\begin{align}\label{eq:ECI-basic}
\Trans_{H}^{G_b} S\Theta^H_{\phi_H}
=
\Theta_\phi^{G_b,\eta(s)}.
\end{align}
\end{enumerate}
\end{assumption}

\begin{remark}
    As a sanity check, we observe that if we multiply $s$ by the pre-image of an element $c \in Z(\wh{G})^{\Gamma}$, then the right-hand side of \eqref{eq: basiceci} is multiplied by $\kappa(b)(c)$ because $\iota_{\mf{w}}(\pi)|_{Z(\wh{G})^{\Gamma}} = \kappa(b)^{\oplus\dim \iota_{\mf{w}}(\pi)}$. 
    On the left-hand side, multiplying $s$ by $c$ does not change $H$, but it does change the transfer factor and hence the notion of transfer of functions between $G_b$ and $H$. It is relatively simple to check that the transfer factor is multiplied by the quantity $\langle \inv[z], c \rangle = \kappa(b)(c)$.
\end{remark}

Our aim in this section is to generalize the identity \eqref{eq:ECI-basic} to any $b\in B(G)$.
For this, we additionally assume the following standard properties of the basic case of the local Langlands correspondence.

\begin{assumption}\label{assumption:LIR-basic}
Let $Q$ be a standard parabolic subgroup of $G$ with standard Levi $L$.
If a tempered $L$-parameter $\phi$ of $G$ factors through the $L$-embedding of ${}^{L}L$ into ${}^{L}G$, then we have the following equality as distributions on $G(F)$:
\[
S\Theta^G_{\phi} =  I^G_Q(S\Theta^L_{\phi}).
\]
\end{assumption}

\begin{remark}
By using the transitivity of parabolic induction, we can reduce the property of Assumption \ref{assumption:LIR-basic} to the case when $L$ is a minimal Levi subgroup through which $\phi$ factors, i.e., $\phi$ is discrete as an $L$-parameter of $L$.
Then, this is a special case of the local intertwining relation (see \cite[Theorem 2.4.1]{Arthurbook}, for instance). 
\end{remark}

\begin{assumption}\label{assumption:twist}
Suppose that $\alpha\colon G\rightarrow G$ is an $F$-rational automorphism of $G$.
Let ${}^L\alpha\colon {}^LG\rightarrow {}^LG$ be the dual to $G$.
Then, for any $L$-parameter $\phi\colon L_F\rightarrow {}^LG$, we have
\[
\Pi_{{}^L\alpha\circ\phi}(G)=\alpha^\ast\Pi_{\phi}(G),
\]
where $\alpha^\ast\Pi_{\phi}(G)$ denotes the pull-back of $\Pi_{\phi}(G)$ via $\alpha\colon G(F)\rightarrow G(F)$.
\end{assumption}

\begin{remark}
Assumption \ref{assumption:twist} should be standard (see, for example, \cite[Conjecture 4.9]{Haines}) and can be also thought of as a special case of the compatibility of the local Langlands correspondence with isogeny; for example, see \cite[10.3 (5)]{Bor2}, \cite[\S IX.6.1]{fargues--scholze}, \cite[Th\'eor\`eme 0.1]{Genestier--Lafforgue}, etc.
\end{remark}

\subsection{Motivation}{\label{ss: motivation}}

We now describe what we believe is the correct way to formulate the endoscopic character identity for a general $b \in B(G)$.
To begin, we want to define a transfer of functions from $C^{\infty}_c(G_b(F))$ to $C^{\infty}_c(H(F))$ for any $b \in B(G)$ and an endoscopic group $H$ of $G$.
We suspect this will not be possible in full generality, but it will be for $\nu_b$-\emph{acceptable} functions. We recall their definition (see \cite[\S2.7]{BMShinIG2}).

Let $\nu: \D_F \to G$ be a homomorphism of groups and let $M_{\nu}$ be the centralizer of $\nu$ in $G$. 
The homomorphism $\nu$ defines a parabolic subgroup $P_{\nu} = M_{\nu}N_{\nu}$ whereby the positive roots of $P_{\nu}$ are those such that $\langle \nu, \alpha \rangle <0$. 
\begin{warning}
We often take $\nu$ to be $\nu_b := \nu_G(b)$ and the opposite parabolic $P_\nu^\op$ is standard in this case.
\end{warning}
We say that $\gamma \in M_{\nu}(\ov{F})$ is $\nu$-\emph{acceptable} if the adjoint action of $\gamma$ on $N_{\nu}(\ov{F})$ is dilating, namely each eigenvalue $\lambda$ of this action satisfies $|\lambda|>1$.
The set of $\nu$-acceptable elements is nonempty and open in $M_{\nu}(\ov{F})$.  Since $\nu$-acceptability only depends on the stable conjugacy class of $\gamma$ in $M_\nu$, we can define for an inner twist $\varphi_M : M_{\nu} \to M'_{\nu}$ that $\gamma' \in M'_{\nu}(\ov{F})$ is $\nu$-acceptable if $\varphi^{-1}(\gamma')$ is $\nu$-acceptable. 
We let $C^{\infty}_{c, \acc}(M_{\nu}(F)) \subset C^{\infty}_c(M_{\nu}(F))$ (resp. $C^{\infty}_{c, \acc}(M'_{\nu}(F)) \subset C^{\infty}_c(M'_{\nu}(F))$) denote the subset of functions supported on $\nu$-acceptable elements. We remark that there are enough $\nu$-acceptable functions to separate $\Pi(M'_{\nu})$ (see the argument of \cite[Lemma 6.4]{Shi4}, cf.\ \cite[Lemma 2.7.5]{BMShinIG2}) so it is sufficient to restrict our attention to them. The relevant proposition is as follows.
\begin{proposition}[{\cite[Lemma 3.1.2]{KretShinIG}}]{\label{prop: shinascentlem}}
    Let $f_{\nu} \in C^{\infty}_{c, \acc}(M_{\nu}(F))$. Then there exists an $f \in C^{\infty}_c(G(F))$ satisfying the following properties.
\begin{itemize}
    \item  For every semisimple element $g \in G(F)$, we have the following identity of orbital integrals
    \begin{equation*}
        O^G_g(f) = \delta^{-1/2}_{P_{\nu}}(m)\cdot O^{M_{\nu}}_m(f_\nu),
    \end{equation*}
    if there exists a $\nu$-acceptable $m \in M_{\nu}(F)$ that is conjugate to $g \in G(F)$ and $O^G_g(f)=0$ otherwise.
    \item We have
    \begin{equation*}
        \tr(f \mid \pi) = \tr( f_\nu \mid J^G_{P^{\op}_{\nu}}(\pi)),
    \end{equation*}
    for $\pi \in \Pi(G)$.
\end{itemize}
\end{proposition}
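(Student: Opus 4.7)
The strategy I would follow is the standard ``parabolic ascent'' construction, going back to Harish-Chandra and carried out in this precise form in \cite{Shi09} and \cite{KS21}. The geometric heart is the following descent lemma, which is the whole reason for imposing $\nu$-acceptability: if $m\in M_\nu(F)$ is $\nu$-acceptable and $g^{-1}mg\in P_\nu(F)$ for some $g\in G(F)$, then in fact $g\in P_\nu(F)$. Via the Iwasawa-style decomposition $G=P_\nu^{\op}\cdot P_\nu$ this reduces to the bijectivity of the self-map $n\mapsto m^{-1}n^{-1}mn$ on $N_\nu^{\op}(F)$, which holds because $\Ad(m)$ acts on $\Lie N_\nu^{\op}$ with all eigenvalues of absolute value $<1$. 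In particular, each $G(F)$-conjugacy class meeting the $\nu$-acceptable locus of $M_\nu(F)$ meets it in a single $M_\nu(F)$-orbit.

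With the descent lemma in hand I would build $f$ explicitly. Fix a maximal compact subgroup $K\subset G(F)$ and some $\phi_N\in C_c^\infty(N_\nu(F))$ with $\int_{N_\nu(F)}\phi_N(n)\,dn=1$, set
\[
\tilde f(mn):=\delta_{P_\nu}^{-1/2}(m)\,f_\nu(m)\,\phi_N(n), \qquad m\in M_\nu(F),\ n\in N_\nu(F),
\]
extended by zero off $P_\nu(F)$, and then put $f(g):=\int_K \tilde f(k^{-1}gk)\,dk$. Compact support of $f$ is immediate. To verify the orbital identity I would expand $O^G_g(f)$ using $G=KP_\nu$, apply the descent lemma to restrict the integration to $h\in P_\nu(F)$, and perform the substitution $n\mapsto n_1=m^{-1}n^{-1}mn$ on $N_\nu(F)$; for $m$ acceptable this map has Jacobian $\lvert\det_{\Lie N_\nu}(\mathrm{Id}-\Ad(m^{-1}))\rvert_F=1$, since each eigenvalue of $\Ad(m^{-1})$ on $\Lie N_\nu$ lies in the maximal ideal and so $1-(\text{eigenvalue})$ is a unit. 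What remains is the orbital integral of $f_\nu$ on $M_\nu$, multiplied by the $\delta_{P_\nu}^{-1/2}(m)$ carried along by $\tilde f$, which is precisely the claimed prefactor. Vanishing of $O^G_g(f)$ for semisimple $g$ not meeting the acceptable locus is immediate from $\mathrm{supp}\,\tilde f\subset P_\nu(F)$ combined with the descent lemma.

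For the second bullet I would invoke the standard compatibility of trace with normalized parabolic descent (``van Dijk's formula''):
\[
\tr(f\mid\pi)=\tr\!\bigl(f^{(P_\nu)}\bigm|J^G_{P_\nu^{\op}}(\pi)\bigr)
\]
for any admissible $\pi$, where $f^{(P_\nu)}(m):=\delta_{P_\nu}^{1/2}(m)\int_K\!\int_{N_\nu}f(k^{-1}mnk)\,dn\,dk$ is the normalized constant term. A direct computation from the definition of $\tilde f$ shows $f^{(P_\nu)}=f_\nu$ on the $\nu$-acceptable locus: the $\delta_{P_\nu}^{1/2}$ of the normalized constant term cancels the $\delta_{P_\nu}^{-1/2}$ built into $\tilde f$, and the $N_\nu$-integration collapses $\phi_N$ to $1$. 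This yields the desired trace identity, since any $\pi\in\Pi(G)$ pairs via the normalized constant term only against $J^G_{P_\nu^{\op}}(\pi)$.

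The main obstacle, as is typical for arguments of this shape, is the careful bookkeeping of modulus characters, Jacobians, and Haar measure normalizations across the three principal ingredients (the descent lemma, the conjugation Jacobian on $N_\nu$, and van Dijk's formula). Keeping the signs of the various $\delta_{P_\nu}^{\pm 1/2}$ straight and choosing the normalization of $\tilde f$ so that the cancellations occur in both the orbital and the trace computations simultaneously is the most error-prone part of the argument; beyond this, each individual ingredient is either classical or formal, and I do not anticipate any genuinely new difficulty.
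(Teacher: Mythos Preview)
The paper does not prove this proposition; it simply cites \cite[Lemma 3.1.2]{KS21}. Your outline is indeed the standard parabolic-ascent argument used there and in \cite{Shi09,Shi10}, so there is nothing to compare on the level of strategy.

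There is, however, a genuine error in your ``descent lemma''. The statement \emph{if $m$ is $\nu$-acceptable and $g^{-1}mg\in P_\nu(F)$ then $g\in P_\nu(F)$} is false. Take $G=\GL_2$, $P_\nu$ a Borel, $M_\nu=T$ the diagonal torus, and $m=\mathrm{diag}(a,b)$ acceptable. With $g$ the nontrivial Weyl element one has $g^{-1}mg=\mathrm{diag}(b,a)\in T\subset P_\nu$, yet $g\notin P_\nu$. What saves the orbital-integral computation is that $\mathrm{diag}(b,a)$ is \emph{not} acceptable, hence lies outside $\mathrm{supp}(f_\nu)$. The lemma you actually need (and which is what appears in the references) is: if $m$ is $\nu$-acceptable and $y^{-1}my\in (\text{acceptable locus of }M_\nu)\cdot N_\nu$, then $y\in P_\nu$; equivalently, two $\nu$-acceptable elements of $M_\nu(F)$ that are $G(F)$-conjugate are already $M_\nu(F)$-conjugate. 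Your ``in particular'' clause is exactly this, but it does not follow from the (false) lemma you wrote above it.

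Relatedly, the ``Iwasawa-style decomposition $G=P_\nu^{\op}\cdot P_\nu$'' is not a decomposition: $P_\nu^{\op}P_\nu=N_\nu^{\op}P_\nu$ is only the open Bruhat cell. The correct form of the descent lemma is proved either via the Bruhat decomposition $G=\coprod_w P_\nu wP_\nu$ (ruling out nontrivial $w$ using that $w^{-1}mw$ fails acceptability), or by a contraction argument on $G(F)/P_\nu(F)$.

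Once the descent lemma is stated and proved correctly, the rest of your sketch---the construction of $\tilde f$, the Jacobian-$1$ substitution $n\mapsto m^{-1}n^{-1}mn$ on $N_\nu$, and the appeal to the van Dijk/Casselman constant-term formula for the trace identity---is right.
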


We need to study the relation between the endoscopy of $G$ and its Levi subgroups. Fix $L \subset G$ a standard Levi subgroup of $G$ (later, especially, we take $L$ to be the standard Levi subgroup of $G$ such that $G_b$ is its inner twist). 

\begin{definition}\label{def:emb-end-datum}
    An \emph{embedded endoscopic datum} for $G$ is a tuple $(H_L, \mc{H}_L, H, \mc{H}, s, \eta)$, where 
\begin{itemize}
    \item $(H, \mc{H}, s, \eta)$ is a refined endoscopic datum of $G$ with a fixed $F$-splitting $(T_H, B_H, \{X_{H,\alpha}\})$ of $H$,
    \item $H_L$ is a standard Levi subgroup of $H$,
    \item $\mc{H}_L$ is a Levi subgroup of $\mc{H}$, namely $\mc{H}_L$ surjects onto $W_F$ and its intersection with $\wh{H}$ is a Levi subgroup of $\wh{H}$,
\end{itemize}    
    such that $\wh{H_L} = \mc{H}_L \cap \wh{H}$ and $(H_L, \mc{H}_L, s, \eta|_{\mc{H}_L})$ is a refined endoscopic datum of $L$. 

    An isomorphism of embedded data from $(H_L, \mc{H}_L, H, \mc{H}, s, \eta)$ to $(H'_L, \mc{H}'_L, H', \mc{H}', s', \eta')$ is a $g \in \wh{G}$, which simultaneously produces isomorphisms 
    $$(H_L, \mc{H}_L, s, \eta|_{\mc{H}_L}) \xrightarrow{\sim} (H'_L, \mc{H}'_L, s', \eta'|_{\mc{H}'_L})\quad \mbox{and} \quad (H, \mc{H}, s, \eta) \xrightarrow{\sim} (H', \mc{H'}, s', \eta').$$
    We denote the set of embedded endoscopic data by $\ms{E}^{\emb}(L,G)$ and the set of isomorphism classes by $\msc{E}^{\emb}(L,G)$.
    \end{definition}

We have the natural restrictions $X: \ms{E}^{\emb}(L,G) \to \ms{E}^{\iso}(L)$ and $Y^{\emb}: \ms{E}^{\emb}(L,G) \to \ms{E}^{\iso}(G)$. These induce maps of isomorphism classes, and the map induced by $X$ is a bijection 
by \cite[Proposition 2.20]{BM2}. We recall from \cite[Construction 2.15]{BM2} that there is a natural map $Y: \ms{E}^{\iso}(L) \to \ms{E}^{\iso}(G)$ such that the following diagram commutes
\begin{equation}
\begin{tikzcd}
&\msc{E}^{\iso}(G)&\\
\msc{E}^{\emb}(L,G) \arrow[ur, " Y^{\emb}"] \arrow[rr, "X"] && \msc{E}^{\iso}(L) \arrow[ul, swap, "Y"].
\end{tikzcd}
\end{equation}

\begin{definition}
For a refined endoscopic datum $(H, \mc{H}, s, \eta)$ of $G$, we define $\msc{E}^{\emb}(L,G;H)$ to be the set of isomorphism classes of embedded endoscopic data whose image under 
\begin{equation}
Y^{\emb}: \msc{E}^{\emb}(L,G) \to \msc{E}^{\iso}(G)
\end{equation}
is the isomorphism class of $(H,\mc{H}, s,\eta)$. 
We define the set of \emph{inner classes of embedded endoscopic data} relative to $H$, denoted by $\msc{E}^i(L,G;H)$, to be the set of equivalence classes of elements of the form $(H_L, \mc{H}_L, H, \mc{H}, s, \Int(n) \circ \eta)$ of $\ms{E}^{\emb}(L,G)$, for $n \in N_{\wh{G}}(\wh{T})$. The isomorphism class of such elements lies in $\msc{E}^{\emb}(L,G;H)$ and two such data are considered equivalent if they are isomorphic by an  inner isomorphism $\alpha$ of the group $H$ inducing an isomorphism of embedded endoscopic data.
\end{definition}

In the following, we fix a refined endoscopic datum $\mf{e} = (H, \mc{H}, s, \eta)$.
Although we believe that our result can be established for general $\mc{H}$, we focus only on the case $\mc{H}={}^LH$ in the following.
We also fix $b \in B(G)$ and an extended pure inner twist $(G_b, \varphi, z)$ of $L$, where $L \subset G$ is the standard Levi subgroup given by the centralizer of $\nu_b := \nu_G(b)$ with standard parabolic $Q$ and $z$ is a cocycle corresponding to $b_L \in B(L)^+_{\bas}$.

We furthermore fix $X^{\mf{e}}_L$, a set of representatives of $\msc{E}^i(L,G;H)$. For each $\mf{e}_L \in X^{\mf{e}}_L$, we get a character $\nu_{\mf{e}_L}: \D_F \xrightarrow{\nu_b} A_{L} \subset T' \cong T_H$ where $T' \subset G$ and we note the isomorphism $T' \cong T_H$ is determined by $\mf{e}_L$ and canonical up to our choice of splittings.  The following diagram records the relationships between the various groups that appear.

\begin{equation}{\label{eq: endodiagram}}
\begin{tikzcd}
&& G \\
G_b \arrow[r, "\text{inner}", leftrightarrow] & L \arrow[ru, "\text{Levi}", hook]& H \arrow[u, "\text{endo.}"', dash]\\
&H_L \arrow[lu, "\text{endo.}", dash] \arrow[ru, "\text{Levi}"', hook] \arrow[u, "\text{endo.}" description, dash]&
\end{tikzcd}
\end{equation}

Fix $f_b \in C^{\infty}_{c, \acc}(G_b(F))$. We produce a matching $f_H \in C^{\infty}_c(H(F))$. 
\begin{enumerate}
\item Define $f_b^0 := f_b \otimes \ov{\delta}^{1/2}_{P_{\nu_b}}$, where $\ov{\delta}_{P_{\nu_b}}$ is the character on $G_b$ defined such that $\ov{\delta}_{P_{\nu_b}}(\gamma') = \delta_{P_{\nu_b}}(\gamma)$ for $\gamma \in L(F)$ matching $\gamma' \in G_b(F)$.
\item For each $\mf{e}_L \in X^{\mf{e}}_L$, define $f_{\mf{e}_L}\in C_c^\infty(H_L)$ to be a transfer of $f_b^0$ from $G_b$ to $H_L$ using the Whittaker normalized $\Delta[\mf{w}_L, z]$ transfer factor (we use the $\Delta^{\lambda}_D$ normalization as in \cite[\S5.5]{KS2}, these transfer factors are explained in \cite[\S3]{BM3} generalizing \cite[(4.3)]{KalTai}, though note that \cite{KalTai} uses the $\Delta'_{\lambda}$ normalization). By multiplying with the indicator function on the set of $\nu_{\mf{e}_L}$-acceptable elements, we can and do assume that $f_{\mf{e}_L} \in C^{\infty}_{c, \acc}(H_L(F))$. Note that the Levi subgroup of $H$ determined by $\nu_{\mf{e}_L}$ is precisely $H_L$. 
\item We now apply Proposition \ref{prop: shinascentlem} to each $f_{\mf{e}_L}\in C^{\infty}_{c, \acc}(H_L(F))$ to get functions $f_{H,\mf{e}_L}\in C^{\infty}_c(H(F))$. 
\item We finally let $f_H = \sum\limits_{X^{\mf{e}}_L} f_{H,\mf{e}_L}\in C^{\infty}_c(H(F))$.
\end{enumerate}

Now take $\gamma_H \in H(F)$ that is $G$-strongly regular semisimple.
We compute the stable orbital integral $SO^H_{\gamma_H}(f_H)$.
If there is no $\mf{e}_L \in X^{\mf{e}}_L$ and $\nu_{\mf{e}_L}$-acceptable $\gamma_{H_L} \in H_L(F)$ conjugate to $\gamma_H$ in $H(F)$, then $SO^H_{\gamma_H}(f_H) = 0$ by Proposition \ref{prop: shinascentlem}.
Otherwise, we have that
\begin{equation}{\label{eqn: HHLascent}}
    SO^H_{\gamma_H}(f_H) = \sum\limits_{\mf{e}_L} \delta^{-1/2}_{P_{\nu_{\mf{e}_L}}}(\gamma_{H_L})\cdot SO^{H_L}_{\gamma_{H_L}}(f_{\mf{e}_L}),
\end{equation}
where the sum is over some subset of $X^{\mf{e}}_L$. 
Here we used the fact that the identity of orbital integrals in Proposition \ref{prop: shinascentlem} induces the identity of stable orbital integrals (\cite[Lemma 3.5]{Shi3}).
Crucially, by \cite[Lemma 2.7.13]{BMShinIG2} (cf.\ \cite[Lemma 6.2]{Shi3}, \cite[Lemma 2.42]{BM2}) there is at most one $\mf{e}_L$ appearing on the right hand side of \eqref{eqn: HHLascent}. If $\gamma_{H_L}$ for such $\mf{e}_L$ does not transfer to some $\gamma_{G_b} \in G_b(F)$ whose image $\gamma_L$ in $L(F)$ is $\nu_b$-acceptable, then the original $SO^H_{\gamma_H}(f_H)$ is $0$. Otherwise, we get
\begin{align*}
    SO^H_{\gamma_H}(f_H)
    &=
    \delta^{-1/2}_{P_{\nu_{\mf{e}_L}}}(\gamma_{H_L}) \cdot SO^{H_L}_{\gamma_{H_L}}(f_{\mf{e}_L})\\
    &=
    \sum\limits_{\gamma'_{G_b} \sim_{\st}  \gamma_{G_b}} \Delta[\mf{w}_L, z](\gamma_{H_L}, \gamma'_{G_b}) \delta^{-1/2}_{P_{\nu_{\mf{e}_L}}}(\gamma_{H_L})\ov{\delta}^{1/2}_{P_{\nu_b}}(\gamma'_{G_b})O^{G_b}_{\gamma'_{G_b}}(f_b).
\end{align*}
The formula 
\[
\frac{|\det( \Ad(\gamma_L) -1 \mid \Lie(G) / \Lie(L))|^{1/2}}{|\det( \Ad(\gamma_{H_L}) -1 \mid \Lie(H) / \Lie(H_L))|^{1/2}}\Delta[\mf{w}_L,z](\gamma_{H_L}, \gamma_L) =  \Delta[\mf{w},z](\gamma_{H_L}, \gamma_L)
\]
(see \cite[Proposition 5.3]{BM2} for instance) and the facts that 
\begin{itemize}
    \item
    $|\delta_{P_{\nu_{\mf{e}_L}}}(\gamma_{H_L})| = |\det( \Ad(\gamma_{H_L}) -1 \mid \Lie(H) / \Lie(H_L))|$ and
    \item 
    $|\delta_{P_{\nu_b}}(\gamma_L)| = |\det( \Ad(\gamma_L) -1 \mid \Lie(G) / \Lie(L))|$
\end{itemize}
(see \cite[Lemma 3.4]{Shi3}) imply that finally:
\begin{equation}{\label{eqn: matchingfn}}
    SO^H_{\gamma_H}(f_H) = \sum\limits_{\gamma'_{G_b} \sim_{\st}  \gamma_{G_b}} \Delta[\mf{w},z](\gamma_{H_L}, \gamma_L)\langle \inv[z](\gamma_L, \gamma_{G'_b}), \wh{\varphi}_{\gamma_{H_L},\gamma_L}(s) \rangle^{-1}O^{G_b}_{\gamma'_{G_b}}(f_b),
\end{equation}
where $\wh{\varphi}_{\gamma_{H_L},\gamma_L}$ is the dual of the admissible isomorphism taking $Z_{H_{L}}(\gamma_{H_L})$ to $Z_{L}(\gamma_L)$ (cf.\ \cite[\S 4.1]{BM2}).
This is our notion of \textit{matching function}.
Corresponding to this notion of matching function, we get a \textit{transfer} of distributions; we say that an invariant distribution $D_b$ on $G_b(F)$ is a transfer of a stable distribution $D_H$ on $H(F)$ if they satisfy $D_b(f_b)=D_H(f_H)$ for any $f_b\in C_{c,\acc}^\infty(G_b(F))$ and any its matching $f_H\in C_c^\infty(H(F))$.

We remark that this definition of a transfer of distributions does not induce a map from the set of stable distributions on $H(F)$ to the set of invariant distributions on $G_b(F)$.
The problem is that the subspace of $\nu_b$-acceptable functions $C_{c,\acc}^\infty(G_b(F))$ is too small to specify an invariant distribution on $G_b(F)$ uniquely.
The following example was given by the anonymous referee:

\begin{example}
Let $G=\GL_2$ over $\Q_p$.
We consider the case where $\nu\in X_\ast(T)$ is given by $\nu(x)=\mathrm{diag}(x,1)$, hence $G_b$ is the diagonal maximal torus $T$.
Let $D\colon C_c^\infty(T(\Q_p))\rightarrow\C$ be the following distribution:
\[
D(f):=\int_{T_1}f(x)\,dx,
\]
where $T_1:=\{\mathrm{diag}(x,y)\in T(\Q_p)\mid x,y\in 1+p\Z_p\}$.
Then $D$ is obviously invariant (even stable) since $T(\Q_p)$ is abelian.
Moreover, $D$ maps any $\nu$-acceptable function to $0$.
In other words, we cannot distinguish $D$ from $0$ by looking at the values on $C_{c,\acc}^\infty(G_b(\Q_p))$.
\end{example}

The point of the above example is that the distribution $D$ considered there is not a virtual character.
As mentioned in the paragraph before Proposition \ref{prop: shinascentlem}, any virtual character is determined uniquely by its values on the set of  $\nu$-acceptable functions.
In other words, for a given stable distribution $D_H$ on $H(F)$, its transfer to $G_b(F)$ \textit{which is a virtual character} is unique if it exists.

In fact, for the stable distribution $S\Theta^H_{\phi_H}$ on $H(F)$, we can construct its unique transfer to $G_b(F)$ which is a virtual character by hand as follows.

\begin{definition}
We define a virtual character $\Trans^{G_b}_H S\Theta^H_{\phi_H}$ of $G_b(F)$ by
\begin{equation}{\label{eqn: endoscopiccharacter}}
     \Trans^{G_b}_H S\Theta^H_{\phi_H}
     :=
     \sum\limits_{\mf{e}_L \in X^{\mf{e}}_L} (\Trans^{G_b}_{H_L} J^H_{P^{\op}_{\nu_{\mf{e}_L}}}S\Theta^H_{\phi_H}) \otimes \ov{\delta}^{1/2}_{P_{\nu_b}}.
\end{equation}
Here, note that the right-hand side makes sense since the normalized Jacquet functor preserves the stability \cite[Lemma 3.3]{Hir1} (and also virtual characters), hence $J^H_{P^{\op}_{\nu_{\mf{e}_L}}}S\Theta^H_{\phi_H}$ is a stable distribution on $H_L(F)$, to which the endoscopic transfer in the basic case is applicable.
\end{definition}
\begin{remark}
    One could instead define $ \Trans^{G_b}_H S\Theta^H_{\phi_H}$ omitting $\ov{\delta}^{1/2}_{P_{\nu_b}}$. This has the effect of removing a number of modulus twists, for instance in the statement of Theorem \ref{thm: ECI}. However, one would have to modify the construction of $f_H$, by deleting the first step, and then adding a twist to Equation \eqref{eqn: matchingfn}. The function $f_H$ and Equation \eqref{eqn: matchingfn} as they appear in this article show up naturally in the stable trace formula for Igusa varieties and are compatible with \cite{Shi3}, which explains our slightly more complicated definition.
\end{remark}

\begin{lemma}\label{lem:Trans-Jac}
The virtual character $\Trans^{G_b}_H S\Theta^H_{\phi_H}$ of $G_b(F)$ is a transfer of the stable distribution $S\Theta^H_{\phi_H}$ on $H(F)$.
\end{lemma}

\begin{proof}
We fix $f_b \in C^{\infty}_{c, \acc}(G_b(F))$ and its transfer $f_H \in C_c^{\infty}(H(F))$.
If we let $f_b^0$, $f_{\mf{e}_L}$, $f_{H,\mf{e}_L}$ be intermediate test functions as explained above, then we have
\begin{align*}
\sum\limits_{\mf{e}_L \in X^{\mf{e}}_L} (\Trans^{G_b}_{H_L} J^H_{P^{\op}_{\nu_{\mf{e}_L}}}S\Theta^H_{\phi_H}) \otimes \ov{\delta}^{1/2}_{P_{\nu_b}}(f_b)
&=
\sum\limits_{\mf{e}_L \in X^{\mf{e}}_L} (\Trans^{G_b}_{H_L} J^H_{P^{\op}_{\nu_{\mf{e}_L}}}S\Theta^H_{\phi_H})(f_b^0)\\
&=
\sum\limits_{\mf{e}_L \in X^{\mf{e}}_L} (J^H_{P^{\op}_{\nu_{\mf{e}_L}}}S\Theta^H_{\phi_H})(f_{\mf{e}_L})\\
&=
\sum\limits_{\mf{e}_L \in X^{\mf{e}}_L} S\Theta^H_{\phi_H}(f_{H,\mf{e}_L})
=
S\Theta^H_{\phi_H}(f_{H}),
\end{align*}
where we used Proposition \ref{prop: shinascentlem} (2) in the third equality.
\end{proof}

A naive expectation is that the identity \eqref{eq:ECI-basic} holds also for non-basic $b\in B(G)$ with this definition of $\Trans_H^{G_b}S\Theta_{\phi_H}^H$.
However, this is not true.
Let us explain the difficulty.

Consider the simplest case where $(H,\mc{H},s,\eta)=(G,{}^{L}G,1,\mathrm{id})$.
In this case, the set $X^{\mf{e}}_L$ is a singleton whose unique element can be taken to be $(L,{}^{L}L,G,{}^{L}G,1,\mathrm{id})$.
Note that the standard parabolic subgroup $P_\nu^\op=P_{\nu_b}^\op$ associated to this unique embedded endoscopic datum is given by $Q$.
Hence, by Lemma \ref{lem:Trans-Jac}, the identity \eqref{eq:ECI-basic} would become
\begin{align}\label{eq: ECI-fake}
(\Trans_{L}^{G_b}J^G_{Q} S\Theta^G_{\phi})\otimes\ov{\delta}^{-1/2}_Q
=
\sum_{\pi\in\Pi_{\phi}(G_{b})}
\langle\pi,1\rangle\Theta_\pi.
\end{align}
Let us explain how this identity fails in the following two examples.

\begin{example}\label{ex:first}
Let $G=\GL_2$.
We take $\phi$ to be the direct sum $\mathbbm{1}\oplus\mathbbm{1}$ of two trivial representations of $W_F\times\SL_2(\C)$.
Then we have $S_\phi=\GL_2(\C)$.
Suppose that $\rho$ is an irreducible representation of $S_\phi$ which is not $1$-dimensional.
Then the element $b\in B(G)$ associated to $\rho$ is non-basic and $G_b=L=T$.
Since $\Pi_\phi(G_b)$ is a singleton consisting of $\mathbbm{1}\boxtimes\mathbbm{1}$, we have
\[
\sum_{\pi\in\Pi_{\phi}(G_{b})}
\langle\pi,1\rangle\Theta_\pi
=
\dim(\rho)\Theta_{\mathbbm{1}\boxtimes\mathbbm{1}}.
\]
On the other hand, $\Pi_\phi(G)$ is a singleton consisting of $I_B^G(\mathbbm{1}\boxtimes\mathbbm{1})$.
We have $Q=B$ and can check that 
\[
(\Trans_{T}^{G_b}J^G_{B} S\Theta^G_{\phi})\otimes\ov{\delta}^{-1/2}_B
=
2\Theta_{\mathbbm{1}\boxtimes\mathbbm{1}}\otimes\ov{\delta}^{-1/2}_B
\]
(for example, by the geometric lemma (\cite[p.\ 448]{Ber1})).
Thus, firstly, this example suggests that it would be better to twist the $G_b$-side $\sum_{\pi\in\Pi_{\phi}(G_{b})}\langle\pi,1\rangle\Theta_\pi$ via the character $\ov{\delta}_{B}^{-1/2}$.
Secondly, even if we make this modification, the equality \eqref{eq: ECI-fake} does not hold unless $\dim(\rho)=2$.
\end{example}

\begin{example}\label{ex:second}
Let $G=\GL_4$.
We take $\phi$ to be the direct sum $\mathrm{Std}\oplus\mathrm{Std}$ of two standard representations of $\SL_2(\C)$ (trivial on the $W_F$-part).
Then we have $S_\phi\cong\GL_2(\C)$.
Suppose that $\rho$ is an irreducible representation of $S_\phi$ which is not $1$-dimensional.
Then the element $b\in B(G)$ associated to $\rho$ is non-basic and $G_b$ is given by an inner form of the standard Levi subgroup $L=\GL_2\times\GL_2$ of $G$.
Since $\Pi_\phi(G_b)$ is a singleton consisting of $\Trans_L^{G_b}\mathrm{St}_2\boxtimes\mathrm{St}_2$, we have
\[
\sum_{\pi\in\Pi_{\phi}(G_{b})}
\langle\pi,1\rangle\Theta_\pi
=
\dim(\rho)\Trans_L^{G_b}\Theta_{\mathrm{St}_2\boxtimes\mathrm{St}_2},
\]
where $\mathrm{St}_2$ denotes the Steinberg representation of $\GL_2(F)$.
On the other hand, $\Pi_\phi(G)$ is a singleton consisting of $I_Q^G(\mathrm{St}_2\boxtimes\mathrm{St}_2)$, where $Q$ is the standard parabolic subgroup of $G$ with Levi part $L$.
By using the geometric lemma as before, we can check that 
\[
J^G_Q S\Theta^G_{\phi}
=
2\Theta_{\mathrm{St}_2\boxtimes\mathrm{St}_2}
+
\Theta_{I_B^{\GL_2}(|-|^{\frac{1}{2}}\boxtimes|-|^{\frac{1}{2}}) \boxtimes I_B^{\GL_2}(|-|^{-\frac{1}{2}}\boxtimes|-|^{-\frac{1}{2}})}.
\]
Thus the equality \eqref{eq: ECI-fake} cannot hold even if we twist the $G_b$-side via $\ov{\delta}_Q^{-1/2}$ and if $\dim(\rho)=2$ because of an extra term in $J^G_Q S\Theta^G_{\phi}$.
\end{example}

What we will do in the following is to modify the identity \eqref{eq:ECI-basic} so that the problems as in the above examples are resolved.

On the $G_b$-side, we introduce a quantity $\langle\pi,-\rangle_\reg$ and replace $\langle\pi,-\rangle$ in $\Theta_\phi^{G_b,\eta(s)}$ with $\langle\pi,-\rangle_\reg$.
In the cases of Examples \ref{ex:first} and \ref{ex:second}, we get $\langle\pi,1\rangle_\reg=2$ for any $\pi$ whose $\rho\in \Irr(S_\phi)$ is not $1$-dimensional.
Moreover, we consider the character twist via $\ov{\delta}_{P_{\nu_b}}^{1/2}$.

On the $H$-side, we define the \textit{regular part} $[\Trans^{G_b}_{H}S\Theta_{\phi_H}^H]_{\reg}$ of $\Trans^{G_b}_{H}S\Theta_{\phi_H}^H$ by simply cutting off some part of the sum obtained after applying the geometric lemma (Definition \ref{def: endregpart}).
In the case of Example \ref{ex:first}, nothing changes by this procedure; in the case of Example \ref{ex:second}, the second term of $J^G_Q S\Theta^G_{\phi}$ is non-regular and thrown away.

The following is the main result of this section, which will be proved in \S \ref{ss:proof of ECI}.

\begin{theorem}\label{thm: ECI}
For any $b\in B(G)$, we have the following equality as distributions on $G_b(F)$:
\begin{align}\label{eq:B(G)-ECI}
[\Trans^{G_b}_{H}S\Theta_{\phi_H}^H]_{\reg}
=
e(G_b)\sum_{\pi\in\Pi_{\phi}(G_{b})}\langle\pi,\eta(s)\rangle_\reg \Theta_{\pi}\otimes\ov{\delta}^{1/2}_{P_{\nu_b}}
\end{align}
\end{theorem}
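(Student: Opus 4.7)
The plan is to begin from Lemma \ref{lem:Trans-Jac}, which decomposes $\Trans^{G_b}_H S\Theta^H_{\phi_H}$ as a sum, indexed by inner classes of embedded endoscopic data $\mf{e}_L \in X^{\mf{e}}_L$, of pieces of the form $(\Trans^{G_b}_{H_L} J^H_{P^{\op}_{\nu}} S\Theta^H_{\phi_H}) \otimes \ov{\delta}^{1/2}_{P_{\nu_b}}$. The strategy is to compute each Jacquet module via the geometric lemma of Bernstein--Zelevinsky, cut off the non-regular summands according to the definition of $[-]_\reg$, then apply the basic endoscopic character identity (Assumption \ref{assumption:ECI-basic}) on $H_L$, and finally rewrite the resulting character sum on the Levi $L_{b_L} = G_b$ (Lemma \ref{lem: Gbequals}) using the construction of $\iota_\mf{w}$ from \S\ref{ss: themap}.

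Concretely, fix a smallest Levi $M_H \subset H$ through which $\phi_H$ factors, so that by Assumption \ref{assumption:LIR-basic} one has $S\Theta^H_{\phi_H} = I^H_{P_H}(S\Theta^{M_H}_{\phi_H})$, with $\phi_H$ discrete on $M_H$. The geometric lemma applied to $J^H_{P^{\op}_{\nu}} \circ I^H_{P_H}$ produces a sum indexed by the double coset space $W_{H_L} \backslash W_H / W_{M_H}$, each summand being an $H_L$-parabolic induction of a twist of $S\Theta^{wM_H}_{w\phi_H}$ to an appropriate intermediate Levi. The definition of $[-]_\reg$ in Definition \ref{def: endregpart} should be designed so that only those double-coset representatives with $wM_H \subset H_L$ survive; equivalently, those making $w\phi_H$ tempered as a parameter of $H_L$. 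For these $w$, Assumption \ref{assumption:ECI-basic} applied to the basic datum of $L$ induced from $\mf{e}_L$ rewrites $\Trans^{G_b}_{H_L} S\Theta^{wM_H}_{w\phi_H}$ as $\Theta^{L_{b_L},\eta(s)}_{\eta_L \circ w\phi_H}$, and compatibility of transfer with parabolic induction lifts this identity to the level of Jacquet modules.

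To match the result with the right-hand side of \eqref{eq:B(G)-ECI}, one invokes the construction of $\iota_\mf{w}$ from \S\ref{ss: themap}: each $\pi \in \Pi_\phi(G_b)$ corresponds to an irreducible representation $\rho = \mc{L}(\lambda, E)$ of the disconnected group $S_\phi$, and its basic-LLC label on $L_{b_L}$ is $\rho_L = \mc{L}_L({}^{\dot{w}}\lambda, {}^{\dot{w}}E_L) \in \Irr(S^\natural_{{}^{\dot{w}}\phi, L})$. Under Theorem \ref{thm: acharmainthm} one has $\rho = \Ind^{S_\phi}_{S_\phi^\lambda}(E \otimes \mc{L}(\lambda))$, and Frobenius reciprocity should identify the sum of $\tr(\eta(s) \mid \rho_L)$-coefficients coming from regular Weyl representatives with the regular pairing $\langle \pi, \eta(s)\rangle_\reg$ of \S\ref{ss: regular-pairing-def}: intuitively, $\langle \pi, \eta(s)\rangle_\reg$ discards contributions from elements of $S_\phi$ outside the subgroup stabilizing the highest weight $\lambda$. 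The twist $\ov{\delta}^{1/2}_{P_{\nu_b}}$ and the Kottwitz sign $e(G_b) = e(L_{b_L})$ propagate unchanged from Lemma \ref{lem:Trans-Jac} and Assumption \ref{assumption:ECI-basic}.

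The main obstacle will be the bookkeeping: matching the Weyl double cosets arising from the geometric lemma on the $H$-side with the representation-theoretic decomposition $W_\phi = W_\phi^\circ \rtimes R_\phi$ (from \S\ref{ss: weylgroupconstructions}) on the $G_b$-side, and verifying that the cutoff defining $[-]_\reg$ is compatible under transfer with the Frobenius-reciprocity cutoff defining $\langle -,-\rangle_\reg$. One must also carefully track the inner-twist transfer factors $\Delta[\mf{w}_L, z]$ and the conjugations by $\dot{w} \in N_{\wh{G}}(\wh{T})$ used throughout \S\ref{ss: themap} to ensure that the sum over $X^{\mf{e}}_L$ on the $H$-side correctly enumerates the contributions to $\Pi_\phi(G_b)$ on the $G_b$-side without over- or under-counting.
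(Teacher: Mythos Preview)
Your plan is correct and matches the paper's approach: both sides are shown independently to equal the common expression $\sum_{w\in W^{\rel}_{L}\backslash W^{\rel}(M,L)}\Theta^{L_{b_{L}},{}^{w}\eta(s)}_{{}^{w}\phi}\otimes\ov{\delta}^{1/2}_{P_{\nu_b}}$, with the $H$-side handled by Lemma~\ref{lem: H-side-unstable-sums} (resting on the Weyl bijection of Lemma~\ref{lem: endoindexinglem}, which is exactly the bookkeeping you anticipate in your last paragraph) and the $G_b$-side by Proposition~\ref{prop:Gb-side-unstable-sum}. One small correction: the $G_b$-side matching does not go through Frobenius reciprocity for $\rho=\Ind^{S_\phi}_{S_\phi^\lambda}(E\otimes\mc{L}(\lambda))$; rather, $\langle\pi,\eta(s)\rangle_\reg$ is \emph{defined} directly as $\sum_{w'\in W_{{}^w\phi,L}\backslash W_{{}^w\phi}}\tr({}^{w'w}\eta(s)\mid\rho_L)$, and Proposition~\ref{prop:Gb-side-unstable-sum} is a straight combinatorial unwinding over the parametrizing set $\mc{I}(\phi,b)/{\sim}$ of Proposition~\ref{prop: members-of-b}, together with Lemma~\ref{lem: E-vs-E_L} identifying the relevant simple $\mc{A}_L^{\lambda_{L,w}}$-modules with $\Irr(S^\natural_{{}^w\phi,L})$ of the correct central character.
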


\begin{remark}
It is a natural attempt to seek a formulation of the endoscopic character identity such that the non-regular part is not discarded.
However, we do not pursue this direction in this paper.
Note that it is expected that the $L$-packet of a supercuspidal $L$-parameter (i.e., discrete and trivial on $\SL_2(\C)$-part) contains only supercuspidal representations (cf.\ \cite[Proposition 4.27]{Haines}).
This implies that when $\phi$ has trivial $\SL_2$-part, the regular part is everything.
For general $\phi$, we just remark that the non-regular part can be quite complicated (cf.\ \cite{AtobeJacquet}).
\end{remark}

\subsection{Preliminaries on the Weyl groups}

For any $F$-rational standard Levi subgroups $L_1$ and $L_2$ of $G$, we put
\begin{itemize}
\item 
$W^{\rel}(L_1,L_2):=\{w\in W^\rel \mid w(A_{L_1})\supset A_{L_2}\}=\{w\in W^\rel \mid w(L_1)\subset L_2\}$,
\item
$W^{\rel,L_1,L_2}:=\{ w \in W^{\rel} \mid w(L_1 \cap B) \subset B , w^{-1}(L_2 \cap B) \subset B\}$, and
\item 
$W^\rel[L_1,L_2]:=W^\rel(L_1,L_2) \cap W^{\rel,L_1,L_2}$.
\end{itemize}
We note that $W^\rel[L_1,L_2]$ gives a complete set of representatives of the double cosets $W^{\rel}_{L_2} \backslash W^\rel(L_1,L_2) / W^{\rel}_{L_1}$ (see \cite[Lemma 2.11]{Ber1}).
Also note that we have $W^{\rel}_{L_2}wW^{\rel}_{L_1}=W^{\rel}_{L_2}wW^{\rel}_{L_1}w^{-1}w=W^{\rel}_{L_2}w$ for any $w \in W^\rel(L_1,L_2)$, hence we have $W^{\rel}_{L_2} \backslash W^\rel(L_1,L_2) / W^{\rel}_{L_1}=W^{\rel}_{L_2} \backslash W^\rel(L_1,L_2)$.

On the dual side, similarly, we put
\[
\wh{W}^{\rel}(L_1,L_2):=\{w\in \wh{W}^\rel \mid w(A_{\wh{L}_{1}})\supset A_{\wh{L}_{2}}\}
\]
for any standard Levi subgroups ${}^{L}L_1$ and ${}^{L}L_2$ of ${}^{L}G$.
The condition $w(A_{\wh{L}_{1}})\supset A_{\wh{L}_{2}}$ is equivalent to $w({}^{L}L_1)\subset{}^{L}L_2$ by \cite[\S0.4.1]{KMSW}.

Note that the identification $W^\rel\cong\wh{W}^\rel$ induces $W^\rel(L_1,L_2)\cong\wh{W}^\rel(L_1,L_2)$ for any standard Levi subgroups $L_1$, $L_2$.

\begin{lemma}\label{lem: Weylgroups}
The image of the map $W_{\wh{G}}(A_{\wh{M}}) \hookrightarrow \wh{W}^\rel$ (see Lemma \ref{lem: weylinj}) is contained in $\wh{W}^{\rel}(M,M)$.
In particular, for any standard Levi subgroup $L$ of $G$, the set $\wh{W}^{\rel}(M,L)$ is stable under the right $W_{\wh{G}}(A_{\wh{M}})$-translation.
\end{lemma}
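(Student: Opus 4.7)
The plan is to unpack the explicit construction of the inclusion $W_{\wh{G}}(A_{\wh{M}}) \hookrightarrow \wh{W}^\rel$ given in the proof of Lemma \ref{lem: weylinj}, and verify the normalization condition directly.

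Recall that the map was constructed as follows: for $n \in N_{\wh{G}}(A_{\wh{M}})$, one observes that ${}^n(\wh{T},\wh{B}_{\wh{M}})$ is again a Borel pair of $\wh{M}$ (because $A_{\wh{M}} \subset {}^n\wh{T}$ forces $\wh{M} \supset {}^n\wh{T}$), and chooses $m \in \wh{M}$ with ${}^m(\wh{T},\wh{B}_{\wh{M}}) = {}^n(\wh{T},\wh{B}_{\wh{M}})$. The image of $n$ in $\wh{W}^\rel$ is then the class of $m^{-1}n \in N_{\wh{G}}(\wh{T})$.

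To check that $m^{-1}n$ lies in $\wh{W}^\rel(M,M)$, I need to show that ${}^{m^{-1}n}A_{\wh{M}} \supset A_{\wh{M}}$; since the two tori have the same dimension, it suffices to show ${}^{m^{-1}n}A_{\wh{M}} = A_{\wh{M}}$. By hypothesis $n \in N_{\wh{G}}(A_{\wh{M}})$, so ${}^n A_{\wh{M}} = A_{\wh{M}}$; moreover $m \in \wh{M}$ centralizes $A_{\wh{M}}$ because $A_{\wh{M}} = Z(\wh{M})^{\Gamma,\circ} \subset Z(\wh{M})$. Therefore
\[
{}^{m^{-1}n}A_{\wh{M}} = {}^{m^{-1}}\bigl({}^n A_{\wh{M}}\bigr) = {}^{m^{-1}}A_{\wh{M}} = A_{\wh{M}},
\]
as desired. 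This proves the first assertion.

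For the second assertion, let $w \in \wh{W}^\rel(M,L)$ and let $w' \in W_{\wh{G}}(A_{\wh{M}})$ be viewed inside $\wh{W}^\rel$ via the inclusion just analyzed. By the first part, $w'$ fixes $A_{\wh{M}}$ setwise, so $w'(A_{\wh{M}}) = A_{\wh{M}}$, and therefore
\[
ww'(A_{\wh{M}}) = w(A_{\wh{M}}) \supset A_{\wh{L}},
\]
which shows $ww' \in \wh{W}^\rel(M,L)$. Hence $\wh{W}^\rel(M,L)$ is stable under right multiplication by the image of $W_{\wh{G}}(A_{\wh{M}})$. The argument is essentially formal once the construction of Lemma \ref{lem: weylinj} is in hand, so I do not expect any serious obstacle.
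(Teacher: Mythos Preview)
Your proof is correct and follows essentially the same approach as the paper's: both recall the explicit construction of the image $m^{-1}n$ from Lemma~\ref{lem: weylinj} and observe that $m^{-1}n$ normalizes $A_{\wh{M}}$ (since $n$ does and $m \in \wh{M}$ centralizes it), then deduce the second assertion from the first by the evident stability of $\wh{W}^{\rel}(M,L)$ under right $\wh{W}^{\rel}(M,M)$-translation. The only difference is that you spell out the verification of ${}^{m^{-1}n}A_{\wh{M}} = A_{\wh{M}}$ and the right-translation step in slightly more detail than the paper does.
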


\begin{proof}
Let $w$ be an element of $W_{\wh{G}}(A_{\wh{M}})$ with a lift $n\in N_{\wh{G}}(A_{\wh{M}})$.
Recall that the image of $w$ in $\wh{W}^\rel=W_{\wh{G}}(A_{\wh{T}})$ is given by the class of $m^{-1}n\in N_{\wh{G}}(A_{\wh{T}})$, where $m\in\wh{M}$ is an element such that $m^{-1}n$-conjugation preserves the Borel pair $(\wh{T}, \wh{B}_{\wh{M}})$ of $\wh{M}$.
In particular, $m^{-1}n$-conjugation preserves $A_{\wh{M}}$.
Hence we get the first assertion.

Since $\wh{W}^{\rel}(M,L)$ is stable under right $\wh{W}^{\rel}(M,M)$-translation, the second assertion follows from the first one.
\end{proof}

\subsection{Definition of the regular part on the endoscopic side}\label{ss: endo-reg-part}

We continue with the fixed data from \S\ref{ss: setup} and \S\ref{ss: motivation}.
Let $P$ be a standard parabolic subgroup of $G$ with standard Levi $M$ for a fixed tempered $L$-parameter $\phi$ as in \S \ref{ss: S-group}, i.e., ${}^{L}M$ is a smallest Levi subgroup of ${}^{L}G$ such that $\phi$ factors through ${}^{L}M\hookrightarrow{}^{L}G$.
Then $X^{\mf{e}}_L$ is in bijection with $W_L \backslash W(L,H) / W_H$ where we identify $W_H$ with a subgroup of $\wh{W}_G$ via $\eta$ and $W(L,H)$ consists of $w \in \wh{W}_G$ such that for each $\gamma \in \Gamma$, there exists $h_{\gamma} \in \wh{H}$ such that $\Int(h_{\gamma}) \circ \gamma$ centralizes $(w \circ \eta)^{-1}(A_{\wh{M}})$ (see \cite[\S2.7]{BM2}).

\begin{lemma}{\label{lem: M-HM}}
Suppose $(H, \mc{H}, s, \eta)$ is a refined endoscopic datum through which $\phi$ factors as $\phi_H$ and let $H_M \subset H$ be a minimal Levi through which $\phi_H$ factors. Then $\eta^{-1}(\wh{M})$ and $\wh{H_M}$ are conjugate in $N_{\wh{H}}(\wh{T}_{H})$.  
\end{lemma}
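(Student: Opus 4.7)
The plan is to identify both $\eta^{-1}(\wh{M})$ and $\wh{H_M}$ as centralizers in $\wh{H}$ of two maximal tori of $S_{\phi_H}^\circ$, and then to conjugate the tori. Along the way I will use two key inputs from the excerpt: Lemma~\ref{lem: S-group-Levis}(2), which identifies the central torus of a minimal Levi carrying an $L$-parameter with a maximal torus of the $S$-group, and the fact that $\eta|_{\wh{H}}$ is an isomorphism onto $Z_{\wh{G}}(\eta(s))^\circ$.

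First I would show that $\eta^{-1}(\wh{M})$ is a Levi subgroup of $\wh{H}$ containing $\wh{T}_H$. Since $s\in Z(\wh{H})^\Gamma\subset\wh{T}_H$ and $\eta(\wh{T}_H)=\wh{T}\subset\wh{M}$, we have $\eta(s)\in\wh{M}$; hence $A_{\wh{M}}$, being central in $\wh{M}$, commutes with $\eta(s)$ and thus lies inside $Z_{\wh{G}}(\eta(s))^\circ=\eta(\wh{H})$. Setting $A':=(\eta|_{\wh{H}})^{-1}(A_{\wh{M}})$, injectivity of $\eta|_{\wh{H}}$ forces $A'$ to coincide with $(\eta|_{\wh{T}_H})^{-1}(A_{\wh{M}})\subset\wh{T}_H$, so $A'$ is a subtorus of $\wh{T}_H$ mapping isomorphically onto $A_{\wh{M}}$. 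Using $\wh{M}=Z_{\wh{G}}(A_{\wh{M}})$ and the same injectivity, we obtain $\eta^{-1}(\wh{M})=Z_{\wh{H}}(A')$, a Levi subgroup of $\wh{H}$ containing $\wh{T}_H$.

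Next I would show that both $A'$ and $A_{\wh{H_M}}$ are maximal tori of $S_{\phi_H}^\circ$. For $A_{\wh{H_M}}$, this is Lemma~\ref{lem: S-group-Levis}(2) applied to the minimal Levi $H_M$ of $H$, using that $\phi_H$ is discrete as an $L$-parameter of $H_M$. For $A'$, the relation $\eta\circ\phi_H=\phi$ together with injectivity of $\eta|_{\wh{H}}$ (and the standard fact that commutators of $\wh{H}$ with ${}^{L}H$ lie in $\wh{H}$) shows that $\eta|_{\wh{H}}$ restricts to an isomorphism $S_{\phi_H}^\circ\xrightarrow{\sim} Z_{S_\phi}(\eta(s))^\circ$ which identifies $A'$ with $A_{\wh{M}}$; since $\phi$ is discrete as a parameter of $M$, Lemma~\ref{lem: S-group-Levis}(2) makes $A_{\wh{M}}$ a maximal torus of $S_\phi^\circ$, hence also a maximal torus of the subgroup $Z_{S_\phi}(\eta(s))^\circ\subset S_\phi^\circ$.

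Finally, all maximal tori of the connected reductive group $S_{\phi_H}^\circ$ are conjugate, so some $h\in S_{\phi_H}^\circ\subset\wh{H}$ satisfies $hA'h^{-1}=A_{\wh{H_M}}$, whence
\[
h\,\eta^{-1}(\wh{M})\,h^{-1}
=Z_{\wh{H}}(hA'h^{-1})
=Z_{\wh{H}}(A_{\wh{H_M}})
=\wh{H_M}.
\]
Both Levi subgroups contain the common maximal torus $\wh{T}_H$, so the standard fact that two Levi subgroups of a connected reductive group sharing a maximal torus are conjugate in the ambient group if and only if they are conjugate by an element of the normalizer of that torus lets us replace $h$ by an element of $N_{\wh{H}}(\wh{T}_H)$. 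The main obstacle is the identification in Step~2 of $A'$ as a maximal torus of $S_{\phi_H}^\circ$: this requires cleanly relating $S_{\phi_H}^\circ$ and $S_\phi^\circ$ through the $L$-embedding $\eta$ (with attention to connected components of centralizers), after which everything reduces to conjugacy of maximal tori.
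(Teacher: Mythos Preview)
Your proposal is correct and follows essentially the same strategy as the paper: both proofs identify $\eta^{-1}(A_{\wh{M}})$ and $A_{\wh{H_M}}$ as maximal tori of $S_{\phi_H}^\circ$ (via Lemma~\ref{lem: S-group-Levis}(2)), conjugate them by an element of $S_{\phi_H}^\circ$, and then adjust this element to lie in $N_{\wh{H}}(\wh{T}_H)$. Your write-up is in fact more careful than the paper's at the point you flag as the main obstacle---the paper simply asserts that $\eta(S_{\phi_H}^\circ)\subset S_\phi^\circ$ implies $\eta^{-1}(A_{\wh{M}})$ is a maximal torus of $S_{\phi_H}^\circ$, whereas you spell out the identification $\eta(S_{\phi_H}^\circ)\cong Z_{S_\phi}(\eta(s))^\circ$ explicitly; but the underlying argument is the same.
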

\begin{proof}
    We have that $A_{\wh{M}}$ is a maximal torus of $S^{\circ}_{\phi}$ and note that $A_{\wh{M}} \subset \wh{T} \subset \eta(\wh{H})$. Since $\eta(S^{\circ}_{\phi_H}) \subset S^{\circ}_{\phi}$, we have that $\eta^{-1}(A_{\wh{M}})$ is a maximal torus of $S^{\circ}_{\phi_H}$. But if $H_M$ is a minimal Levi through which $\phi_H$ factors, then $A_{\wh{H_M}}$ is a maximal torus of $S^{\circ}_{\phi_H}$ and hence there exists $h \in S^{\circ}_{\phi_H} \subset \wh{H}$ conjugating $A_{\wh{H_M}}$ to  $\eta^{-1}(A_{\wh{M}})$. Let $\wh{T}' = \Int(h)(\wh{T}_{H})$. Then $\wh{T}'$ and $\wh{T}_{H}$ are two maximal tori in $Z_{\wh{H}}(\eta^{-1}(A_{\wh{M}}))$ and hence are conjugate. Thus, we may as well assume $h \in N_{\wh{H}}(\wh{T}_{H})$.
\end{proof}

We assume $\phi_H$ and $\phi$ are chosen such that $H_M$ and $M$ can be chosen to be standard Levi subgroups. Each $\mf{e}_L \in X^{\mf{e}}_L$ determines a Borel subgroup $\wh{B}^{\mf{e}_L} \subset \wh{H}$ via $\wh{B}^{\mf{e}_L} = (\Int(h) \circ \eta)^{-1}(\wh{B})$. There is a unique standard parabolic subgroup for $\wh{H_M}$ containing $\wh{B}^{\mf{e}_L}$, which we call $P^{\mf{e}_L}$. Similarly, there is a standard parabolic for $\wh{H_L}$ containing $\wh{B}^{\mf{e}_L}$, which is exactly $P^{\op}_{\nu_{\mf{e}_L}}$. 

By Assumption \ref{assumption:LIR-basic} and the geometric lemma of \cite{Ber1}, we have that the term $J^H_{P^{\op}_{\nu_{\mf{e}_L}}} S\Theta_{\phi_H}^H$ which appears in the expression in \eqref{eqn: endoscopiccharacter} becomes
\begin{align}
     J^H_{P^{\op}_{\nu_{\mf{e}_L}}} S\Theta_{\phi_H}^H
     &=
     J^H_{P^{\op}_{\nu_{\mf{e}_L}}}I^H_{P^{\mf{e}_L}}S\Theta_{\phi_{H}}^{H_M}\\
     &=
     \sum\limits_{w \in W^{\rel,H_M, H_L}} I^{H_L}_{P_2} \circ w^\ast \circ J^{H_M}_{P_1} S\Theta_{\phi_{H}}^{H_M}, \nonumber
\end{align}
where $P_1$ (resp.\ $P_2$) is the standard parabolic subgroup of $H_M \cap w^{-1}(H_L)$ inside $H_M$ (resp.\ $w(H_M) \cap H_L$ inside $H_L$) and $w^\ast$ denotes the pull-back via the $w$-conjugation from $H_M \cap w^{-1}(H_L)$ to $w(H_M) \cap H_L$.

Note that when $w \in W^\rel[H_M, H_L]$, we have $I^{H_L}_{P_2} \circ w^\ast \circ J^{H_M}_{P_1} S\Theta_{\phi_{H}}^{H_M}=S\Theta_{{}^w\phi_{H}}^{H_L}$.
Indeed, $H_M \cap w^{-1}(H_L) = H_M$ and so the $J^{H_M}_{P_1}$ is just the identity map. 
Moreover, by Assumption \ref{assumption:twist}, we have $w^\ast S\Theta_{\phi_{H}}^{H_M}=S\Theta_{{}^w\phi_{H}}^{H_M}$.
Finally, by Assumption \ref{assumption:LIR-basic}, we get $I_{P_2}^{H_L}S\Theta_{{}^w\phi_{H}}^{H_M}=S\Theta_{{}^w\phi_{H}}^{H_L}$

This motivates the following definition.

\begin{definition}{\label{def: endregpart}}
    We define the \textit{regular part} of $J^H_{P^{\op}_{\nu_{\mf{e}_L}}} S\Theta_{\phi_H}^H$ to be
    \begin{equation*}
             [J^H_{P^{\op}_{\nu_{\mf{e}_L}}} S\Theta_{\phi_H}^H]_{\reg}
             :=
             \sum\limits_{w \in W^\rel[H_M, H_L]} S\Theta_{\co{w}{\phi_H}}^{H_L}.
    \end{equation*}
    We define the \textit{regular part} of $\Trans^{G_b}_H S\Theta^H_{\phi_H}$ by replacing $J^H_{P^{\op}_{\nu_{\mf{e}_L}}} S\Theta_{\phi_H}^H$ in the expression \eqref{eqn: endoscopiccharacter} with $[J^H_{P^{\op}_{\nu_{\mf{e}_L}}} S\Theta_{\phi_H}^H]_{\reg}$:
    \begin{align*}
         [\Trans^{G_b}_H S\Theta^H_{\phi_H}]_\reg
         &:=
         \sum\limits_{\mf{e}_L \in X^{\mf{e}}_L} (\Trans^{G_b}_{H_L} [J^H_{P^{\op}_{\nu_{\mf{e}_L}}}S\Theta^H_{\phi_H}]_\reg)\otimes\ov{\delta}^{1/2}_{P_{\nu_b}}\\
         &=
         \sum\limits_{\mf{e}_L \in X^{\mf{e}}_L} \Bigl(\Trans^{G_b}_{H_L} \sum\limits_{w \in W^\rel[H_M, H_L]} S\Theta_{\co{w}{\phi_H}}^{H_L}\Bigr)\otimes\ov{\delta}^{1/2}_{P_{\nu_b}}.
    \end{align*}
\end{definition}

\subsection{Parametrization of members of \texorpdfstring{$\Pi_{\phi}(G_{b})$}{Pi(phi)(Gb)}}

In \S \ref{s: theconstruction}, we constructed a bijective map $\iota_{\mf{w}}$ between $\coprod_{b\in B(G)}\Pi_\phi(G_b)$ and $\Irr(S_\phi)$.
For convenience, for any $\rho\in\Irr(S_\phi)$, we write $\pi_\rho:=\iota_{\mf{w}}^{-1}(\rho)$.
Our aim in here is to, for each $b\in B(G)$, describe and parametrize $\rho\in\Irr(S_\phi)$ satisfying $\pi_\rho\in\Pi_\phi(G_b)$.

In the following, we fix a standard parabolic subgroup $Q$ of $G$ with Levi part $L$ and fix $b_L\in B(L)^+_\bas$ such that $\alpha_L(\lambda_L)\in\mf{A}_Q^+$, where $\lambda_L:=\kappa_L(b_L)|_{A_{\wh{L}}}$.
We put $b\in B(G)$ to be the image of $b_L$ in $B(G)$.

\begin{lemma}\label{lem: highest-weight-weyl}
Let $\rho=\mc{L}(\lambda,E)\in\Irr(S_\phi)$ be the irreducible representation of $S_\phi$ with highest weight $\lambda\in X^\ast(A_{\wh{M}})^+$ and a simple $\mc{A}^\lambda$-module $E$.
If $\pi_\rho$ belongs to $\Pi_\phi(G_b)$, then there exists an element $w\in \wh{W}^{\rel}(M,L)$ satisfying $\alpha_{{}^{w}M}({}^{w}\lambda)=\alpha_L(\lambda_L)$, or equivalently, $\lambda=\alpha_{M}^{-1}\circ w^{-1}\circ\alpha_L(\lambda_L)$.
\end{lemma}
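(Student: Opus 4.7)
The plan is to trace through the construction of \S\ref{ss: themap} and invoke the uniqueness recalled in \S\ref{ss: Kottwitzreview} for the decomposition $B(G) = \coprod_{P} B(G)_{P}$. Since $\rho = \mc{L}(\lambda, E)$ is assumed to produce our fixed $b \in B(G)$, and the construction produces $b$ as the image of some $b_{L_\lambda} \in B(L_\lambda)_\bas^+$ for the standard Levi $L_\lambda$ attached to $\lambda$, the bijection $B(L')_\bas^+ \xrightarrow{1:1} B(G)_{Q'}$ combined with the disjointness of the Newton strata forces $L = L_\lambda$ and $b_L = b_{L_\lambda}$.

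Next I unpack the construction. Choose $w \in W^\rel$ with $w\cdot\alpha_M(\lambda) \in \ov{C}$, set $M' := {}^w M$ and $\lambda' := {}^w\lambda$, so that by Weyl-equivariance of $\alpha$ we have $\alpha_{M'}(\lambda') = w\cdot\alpha_M(\lambda) \in \mf{A}_{Q_\lambda}^+$. The standard Levi $L_\lambda$ is characterized by containing $M'$ and by $\alpha_{M'}(\lambda') \in \mf{A}_{Q_\lambda}^+$. By the previous paragraph $L_\lambda = L$, so in particular $M' \subset L$, whence $A_{\wh{L}} \subset A_{\wh{M'}} = w(A_{\wh{M}})$. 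Under the identification $W^\rel \cong \wh{W}^\rel$ this places $w \in \wh{W}^\rel(M, L)$.

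For the asserted identity, the proof of Lemma \ref{lem: dominance} establishes
\[
\alpha_{L_\lambda}\bigl(\kappa_{L_\lambda}(b_{L_\lambda})\bigr) = \alpha_{M'}(\lambda').
\]
Since $\alpha_L$ is compatible with the restriction isomorphism $X^\ast(Z(\wh{L})^\Gamma)_\R \xrightarrow{\sim} X^\ast(A_{\wh{L}})_\R$, the left-hand side equals $\alpha_L(\lambda_L)$ for $\lambda_L = \kappa_L(b_L)|_{A_{\wh{L}}}$, and so $\alpha_{{}^w M}({}^w\lambda) = \alpha_L(\lambda_L)$. The equivalent formulation $\lambda = \alpha_M^{-1} \circ w^{-1} \circ \alpha_L(\lambda_L)$ then follows upon applying $w^{-1}$ and invoking the equivariance $w^{-1}\circ\alpha_{{}^w M}\circ{}^w = \alpha_M$.

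I do not anticipate a major obstacle; the argument is essentially bookkeeping of the various identifications between $\mf{A}_{L'}$, $X^\ast(A_{\wh{L'}})_\R$, and $X^\ast(Z(\wh{L'})^\Gamma)_\R$ for the relevant standard Levi subgroups, combined with the uniqueness of the Newton stratification of $B(G)$. The only mildly delicate point is verifying that the $w$ produced by the construction actually lies in $\wh{W}^\rel(M,L)$, and this reduces at once to the inclusion $M' \subset L$ forced by the comparison $L_\lambda = L$.
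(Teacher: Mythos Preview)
Your proposal is correct and follows essentially the same approach as the paper's proof. The only cosmetic differences are that you make the identification $L_\lambda = L$, $b_{L_\lambda} = b_L$ explicit via the disjointness of the Newton strata (the paper leaves this implicit in its setup), and you cite the proof of Lemma~\ref{lem: dominance} for the identity $\alpha_L(\kappa_L(b_L)) = \alpha_{M'}(\lambda')$ rather than reproving it via the commutative diagram and \eqref{eqn: basic correspondence} as the paper does.
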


\begin{proof}
Let us recall our construction of $\pi_\rho$.
We first choose an element $w\in W^\rel$ satisfying ${}^{w}\alpha_M(\lambda)\in \mf{A}_{Q_\lambda}^+$ for a (unique) standard parabolic subgroup $Q_\lambda$.
Let $L_\lambda$ be the Levi part of $Q_\lambda$ (thus we have $\mf{A}_{Q_\lambda}^+\subset X_\ast(A_{L_\lambda})_\R$). 
We have ${}^{w}\alpha_M(\lambda)=\alpha_{{}^{w}M}(\co{w}{\lambda})$.
Note that $\co{w}{M} \subset L_\lambda$ since we have $\alpha_{{}^{w}M}({}^{w}\lambda)\in\mf{A}_Q^+$, hence $w$ belongs to $W^{\rel}(M,L_\lambda)$.
We apply the $B(L_\lambda)_\bas$-LLC to $({}^{w}\phi,\rho_{L_\lambda})$ to obtain $b_{L_\lambda}\in B(L_\lambda)^+_\bas$ and $\pi_{b_{L_\lambda}}\in\Pi_{\co{w}{\phi}}(L_{b_{L_\lambda}})$, where $\rho_{L_\lambda}:=\mc{L}_{L_\lambda}({}^{w}\lambda,{}^{w}E_{L_\lambda})$ (see \S\ref{ss: themap}). 
Then $\pi_\rho$ is defined to be $\pi_{b_{L_\lambda}}$.
Hence, the assumption that $\pi_\rho\in\Pi_\phi(G_b)$ is equivalent to that $b\in B(G)$ is the image of $b_{L_\lambda}\in B(L_\lambda)_\bas^+$.
By our definition of $b\in B(G)$, this is furthermore equivalent to that $L=L_\lambda$ and $b_L=b_{L_\lambda}$.

By the commutative diagram \eqref{eqn: basic correspondence}, $\kappa_L(b_L)|_{A_{\wh{L}}}$ is given by the $A_{\wh{L}}$-central character of $\rho_L$, which equals ${}^{w}\lambda|_{A_{\wh{L}}}$.
On the other hand, by definition, $\lambda_L=\kappa_L(b_L)|_{A_{\wh{L}}}$.
Hence we get $\lambda_L={}^{w}\lambda|_{A_{\wh{L}}}$.
Now we note the following commutative  diagram:
\begin{equation*}
    \begin{tikzcd}
    & X^{\ast}(Z({}^{w}\wh{M})^\Gamma) \ar[r,->>, "\res"] \arrow[d,->>, "\res"] & X^{\ast}(A_{{}^{w}\wh{M}}) \arrow[r,hook] \arrow[d,->>, "\res"] & X^{\ast}(A_{{}^{w}\wh{M}})_{\R} \arrow[d,->>, "\res"] \arrow[r,"\alpha_{{}^{w}M}"] & \mf{A}_{{}^{w}M} \\
    B(L)_\bas \arrow[r, "\kappa_L"] & X^{\ast}(Z(\wh{L})^\Gamma) \ar[r,->>, "\res"] & X^{\ast}(A_{\wh{L}}) \arrow[r,hook] &X^{\ast}(A_{\wh{L}})_{\R} \arrow[r,"\alpha_L"] & \mf{A}_L \arrow[u, hook]
    \end{tikzcd}
\end{equation*}
Since $\alpha_{{}^{w}M}({}^{w}\lambda)$ belongs to $\mf{A}_L\subset \mf{A}_{{}^{w}M}$, we have ${}^{w}\lambda|_{A_{\wh{L}}}=\alpha_L^{-1}\circ\alpha_{{}^{w}M}({}^{w}\lambda)$.
Hence we obtain $\lambda_L=\alpha_L^{-1}\circ\alpha_{{}^{w}M}({}^{w}\lambda)$.
\end{proof}

In the following, for $w\in \wh{W}^{\rel}(M,L)$, we shortly write $\lambda_{L}^{w}$ for $\alpha_{M}^{-1}\circ w^{-1}\circ\alpha_L(\lambda_L)\in X^{\ast}(A_{\wh{M}})$.
(Hence what we have proved in Lemma \ref{lem: highest-weight-weyl} is that the highest weight of any $\rho\in\Irr(S_\phi)$ satisfying $\pi_\rho\in\Pi_\phi(G_b)$ must be of the form $\lambda_{L}^{w}$ for some $w\in \wh{W}^{\rel}(M,L)$.)
We also put 
\begin{align*}
\lambda_{L,w}
:=w\circ\alpha_{M}^{-1}\circ w^{-1}\circ\alpha_L(\lambda_L)
=\alpha_{{}^{w}M}^{-1}\circ\alpha_L(\lambda_L)\in X^{\ast}(A_{{}^{w}\wh{M}}).
\end{align*}
Note that ${}^{w}M$, ${}^{w}\phi$, and $\lambda_{L,w}$ depend only on the right $W_\phi$-coset of $w\in\wh{W}^{\rel}(M,L)$.
(Recall that we have a map $W_\phi\rightarrow W_{\wh{G}}(A_{\wh{M}})\hookrightarrow\wh{W}^\rel$ by \eqref{map:Wphi} and Lemma \ref{lem: weylinj}, hence $\wh{W}^{\rel}(M,L)$ is stable under right $W_\phi$-translation by Lemma \ref{lem: Weylgroups}.)

We let $\mc{I}(\phi,b)$ denote the set of pairs $(w,E_{L,w})$, where
\begin{itemize}
\item
$w\in W^{\rel}(M,L)/W_\phi$, and 
\item
$E_{L,w}$ is a simple $\mc{A}_{L}^{\lambda_{L,w}}$-module such that the $Z(\wh{L})^\Gamma$-central character of the irreducible representation $\mc{L}_L(\lambda_{L,w},E_{L,w})$ of $S_{{}^{w}\phi,L}$ is given by $\kappa_L(b_L)$.
\end{itemize}
Here, note that we need to specify the dominance in $S_{{}^{w}\phi,L}^{\circ}$ so that the notation $\mc{L}_L(-,-)$ makes sense in general.
However, since $\lambda_{L,w}$ extends to a $1$-dimensional character of $S_{{}^{w}\phi,L}^{\circ}$ as shown in the proof of Lemma \ref{lem: naturalfactoring}, the representation $\mc{L}_L(\lambda_{L,w},E_{L,w})$ is determined independently of the choice of the dominance.
We define an equivalence relation on $\mc{I}(\phi,b)$ as follows: $(w_{1},E_{L,w_{1}})\sim(w_{2},E_{L,w_{2}})$ if and only if there exists an element $w_{L}\in W^{\rel}_{L}$ such that
\begin{itemize}
\item
$w_{2}=w_{L}w_{1}$ (i.e., $w_{1}$ and $w_{2}$ belong to the same double coset in $W^{\rel}_{L}\backslash W^{\rel}(M,L)/W_\phi$) and
\item
$E_{L,w_{1}}$ and $E_{L,w_{2}}$ are identified under the isomorphism $S_{{}^{w_{1}}\phi,L}^{\lambda_{L,w_{1}}}\cong S_{{}^{w_{2}}\phi,L}^{\lambda_{L,w_{2}}}$ given by $\Int(w_{L})$.
\end{itemize}

\begin{proposition}\label{prop: members-of-b}
We have a natural bijection between the sets $\{\rho\in\Irr(S_{\phi}) \mid \pi_{\rho}\in\Pi_{\phi}(G_{b})\}$ and $\mc{I}(\phi,b)/{\sim}$.
\end{proposition}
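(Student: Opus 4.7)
The plan is to construct the bijection explicitly by refining the parametrization of $\Irr(S_\phi)$ from Theorem \ref{thm: acharmainthm} with the Weyl-group data supplied by Lemma \ref{lem: highest-weight-weyl}. For the forward map $\Phi$, take $\rho \in \Irr(S_\phi)$ with $\pi_\rho \in \Pi_\phi(G_b)$ and write $\rho \cong \mc{L}(\lambda, E)$ with $\lambda \in X^\ast(A_{\wh{M}})^+$ and a simple $\mc{A}^\lambda$-module $E$. Lemma \ref{lem: highest-weight-weyl} provides $w \in \wh{W}^{\rel}(M, L)$ with ${}^w \lambda = \lambda_{L,w}$. Proposition \ref{prop: AlambdaL} identifies $\mc{A}^{\lambda_{L,w}} = \mc{A}_L^{\lambda_{L,w}}$, so $E_{L,w} := {}^w E$ inherits the structure of a simple $\mc{A}_L^{\lambda_{L,w}}$-module. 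The central character of $\rho_L := \mc{L}_L(\lambda_{L,w}, E_{L,w})$ equals $\kappa_L(b_L)$ because the construction of \S\ref{ss: themap} assigns $\rho$ via the $B(L)_{\bas}$-LLC to an element $b_L^{(\rho)} \in B(L)_{\bas}^+$ mapping to $b$, and the bijection $B(L)_{\bas}^+ \cong B(G)_Q$ of \S\ref{ss: Kottwitzreview} forces $b_L^{(\rho)} = b_L$. Set $\Phi(\rho) := [(w, E_{L,w})] \in \mc{I}(\phi, b)/\sim$.

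Well-definedness of $\Phi$ and the construction of an inverse both rely on identifying the two sources of ambiguity, already analyzed in the independence-of-choices discussion after \S\ref{ss: themap}. The pair $(\lambda, E)$ is determined only modulo $\pi_0(S_\phi) \cong R_\phi$ (Lemma \ref{lem: disconn-irrep-paramet}); the image of $R_\phi$ in $\wh{W}^{\rel}$ under the composite $R_\phi \hookrightarrow W_\phi \to W_{\wh{G}}(A_{\wh{M}}) \hookrightarrow \wh{W}^{\rel}$ from \eqref{map:Wphi} and Lemma \ref{lem: weylinj} preserves $\wh{W}^{\rel}(M, L)$ by Lemma \ref{lem: Weylgroups}, and by the equivariance established in Lemma \ref{lem: weylinj2}, replacing $(\lambda, E)$ by $r \cdot (\lambda, E)$ amounts to replacing $w$ by $w r^{-1}$, which is absorbed by the right $W_\phi$-quotient in the definition of $\mc{I}(\phi, b)$. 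Separately, the element $w$ is determined by Lemma \ref{lem: highest-weight-weyl} only modulo left multiplication by $\Stab_{\wh{W}^{\rel}}(\alpha_L(\lambda_L)) = \wh{W}^{\rel}_L$ (Lemma \ref{lem: lambdacentweyl}); tracking the induced $\Int(w_L)$-action on modules shows this matches exactly the equivalence $\sim$.

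For the inverse map $\Psi$, given $[(w, E_{L,w})]$, use the $W_\phi^\circ$-part of the right $W_\phi$-action (which acts on $X^\ast(A_{\wh{M}})$ as the Weyl group of $S_\phi^\circ$ relative to its maximal torus $A_{\wh{M}}$) to select a representative with $\lambda := \lambda_L^w$ dominant with respect to $B_\phi$; transport $E_{L,w}$ back through Proposition \ref{prop: AlambdaL} and ${}^{w^{-1}}$ to obtain a simple $\mc{A}^\lambda$-module $E$, and set $\Psi([(w, E_{L,w})]) := \mc{L}(\lambda, E)$. Running the construction of \S\ref{ss: themap} on $\mc{L}(\lambda, E)$ with the same $w$ reproduces $\rho_L = \mc{L}_L(\lambda_{L,w}, E_{L,w})$, which by the central character condition corresponds via the $B(L)_{\bas}$-LLC to our fixed $b_L$; hence $\pi_\rho \in \Pi_\phi(G_b)$. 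The main obstacle in the whole argument is the careful bookkeeping of the equivalence $\sim$: confirming that the $R_\phi$-action on $(\lambda, E)$ and the $\wh{W}^{\rel}_L$-action on $w$ assemble precisely into $\sim$ without introducing extra identifications, after which $\Phi$ and $\Psi$ are visibly mutually inverse.
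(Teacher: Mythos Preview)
Your proposal is correct and follows essentially the same route as the paper: both arguments rest on Theorem \ref{thm: acharmainthm} and Lemma \ref{lem: disconn-irrep-paramet} to parametrize $\Irr(S_\phi)$ by pairs $(\lambda,E)$, use Lemma \ref{lem: highest-weight-weyl} to see that the relevant $\lambda$'s are exactly the $\lambda_L^w$ for $w\in W^{\rel}(M,L)$, invoke the decomposition $W_\phi=W_\phi^\circ\rtimes R_\phi$ together with $\Stab_{W^{\rel}}(\alpha_L(\lambda_L))=W^{\rel}_L$ to match the ambiguities with the equivalence $\sim$, and use Proposition \ref{prop: AlambdaL} to pass between $\mc{A}^\lambda$- and $\mc{A}_L^{\lambda_{L,w}}$-modules. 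The paper phrases this as a chain of parameter bijections rather than constructing explicit mutually inverse maps $\Phi$ and $\Psi$, but the content is the same.
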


\begin{proof}
By Theorem \ref{thm: acharmainthm} and Lemma \ref{lem: disconn-irrep-paramet}, the set $\Irr(S_{\phi})$ is bijective to the set of pairs $(\lambda,E)$, where
\begin{itemize}
\item 
$\lambda$ is (a representative of) an element of $X^\ast(A_{\wh{M}})^+/R_\phi$,
\item
$E$ is (the isomorphism class of) a simple $\mc{A}^{\lambda}$-module,
\end{itemize}
by $\mc{L}(\lambda,E)\leftrightarrow (\lambda,E)$.
Thus, by Lemma \ref{lem: highest-weight-weyl}, the set of elements $\rho\in\Irr(S_{\phi})$ satisfying $\pi_{\rho}\in\Pi_{\phi}(G_{b})$ is in bijection with the set of pairs $(\lambda,E)$, where
\begin{itemize}
\item 
$\lambda$ runs over a complete set of representatives of
\[
\{\lambda_{L}^{w}\in X^\ast(A_{\wh{M}})^+ \mid w\in W^{\rel}(M,L)\}/R_\phi,
\]
\item
$E$ runs over the isomorphism classes of simple $\mc{A}^\lambda$-modules such that the $Z(\wh{L})^\Gamma$-central character of $\mc{L}_L(\lambda_{L,w},E_{L,w})$ is given by $\kappa_L(b_L)$, where $E_{L,w}$ is the simple $\mc{A}_{L}^{\lambda_{L,w}}$-module which is identified with $E$ under the isomorphism $\mc{A}^{\lambda}\cong\mc{A}^{{}^{w}\lambda}\cong\mc{A}_{L}^{{}^{w}\lambda}$.
\end{itemize}
Since we have $W_\phi=W_\phi^\circ\rtimes R_\phi$ (Lemma \ref{lem: pi0embedweyl}) and each $W_\phi^\circ$-orbit in $X^{\ast}(A_{\wh{M}})$ contains a unique dominant element, we have
\[
X^{\ast}(A_{\wh{M}})/W_\phi\cong X^{\ast}(A_{\wh{M}})^{+}/R_\phi.
\]
Thus, by noting that the stabilizer of $\lambda_{L}$ in $W^{\rel}$ is given by $W^{\rel}_{L}$, we see that the map $W^{\rel}(M,L)\rightarrow X^{\ast}(A_{\wh{M}})\colon w\mapsto \lambda_{L}^{w}$ induces a bijection 
\[
W^{\rel}_{L}\backslash W^{\rel}(M,L)/W_\phi
\xrightarrow{1:1}
\{\lambda_{L}^{w}\in X^\ast(A_{\wh{M}})^+ \mid w\in W^{\rel}(M,L)\}/R_\phi.
\]
Therefore the set of pairs $(\lambda,E)$ as above can be identified with $\mc{I}(\phi,b)/{\sim}$.
\end{proof}

\subsection{Definition of \texorpdfstring{$\langle\pi,\eta(s)\rangle_\reg$}{<pi,s>reg}}{\label{ss: regular-pairing-def}}

\begin{lemma}\label{lem: endoscopic-s-Levi}
Let $\phi$ be an $L$-parameter of $G$.
Suppose that $(H, \mc{H}, s,\eta)$ is a refined endoscopic datum which $\phi$ factors through as $\phi_{H}$ (i.e., $\phi=\eta\circ\phi_{H}$).
Then, for any standard Levi subgroup $L$ of $G$ such that $\phi$ factors through ${}^{L}L\hookrightarrow {}^{L}G$, we have $\eta(s)$ belongs to $S_{\phi,L}$.
\end{lemma}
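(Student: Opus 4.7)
The desired containment $\eta(s) \in S_{\phi,L} = Z_{\wh{L}}(\im\phi)$ splits naturally into two subclaims: $\eta(s) \in S_\phi$ and $\eta(s) \in \wh{L}$. I would establish each separately and then combine.

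First, to verify $\eta(s) \in S_\phi$ I would use that $s \in Z(\wh{H})^\Gamma$. A direct calculation in ${}^{L}H = \wh{H}\rtimes W_F$ shows that for any $a\rtimes w \in {}^{L}H$, we have $(s\rtimes 1)(a\rtimes w) = sa\rtimes w$ while $(a\rtimes w)(s\rtimes 1) = a\cdot w(s)\rtimes w$; centrality of $s$ in $\wh{H}$ gives $sa=as$, and $\Gamma$-invariance gives $w(s)=s$, so the two products agree. Thus $s$ commutes with $\phi_H(x)$ for every $x\in L_F$. Applying the $L$-homomorphism $\eta$ and using $\phi=\eta\circ\phi_H$ then yields $\eta(s)\in Z_{\wh{G}}(\im\phi) = S_\phi$.

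Next, to obtain $\eta(s)\in\wh{L}$, I would invoke the setup assumption $\eta(\wh{T}_H)=\wh{T}$ from \S\ref{ss: setup}, which gives $\wh{T}\subset \eta(\wh{H})$. We may assume $L$ is a standard Levi (otherwise conjugate the datum). Then $A_{\wh{L}} = Z(\wh{L})^{\Gamma, \circ}\subset \wh{T}\subset\eta(\wh{H})$. But $\eta(\wh{H}) = Z_{\wh{G}}(\eta(s))^\circ$ by the very definition of a refined endoscopic datum, so every element of $\eta(\wh{H})$ — in particular every element of $A_{\wh{L}}$ — commutes with $\eta(s)$. Hence $\eta(s)\in Z_{\wh{G}}(A_{\wh{L}}) = \wh{L}$.

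Combining the two containments, $\eta(s)\in \wh{L}\cap S_\phi$, and since the commutation relations defining the centralizer of $\im\phi$ are the same in $\wh{L}$ as in $\wh{G}$, this equals $Z_{\wh{L}}(\im\phi) = S_{\phi,L}$. There is no substantial obstacle here: the whole argument hinges on the simple observation that the setup assumption $\eta(\wh{T}_H)=\wh{T}$ forces $A_{\wh{L}}$ into the connected centralizer of $\eta(s)$, which immediately places $\eta(s)$ inside $\wh{L}$.
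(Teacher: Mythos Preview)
Your proof is correct and takes a genuinely more direct route than the paper's. Both arguments establish $\eta(s)\in S_\phi$ in the same way. For the containment $\eta(s)\in\wh{L}$, the paper first reduces to the minimal Levi $M\subset L$ through which $\phi$ factors, then invokes Lemma~\ref{lem: M-HM} to find $h\in\wh{H}$ with $h\,\eta^{-1}(\wh{M})\,h^{-1}=\wh{H_M}$, observes $s\in Z(\wh{H})^\Gamma\subset Z(\wh{H_M})^\Gamma\subset\wh{H_M}$, and concludes $\eta(s)=\eta(h^{-1}sh)\in\wh{M}$. Your argument bypasses the comparison of minimal Levis entirely: the chain $A_{\wh{L}}\subset\wh{T}=\eta(\wh{T}_H)\subset\eta(\wh{H})=Z_{\wh{G}}(\eta(s))^\circ$ gives $\eta(s)\in Z_{\wh{G}}(A_{\wh{L}})=\wh{L}$ in one line, and in fact the same chain with $M$ in place of $L$ already recovers the paper's stronger intermediate conclusion without Lemma~\ref{lem: M-HM}. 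Both approaches ultimately rest on the standing hypothesis $\eta(\wh{T}_H)=\wh{T}$ from \S\ref{ss: setup} (the paper's through the proof of Lemma~\ref{lem: M-HM}); your phrase ``otherwise conjugate the datum'' to reduce to standard $L$ is slightly glib, since such a conjugation need not preserve that hypothesis, but in practice the lemma is only applied with $L$ standard, so this is not a genuine gap.
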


\begin{proof}
We first note that $\eta(s)$ belongs to $S_{\phi}$.
Indeed, by definition, $\eta(s)\in S_{\phi}$ if and only if $\eta(s)\cdot\phi(\sigma)\cdot\eta(s)^{-1}=\phi(\sigma)$ for any $\sigma\in L_{F}$.
As we have $\phi=\eta\circ\phi_{H}$ and $\eta$ is an $L$-embedding ${}^{L}H\hookrightarrow{}^{L}G$, this is equivalent to that $s\cdot\phi_{H}(\sigma)\cdot s^{-1}=\phi_{H}(\sigma)$ for any $\sigma\in L_{F}$, which is true since $s\in Z(\wh{H})^{\Gamma}$.

Thus our task is to show that $\eta(s)$ belongs to $\wh{L}$.
Let $M$ be a minimal Levi subgroup of $G$ such that $M\subset L$ and $\phi$ factors through $M$.
It is enough to show that $\eta(s)$ belongs to $\wh{M}$.
Let $H_{M}$ be a minimal Levi subgroup of $H$ which $\phi_{H}$ factors through.
As $\wh{H_{M}}\subset\wh{H}$, we have $Z(\wh{H_{M}})^{\Gamma}\supset Z(\wh{H})^{\Gamma}$.
Since $s\in Z(\wh{H})^{\Gamma}$ and $Z(\wh{H_{M}})^{\Gamma}\subset \wh{H_{M}}$, we get $s\in \wh{H_{M}}$.
By Lemma \ref{lem: M-HM}, there exists an element $h\in \wh{H}$ satisfying $h\eta^{-1}(\wh{M})h^{-1}=\wh{H_{M}}$.
Hence we get $\eta(h^{-1}sh)\in\wh{M}$.
Again noting that $s\in Z(\wh{H})^{\Gamma}$, we get $\eta(h^{-1}sh)=\eta(s)$, which completes the proof.
\end{proof}

Now suppose that $(H, \mc{H}, s,\eta)$ is a refined endoscopic datum for $G$ which $\phi$ factors through as $\phi_{H}$ (i.e., $\phi=\eta\circ\phi_{H}$).
Let $\rho=\mc{L}(\lambda,E)\in\Irr(S_\phi)$ be an element satisfying $\pi_{\rho}\in\Pi_{\phi}(G_{b})$.
We define a quantity $\langle\pi_{\rho},\eta(s)\rangle_\reg\in\C$ in the following manner.

Let $(w,E_{L,w})\in\mc{I}(\phi,b)/{\sim}$ be an element corresponding to $\rho$ as in Proposition \ref{prop: members-of-b}.
We take a representative of $(w,E_{L,w}) \in \mc{I}(\phi,b)/{\sim}$ in $\mc{I}(\phi,b)$ and furthermore a representative of $w\in W^{\rel}(M,L)/W_\phi$ in $W^{\rel}(M,L)$. We use the same notations ($(w, E_{L,w})$ and $w$) to refer to these representatives.
We put $\rho_{L}:=\mc{L}_{L}(\lambda_{L,w},E_{L,w})$, which is an irreducible representation of $S_{{}^{w}\phi,L}$.
For any element $w'\in W_{\co{w}\phi}$, we have ${}^{w'w}\eta(s)\in S_{{}^{w}\phi,L}$ by applying Lemma \ref{lem: endoscopic-s-Levi} to the refined endoscopic datum $(H,s,\Int(w'w)\circ\eta)$ and the $L$-parameter $\co{w'w}{\phi} \,(=\co{w}{\phi})$.
Here, we implicitly fix a representative of $w\in W^\rel\cong\wh{W}^\rel$ in $N_{\wh{G}}(A_{\wh{T}})$ (resp.\ $w'\in W_{{}^w\phi}$ in $N_{S_{{}^w\phi}}(A_{{}^w\wh{M}})$) and again write $w$ (resp.\ $w'$) for it by abuse of notation.
We put
\[
\langle\pi_{\rho},\eta(s)\rangle_\reg
:=
\sum_{w'\in W_{{}^w\phi,L}\backslash W_{{}^w\phi}} \tr({}^{w'w}\eta(s) \mid \rho_{L}). 
\]
Here, note that the trace of $\rho_L$ is invariant under the $S_{\co{w}\phi,L}$-conjugation, hence the quotienting by $W_{\co{w}\phi,L}=W_{\co{w}\phi}\cap W^\rel_L$ in the index set makes sense.

\begin{remark}
When $L=G$, the index set of the above sum is trivial and also $\rho_L=\rho$, hence we simply have $\langle\pi_{\rho},\eta(s)\rangle_\reg=\langle\pi_{\rho},\eta(s)\rangle$.
\end{remark}

\begin{lemma}
The quantity $\langle\pi_{\rho},\eta(s)\rangle_\reg$ is well-defined, i.e., independent of the choices of representatives of $(w,E_{L,w})\in \mc{I}(\phi,b)/{\sim}$ in $\mc{I}(\phi,b)$ and $w\in W^{\rel}(M,L)/W_\phi$ in $W^{\rel}(M,L)$.
\end{lemma}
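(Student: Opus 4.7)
The plan is to verify each of the two independences separately, both via substitution arguments that exploit the structure of the sum over $W_{\co{w}\phi, L} \backslash W_{\co{w}\phi}$.

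First I would check invariance under the choice of representative of $(w, E_{L,w}) \in \mc{I}(\phi, b)/{\sim}$ in $\mc{I}(\phi, b)$. Replace $(w, E_{L,w})$ by $(w_L w, E_{L, w_L w})$ with $w_L \in W^\rel_L$, where $E_{L, w_L w}$ is identified with $E_{L, w}$ via $\Int(w_L)$. The key observation is that the new representation $\rho_L^{\mathrm{new}} := \mc{L}_L(\lambda_{L, w_L w}, E_{L, w_L w})$ of $S_{\co{w_L w}\phi, L} = \Int(w_L)(S_{\co{w}\phi, L})$ is related to the original $\rho_L$ by $\rho_L^{\mathrm{new}}(x) = \rho_L(\Int(w_L^{-1})(x))$, since $w_L \in W^\rel_L$ intertwines the constructions on $L$. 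The map $w' \mapsto w_L^{-1} w' w_L$ is a bijection of $W_{\co{w_L w}\phi, L} \backslash W_{\co{w_L w}\phi}$ with $W_{\co{w}\phi, L} \backslash W_{\co{w}\phi}$. Combining, one computes
\[
\tr\bigl({}^{w' w_L w}\eta(s) \bigm| \rho_L^{\mathrm{new}}\bigr)
= \tr\bigl({}^{w_L^{-1} w' w_L \cdot w}\eta(s) \bigm| \rho_L\bigr),
\]
so the sum is unchanged.

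Next I would check invariance under the choice of representative $w \in W^\rel(M,L)$ of its class in $W^\rel(M,L)/W_\phi$. Replace $w$ by $w \bar{w}''$ with $\bar{w}'' \in W_\phi$, and take a lift $\tilde{w}'' \in N_{S_\phi}(A_{\wh{M}}) \subset S_\phi$. Since $\tilde{w}''$ lies in $S_\phi$ it centralizes $\phi$, so $\co{w\bar{w}''}\phi = \co{w}\phi$; since $\tilde{w}''$ normalizes $A_{\wh{M}}$ it normalizes $\wh{M} = Z_{\wh{G}}(A_{\wh{M}})$, so $\lambda_{L, w\bar{w}''} = \lambda_{L, w}$ and $\rho_L$ is unchanged as a representation of $S_{\co{w}\phi, L}$. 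The conjugate ${}^{w\tilde{w}''}\eta(s)$ equals ${}^{w\tilde{w}''w^{-1}} \cdot {}^w\eta(s)$, where $w \tilde{w}'' w^{-1}$ is a lift of $w \bar{w}'' w^{-1} \in W_{\co{w}\phi}$ (using Lemma~\ref{lem: Weylgroups}). The substitution $\bar{w}' := w' \cdot (w \bar{w}'' w^{-1})$ is a bijection of $W_{\co{w}\phi, L} \backslash W_{\co{w}\phi}$ with itself (right-multiplication by a fixed element of $W_{\co{w}\phi}$ preserves the coset decomposition), and reindexing gives
\[
\sum_{w'} \tr\bigl({}^{w' w \bar{w}''}\eta(s) \bigm| \rho_L\bigr)
= \sum_{\bar{w}'} \tr\bigl({}^{\bar{w}' w}\eta(s) \bigm| \rho_L\bigr).
\]

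The only subtlety throughout is ensuring that lifts of elements of the various Weyl groups do not introduce further ambiguity: lifts of $w'$ in $N_{S_{\co{w}\phi}}(A_{\co{w}\wh{M}})$ differ by $A_{\co{w}\wh{M}} \subset S_{\co{w}\phi, L}^\circ$, and since $\tr(- \mid \rho_L)$ is a class function on $S_{\co{w}\phi, L}$, this freedom contributes nothing. Similarly, changing the lift of $w\bar{w}''w^{-1}$ by $A_{\co{w}\wh{M}}$ only alters the conjugated element within a central torus of $S_{\co{w}\phi, L}^\circ$ and so does not change the trace. I expect the main obstacle to be bookkeeping---carefully tracking that the substitutions are indeed bijections on the coset spaces while verifying that the identification of Weyl groups and $S$-group Levis is compatible with the various isomorphisms used.
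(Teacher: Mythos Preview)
Your proposal is correct and follows essentially the same substitution strategy as the paper. The paper streamlines the $W_\phi$-coset independence by first rewriting the sum as $|W_{\phi,L}|^{-1}\sum_{w''\in wW_{\phi}} \tr({}^{w''}\eta(s)\mid\rho_L)$ (using $W_{\co{w}\phi}w=wW_\phi$), from which that independence is immediate; your direct reindexing via $\bar{w}'=w'\cdot(w\bar{w}''w^{-1})$ achieves the same thing. For the $\sim$-independence both you and the paper use that $\Int(w_L)$ simultaneously identifies $\rho_L$ with $\rho_L^{\mathrm{new}}$ and bijects the index sets, so the arguments coincide.
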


\begin{proof}
By noting that $W_{\co{w}\phi}w=wW_\phi$ and that $|W_{\co{w}{\phi},L}|=|W_{\phi,L}|$, we have
\[
\langle\pi_{\rho},\eta(s)\rangle_\reg
=
|W_{\phi,L}|^{-1}
\sum_{w''\in wW_{\phi}} \tr({}^{w''}\eta(s) \mid \rho_{L}). 
\]
Thus the independence of the choice of a representative of $w\in W^{\rel}(M,L)/W_\phi$ in $W^{\rel}(M,L)$ is clear from this expression.
If $(w_{1},E_{L_{1},w})\in\mc{I}(\phi,b)$ and $(w_{2},E_{L_{2},w})\in\mc{I}(\phi,b)$ represent $(w,E_{L,w})\in\mc{I}(\phi,b)/{\sim}$, then there exists an element $w_{L}\in W^{\rel}_{L}$ such that $w_{2}=w_{L}w_{1}$ and $E_{L,w_{1}}$ and $E_{L,w_{2}}$ are identified under the isomorphism $S_{{}^{w_{1}}\phi,L}^{\lambda_{L,w_{1}}}\cong S_{{}^{w_{2}}\phi,L}^{\lambda_{L,w_{2}}}$ given by $\Int(w_{L})$.
In particular, the representations $\mc{L}_{L}(\lambda_{L,w_{1}},E_{L,w_{1}})$ of $S_{{}^{w_{1}}\phi,L}$ and $\mc{L}_{L}(\lambda_{L,w_{2}},E_{L,w_{2}})$ of $S_{{}^{w_{2}}\phi,L}$ are identified under the isomorphism $\Int(w_{L})\colon S_{{}^{w_{1}}\phi,L}\cong S_{{}^{w_{2}}\phi,L}$.
Moreover, $\Int(w_{L})$ maps the set $\{{}^{w''}\eta(s)\mid w''\in w_1 W_\phi\}$ to $\{{}^{w''}\eta(s)\mid w''\in w_2 W_\phi\}$ bijectively.
Thus we get
\begin{align*}
\sum_{w''\in w_1 W_\phi} \tr ({}^{w''}\eta(s) \mid \mc{L}_{L}(\lambda_{L,w_{1}},E_{L,w_{1}}))
=
\sum_{w''\in w_2 W_\phi} \tr ({}^{w''}\eta(s) \mid \mc{L}_{L}(\lambda_{L,w_{2}},E_{L,w_{2}})).
\end{align*}
This completes the proof.
\end{proof}

\begin{proposition}\label{prop:Gb-side-unstable-sum}
We have
\[
e(G_b)\sum_{\pi\in\Pi_{\phi}(G_{b})}\langle\pi,\eta(s)\rangle_\reg \Theta_\pi
=
\sum_{w\in W^{\rel}_{L}\backslash W^{\rel}(M,L)}\Theta^{L_{b_{L}},{}^{w}\eta(s)}_{{}^{w}\phi}.
\]
\end{proposition}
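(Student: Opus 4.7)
The plan is to expand both sides via the basic LLC for $L$ and match the coefficient of $\Theta_\pi$ for each $\pi\in\Pi_\phi(G_b)$. By Lemma \ref{lem: Gbequals} we have $L_{b_L}=G_b$, hence $e(G_b)=e(L_{b_L})$, and the $B(L)_\bas$-LLC yields
\[
\Theta_{\co{w}\phi}^{L_{b_L},\co{w}\eta(s)}
=
e(L_{b_L})\sum_{\pi'\in\Pi_{\co{w}\phi}(L_{b_L})}\langle \pi',\co{w}\eta(s)\rangle\Theta_{\pi'}.
\]
Running the construction of \S\ref{ss: themap} in reverse, as in the proof of Proposition \ref{prop: surjectivity}, shows that every $\pi'\in\Pi_{\co{w}\phi}(L_{b_L})$ lies in $\Pi_\phi(G_b)$ (its $\wh{G}$-parameter is $[\co{w}\phi]=[\phi]$). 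Thus both sides have the form $e(G_b)\sum_{\pi\in\Pi_\phi(G_b)}(\ast_\pi)\,\Theta_\pi$, and it suffices to match the scalars $(\ast_\pi)$.

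Fix $\pi\in\Pi_\phi(G_b)$ and let $(w_0,E_{L,w_0})\in\mc{I}(\phi,b)$ represent the associated class under Proposition \ref{prop: members-of-b}. Put $\rho_L:=\mc{L}_L(\lambda_{L,w_0},E_{L,w_0})$; this is the $\iota_{\mf{w}_L}$-label of $\pi$ at the parameter $\co{w_0}\phi$. The right-hand coefficient of $\Theta_\pi$ equals
\[
e(G_b)\sum_{w\in W^\rel_L\backslash\Omega_\pi}\langle\pi,\co{w}\eta(s)\rangle,\qquad \Omega_\pi:=\{w\in W^\rel(M,L)\mid\co{w}\phi\sim_{\wh{L}}\co{w_0}\phi\}.
\]
For each $w\in\Omega_\pi$, choose $l\in\wh{L}$ with $\Int(l)\co{w_0}\phi=\co{w}\phi$. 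The argument following Lemma \ref{lem: S-group-Levis} applied inside $\wh{L}$ shows that the minimal Levis $\Int(l)\co{w_0}\wh{M}$ and $\co{w}\wh{M}$ of $\wh{L}$ containing $\im\co{w}\phi$ are $S^\circ_{\co{w}\phi,L}$-conjugate; after modifying $l$ by such an element we may assume $\Int(l)\co{w_0}\wh{M}=\co{w}\wh{M}$. Then $s_0:=w^{-1}lw_0\in S_\phi$ normalizes $\wh{M}$ and hence $A_{\wh{M}}$, so represents a well-defined element of $W_\phi$. Passing to $W^\rel$ via $W_\phi\to W_{\wh{G}}(A_{\wh{M}})\hookrightarrow\wh{W}^\rel$ (Lemma \ref{lem: weylinj}) together with $W^\rel_L\hookrightarrow W^\rel$, the identity $w=lw_0s_0^{-1}$ in $\wh{G}$ descends to a factorization $w\equiv w_L\,w_0\,s_0^{-1}$ in $W^\rel$ with $w_L\in W^\rel_L$.

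This exhibits a surjection $W_\phi\twoheadrightarrow W^\rel_L\backslash\Omega_\pi$ sending $s_0\mapsto W^\rel_L w_0 s_0^{-1}$, whose fibers are the cosets of $W_\phi\cap\co{w_0^{-1}}W^\rel_L$. Via $\Int(w_0)\colon W_\phi\xrightarrow{\sim}W_{\co{w_0}\phi}$ this identifies $W^\rel_L\backslash\Omega_\pi$ with $W_{\co{w_0}\phi,L}\backslash W_{\co{w_0}\phi}$. Using the isomorphism $\Int(l)\colon S^\natural_{\co{w_0}\phi,L}\xrightarrow{\sim}S^\natural_{\co{w}\phi,L}$ to transport $\rho_L$ to the $\iota_{\mf{w}_L}$-label $\rho'_L$ of $\pi$ at $\co{w}\phi$, one computes
\[
\langle\pi,\co{w}\eta(s)\rangle
=
\tr(\co{l^{-1}w}\eta(s)\mid\rho_L)
=
\tr(\co{w_0s_0^{-1}}\eta(s)\mid\rho_L)
=
\tr(\co{w'w_0}\eta(s)\mid\rho_L),
\]
where $w':=\co{w_0}(s_0^{-1})\in W_{\co{w_0}\phi}$. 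Summing over $w\in W^\rel_L\backslash\Omega_\pi$ and reindexing by $w'$ produces exactly $\langle\pi,\eta(s)\rangle_\reg$, which matches the left-hand side.

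The main obstacle will be the double-coset analysis: confirming that $\Omega_\pi=W^\rel_L\cdot w_0\cdot W_\phi$ inside $W^\rel(M,L)$ (with $W_\phi$ acting through its image in $W^\rel$) and that the fibers of $s_0\mapsto W^\rel_L w_0 s_0^{-1}$ are exactly $W_\phi\cap\co{w_0^{-1}}W^\rel_L$. This requires a careful juggling of the Weyl-group embeddings of Lemmas \ref{lem: weylinj} and \ref{lem: Weylgroups}, and verifying that the $\wh{M}$-adjustments implicit in the definition of $W_{\wh{G}}(A_{\wh{M}})\hookrightarrow\wh{W}^\rel$ can be absorbed into $\wh{L}$-adjustments to the intertwining element $l$.
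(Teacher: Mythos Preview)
Your approach is correct and rests on the same combinatorics as the paper's proof, but run in the opposite direction. The paper starts from the left-hand side: it rewrites the sum over $\Pi_\phi(G_b)$ as a sum over $\mc{I}(\phi,b)/{\sim}$ via Proposition~\ref{prop: members-of-b}, lifts to $\mc{I}(\phi,b)$ with the normalization $|W_{\co{w}\phi,L}|^{-1}|W^\rel_Lw W_\phi/W_\phi|^{-1}$, unfolds the inner sum over $W_{\co{w}\phi}$ into the outer one over $W^\rel(M,L)$, and then invokes a separate lemma (Lemma~\ref{lem: E-vs-E_L}) identifying the simple $\mc{A}_L^{\lambda_{L,w}}$-modules with the right central character with the relevant $\rho_L\in\Irr(S^\natural_{\co{w}\phi,L})$, recovering $\Theta^{L_{b_L},\co{w}\eta(s)}_{\co{w}\phi}$. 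You instead fix $\pi$ and compare coefficients, which sidesteps Lemma~\ref{lem: E-vs-E_L} at the cost of building the bijection $W^\rel_L\backslash\Omega_\pi\cong W_{\co{w_0}\phi,L}\backslash W_{\co{w_0}\phi}$ directly. Both routes boil down to the identity $|W^\rel_L w_0 W_\phi/W_\phi|=|W^\rel_L/W_{\co{w_0}\phi,L}|$.

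On your flagged obstacle: the $\wh{M}$-adjustment in Lemma~\ref{lem: weylinj} causes no trouble because $\co{w_0}\wh{M}\subset\wh{L}$. Concretely, if the image of $s_0$ in $\wh{W}^\rel$ is represented by $m^{-1}s_0$ with $m\in\wh{M}$, then $w\cdot(m^{-1}s_0)\cdot w_0^{-1}=l\cdot\co{w_0 s_0^{-1}}(m^{-1})\in\wh{L}$, and since each factor on the left normalizes $A_{\wh{T}}$ this product lies in $N_{\wh{L}}(A_{\wh{T}})$, giving your $w_L\in W^\rel_L$. For the fiber computation note also that the kernel of $W_\phi\to\wh{W}^\rel$ is $S_{\phi,M}/A_{\wh{M}}$, whose $w_0$-conjugate sits inside $W_{\co{w_0}\phi,L}$ (since $S_{\co{w_0}\phi,\co{w_0}M}\subset S_{\co{w_0}\phi,L}$ and all of it normalizes $A_{\co{w_0}\wh{M}}$), so the quotient $W_{\co{w_0}\phi,L}\backslash W_{\co{w_0}\phi}$ is unaffected. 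With these two points checked, your factorization $w\equiv w_L w_0 s_0^{-1}$ in $W^\rel$ goes through and the rest of your argument is complete.
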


\begin{proof}

By our construction of $\Pi_{\phi}(G_b)$, we have
\[
\sum_{\pi\in\Pi_{\phi}(G_{b})}\langle\pi,\eta(s)\rangle_\reg \Theta_\pi
=
\sum_{\begin{subarray}{c}\rho\in\Irr(S_{\phi}) \\ \pi_\rho\in\Pi_\phi(G_b) \end{subarray}}\langle\pi_{\rho},\eta(s)\rangle_\reg \Theta_{\pi_{\rho_{L}}},
\]
where the sum on the right-hand side is over $\rho\in\Irr(S_{\phi})$ associated to $b\in B(G)$ and $\pi_{\rho_{L}} \in \Pi(L)$ corresponds to $\rho_{L}$ under the $B(L)_\bas$-LLC (see \S \ref{ss: themap}).
By Proposition \ref{prop: members-of-b} and the definition of $\langle\pi_\rho,\eta(s)\rangle_\reg$, we have
\begin{align*}
\sum_{\begin{subarray}{c}\rho\in\Irr(S_{\phi}) \\ \pi_\rho\in\Pi_\phi(G_b)\end{subarray}}\langle\pi_{\rho},\eta(s)\rangle_\reg \Theta_{\pi_{\rho_{L}}}
=
\sum_{\begin{subarray}{c}(w,E_{L,w})\\ \in\mc{I}(\phi,b)/{\sim}\end{subarray}} 
\sum_{w'\in W_{{}^w\phi,L}\backslash W_{{}^w\phi}} \tr({}^{w'w}\eta(s) \mid \rho_{L}) \Theta_{\pi_{\rho_{L}}}.
\end{align*}
Note that the order of the equivalence class of $(w,E_{L,w})\in\mc{I}(\phi,b)$ is given by $|W^{\rel}_{L}wW_\phi/W_\phi|$.
Hence, the right-hand side equals
\[
\sum_{(w,E_{L,w})\in\mc{I}(\phi,b)}
|W_{{}^w\phi,L}|^{-1}\cdot |W^{\rel}_{L}wW_\phi/W_\phi|^{-1}
\sum_{w'\in W_{\co{w}\phi}} \tr ({}^{w'w}\eta(s) \mid \rho_{L}) \Theta_{\pi_{\rho_{L}}}.
\]
By noting that the association $w_L\mapsto w_LwW_\phi$ induces a bijection $W^{\rel}_{L}/W_{{}^w\phi,L}\xrightarrow{1:1} W^{\rel}_{L}wW_\phi/W_\phi$, this equals
\[
\sum_{(w,E_{L,w})\in\mc{I}(\phi,b)}
|W^{\rel}_{L}|^{-1}\sum_{w'\in W_\phi} \tr ({}^{ww'}\eta(s) \mid \rho_{L}) \Theta_{\pi_{\rho_{L}}}.
\]
By the definitions of $\mc{I}(\phi,b)$ and $\rho_{L}$, this equals
\begin{align}\label{eq:ECI-sum-over-W-E}
\sum_{w\in W^{\rel}(M,L)}
|W^{\rel}_{L}|^{-1}
\sum_{E_{L,w}} \tr({}^{w}\eta(s) \mid \mc{L}_{L}(\lambda_{L,w},E_{L,w})) \Theta_{\pi_{\rho_{L}}},
\end{align}
where $E_{L,w}$ runs over (the isomorphism classes of) simple $\mc{A}_{{}^{w}\phi,L}^{\lambda_{L,w}}$-modules such that the $Z(\wh{L})^\Gamma$-central character of $\mc{L}_L(\lambda_{L,w},E_{L,w})$ is given by $\kappa_L(b_L)$.
By Lemma \ref{lem: E-vs-E_L} (see below), \eqref{eq:ECI-sum-over-W-E} is equal to
\begin{align}\label{eq:ECI-sum-over-W-L}
\sum_{w\in W^{\rel}_{L}\backslash W^{\rel}(M,L)}
\sum_{\begin{subarray}{c} \rho_L\in\Irr(S_{{}^{w}\phi,L}^\natural)\\ \rho_L|_{Z(\wh{L})^\Gamma}=\kappa_L(b_L) \end{subarray}}
\tr({}^{w}\eta(s) \mid \rho_{L})\cdot\Theta_{\pi_{\rho_L}},
\end{align}
where the second sum is over irreducible representations $\rho_L$ of $S_{{}^{w}\phi,L}^\natural$ with $Z(\wh{L})^\Gamma$-central character $\kappa_L(b_L)$.
Since the product of $e(G_b)$ and the inner sum is nothing but $\Theta^{L_{b_{L}},{}^{w}\eta(s)}_{{}^{w}\phi}$, we get the desired equality.
\end{proof}

\begin{lemma}\label{lem: E-vs-E_L}
Let $w\in W^{\rel}(M,L)$.
The association $E_{L,w}\mapsto \mc{L}_{L}(\lambda_{L,w},E_{L,w})$ gives a bijection between
\begin{itemize}
\item 
the set of isomorphism classes of simple $\mc{A}_{{}^{w}\phi,L}^{\lambda_{L,w}}$-modules such that the $Z(\wh{L})^\Gamma$-central character of $\mc{L}_L(\lambda_{L,w},E_{L,w})$ is given by $\kappa_L(b_L)$, and
\item
the set of irreducible representations $\rho_L$ of $S_{{}^{w}\phi,L}^\natural=S_{{}^{w}\phi,L}/(\wh{L}_\der \cap S_{{}^{w}\phi,L})^\circ$ with $Z(\wh{L})^\Gamma$-central character $\kappa_L(b_L)$.
\end{itemize}
\end{lemma}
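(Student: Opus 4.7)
The plan is to apply Theorem \ref{thm: acharmainthm} and Lemma \ref{lem: disconn-irrep-paramet} to the disconnected reductive group $S_{\co{w}\phi,L}$, whose identity component has maximal torus $A_{\co{w}\wh{M}}$ by Lemma \ref{lem: S-group-Levis}. This produces a bijection between $\Irr(S_{\co{w}\phi,L})$ and pairs $(\mu,E)$, where $\mu$ runs over dominant characters of $A_{\co{w}\wh{M}}$ (modulo the appropriate Weyl stabilizer) and $E$ is a simple $\mc{A}_L^\mu$-module. From this parametrization I will extract the desired correspondence for the quotient $S^\natural_{\co{w}\phi,L}=S_{\co{w}\phi,L}/(\wh{L}_\der\cap S_{\co{w}\phi,L})^\circ$ under the constraint that the $Z(\wh{L})^\Gamma$-central character equals $\kappa_L(b_L)$.

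For well-definedness, the map $E_{L,w}\mapsto\mc{L}_L(\lambda_{L,w},E_{L,w})$ lands in $\Irr(S_{\co{w}\phi,L})$ by Theorem \ref{thm: acharmainthm}, factors through $S^\natural_{\co{w}\phi,L}$ by the argument of Lemma \ref{lem: naturalfactoring}, and carries the central character $\kappa_L(b_L)$ by the very definition of $\mc{I}(\phi,b)$. Injectivity is then immediate from Theorem \ref{thm: acharmainthm}: after fixing the highest weight to be $\lambda_{L,w}$, non-isomorphic $E_{L,w}$ yield non-isomorphic $\mc{L}_L(\lambda_{L,w},E_{L,w})$. Note that this uses Weyl-invariance of $\lambda_{L,w}$ (it extends, up to a positive multiple, to a character of $\wh{L}$, hence is killed by every root of $S^\circ_{\co{w}\phi,L}$), so the choice of $\lambda_{L,w}$ as a representative of its Weyl orbit is unambiguous.

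For surjectivity, I would take $\rho_L\in\Irr(S^\natural_{\co{w}\phi,L})$ with the specified central character, inflate to $S_{\co{w}\phi,L}$, and write $\rho_L\cong\mc{L}_L(\mu,E)$ via Theorem \ref{thm: acharmainthm}. The central-character hypothesis forces $\mu|_{A_{\wh{L}}}=\lambda_L=\lambda_{L,w}|_{A_{\wh{L}}}$, because $A_{\wh{L}}=Z(\wh{L})^{\Gamma,\circ}\subset A_{\co{w}\wh{M}}$ acts on $\mc{L}_L(\mu,E)$ via $\mu|_{A_{\wh{L}}}$. On the other hand, the descent of $\rho_L$ through $S^\natural_{\co{w}\phi,L}$ forces $\mu$ to vanish on the subtorus $A_{\co{w}\wh{M}}\cap(\wh{L}_\der\cap S^\circ_{\co{w}\phi,L})^\circ$; the proof of Lemma \ref{lem: naturalfactoring} shows that $\lambda_{L,w}$ vanishes on the same subtorus. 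Once $\mu=\lambda_{L,w}$ is established, the module $E$ automatically inherits the central-character condition and provides the required $E_{L,w}$.

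The main obstacle is the uniqueness step: showing that the two vanishing conditions pin down $\mu\in X^\ast(A_{\co{w}\wh{M}})$ uniquely. I plan to use the almost-direct-product decomposition $\wh{L}=Z(\wh{L})^\circ\cdot\wh{L}_\der$ to argue that $A_{\wh{L}}$ and $A_{\co{w}\wh{M}}\cap(\wh{L}_\der\cap S^\circ_{\co{w}\phi,L})^\circ$ generate $A_{\co{w}\wh{M}}$ up to finite index; then $\mu-\lambda_{L,w}$, being trivial on both subtori and a torsion-free element of $X^\ast(A_{\co{w}\wh{M}})$, must vanish. The delicate input is identifying $A_{\co{w}\wh{M}}\cap Z(\wh{L})^\circ$ with $A_{\wh{L}}$ up to finite index, which will require a Galois-invariance argument using $Z(\co{w}\wh{M})\supset Z(\wh{L})$ and the equality $Z(\wh{L})^{\circ,\Gamma}=Z(\wh{L})^{\Gamma,\circ}$ up to finite index.
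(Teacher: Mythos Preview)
Your overall strategy (well-definedness via Lemma~\ref{lem: naturalfactoring}, injectivity via Theorem~\ref{thm: acharmainthm}, surjectivity by inflating $\rho_L$ and identifying its highest weight) matches the paper's exactly. The only substantive difference is in how you pin down $\mu=\lambda_{L,w}$ in the surjectivity step.

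The paper avoids your ``delicate input'' entirely by invoking \cite[Lemma 0.4.13]{KMSW14}, which gives
\[
(\wh{L}_\der \cap S_{\co{w}\phi,L})^\circ \cdot Z(\wh{L})^\Gamma
=
S_{\co{w}\phi,L}^\circ \cdot Z(\wh{L})^\Gamma,
\]
so in particular this product \emph{contains} the maximal torus $A_{\co{w}\wh{M}}$. The paper then observes that the larger group acts on any such $\rho_L$ by a single character $\tilde{\lambda}_{L,w}$ (trivial on $(\wh{L}_\der \cap S_{\co{w}\phi,L})^\circ$ since $\rho_L$ is inflated from $S^\natural_{\co{w}\phi,L}$, and equal to $\kappa_L(b_L)$ on $Z(\wh{L})^\Gamma$ by hypothesis); restricting $\tilde{\lambda}_{L,w}$ to $A_{\co{w}\wh{M}}$ therefore determines the highest weight uniquely, and one checks it is $\lambda_{L,w}$. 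This sidesteps any need to decompose $A_{\co{w}\wh{M}}$ itself.

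Your route---showing that $A_{\wh{L}}$ and $A_{\co{w}\wh{M}}\cap(\wh{L}_\der\cap S^\circ_{\co{w}\phi,L})^\circ$ generate $A_{\co{w}\wh{M}}$ and then using torsion-freeness---can be completed, but the generation statement is precisely what \cite[Lemma 0.4.13]{KMSW14} hands you after intersecting with $A_{\co{w}\wh{M}}$. Your proposed argument via the almost-direct product $\wh{L}=Z(\wh{L})^\circ\cdot\wh{L}_\der$ would amount to reproving a special case of that lemma by hand (one has to track $\Gamma$-invariance of the $Z(\wh{L})^\circ$-component and then argue that the $\wh{L}_\der$-component lands in $S^\circ_{\co{w}\phi,L}$). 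It is not wrong, but it is extra work; citing \cite[Lemma 0.4.13]{KMSW14} and arguing with the extended character $\tilde{\lambda}_{L,w}$ as the paper does is cleaner.
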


\begin{proof}
The well-definedness of the map is already discussed in Lemma \ref{lem: naturalfactoring}.
Here, we remark that $(\wh{L}_\der \cap S_{{}^{w}\phi,L})^\circ$ acts trivially on $\mc{L}_L(\lambda_{L,w},E_{L,w})$ and  $Z(\wh{L})^\Gamma$ acts on $\mc{L}_L(\lambda_{L,w},E_{L,w})$ as the character $\kappa_L(b_L)$, their product $(\wh{L}_\der \cap S_{{}^{w}\phi,L})^\circ\cdot Z(\wh{L})^\Gamma$ acts via a character.
For our convenience, we let $\tilde{\lambda}_{L,w}$ denote for this character, which does not depend on $E_{L,w}$.
Note that we have 
\[
(\wh{L}_\der \cap S_{{}^{w}\phi,L})^\circ\cdot Z(\wh{L})^\Gamma
=
S_{{}^{w}\phi,L}^\circ\cdot Z(\wh{L})^\Gamma
\]
by \cite[Lemma 0.4.13]{KMSW}.
In particular, the group $(\wh{L}_\der \cap S_{{}^{w}\phi,L})^\circ\cdot Z(\wh{L})^\Gamma$ contains $S_{{}^{w}\phi,{}^{w}M}^\circ=A_{{}^{w}\wh{M}}$.
The restriction of $\tilde{\lambda}_{L,w}$ to $A_{{}^{w}\wh{M}}$ equals $\lambda_{L,w}$.

The injectivity of the map is a part of the classification theorem of irreducible representations of a disconnected reductive group (Theorem \ref{thm: acharmainthm} together with Lemma \ref{lem: disconn-irrep-paramet}, applied to $S_{{}^{w}\phi,L}$).

To show the surjectivity, let us take an irreducible representation $\rho_L$ of $S_{{}^{w}\phi,L}^\natural$ with $Z(\wh{L})^\Gamma$-central character $\kappa_L(b_L)$.
It is enough to show that if we regard $\rho_L$ as an irreducible representation of $S_{{}^{w}\phi,L}$ by inflation, the highest weight of $\rho_L$ is given by $\lambda_{L,w}\in X^\ast(A_{{}^{w}\wh{M}})$.
By the discussion in the first paragraph, it suffices to check that the group $(\wh{L}_\der \cap S_{{}^{w}\phi,L})^\circ\cdot Z(\wh{L})^\Gamma$ acts on $\rho_L$ by the character $\tilde{\lambda}_{L,w}$.
This is obvious since $\rho_L$ is constructed by inflation from $S_{{}^{w}\phi,L}^\natural$ and the $Z(\wh{L})^\Gamma$-central character of $\rho_{L}$ is $\kappa_L(b_L)$.
\end{proof}

\subsection{Proof of main theorem}\label{ss:proof of ECI}

\begin{lemma}{\label{lem: endoindexinglem}}
    We have a natural bijection  
    \begin{equation*}
        \coprod\limits_{X^{\mf{e}}_L} W^{\rel}_{H_L} \backslash W^{\rel}(H_M, H_L) = W^{\rel}_L \setminus W^{\rel}(M,L).
    \end{equation*}
\end{lemma}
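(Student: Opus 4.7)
The plan is to construct a natural bijection using the endoscopic embedding $\eta$. By Lemma \ref{lem: M-HM}, I may assume (after conjugating $\phi_H$) that $\eta(\wh{H_M}) = \wh{M}$; since $\eta(\wh{T}_H) = \wh{T}$, this identifies $W^\rel_H$ with a subgroup of $\wh{W}^\rel \cong W^\rel_G$, under which $W^\rel_{H_L}$ lands inside $W^\rel_L$ for every embedded endoscopic datum $\mf{e}_L$, because $\eta|_{\mc{H}_L}$ takes values in ${}^LL$ by the defining condition of an embedded datum.

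For each $\mf{e}_L = (H_L, \mc{H}_L, H, \mc{H}, s, \Int(n_{\mf{e}_L}) \circ \eta) \in X^{\mf{e}}_L$ with image $\bar n_{\mf{e}_L} \in W^\rel_G$, I would define
\[
\Phi_{\mf{e}_L} \colon W^\rel_{H_L} \backslash W^\rel(H_M, H_L) \longrightarrow W^\rel_L \backslash W^\rel(M, L), \qquad [w_H] \longmapsto [\bar n_{\mf{e}_L} \cdot \eta(w_H)],
\]
and set $\Phi := \bigsqcup_{\mf{e}_L} \Phi_{\mf{e}_L}$. Well-definedness reduces to three verifications: (i) $(\bar n_{\mf{e}_L}\cdot\eta(w_H))(\wh{M}) \subset \bar n_{\mf{e}_L}(\eta(\wh{H_L})) \subset \wh{L}$, so the image lies in $W^\rel(M,L)$; (ii) left multiplication of $w_H$ by $v \in W^\rel_{H_L}$ alters the image by $\bar n_{\mf{e}_L}\,\eta(v)\,\bar n_{\mf{e}_L}^{-1}$, which lies in $W^\rel_L$ since $\bar n_{\mf{e}_L}$ carries $\eta(\wh{H_L})$ into $\wh{L}$; and (iii) any change of representative of the inner equivalence class of $\mf{e}_L$ alters $\bar n_{\mf{e}_L}$ by an element that absorbs into the $W^\rel_L$-quotient. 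For the inverse, given $w \in W^\rel(M,L)$ with lift $\dot w \in N_{\wh G}(\wh T)$, the embedding $\Int(\dot w) \circ \eta$ sends $\wh{H_M}$ into $\wh{L}$; the smallest Levi $\wh{H_L^0} \supset \wh{H_M}$ of $\wh H$ with this property, standardized via an element of $W_H$, recovers a unique $\mf{e}_L$, and the residual Weyl element provides the required $w_H \in W^\rel(H_M, H_L)$.

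The main obstacle will be the careful matching of quotients: one must verify that the disjoint union on the left maps bijectively to $W^\rel_L \backslash W^\rel(M,L)$ with no collisions across distinct $\mf{e}_L$. Concretely, this amounts to proving the decomposition
\[
W^\rel(M, L) = \bigsqcup_{\mf{e}_L \in X^{\mf{e}}_L} W^\rel_L \cdot \bar n_{\mf{e}_L} \cdot \eta\bigl(W^\rel(H_M, H_L)\bigr),
\]
which is the Weyl-theoretic refinement of the parametrization $X^{\mf{e}}_L \leftrightarrow W^\rel_L \backslash W(L,H) / W_H$ mentioned earlier in the paper. Throughout, I must track the $\Gamma$-equivariance carefully so as to remain inside the relative Weyl groups, and interpret inner equivalence of embedded data as $\wh{H}$-conjugation on the representatives $n_{\mf{e}_L}$; this is the bookkeeping that makes the matching of left and right cosets precise.
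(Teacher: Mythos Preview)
Your approach is essentially the same as the paper's: construct the forward map by composing $\eta(w_H)$ with the twist $n_{\mf{e}_L}$ coming from the embedded datum, and construct the inverse by starting from $w \in W^{\rel}(M,L)$, using $\Int(\dot{w}) \circ \eta$ to produce an embedded datum, and reading off the residual $H$-Weyl element. The paper carries the conjugating element $h \in N_{\wh{H}}(\wh{T}_H)$ from Lemma~\ref{lem: M-HM} explicitly rather than normalizing it away, and it verifies bijectivity by checking directly that the inverse composed with the forward map recovers $w$, citing \cite[Proposition~2.24]{BM21-ave} for the construction of the embedded datum from $\dot{w}$.

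Two technical points in your formulation need correction. First, Lemma~\ref{lem: M-HM} gives $\eta^{-1}(\wh{M})$ conjugate to $\wh{H_M}$, hence (after your normalization) $\eta(A_{\wh{H_M}}) = A_{\wh{M}}$; it does \emph{not} give $\eta(\wh{H_M}) = \wh{M}$, which is false whenever $\eta(\wh{H}) \subsetneq \wh{G}$. Consequently your verification (i) as written, namely $(\bar n_{\mf{e}_L}\eta(w_H))(\wh{M}) \subset \bar n_{\mf{e}_L}(\eta(\wh{H_L}))$, is incorrect since $\wh{M} \not\subset \eta(\wh{H})$ in general. The argument goes through once rewritten in terms of the split central tori: one has $(\bar n_{\mf{e}_L}\eta(w_H))(A_{\wh{M}}) \supset \bar n_{\mf{e}_L}(\eta(A_{\wh{H_L}})) \supset A_{\wh{L}}$, where the last containment is part of what it means for $(\Int(n_{\mf{e}_L})\circ\eta)|_{\mc{H}_L}$ to land in ${}^LL$. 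Second, in your inverse construction you want the \emph{largest} (not smallest) Levi of $\wh{H}$ containing $\wh{H_M}$ that $\Int(\dot{w}) \circ \eta$ carries into $\wh{L}$; equivalently, the preimage $(\Int(\dot{w}) \circ \eta)^{-1}(\wh{L})$, which is what the paper uses before standardizing by an element of $N_{\wh{H}}(\wh{T}_H)$.
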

\begin{proof}
    Fix $h \in N_{\wh{H}}(\wh{T}_{H})$ conjugating $\eta^{-1}(A_{\wh{M}})$ to $A_{\wh{H_M}}$ as in Lemma \ref{lem: M-HM}. An element of $X^{\mf{e}}_L$ yields some $\dot{w}^{-1} \in N_{\wh{G}}(\wh{T})$ that takes $A_{\wh{L}}$ into $\eta(A_{\wh{H_L}})$ and an element of $W^{\rel}(H_M, H_L)$, whose inverse mapped into $\wh{W}^{\rel}$ via $\eta$ takes $\eta(A_{\wh{H_L}})$ to $\eta(A_{\wh{H_M}})$, which we identify with $A_{\wh{M}}$ via $\eta(h)^{-1}$. So in all we get a map $A_{\wh{L}} \to A_{\wh{M}}$. If we act on the element of $W^{\rel}(H_M, H_L)$ on the left by an element of $W^{\rel}_{H_L}$, then the resulting map $A_{\wh{L}} \to A_{\wh{M}}$ does not change. In particular, the corresponding elements of $W^{\rel}(M,L)$ agree up to an element of $W^{\rel}_L$. This constructs a map in one direction.

    Conversely, suppose we are given $w \in W^{\rel}(M,L)$ that therefore satisfies $w^{-1}(A_{\wh{L}}) \subset A_{\wh{M}}$. We take a lift $\dot{w} \in N_{\wh{G}}(\wh{T})$ of $w$ and then $\eta(h)\dot{w}^{-1}$ maps $A_{\wh{L}}$ into $\eta(A_{H_M})$. Then $(\Int(\dot{w}\eta(h)^{-1}) \circ \eta)^{-1}(A_{\wh{L}}) \subset A_{\wh{H_M}} \subset \wh{T}_{H}^{\Gamma}$, so $\dot{w}\eta(h)^{-1}$ induces an element of $W(L,H)$. By the proof of \cite[Proposition 2.24]{BM2},  $\Int(\dot{w} \eta(h)^{-1}) \circ \eta$ restricts to give an embedded endoscopic datum $(H'_L, H, s, \Int(\dot{w} \eta(h)^{-1}) \circ \eta)$. This datum is conjugate by some $h' \in N_{\wh{H}}(\wh{T}_{H})$ to some $(H_L, H, s, \Int(\dot{w} \eta(h^{-1}h')) \circ \eta) \in X^{\mf{e}}_L$. In particular, $\Int(\dot{w} \eta(h^{-1}h'))(\eta(A_{\wh{H_L}})) \supset A_{\wh{L}}$. Now,
    \begin{align*}
 (\Int(\dot{w} \eta(h)^{-1}) \circ \eta)(\co{L}{H_M}) &= \Int(\dot{w})((\Int(\eta(h)^{-1}) \circ \eta)(\co{L}{H}) \cap \co{L}{M})\\
 &\subset (\Int(\dot{w} \eta(h)^{-1}) \circ  \eta)(\co{L}{H}) \cap \co{L}{L}\\
 &= (\Int(\dot{w}\eta(h)^{-1}) \circ \eta)(\co{L}{H'_L})\\
 &=(\Int(\dot{w}\eta(h^{-1}{h'}^{-1})) \circ \eta)(\co{L}{H_L}),
    \end{align*}
and hence $\Int(h')(A_{\wh{H_M}}) \supset A_{\wh{H_L}}$. So $h'$ gives an element of $W^{\rel}(H_M, H_L)$. So we have given an element of $X^{\mf{e}}_L$ and $W^{\rel}(H_M, H_L)$ and we see that by the construction going in the other direction, we recover $w$ since we are supposed to compose $\eta(h)^{-1} \circ  \eta(h') \circ (\dot{w}\eta(h^{-1}h'))^{-1}$ and this is supposed to yield the inverse of the element of $W^{\rel}(M,L)$.  If we act on the original $w \in W^{\rel}(M,L)$ on the left by an element of $W^{\rel}_L$, then the embedded datum  $(H'_L, H, s, \Int(\dot{w} \eta(h)) \circ \eta)$ will be in the same inner class, and hence the new $h'$ will differ from the old one by an element of $W^{\rel}_{H_L}$. This completes the proof.
\end{proof}

\begin{lemma}\label{lem: H-side-unstable-sums}
We have
\[
[\Trans^{G_b}_{H}S\Theta_{\phi_H}^H]_{\reg}
=
\sum_{w\in W^{\rel}_{L}\backslash W^{\rel}(M,L)}\Theta^{L_{b_{L}},{}^{w}\eta(s)}_{{}^{w}\phi}\otimes\ov{\delta}^{1/2}_{P_{\nu_b}}.
\]
\end{lemma}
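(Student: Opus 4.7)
The plan is to unfold the definition of the regular part, apply the basic endoscopic character identity (Assumption \ref{assumption:ECI-basic}) term by term, and then re-index via Lemma \ref{lem: endoindexinglem}.

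First I would start from Definition \ref{def: endregpart}, which gives
\[
[\Trans^{G_b}_H S\Theta^H_{\phi_H}]_\reg = \sum_{\mf{e}_L \in X^{\mf{e}}_L} \Bigl(\Trans^{G_b}_{H_L} \sum_{w \in W^\rel[H_M, H_L]} S\Theta^{H_L}_{\co{w}{\phi_H}}\Bigr) \otimes \ov{\delta}^{1/2}_{P_{\nu_b}}.
\]
Since $W^\rel[H_M, H_L]$ is a complete set of representatives for the double cosets $W^\rel_{H_L} \backslash W^\rel(H_M, H_L) / W^\rel_{H_M}$, and since for $w\in W^\rel(H_M,H_L)$ one has $W^\rel_{H_L} w W^\rel_{H_M} = W^\rel_{H_L} w$, the inner sum is indexed by $W^\rel_{H_L}\backslash W^\rel(H_M, H_L)$.

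Next, for each pair $(\mf{e}_L, w)$, write $\mf{e}_L=(H_L,\co{L}{H_L},s,\eta_L)$ with $\eta_L=\Int(n)\circ\eta$ for some $n\in N_{\wh{G}}(\wh{T})$. Apply Assumption \ref{assumption:ECI-basic} to the refined endoscopic datum $\mf{e}_L$ of $L$, the tempered parameter $\co{w}{\phi_H}$ of $H_L$, and the basic class $b_L\in B(L)_\bas$ (using $G_b=L_{b_L}$ from Lemma \ref{lem: Gbequals}):
\[
\Trans^{G_b}_{H_L} S\Theta^{H_L}_{\co{w}{\phi_H}} = \Theta^{L_{b_L},\,\eta_L(s)}_{\eta_L \circ \co{w}{\phi_H}}.
\]

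Then I would match this with the desired expression via Lemma \ref{lem: endoindexinglem}. By Lemma \ref{lem: M-HM}, we can choose $H_M$ and $M$ compatibly so that $\wh{H_M}=\eta^{-1}(\wh{M})$; hence the auxiliary element $h$ appearing in the proof of Lemma \ref{lem: endoindexinglem} can be taken to be $1$. With this choice, the element of $W^\rel_L\backslash W^\rel(M,L)$ associated to $(\mf{e}_L,w)$ has lift $\dot{w}' = n\eta(\dot{w}) \in N_{\wh{G}}(\wh{T})$, and one checks directly:
\begin{itemize}
\item[(i)] $\eta_L \circ \co{w}{\phi_H} = \Int(n\eta(\dot{w}))\circ\eta\circ\phi_H = \Int(\dot{w}')\circ\phi = \co{w'}{\phi}$ as $L$-parameters of $L$;
\item[(ii)] since $s\in Z(\wh{H})^\Gamma$, $\eta(s)$ is central in $\eta(\wh{H})=Z_{\wh{G}}(\eta(s))^\circ$, hence centralizes $\eta(\wh{T}_H)=\wh{T}$, so in particular lies in $\wh{T}$; therefore $\eta(\dot{w})\eta(s)\eta(\dot{w})^{-1}=\eta(\dot{w}s\dot{w}^{-1})=\eta(s)$, which gives $\eta_L(s)=n\eta(s)n^{-1}=\co{\dot{w}'}{\eta(s)}=\co{w'}{\eta(s)}$.
\end{itemize}
Thus $\Theta^{L_{b_L},\,\eta_L(s)}_{\eta_L\circ\co{w}{\phi_H}} = \Theta^{L_{b_L},\,\co{w'}{\eta(s)}}_{\co{w'}{\phi}}$.

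Finally, reassembling and using Lemma \ref{lem: endoindexinglem} to re-index the double sum $\sum_{\mf{e}_L}\sum_{w\in W^\rel_{H_L}\backslash W^\rel(H_M,H_L)}$ as $\sum_{w'\in W^\rel_L\backslash W^\rel(M,L)}$ yields the claimed formula. The main obstacle is Step 3, namely confirming cleanly that $\eta_L\circ\co{w}{\phi_H}$ and $\co{w'}{\phi}$ coincide as $L$-parameters of $L$ (not merely as $\wh{G}$-conjugates), but this is resolved by the freedom to take $h=1$; the compatibility for $\eta_L(s)$ then follows immediately from the $\Gamma$-invariance and centrality of $s$.
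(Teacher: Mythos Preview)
Your proposal is correct and follows essentially the same route as the paper's proof: unfold Definition~\ref{def: endregpart}, apply Assumption~\ref{assumption:ECI-basic} to each summand, and then re-index via Lemma~\ref{lem: endoindexinglem}. The only point where you diverge slightly is in handling the auxiliary element $h$ from Lemma~\ref{lem: M-HM}: you arrange for $h=1$ by re-choosing $H_M$ so that $\wh{H_M}=\eta^{-1}(\wh{M})$, whereas the paper keeps $H_M$ standard (as fixed in the setup) and instead observes that $h$ may be chosen inside $S_{\phi_H}$, so that conjugation by $h$ does not disturb $\phi_H$. The paper's phrasing is marginally safer, since in its running conventions $H_M$ is already pinned down as a standard Levi and $\eta^{-1}(\wh{M})$ need not coincide with it exactly; but your variant achieves the same effect and your explicit verifications (i) and (ii) make transparent what the paper leaves implicit.
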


\begin{proof}
We recall that by Definition \ref{def: endregpart}, the left-hand side is 
\[
\sum\limits_{\mf{e}_L \in X^{\mf{e}}_L} \Bigl(\sum\limits_{w \in W^\rel[H_M, H_L]}\Trans^{G_b}_{H_L} S\Theta_{\co{w}{\phi_H}}^{H_L}\Bigr)\otimes\ov{\delta}^{1/2}_{P_{\nu_b}}.
\]
 Applying the endoscopic character identities from the basic correspondence (Assumption \ref{assumption:ECI-basic}), we have that the left-hand side equals a sum over terms of the form $\Theta^{L_{b_L},\co{w'}{\eta(s)}}_{\co{w'}{
\phi}}$ for $w' \in \wh{W}_G$. Moreover, each element $w'$ that we get can be chosen to be exactly the element of $W^{\rel}(H_M, H_L)$ constructed in Lemma \ref{lem: endoindexinglem} (since $h$ as in that lemma can be chosen to centralize $\phi_H$).  Hence, to show that the two sides are equal, we just need to show the indexing sets are the same.  But this is Lemma \ref{lem: endoindexinglem}.
\end{proof}

Now let us prove Theorem \ref{thm: ECI}.

\begin{proof}[Proof of Theorem \ref{thm: ECI}]
By Lemma \ref{lem: H-side-unstable-sums}, we have
\[
[\Trans^{G_b}_{H}S\Theta_{\phi_H}^H]_{\reg}
=
\sum_{w\in W^{\rel}_{L}\backslash W^{\rel}(M,L)}\Theta^{L_{b_{L}},{}^{w}\eta(s)}_{{}^{w}\phi}\otimes\ov{\delta}^{1/2}_{P_{\nu_b}}.
\]
By Proposition \ref{prop:Gb-side-unstable-sum}, we have
\[
e(G_b)\sum_{\pi\in\Pi_{\phi}(G_{b})}\langle\pi,\eta(s)\rangle_\reg \Theta_\pi
=
\sum_{w\in W^{\rel}_{L}\backslash W^{\rel}(M,L)}\Theta^{L_{b_{L}},{}^{w}\eta(s)}_{{}^{w}\phi}.
\]
Thus we obtain the desired identity \eqref{eq:B(G)-ECI}:
\[
[\Trans^{G_b}_{H}S\Theta_{\phi_H}^H]_{\reg}
=
e(G_b)\sum_{\pi\in\Pi_{\phi}(G_{b})}\langle\pi,\eta(s)\rangle_\reg \Theta_{\pi}\otimes\ov{\delta}^{1/2}_{P_{\nu_b}}.
\]
\end{proof}

\begin{remark}
We finally comment on the non-tempered case.
The fundamental issue beyond the tempered case is that the endoscopic character identity for the basic LLC (Assumption \ref{assumption:ECI-basic}) no longer holds.
This is because non-tempered L-packets are constructed by the Langlands classification; in general, there is no nice description of the character of the Langlands quotient, which is a unique irreducible quotient of the standard module.
However, it is believed that the standard module is irreducible if and only if its Langlands quotient is generic (e.g., see \cite{HM07}).
The standard module itself is just a parabolically induced representation, so its character can be described in terms of the character of the inducing representation (e.g., \cite{vD72}).
Hence it is reasonable to expect that Assumption \ref{assumption:ECI-basic} and also our discussion so far can be extended to non-tempered but generic $L$-packets.
\end{remark}

\appendix
\section{Interpretation of the regular part in the $\GL_n$ case}{\label{s: appendixA}}

In our formulation of the endoscopic character relation, we introduced the regular part $[\Trans^{G_b}_H S\Theta^H_{\phi_H}]_\reg$ on the endoscopic side by replacing $J^H_{P^{\op}_{\nu_{\mf{e}_L}}} S\Theta_{\phi_H}^H$ in the expression \eqref{eqn: endoscopiccharacter} with $[J^H_{P^{\op}_{\nu_{\mf{e}_L}}} S\Theta_{\phi_H}^H]_{\reg}$, whose definition essentially relies on the geometric lemma of Bernstein--Zelevinsky (see Definition \ref{def: endregpart}).
It is natural to seek a more conceptual explanation of the regular part.
In this appendix, we explore this problem in the $\GL_n$ case.

\subsection{Preliminaries on the Zelevinsky classification}

In the following, we appeal to the theory of Zelevinsky classification \cite{Zel80}.
Here, we briefly summarize some key points of the theory, particularly those needed in our later proof.

\subsubsection{Classification of discrete series via segments}

We use the notation $\mf{m}=[\rho; x, y]$ for a \textit{segment} (in the sense of \cite{Zel80}) determined by the data of a unitary irreducible supercuspidal representation $\rho$ of $\GL_{r}(F)$ (for some $r\in\Z_{>0}$) and real numbers $x,y\in\R$ satisfying $y-x\in\Z_{\geq0}$.
More explicitly, $[\rho; x, y]$ is the set $\{\rho|\det|^{x}, \rho|\det|^{x+1}, \ldots, \rho|\det|^{y} \}$ of irreducible supercuspidal representations of $\GL_{r}(F)$.
We say that a segment $\mf{m}=[\rho; x, y]$ is \textit{centered} if $x+y=0$.
For any segment $\mf{m}=[\rho; x, y]$, we define $\pi(\mf{m})$ by the following:
\[
\pi(\mf{m})
:= \rho|\det|^{x}\times\rho|\det|^{x+1}\times\cdots\times\rho|\det|^{y}.
\]
Here, $(-)\times\cdots\times(-)$ is an abbreviated symbol for the normalized parabolic induction with respect to the standard (upper-triangular) parabolic subgroup; so, from $\GL_{r}\times\cdots\times\GL_{r}$ to $\GL_{r(y-x+1)}$ in this case.

\begin{theorem}[{\cite[Theorem 9.3]{Zel80}}]\label{thm:Zel-ds}
\begin{enumerate}
\item
For any segment $\mf{m}$, the representation $\pi(\mf{m})$ has a unique irreducible quotient $\Delta(\mf{m})$, which is discrete series.
\item
Conversely, any irreducible discrete series representation of $\GL_{n}(F)$ is of the form $\Delta(\mf{m})$ for a unique segment $\mf{m}$.
\item
An irreducible discrete series representation $\Delta(\mf{m})$ is unitary if and only if $\mf{m}$ is centered.
\end{enumerate}
\end{theorem}

\subsubsection{Classification of irreducible admissible representations via multi-segments}

We use the symbol $\ul{\mf{m}}=\{\mf{m}_{1},\ldots,\mf{m}_{k}\}$ for denoting a \textit{multi-segment}, i.e., a multi-set of segments. 
We say that a multi-segment $\ul{\mf{m}}$ is \textit{centered} if each segment contained in $\ul{\mf{m}}$ is centered.

We say that two segments $\mf{m}_{1}=[\rho_{1}; x_{1}, y_{1}]$ and $\mf{m}_{2}=[\rho_{2}; x_{2}, y_{2}]$ are \textit{linked} if $\mf{m}_{1}\not\subseteq\mf{m}_{2}$, $\mf{m}_{2}\not\subseteq\mf{m}_{1}$, and $\mf{m}_{1}\cup\mf{m}_{2}$ is a segment (note that this condition necessarily implies that $\rho_{1}\cong\rho_{2}$).
We say that a segment $\mf{m}_{1}=[\rho_{1}; x_{1}, y_{1}]$ \textit{precedes} $\mf{m}_{2}=[\rho_{2}; x_{2}, y_{2}]$ if $\mf{m}_{1}$ and $\mf{m}_{2}$ are linked and $x_1< x_2$. 

For any multi-segment $\ul{\mf{m}}=\{\mf{m}_{1},\ldots,\mf{m}_{k}\}$, we put
\[
\pi(\ul{\mf{m}}):=\Delta(\mf{m}_{1})\times\cdots\times\Delta(\mf{m}_{k}),
\]
where $\mf{m}_{i}=[\rho_{i}; x_{i}, y_{i}]$ are segments ordered so that $\mf{m}_{i}$ does not precede $\mf{m}_{j}$ whenever $i>j$ (such an ordering may not be unique, hence we fix one).

\begin{theorem}[{\cite[Theorem 6.1]{Zel80}}]\label{thm:Zel-nontemp}
Let $\ul{\mf{m}}$ be a multi-segment.
\begin{enumerate}
\item
The representation $\pi(\ul{\mf{m}})$ has a unique irreducible quotient denoted by $\Delta(\ul{\mf{m}})$.
Moreover, $\Delta(\ul{\mf{m}})$ is independent of the choice of the ordering as above of $\mf{m}_1,\ldots,\mf{m}_k$.
\item
Any irreducible admissible representation of $\GL_{n}(F)$ is of the form $\Delta(\ul{\mf{m}})$ for a unique multi-segment $\ul{\mf{m}}$.
\end{enumerate}
\end{theorem}

\begin{remark}
We remark that the statement of \cite[Theorem 6.1]{Zel80} is that $\pi(\ul{\mf{m}})$ has a unique irreducible `subrepresentation' when $\mf{m}_i$'s are ordered so that $\mf{m}_{i}$ does not precede $\mf{m}_{j}$ whenever `$i<j$'.
These two different conventions can be translated into each other by the so-called Zelevinsky involution; see, e.g., \cite[\S3.3]{LM16}.
\end{remark}

\subsubsection{Description of Jordan--Holder constituents}

We say that a multi-segment $\ul{\mf{m}}'$ is obtained by an \textit{elementary operation} from another multi-segment $\ul{\mf{m}}$ if the following holds:
\begin{quote}
We may write $\ul{\mf{m}}=\{\mf{m}_{1},\ldots,\mf{m}_k\}$ and $\ul{\mf{m}}'=\{\mf{m}'_{1}, \mf{m}'_{2}, \mf{m}_{3},\ldots,\mf{m}_{k}\}$, where each $\mf{m}_{i}$ is a segment such that $\mf{m}_{1}$ and $\mf{m}_{2}$ are linked and satisfy $\mf{m}'_{1}=\mf{m}_{1}\cup\mf{m}_{2}$ and $\mf{m}'_{2}=\mf{m}_{1}\cap\mf{m}_{2}$.
\end{quote}

\begin{theorem}[{\cite[Theorem 7.1]{Zel80}}]\label{thm:Zel-JH}
Let $\ul{\mf{m}}$ be a multi-segment.
Then the set of Jordan--Holder constituents of $\pi(\ul{\mf{m}})$ contains $\Delta(\ul{\mf{m}}')$ for a multi-segment $\ul{\mf{m}}'$ if and only if $\ul{\mf{m}}'$ can be obtained from $\ul{\mf{m}}$ by a chain of elementary operations.
\end{theorem}

\subsection{Temperedness and centeredness}

\begin{proposition}
An irreducible admissible representation $\Delta(\ul{\mf{m}})$ is tempered if and only if $\ul{\mf{m}}$ is centered.
\end{proposition}

\begin{proof}
We believe that this proposition is well-known, but explain some details.
In general, an irreducible admissible representation $\pi$ of a $p$-adic reductive group is tempered if and only if it is realized in the normalized parabolic induction of a unitary discrete series representation of a Levi subgroup (see, e.g., \cite[Section VII.2.6]{Renard}); note that such a parabolically induced representation is unitary, hence semisimple.
Thus, in the case of $\GL_n$, $\pi$ is tempered if and only if $\pi$ is contained in $\Delta(\mf{m}_1)\times\cdots\times\Delta(\mf{m}_k)$ for some centered segments $\mf{m}_1,\ldots,\mf{m}_k$.
As any two centered segments are not linked, we cannot construct any new multi-segment from a centered multi-segment.
Hence, by \cite[Theorem 4.2]{Zel80}, $\Delta(\mf{m}_1)\times\cdots\times\Delta(\mf{m}_k)$ is irreducible and equal to $\Delta(\ul{\mf{m}})$, where $\ul{\mf{m}}:=\{\mf{m}_1,\ldots,\mf{m}_k\}$.
\end{proof}

\subsubsection{Jacquet modules of discrete series representations}
The following proposition says that the Jacquet module of a discrete series representation is simply described by ``dividing" the corresponding segment.

\begin{proposition}[{\cite[Proposition 9.5]{Zel80}}]\label{prop:ds-J}
Let $\mf{m}=[\rho;x,y]$ be a segment, where $\rho$ is a unitary supercuspidal representation of $\GL_{r}(F)$.
We put $n:=r(y-x+1)$, hence $\Delta(\mf{m})$ is a discrete series representation of $\GL_n(F)$.
For $0<l<n$, we let $P_{n-l,l}$ denote the standard parabolic subgroup of $\GL_{n}$ with standard Levi $\GL_{n-l}\times\GL_{l}$.
Then we have
\[
J_{P_{n-l,l}}^{\GL_{n}} (\Delta(\mf{m}))
=
\begin{cases}
0& \text{if $r\nmid l$,}\\
\Delta([\rho;x+k,y])\boxtimes\Delta([\rho;x,x+k-1])& \text{if $r\mid l$ (write $l=rk$).}
\end{cases}
\]
\end{proposition}

\subsubsection{Pseudo-centered multi-segments}

For a multi-segment $\ul{\mf{m}}=\{\mf{m}_{1},\ldots,\mf{m}_{k}\}$ and an irreducible unitary supercuspidal representation $\rho$ of $\GL_{r}(F)$, we define the \textit{$\rho$-part} $\ul{\mf{m}}_{\rho}$ of $\ul{\mf{m}}$ to be the multi-set consisting of $\mf{m}_{i}$ which is of the form $[\rho;x_{i},y_{i}]$.

\begin{definition}
Let $\ul{\mf{m}}$ be a multi-segment.
For an irreducible unitary supercuspidal representation $\rho$ of $\GL_{r}(F)$, we write $\ul{\mf{m}}_{\rho}=\{\mf{m}_{1},\ldots,\mf{m}_{k}\}$ and $\mf{m}_{i}=[\rho;x_{i},y_{i}]$.
We say that $\ul{\mf{m}}_{\rho}$ is \textit{pseudo-centered} if the following holds:
\begin{quote}
    For any $z\in\R$, the sum of the multiplicities of $\rho|\det|^{z}$ in $\mf{m}_i$ (over $1 \leq i \leq k$) equals that of $\rho|\det|^{-z}$.
\end{quote}
We say that $\ul{\mf{m}}$ is \textit{pseudo-centered} if so is $\ul{\mf{m}}_{\rho}$ for any $\rho$.
\end{definition}

Note that a centered segment is obviously pseudo-centered.

\begin{lemma}\label{lem:pseudo-cent}
Let $\mf{m}_{1},\ldots,\mf{m}_{r}$ be segments.
We put $\mf{m}:=\{\mf{m}_{1},\ldots,\mf{m}_{r}\}$.
If $\Delta(\mf{m}_{1})\times\cdots\times\Delta(\mf{m}_{r})$ contains a tempered irreducible subquotient, then $\mf{m}$ is pseudo-centered.
\end{lemma}

\begin{proof}
Note that $\mf{m}_1,\ldots,\mf{m}_r$ is not necessarily ordered so that $\mf{m}_{i}$ does not precede $\mf{m}_{j}$ whenever $i>j$, hence the parabolically induced representations $\Delta(\mf{m}_{1})\times\cdots\times\Delta(\mf{m}_{r})$ and $\pi(\mf{m})$ may be different.
However, they have the same sets of irreducible subquotients ignoring the multiplicity (see \cite[Theorem 2.9]{Zel1}), hence it is enough to discuss the claim for $\pi(\mf{m})$.
Suppose that the set of Jordan--Holder factors of $\pi(\mf{m})$ contains a tempered irreducible subquotient, which is written by $\pi(\ul{\mf{m}}')$ with a multi-segment $\ul{\mf{m}}'$.
By Theorem \ref{thm:Zel-JH}, $\ul{\mf{m}}'$ is obtained from $\ul{\mf{m}}$ by a chain of elementary operations.
This implies that $\ul{\mf{m}}'_{\rho}$ is obtained from $\ul{\mf{m}}_{\rho}$ by a chain of elementary operations for any $\rho$.
Since $\pi(\ul{\mf{m}}')$ is tempered, $\ul{\mf{m}}'$ is centered (Theorem \ref{thm:Zel-nontemp} (3)), hence so is $\ul{\mf{m}}'_{\rho}$.
In particular, $\ul{\mf{m}}'_{\rho}$ is pseudo-centered.
Noting that being pseudo-centered is preserved under the elementary operation, we conclude that $\ul{\mf{m}}_{\rho}$ must be pseudo-centered.
\end{proof}

\subsection{Non-temperedness of the non-regular part}

We let 
\begin{itemize}
\item 
$G=\GL_n$, and
\item
$(H,\mc{H},s,\eta)=(G,{}^L{G},1,\mathrm{id})$.
\end{itemize}
Let $\phi_H=\phi$ be a tempered $L$-parameter of $H=G$.
Let $b\in B(G)$ and $L$ be the standard Levi subgroup such that $b$ comes from $b_L\in B(L)_\bas^+$.
Let $Q$ be the standard parabolic subgroup of $G$ with standard Levi $L$.
As discussed in the paragraph above Example \ref{ex:first}, then we have
\begin{equation}{\label{eq: Appendixtranseqn}}
    \Trans_{H}^{G_b}S\Theta_{\phi_H}^{H}
=
(\Trans_{L}^{G_b}J^{G}_{Q}S\Theta_\phi^G)\otimes\overline{\delta}^{-1/2}_Q.
\end{equation}

Let us describe the regular part of this distribution following Section \ref{ss: endo-reg-part}.
By replacing $\phi$ via conjugation if necessary, we choose a minimal standard Levi subgroup $M$ of $G$ such that $\phi$ factors through a discrete $L$-parameter $\phi_M$ of $M$.
Let $P$ be the standard parabolic subgroup of $G$ with standard Levi $M$.
We write $\pi_M$ for the unique discrete series representation of $M(F)$ contained in $\Pi_{\phi_M}^M$.
Note that $\pi_M$ is unitary since $\phi$ is tempered.
Then, by Assumption \ref{assumption:LIR-basic} (this is indeed a theorem in this case), 
\[
S\Theta_\phi^G
=I_P^G(S\Theta_{\phi_M}^M)
=I_P^G(\pi_M).
\]
Hence the right-hand side of \eqref{eq: Appendixtranseqn} becomes
\[
\Trans_{L}^{G_b}(J^{G}_{Q}\circ I_P^G(\pi_M))\otimes\overline{\delta}^{-1/2}_Q.
\]

Recall that 
\begin{itemize}
\item
$W:=W_{G}(T)$,
\item
$W^{M,L}:=\{w\in W \mid w(M\cap B)\subset B,\, w^{-1}(L\cap B)\subset B \}$,
\item
$W(M,L):=\{w\in W \mid w(M)\subset L\}$,
\item
$W[M,L]:=W^{M,L}\cap W(M,L)$.
\end{itemize}
(Here, we are omitting the script ``$\rel$" from the notation).
Also recall that we often write ${}^{w}M$ in short for $w(M)=wMw^{-1}$.

By the geometric Lemma of Bernstein--Zelevinsky (\cite[448 page]{Ber1}), we have
\[
J^{G}_{Q}\circ I^{G}_{P}(\pi_{M})
=\sum_{w\in W^{M,L}} I^{L}_{P_{2}}\circ w^{\ast}\circ J^{M}_{P_{1}}(\pi_{M}),
\]
where
\begin{itemize}
\item
$P_{1}$ is the standard parabolic subgroup of $M$ with standard Levi $L_{1}:=M\cap w^{-1}(L)$, 
\item
$P_{2}$ is the standard parabolic subgroup of $L$ with standard Levi $L_{2}:=w(M)\cap L$.
\end{itemize}
Note that, for any $w\in W[M,L]$, we have $L_{1}=M$ and $L_{2}=w(M)$, hence the summand equals $I^{Q}_{P}({}^{w}\pi_{M})$, which is an irreducible tempered representation.
Recall that, by definition, 
\[
[J^{G}_{Q}\circ I^{G}_{P}(\pi_{M})]_{\reg}
:=\sum_{w\in W[M,L]} I^{Q}_{P}({}^{w}\pi_{M})
\]
and 
\[
[\Trans_{H}^{G_b}S\Theta_{\phi_H}^{H}]_\reg
:=
\Trans_{L}^{G_b}([J^{G}_{Q}\circ I_P^G(\pi_M)]_\reg).
\]

Our aim is to show the following:

\begin{proposition}\label{prop:ABM}
For any $w\in W^{M,L}\smallsetminus W[M,L]$, any irreducible subquotient of $I^{L}_{P_{2}}\circ w^{\ast}\circ J^{M}_{P_{1}}(\pi_{M})$ is non-tempered.
In particular, the regular part $[J^{G}_{Q}\circ I^{G}_{P}(\pi_{M})]_{\reg}$ is the projection of $J^{G}_{Q}\circ I^{G}_{P}(\pi_{M})$ to its tempered part.
\end{proposition}


We suppose that $w\in W^{M,L}\smallsetminus W[M,L]$.
Thus, in particular, ${}^{w}M\not\subset L$.

We introduce some ad hoc terminology and notation for convenience.

\begin{definition}\label{defn:position}
\begin{enumerate}
\item
We say that a subgroup $M'$ of $\GL_n$ is a \textit{single-block} subgroup if it is of the following form:
\[
M'=\left\{g=(g_{ij})_{ij}\in \GL_n \,\Bigg\vert\, \begin{array}{l}
     \text{$g_{ii}=1$ if $i\notin [n',m']$}  \\
     \text{$g_{ij}=0$ if $i\notin [n',m']$ or $j\notin [n',m']$}
\end{array}   \right\}
\]
for some $1\leq n' \leq m' \leq n$.
We call $m'-n'+1$ the \textit{size} of $M'$.
We call $n'$ (resp.\ $m'$) of $M'$ the \textit{upper-left entry} (resp.\ \textit{lower-right entry}) of $M'$.
\item 
For single-block subgroups $M'$ and $M''$ of $\GL_n$, we write $M'\nwarrow M''$ if the lower-right entry of $M'$ is smaller than the upper-left entry of $M''$.
\end{enumerate}
\end{definition}

We write $M=M^{(1)}\times\cdots\times M^{(r)}$, where each $M^{(i)}$ is a general linear group which is identified with a single-block subgroup of $\GL_n$ such that $M^{(i)}\nwarrow M^{(j)}$ for any $i<j$. 

Note that, since $w\in W^{M,L}$, it follows $M\cap L^{w}$ is a standard Levi of $M$, hence also of $G$ (see \cite[Lemma 2.11]{Ber1}).
In particular, we may write $M^{(i)}\cap L^{w}=M^{(i)}_{1}\times\cdots\times M^{(i)}_{n_{i}}$, where each $M^{(i)}_{j}$ is a single-block subgroup such that $M^{(i)}_{j}\nwarrow M^{(i)}_{j'}$ whenever $j<j'$.
On the other hand, ${}^{w}M\cap L$ is also a standard Levi of $L$, hence of $G$ (see \cite[Lemma 2.11]{Ber1}).
In particular, each factor $M^{(i)}_{j}$ of $M^{(i)}\cap L^{w}$ is mapped to a single-block subgroup of $G$ under the $w$-conjugation.

By noting that $w(M\cap B)\subset B$, we can check that the $w$-conjugation preserves the relative positions of the blocks $M^{(i)}_{1},\ldots, M^{(i)}_{n_{i}}$ in each $M^{(i)}\cap L^{w}$.
To be more precise, the following holds:

\begin{lemma}\label{lem:relative-position}
For any $1\leq i \leq r$, we have ${}^{w}M^{(i)}_{j}\nwarrow {}^{w}M^{(i)}_{j'}$ whenever $j<j'$.
\end{lemma}

We write the unitary discrete series representation $\pi_{M}$ of $M(F)=M^{(1)}(F)\times\cdots\times M^{(r)}(F)$ as 
\[
\pi_M
=
\Delta(\mf{m}^{(1)})\boxtimes\cdots\boxtimes\Delta(\mf{m}^{(r)})
\]
with centered segments $\mf{m}^{(1)}, \ldots, \mf{m}^{(r)}$.
We put $\ul{\mf{m}}:=\{\mf{m}^{(1)},\ldots,\mf{m}^{(r)}\}$.

Let us fix a unitary irreducible supercuspidal representation $\rho$ of $\GL_{m}(F)$ for some $m\in\Z_{>0}$ such that $\ul{\mf{m}}_{\rho}\neq0$.
By permuting $M^{(i)}$'s if necessary, we may assume that $\ul{\mf{m}}_{\rho}=\{\mf{m}^{(1)},\ldots,\mf{m}^{(s)}\}$ for some $1\leq s \leq r$.
Let us write $\mf{m}^{(i)}=[\rho;-x_{i},x_{i}]$ for $1\leq i\leq s$.
Furthermore, by again permuting $M^{(i)}$'s and also replacing the choice of $\rho$ if necessary, we may also assume that
\begin{itemize}
\item
$x_{1}\geq\cdots \geq x_{s}$,
\item
there exists $1\leq i\leq s$ satisfying ${}^{w}M^{(i)}\not\subset L$. 
\end{itemize}
(If we cannot find $i$ satisfying the second condition for any $\rho$, then it means that ${}^wM\subset L$, which contradicts $w\notin W[M,L]$.)

Let $1\leq k\leq s$ be the index such that ${}^{w}M^{(k)}\not\subset L$ and $x_k$ is the largest among all such $k$'s.
Note that there might be multiple such indices $k$.
In that case, we choose $k$ so that ${}^{w}M^{(k)}_{1}\nwarrow{}^{w}M^{(k')}_1$ for any other such index $k'$.

Now we start the proof.
Recall that our goal is to show that any irreducible subquotient of $I^{L}_{P_{2}}\circ w^{\ast}\circ J^{M}_{P_{1}}(\pi_{M})$ is non-tempered.
For this, we may assume that $I^{L}_{P_{2}}\circ w^{\ast}\circ J^{M}_{P_{1}}(\pi_{M})\neq0$.

\begin{proof}[Proof of Proposition \ref{prop:ABM}]
We write $L=L^{(1)}\times\cdots\times L^{(t)}$, where $L^{(i)}$'s are single-block subgroups such that $L^{(i)}\nwarrow L^{(j)}$ for any $i<j$.
Then $w^{\ast}\circ J^{M}_{P_{1}}(\pi_{M})$ is a representation of ${}^{w}M\cap L=({}^{w}M\cap L^{(1)})\times\cdots\times({}^{w}M\cap L^{(s)})$.
By writing $w^{\ast}\circ J^{M}_{P_{1}}(\pi_{M})=\boxtimes_{i=1}^{s}\pi^{(i)}$ according to this product expression of ${}^{w}M\cap L$, we have
\[
I^{L}_{P_{2}}\circ w^{\ast}\circ J^{M}_{P_{1}}(\pi_{M})
=
\bigl(I^{L^{(1)}}_{P^{(1)}_{2}}\pi^{(1)}\bigr)\boxtimes\cdots\boxtimes\bigl(I^{L^{(t)}}_{P^{(t)}_{2}}\pi^{(t)}\bigr),
\]
where $P^{(i)}_{2}:=P_{2}\cap L^{(i)}$.

Let $L^{(l)}$ be the block containing the upper-left entry of ${}^{w}M^{(k)}_{1}$.
To complete the proof, it is enough to show the following:
\begin{claim}
Any irreducible subquotient of $I^{L^{(l)}}_{P^{(l)}_{2}}\pi^{(l)}$ is non-tempered.
\end{claim}

We write ${}^{w}M\cap L^{(l)}=L^{(l)}_{1}\times\cdots\times L^{(l)}_{m_{l}}$ as usual and $\pi^{(l)}=\pi^{(l)}_{1}\boxtimes\cdots\boxtimes\pi^{(l)}_{m_{l}}$.

Let $L^{(l)}_{m}$ be the block which contains the upper-left entry of ${}^{w}M^{(k)}_1$.
Note that both $L^{(l)}_{m}$ and ${}^{w}M^{(k)}_1$ are single-block subgroups constituting the standard Levi subgroup ${}^wM\cap L$, hence we have $L^{(l)}_{m}={}^{w}M_{1}^{(k)}$.
By the assumption that $I^{L}_{P_{2}}\circ w^{\ast}\circ J^{M}_{P_{1}}(\pi_{M})\neq0$, we have $\pi^{(l)}_m\neq0$.
Thus, Proposition \ref{prop:ds-J} and Lemma \ref{lem:relative-position} implies that $\pi^{(l)}_{m}=\Delta(\mf{m}_{m}^{(l)})$, where $\mf{m}_{m}^{(l)}$ is a segment of the form $[\rho;z,x_{k}]$, where $-x_{k}<z\leq x_{k}$.
Also, the other components $\pi^{(l)}_{i}$ must be discrete series, so let us write $\pi^{(l)}_{i}=\Delta(\mf{m}_{i}^{(l)})$ with a segment $\mf{m}_{i}^{(l)}$.
Hence, with this notation, we have
\[
I^{L^{(l)}}_{P^{(l)}_{2}}\pi^{(l)}
=\Delta(\mf{m}_{1}^{(l)})\times\cdots\times\Delta(\mf{m}_{m_{l}}^{(l)}).
\]

For the sake of contradiction, we suppose that this parabolically induced representation contains a tempered irreducible subquotient.
Then, by Lemma \ref{lem:pseudo-cent}, the multi-segment $\{\mf{m}_{1}^{(l)},\ldots,\mf{m}_{m_{l}}^{(l)}\}$ is pseudo-centered.
Hence, since $\mf{m}_{m}^{(l)}=[\rho;z,x_{k}]$, at least one of $\mf{m}_{i}^{(l)}$ ($1\leq i\leq m_{l}$, $i\neq m$) must be of the form $[\rho;x,y]$ with $x\leq -x_{k}\leq y$.
Let $\mf{m}_{m'}^{(l)}=[\rho;x,y]$ be such a segment.
Recall that the index ``$k$'' was chosen so that $x_{k}$ is the largest among all indices $i$ satisfying ${}^{w}M^{(i)}\not\subset L$.

\begin{enumerate}
\item
If $x<-x_{k}$, then the segment $\mf{m}_{m'}^{(l)}=[\rho;x,y]$ ``originates" from some $\mf{m}^{(i)}=[\rho;-x_{i},x_{i}]$ with $x_{i}>x_{k}$.
In this case, by the definition of $k$, ${}^{w}M^{(i)}\subset L$, which implies that the segment $\mf{m}^{(i)}$ is not divided (in the sense of Proposition \ref{prop:ds-J}) when $J_{P_{1}}^{M}$ is applied.
Hence, $\mf{m}_{m'}^{(l)}$ is necessarily $[\rho;-x_{i},x_{i}]$, which is centered itself.
Therefore, so that $\{\mf{m}_{1}^{(l)},\ldots,\mf{m}_{s_{l}}^{(l)}\}$ is pseudo-centered, there must be another segment $\mf{m}_{m''}^{(l)}$ of the form $[\rho;x',y']$ with $x'\leq -x_{k}\leq y'$.
\item
If $x\geq-x_{k}$, we have $x=-x_{k}$.
\begin{itemize}
\item[(i)]
If $y>x_{k}$, then the same argument as in (1) implies that $\mf{m}^{(l)}_{m'}$ must be centered.
However, as $x=-x_k$, this cannot happen.
\item[(ii)]
If $y=x_{k}$, the same argument as in (1) implies that there must be another segment $\mf{m}_{m''}^{(l)}$ of the form $[\rho;x',y']$ with $x'\leq -x_{k}\leq y'$.
\item[(iii)]
Suppose that $y<x_{k}$.
By the definition of $k$ and Lemma \ref{lem:relative-position}, we cannot have $1\leq m'<m$.
However, if $m<m'\leq m_{l}$, then again the definition of $k$ and Lemma \ref{lem:relative-position} imply that there must be $m<m^\circ<m'$ such that $\mf{m}^{(l)}_{m^\circ}=[\rho;z',x_{k}]$ for some $z'$.
But then $\rho|\det|^{x_k}$ is contained in $\mf{m}^{(l)}_{m}$ and $\mf{m}^{(l)}_{m^\circ}$ while $\rho|\det|^{-x_k}$ is contained in $\mf{m}^{(l)}_{m'}$.
Therefore, so that $\{\mf{m}_{1}^{(l)},\ldots,\mf{m}_{s_{l}}^{(l)}\}$ is pseudo-centered, there must be another segment $\mf{m}_{m''}^{(l)}$ of the form $[\rho;x',y']$ with $x'\leq -x_{k}\leq y'$.
\end{itemize}
\end{enumerate}
By repeating this procedure of finding a segment $\mf{m}_{m''}^{(l)}$, we arrive at a contradiction.
\end{proof}

\end{document}